\documentclass[10pt]{amsart}
\usepackage{amscd}
\usepackage{verbatim}
\usepackage{amssymb,amsmath}

\input xy
\xyoption{all}

\textheight 21.4 cm 
\textwidth=15 cm
\topmargin=0.5 cm
\oddsidemargin=0.5 cm
\evensidemargin=0.5 cm
\footskip=40 pt

\newtheorem{theorem}{Theorem}[section]
\newtheorem{lemma}[theorem]{Lemma}
\newtheorem{proposition}[theorem]{Proposition}
\newtheorem{corollary}[theorem]{Corollary}

\theoremstyle{definition}
\newtheorem{definition}[theorem]{Definition}

\theoremstyle{remark}
\newtheorem{remark}[theorem]{Remark}
\newtheorem{example}[theorem]{Example}

\makeatletter\let\c@equation=\c@theorem\makeatother

\numberwithin{equation}{section}

\newcommand{\co}{\colon\thinspace}

\newcommand{\colim}{\operatornamewithlimits{colim}}

\newcommand{\hocolim}{\operatornamewithlimits{hocolim}}
\newcommand{\I}{\mathcal I}

\newcommand{\la}{\leftarrow}

\newcommand{\Map}{\operatorname{Map}}

\newcommand{\Sp}{\textit{Sp}}

\newcommand{\U}{\mathcal U}

\newcommand{\xl}{\xleftarrow}
\newcommand{\xr}{\xrightarrow}

\begin{document}

\title{Thom spectra that are symmetric spectra}
\author{Christian Schlichtkrull}
\address{Department of Mathematics, University of Bergen, Johannes
  Brunsgate 12, 5008 Bergen, Norway}
\email{krull@math.uib.no}
\date{\today}

\begin{abstract}
We analyze the functorial and multiplicative properties of the Thom spectrum functor in the setting of symmetric spectra and we establish the relevant homotopy invariance. 
\end{abstract}

\maketitle

\section{Introduction}
The purpose of this paper is to develop the theory of Thom spectra in the setting of symmetric spectra. In particular, we establish the relevant homotopy invariance and we investigate the multiplicative properties. Classically, given a sequence of spaces
$
X_0\to X_1\to X_2\to\dots, 
$
equipped with a compatible sequence of maps $f_n\co X_n\to BO(n)$, the Thom spectrum $T(f)$ is defined by pulling the universal bundles $V(n)$ over $BO(n)$ back via the $f_n$'s and letting 
$$
T(f)_n=\overline{f^*V(n)}/X_n.
$$
Here the bar denotes fibre-wise one-point compactification. More generally, one may  consider compatible families of maps $X_n\to BF(n)$, where $F(n)$ is the topological monoid of base point preserving self-homotopy equivalences of $S^n$, and similarly define a Thom spectrum by pulling back the canonical $S^n$-(quasi)fibration over $BF(n)$. Composing with the canonical maps $BO(n)\to BF(n)$, one sees that the latter construction generalizes the former. This generalization was suggested by Mahowald \cite{Mah1}, \cite{Mah}, and has been investigated in detail by Lewis in \cite{LMS}. 

\subsection{Symmetric Thom spectra via $\I$-spaces}\label{symmetricintro}
In order to translate the definition of Thom spectra into the setting of symmetric spectra, we shall modify the construction by considering certain diagrams of spaces. Let $\I$ be the category whose objects are the finite sets $\mathbf n=\{1,\dots,n\}$, together with the empty set 
$\mathbf 0$, and whose morphisms are the injective maps. The concatenation $\mathbf m\sqcup \mathbf n$ in $\I$ is defined by letting  $\mathbf m$ correspond to the first $m$ and
$\mathbf n$ to the last $n$ elements of $\{1,\dots,m+n\}$. This makes  $\mathcal I$ a symmetric monoidal category with symmetric structure  given by the $(m,n)$-shuffles $\tau_{m,n}\co\mathbf m\sqcup\mathbf n\to\mathbf n\sqcup \mathbf m$. We define an $\I$-space to be a functor from $\I$ to the category $\mathcal U$ of spaces and write $\I\mathcal U$ for the category of such functors. The correspondence $\mathbf n\to BF(n)$ defines an $\I$-space and we show that if 
$X\to BF$ is a map of $\I$-spaces, then the Thom spectrum $T(f)$ defined as above is a symmetric spectrum. The main advantage of the category $\Sp^{\Sigma}$ of symmetric spectra to ordinary spectra is that it has a symmetric monoidal smash product. 
Similarly, the category $\I\mathcal U/BF$ of $\I$-spaces over $BF$ inherits a symmetric monoidal structure from $\I$ and the Thom spectrum functor is compatible with these structures in the following sense.
\begin{theorem}\label{monoidaltheorem}
The symmetric Thom spectrum functor 
$
T\co \I\mathcal U/BF\to \Sp^{\Sigma}
$ 
is strong symmetric monoidal.
\end{theorem}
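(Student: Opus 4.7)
The plan is to construct, for each pair of $\I$-spaces $f\co X\to BF$ and $g\co Y\to BF$, a natural isomorphism
\begin{equation*}
T(f)\wedge T(g)\xr{\cong} T(f\boxtimes g)
\end{equation*}
in $\Sp^{\Sigma}$, together with an identification of the sphere spectrum with the Thom spectrum of the unit object of $\I\U/BF$, and then to verify the coherence constraints for associativity, unit, and symmetry.

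I would begin by unpacking both symmetric monoidal structures explicitly. On the source, Day convolution on $\I\U$ is given by
\begin{equation*}
(X\boxtimes Y)(\mathbf n)=\colim_{\mathbf p\sqcup \mathbf q\to \mathbf n} X(\mathbf p)\times Y(\mathbf q),
\end{equation*}
and the $\I$-space $BF$ carries a commutative monoid structure coming from block sum of self-equivalences, whence $\I\U/BF$ inherits a symmetric monoidal product obtained by composing $X\boxtimes Y\to BF\boxtimes BF$ with the multiplication $BF\boxtimes BF\to BF$. On the target, the smash product in $\Sp^{\Sigma}$ admits an entirely analogous level-wise colimit description over the same indexing category of decompositions $\mathbf p\sqcup \mathbf q\to \mathbf n$.

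The central step is to analyze the Thom construction bi-level-wise. For fixed $\mathbf p,\mathbf q$ I would check that the universal $S^{p+q}$-(quasi)fibration over $BF(p+q)$ pulls back along the block-sum map $BF(p)\times BF(q)\to BF(p+q)$ to the fiberwise smash of the universal $S^p$- and $S^q$-fibrations. Pulling further back along $f_p\times g_q$ and taking fiberwise one-point compactification then yields a natural homeomorphism identifying the Thom space of the composite $X(\mathbf p)\times Y(\mathbf q)\to BF(p+q)$ with the smash product $T(f)_p\wedge T(g)_q$. Taking the coend over $\mathbf p\sqcup \mathbf q\to \mathbf n$ on both sides and using that fiberwise compactification commutes with the relevant pushouts and colimits, the bi-indexed diagrams defining $(T(f)\wedge T(g))_n$ and $T(f\boxtimes g)_n$ are identified. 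The unit check is direct: the unit object of $\I\U/BF$ is represented by the map $\I(\mathbf 0,-)\to BF$ picking out the basepoint, whose levelwise Thom space assembles into the sphere spectrum by inspection. The coherence diagrams then reduce on both sides to the same combinatorial identities, since the symmetry on $T(f)\wedge T(g)$ is built from the shuffle permutations $\tau_{m,n}$ acting on smash factors, while the symmetry on $T(f\boxtimes g)$ is built from the same $\tau_{m,n}$ acting on $\mathbf m\sqcup \mathbf n$ via the symmetric monoidal structure of $\I$.

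I expect the main obstacle to be the multiplicativity step above: one must choose a point-set model of the universal $S^n$-(quasi)fibration over $BF(n)$ that is strictly compatible with block sum, so that the claimed identification with a fiberwise smash holds on the nose rather than merely up to homotopy. This will likely require working with a two-sided bar-construction model of $BF(n)$ together with a suitable replacement that turns the universal quasifibration into an honest ex-space, after which the remaining identifications of colimits and the verification of the pentagon, unit, and hexagon axioms follow formally from the universal property of Day convolution and the naturality of the Thom-space construction.
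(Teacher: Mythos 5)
Your strategy is essentially the paper's: both proofs write $T(f)\wedge T(g)(n)$ and $T(f\boxtimes g)(n)$ as colimits over $(\sqcup\downarrow\mathbf n)$, use that the Thom space functor preserves colimits, and reduce to identifying each colimit term via the strict compatibility of the universal quasifibration with block sum; the unit and coherence checks are also handled the same way.

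One correction to your anticipated obstacle, though: you suggest a ``suitable replacement that turns the universal quasifibration into an honest ex-space,'' but no such replacement is needed, and replacing $p_n\co V(n)\to BF(n)$ by an equivalent sectioned fibration would actually ruin the argument. The bar-construction model $V(n)=B(*,F(n),S^n)$ is already strictly compatible with block sum on the nose, because the bar construction preserves products, $F(m)\times F(n)\to F(m+n)$ is a strict monoid homomorphism, and topological realization commutes with pullbacks; this is exactly what gives the isomorphism $\alpha(f,g)^*V(n)\cong S^{n-\alpha}\bar\wedge f^*_{n_1}V(n_1)\bar\wedge g^*_{n_2}V(n_2)$. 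The paper explicitly warns in Section \ref{preliminariessection} that replacing $p_n$ by an equivalent fibration would lose precisely these strict multiplicative properties. The price of keeping the bar-construction model is that $p_n$ is only a quasifibration, so $T$ fails to be a homotopy functor without further hypotheses---but that defect is handled separately in Theorem \ref{introinvariance}, not in the monoidality proof.
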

That $T$ is strong symmetric monoidal means of course that there is a  natural isomorphism of symmetric spectra
$
T(f)\wedge T(g)\cong T(f\boxtimes g),
$
where $f\boxtimes g$ denotes the monoidal product in $\I\mathcal U/BF$. In particular,  $T$ takes monoids in $\I\mathcal U/BF$ to symmetric ring spectra.  A similar construction can be carried out in the setting of orthogonal spectra and the idea of realizing Thom spectra as ``structured ring spectra'' by such a diagrammatic  approach goes back to
\cite{Ma3}.  

\subsection{Lifting space level data to $\I$-spaces}
\label{liftingintro}
Let $\mathcal N$ be the ordered set of non-negative integers, thought of as a subcategory of $\I$ via the canonical subset inclusions. Another starting point for the construction of Thom spectra is 
to consider maps $X\to BF_{\mathcal N}$, where $BF_{\mathcal N}$ denotes the colimit of the 
$\I$-space $BF$ restricted to $\mathcal N$. Given such a map, one may choose a suitable filtration of $X$ so as to get a map of $\mathcal N$-spaces $X(n)\to BF(n)$ and the definition of the Thom spectrum $T(f)$ then proceeds as above. This is the point of view taken by Lewis \cite{LMS}. The space $BF_{\mathcal N}$ has an action of the linear isometries operad $\mathcal L$, and Lewis proves that if $f$ is a map of $\mathcal C$-spaces where $\mathcal C$ is an operad that is augmented over 
$\mathcal L$, then the Thom spectrum $T(f)$ inherits an action of 
$\mathcal C$. 

In the setting of symmetric spectra the problem is how to lift space level data to objects in $\I\mathcal U/BF$. We think of $\I$ as some kind of algebraic structure acting on $BF$, and in order to pull such an action back via a space level map we should ideally map into the quotient space $BF_{\I}$, that is, into the colimit over $\I$. The problem with this approach is that the homotopy type of $BF_{\I}$ differs from that of $BF_{\mathcal N}$. For this reason we shall instead work with the homotopy colimit $BF_{h\I}$ which does have the correct homotopy type. 
We prove in Section \ref{rectificationsection} that the homotopy colimit functor has a right adjoint $U\co \mathcal U/BF_{h\I}\to \I\mathcal U/BF$ such that this pair of adjoint functors defines a Quillen equivalence
\[
\hocolim_{\I}\co
\xymatrix{
\I\mathcal U/BF \ar@<0.5ex>[r] &
\mathcal U/BF_{h\I} \ar@<0.5ex>[l]\thinspace\thinspace\co\!\! U.  
}
\]
Here the model structure on $\I\mathcal U$ is the one established by Sagave-Schlichtkrull
\cite{SS}. The weak equivalences in this model structure are called \emph{$\I$-equivalences} and are the maps that induce weak homotopy equivalences on the associated homotopy colimits; see Section \ref{rightadjointsection} for details. It follows from the theorem that the homotopy theory associated to $\I\mathcal U/BF$ is equivalent to that of $\U/BF_{h\I}$.
As is often the case for functors that are right adjoints, $U$ is only homotopically well-behaved when applied to fibrant objects. We shall usually remedy this by composing with a suitable fibrant replacement functor on $\mathcal U/BF_{h\I}$ and we write $U'$ for the composite functor so defined. 

Composing the right adjoint $U$ with the symmetric Thom spectrum functor from Theorem \ref{monoidaltheorem} we get a Thom spectrum functor on $\mathcal U/BF_{h\I}$. However, even when restricted to fibrant objects this functor does not have all the properties one may expect from a Thom spectrum functor. 
Notably, one of the important properties of the Lewis-May Thom spectrum functor on $\mathcal U/BF_{\mathcal N}$ is that it preserves colimits whereas the symmetric Thom spectrum obtained by composing with $U$ does not have this property. For this reason we shall 
introduce another procedure for lifting space level data to $\I$-spaces in the form of a functor 
\[
R\co \mathcal U/BF_{h\I}\to \I\mathcal U/BF
\] 
and we shall use this functor to associate Thom spectra to objects in $\U/BF_{h\I}$.  The first statement in the following theorem ensures that the functor so defined produces Thom spectra with the correct homotopy type. 

\begin{theorem}\label{RThomfunctor}
There is a natural level equivalence $R\xr{\sim}U'$ over $BF$ and the symmetric Thom spectrum functor defined by the composition
\[
T\co \U/BF_{h\I}\xr{R}\I\U/BF\xr{T} \Sp^{\Sigma}
\]
preserves colimits.
\end{theorem}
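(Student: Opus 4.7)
The plan is to construct $R$ via a pullback formula built from the Bousfield--Kan model of $BF_{h\I}$, to verify the natural level equivalence $R\to U'$ by reducing both sides to the same homotopy pullback, and then to deduce the colimit-preservation of $T\circ R$ from the pullback nature of the construction together with the explicit form of the Thom spectrum functor.

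To construct $R$, I would use the fact that $BF_{h\I}$ is assembled from the pieces $BF(\mathbf n)\times B(\I\downarrow \mathbf n)$, where each $\I\downarrow \mathbf n$ has the terminal object $\id_{\mathbf n}$ and hence contractible nerve. Functorially factor the canonical map $BF(\mathbf n)\to BF_{h\I}$ (via the mapping path space, say) as a weak equivalence followed by a Hurewicz fibration $p_{\mathbf n}\co E(\mathbf n)\twoheadrightarrow BF_{h\I}$, equipped with a compatible projection $q_{\mathbf n}\co E(\mathbf n)\to BF(\mathbf n)$. Then set
\[
R(Y)(\mathbf n)=Y\times_{BF_{h\I}}E(\mathbf n),
\]
with structure map to $BF(\mathbf n)$ induced by $q_{\mathbf n}$, and with $\I$-functoriality inherited from the simplicial data of the Bousfield--Kan construction.

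For the natural transformation $R\to U'$, the comparison map is assembled from the adjunction counit $\hocolim_{\I} U(Y)\to Y$ together with the fibrant replacement in $\U/BF_{h\I}$ used to define $U'$. To show it is a level equivalence, I would reduce to the case where $Y\to BF_{h\I}$ is a fibration, and then observe that both sides compute the homotopy pullback of $Y$ along $BF(\mathbf n)\to BF_{h\I}$. The essential input on the $R$ side is that $p_{\mathbf n}$ is a Hurewicz fibration weakly equivalent, over $BF_{h\I}$, to $BF(\mathbf n)$ (via the contractibility of $B(\I\downarrow\mathbf n)$); on the $U$ side it is the standard bar-construction formula for the right adjoint to $\hocolim_{\I}$, whose homotopical analysis rests on the contractibility of $B(\mathbf n\downarrow\I)$.

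For the colimit-preservation of $T\circ R$, I would inspect the resulting symmetric spectrum directly rather than separately arguing that $R$ and $T$ each preserve colimits. By definition
\[
T(R(Y))_n=\overline{R(Y)(\mathbf n)\times_{BF(\mathbf n)} V(n)}/R(Y)(\mathbf n),
\]
which unwinds to a quotient of a pullback of $Y$ along the fixed Hurewicz fibration $p_{\mathbf n}$. Both the pullback (along a bundle-like exponentiable map, in a convenient category of spaces) and the fibrewise one-point compactification together with the quotient commute with colimits in $Y$, so $T\circ R$ does as well. The main technical hurdle is arranging the factorizations $E(\mathbf n)$ coherently in $\mathbf n$, so that $R$ is genuinely a functor and the map to $U'$ a natural transformation; once this is in place, the remaining homotopical content reduces to the contractibility statements above.
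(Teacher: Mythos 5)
Your overall plan — define $R$ as a pullback of $Y$ along a level-wise Hurewicz-fibrant replacement of the canonical maps $BF(\mathbf n)\to BF_{h\I}$, compare to $U'$ via homotopy pullbacks, and deduce colimit-preservation from the pullback description — is close in spirit to the paper's. But the hurdle you flag at the end is not a deferred technicality; it is the actual content of the construction, and as stated your proposal does not clear it.

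The problem is that the canonical maps $j_n\co BF(n)\to BF_{h\I}$ do \emph{not} assemble into a natural transformation of $\I$-spaces when $BF_{h\I}$ is viewed as a constant $\I$-space: for $\alpha\co\mathbf m\to\mathbf n$ the square comparing $j_m$ with $j_n\circ\alpha_*$ commutes only up to a canonical homotopy, not strictly. Consequently, functorially factoring each $j_n$ through a mapping path space $E(\mathbf n)$ does not produce an $\I$-space $E$ with a map to $BF$: there is simply no way to make the structure maps $E(\mathbf m)\to E(\mathbf n)$ strictly compatible with both $p_{\mathbf n}$ and $q_{\mathbf n}$. This is exactly why the paper replaces $BF$ by the homotopy left Kan extension $\overline{BF}$, with $\overline{BF}(n)=\hocolim_{(\I\downarrow\mathbf n)}BF\circ\pi_n$. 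The whole point of $\overline{BF}$ is that it is an $\I$-space coming with a \emph{strictly} natural map $\pi\co\overline{BF}\to BF_{h\I}$ (induced by the forgetful functors $\pi_n$) and a level-wise equivalence $t\co\overline{BF}\to BF$ (induced by the terminal object of $(\I\downarrow\mathbf n)$), and $R_f(X)$ is then the level-wise homotopy pullback of $X\to BF_{h\I}\leftarrow\overline{BF}$. Without something playing the role of $\overline{BF}$, your $R$ does not exist as a functor to $\I\U/BF$.

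Two smaller gaps are worth naming. First, for colimit-preservation the relevant fact is not generic ``pullback along a bundle-like exponentiable map commutes with colimits''; the paper uses the specific result (\cite{LMS}, Props.~1.1--1.2) that pullback along a Hurewicz fibration preserves colimits when the \emph{base is locally equiconnected}, and then verifies that $BF_{h\I}$ is locally equiconnected via \cite{Le}, Cor.~2.4. That hypothesis has to be identified and checked. Second, the natural transformation $R\to U'$ cannot be obtained merely by noting that both sides have the homotopy type of the homotopy pullback of $Y$ along $BF(\mathbf n)\to BF_{h\I}$; one must actually produce a map, and the paper does so by exhibiting a diagram with one strictly commuting square and one square commuting up to a \emph{canonical} homotopy, the latter furnished by Lemma~\ref{homotopyKanlemma}. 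Your reduction to the fibration case determines the homotopy types but not the required natural comparison map.
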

As indicated in the theorem we shall use the notation $T$ both for the symmetric Thom spectrum  
on $\I\U/BF$ and for its composition with $R$; the context will always make the meaning clear. In Section \ref{May-Lewis} we show that in a precise sense our Thom spectrum functor becomes equivalent to that of Lewis-May when composing with the forgetful functor from symmetric spectra to spectra. 
We also have the following analogue of Lewis' result imposing $\mathcal L$-actions on Thom spectra. In our setting the relevant operad is the Barrat-Eccles operad 
$\mathcal E$, see \cite{BE} and  \cite{Ma}, Remarks 6.5. We recall that $\mathcal E$ is an $E_{\infty}$ operad and that a space with an $\mathcal E$-action is automatically an associative monoid. 
\begin{theorem}\label{introoperad}
The operad $\mathcal E$ acts on $BF_{h\I}$ and if $f\co X\to BF_{h\I}$ is a map of $\mathcal C$-spaces where $\mathcal C$ is an operad that is augmented over $\mathcal E$, then $T(f)$ inherits an action of $\mathcal C$.
\end{theorem}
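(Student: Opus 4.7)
The plan is to prove the two assertions in turn, deducing the operad action on $T(f)$ from the action on $BF_{h\I}$ via the symmetric monoidality of the Thom spectrum functor established in Theorem \ref{monoidaltheorem}.

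First I would observe that $BF$ is a commutative monoid in $\I$-spaces. The structure maps $BF(\mathbf m)\times BF(\mathbf n)\to BF(\mathbf m\sqcup\mathbf n)$ arise from block sum of self-equivalences via the identification $S^m\wedge S^n\cong S^{m+n}$, and the $(m,n)$-shuffles making $\I$ symmetric monoidal enforce commutativity in the $\I$-space sense. This is precisely the structure that enables Theorem \ref{monoidaltheorem} and endows $\I\U/BF$ with its symmetric monoidal structure. With this in hand, the $\mathcal E$-action on $BF_{h\I}$ can be built from the general principle that for a commutative $\I$-monoid $M$, the homotopy colimit $M_{h\I}$ inherits a natural $\mathcal E$-algebra structure: a simplex in $\mathcal E(k)=B\tilde\Sigma_k$ records a coherent choice of ordering for the $k$-fold concatenation in $\I$, and the iterated multiplication of $M$ together with the shuffle isomorphisms supplies the action map $\mathcal E(k)\times M_{h\I}^k\to M_{h\I}$.

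For the second assertion, my strategy is to deduce the $\mathcal C$-action on $T(f)$ from suitable multiplicativity of the Thom spectrum functor. I would construct natural maps
\[
\mathcal E(k)_+\wedge T(f_1)\wedge\cdots\wedge T(f_k)\lra T\bigl(\mu_k\circ(f_1\times\cdots\times f_k)\bigr),
\]
where $\mu_k$ denotes the $k$-fold $\mathcal E$-action on $BF_{h\I}$. Combining these with the $\mathcal C$-structure maps $\mathcal C(k)\times X^k\to X$ (compatible with $f$ via the augmentation $\mathcal C\to\mathcal E$) and then applying $T$ produces the desired action maps $\mathcal C(k)_+\wedge T(f)^{\wedge k}\to T(f)$, with the operad axioms following by naturality and functoriality.

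The main obstacle will be constructing those $\mathcal E$-parametrized multiplicativity maps for the composite $T\circ R$. Because $R$ is not strict monoidal---it is only level-equivalent to the right adjoint $U'$ by Theorem \ref{RThomfunctor}, and a right adjoint need not commute with the monoidal products on either side---one cannot simply identify $R(f_1)\boxtimes\cdots\boxtimes R(f_k)$ with $R(\mu_k\circ(f_1\times\cdots\times f_k))$. The task is to produce natural comparison maps parametrized by $\mathcal E(k)$ using the explicit bar-type definition of $R$ together with the commutative $\I$-monoid structure of $BF$, and to verify that these cohere under operadic composition. Once in place, the strong symmetric monoidality of the Thom spectrum functor on $\I\U/BF$ converts these $\I$-space level constructions into the required smash-product maps in $\Sp^\Sigma$.
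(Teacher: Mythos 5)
There is a genuine gap, and you essentially identify it yourself without closing it. Your outline of the first part (the $\mathcal E$-action on $BF_{h\I}$ arising from the commutative $\I$-monoid structure of $BF$ via the homotopy colimit, with $\mathcal E(k)=B\tilde\Sigma_k$ recording coherent orderings of concatenations) agrees in substance with the paper's Proposition \ref{Einftyprop}, which more generally shows that $\hocolim_{\I}$ takes a $\mathcal C$-$\I$-space to an $\mathcal E\times\mathcal C$-space by constructing explicit functors $\psi_k\co\tilde\Sigma_k\times\mathcal C(k)\times\I(X)^k\to\I(X)$ at the level of topological categories. That part of your plan is fine.

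For the second part, however, you correctly observe that $R$ is not strictly compatible with the monoidal structures, and that one therefore cannot identify $R(f_1)\boxtimes\cdots\boxtimes R(f_k)$ with $R$ of the product; you then announce that the remaining work is to ``produce natural comparison maps parametrized by $\mathcal E(k)$ \dots and to verify that these cohere under operadic composition'' without actually doing so. This is precisely the hard step, and it is where the paper takes a different and much cleaner route that you do not identify. The key idea (Lemma \ref{overlineBFaction}) is to put an $\mathcal E$-action directly on the $\I$-space $\overline{BF}$ so that the projection $\pi\co\overline{BF}\to BF_{h\I}$ and the level equivalence $t\co\overline{BF}\to BF$ are both maps of $\mathcal E$-$\I$-spaces. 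Once this is done, the defining homotopy pullback diagram $X\xr{f}BF_{h\I}\xl{\pi}\overline{BF}$ becomes, after restricting the $\mathcal E$-actions along the augmentation $\mathcal C\to\mathcal E$, a diagram of $\mathcal C$-$\I$-spaces; since homotopy pullbacks of $\mathcal C$-algebras are again $\mathcal C$-algebras, $R_f(X)$ automatically carries a $\mathcal C$-$\I$-space structure over $BF$ (Proposition \ref{operadrectificationprop}). The paper then finishes with Corollary \ref{thomoperadcorollary}, which is the formal step you also envision: because $T\co\I\U/BF\to\Sp^\Sigma$ is strong symmetric monoidal and preserves colimits and tensors with unbased spaces, there is a canonical isomorphism $T(C(f))\cong C(T(f))$, so $T$ preserves $\mathcal C$-algebra structures. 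So the final formal step of your plan is right, but the construction you defer (producing the $\mathcal E$-parametrized comparison maps for $T\circ R$) is exactly what the paper avoids by transporting the action to $\overline{BF}$, and without this idea your proof is incomplete.
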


We often find that the enriched functoriality obtained by working with homotopy colimits over $\I$ instead of colimits over $\mathcal N$ is very useful. For example, one may represent complexification followed by realification as maps of $\mathcal E$-spaces
$$
BO_{h\I}\to BU_{h\I}\to BO_{h\I},
$$ 
such that the composite $E_{\infty}$ map represents multiplication by $2$.
The procedure for lifting space level data described above works quite generally for diagram categories. Implemented in the framework of orthogonal spectra, it gives an answer to the problem left open in \cite{May4}, Chapter 23, on how to construct orthogonal Thom spectra from space level data; we spell out the details of this in Section \ref{orthogonalsection}. We also remark that one can define an $\I$-space $BGL_1(A)$ for any symmetric ring spectrum $A$, and that an analogous lifting procedure allows one to associate $A$-module Thom spectra to maps $X\to BGL_1(A)_{h\I}$. We hope to return to this in a future paper. 

\subsection{Homotopy invariance}
Ideally, one would like the symmetric Thom spectrum functor to take $\I$-equivalences of $\I$-spaces over $BF$ to stable equivalences of symmetric spectra. However, due to the fact that quasifibrations are not in general preserved under pullbacks this is not true without further assumptions on the objects in $\I\U/BF$. We say that an object $(X,f)$ (that is, a map 
$f\co X\to BF$) is \emph{$T$-good} if $T(f)$ has the same homotopy type as the Thom spectrum associated to a fibrant replacement of $f$; see Definition \ref{T-good-definition} for details. 

\begin{theorem}\label{introinvariance}
If $(X,f)\to(Y,g)$ is an $\I$-equivalence of $T$-good $\I$-spaces over $BF$, then the induced map $T(f)\to T(g)$ is a stable equivalence of symmetric spectra.
\end{theorem}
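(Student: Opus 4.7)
The plan is to use $T$-goodness to reduce to the case where both $\I$-spaces over $BF$ are fibrant, and then to show that an $\I$-equivalence between fibrant objects induces a levelwise weak equivalence of Thom spectra in positive degrees. Fix a functorial fibrant replacement in the Sagave-Schlichtkrull model structure on $\I\U/BF$ (over a fibrant model of $BF$), giving for each object $(X,f)$ an $\I$-equivalence to a fibrant object $(\hat X, \hat f)$. Applying this functorially to the given map $(X,f)\to(Y,g)$ and postcomposing with $T$ produces a commutative square in $\Sp^{\Sigma}$. The two vertical maps $T(f)\to T(\hat f)$ and $T(g)\to T(\hat g)$ are stable equivalences precisely by the hypothesis that $(X,f)$ and $(Y,g)$ are $T$-good. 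By two-out-of-three in $\I\U$, the lower horizontal map $(\hat X,\hat f)\to(\hat Y,\hat g)$ remains an $\I$-equivalence, so two-out-of-three for stable equivalences reduces the problem to proving the theorem under the additional assumption that both objects are fibrant.

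For a fibrant object $(X,f)$ of $\I\U/BF$, the structure map $f\co X\to BF$ is levelwise sufficiently fibration-like that the pullback $f_n^{*}V(n)\to X(n)$ is a quasifibration representing the homotopy pullback; in particular $T(f)_n=\overline{f_n^{*}V(n)}/X(n)$ has the correct Thom-space homotopy type at each level. Moreover, between fibrant objects in the positive Sagave-Schlichtkrull model structure, an $\I$-equivalence is a levelwise weak equivalence in positive degrees. This is the point where the fibrancy assumption pays off: for a general map of $\I$-spaces an equivalence on $\hocolim_{\I}$ need not be detected levelwise, but for fibrant ones the structure maps are well-behaved enough that it is.

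Combining these observations, the map $\hat X(n)\to \hat Y(n)$ is a weak equivalence of spaces over $BF(n)$ for all $n\geq 1$, covered by a map of quasifibrations pulled back from $V(n)$. The classical fact that a weak equivalence of base spaces covered by a map of $S^n$-quasifibrations induces a weak equivalence of associated Thom spaces (established via the long exact sequences of pairs and the five lemma, as in \cite{LMS}) then implies that $T(\hat f)_n\to T(\hat g)_n$ is a weak equivalence for each $n\geq 1$. Such a levelwise equivalence of symmetric spectra in positive degrees is a stable equivalence, completing the proof.

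The main obstacle is the second step: rigorously pinning down the precise sense in which fibrant objects in $\I\U/BF$ give pullback quasifibrations with the correct homotopy type, and in which $\I$-equivalences between such objects become levelwise equivalences at the point of application to the Thom construction. This requires careful bookkeeping about the positive Sagave-Schlichtkrull model structure on the overcategory $\I\U/BF$, and it is precisely the failure of these properties for arbitrary objects that motivates the $T$-good hypothesis in the first place.
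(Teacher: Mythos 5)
Your reduction step misapplies the $T$-good hypothesis. In the paper, $T$-goodness (Definition \ref{T-good-definition}) means that the map $T(f)\to T(\Gamma(f))$ is a stable equivalence, where $\Gamma$ is the \emph{level-wise Hurewicz} fibrant replacement functor from (\ref{fibrantreplacement}): it replaces each $f_n\co X(n)\to BF(n)$ by a Hurewicz fibration, one level at a time, with no control over the $\I$-diagram structure. It says nothing about a fibrant replacement $(\hat X,\hat f)$ in the Sagave--Schlichtkrull $\I$-model structure on $\I\U/BF$. These are genuinely different notions of fibrancy: level-wise Hurewicz fibrancy does not make the structure maps $X(m)\to X(n)$ into weak equivalences, which is precisely the property you rely on in your second step to promote an $\I$-equivalence to a level-wise equivalence. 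So the claim that ``the two vertical maps $T(f)\to T(\hat f)$ and $T(g)\to T(\hat g)$ are stable equivalences precisely by the hypothesis that $(X,f)$ and $(Y,g)$ are $T$-good'' is not justified by the definition.

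Trying to bridge the two notions would be circular. The only way to pass from $T(\Gamma(f))$ to $T(\Gamma(\hat f))$ along the $\I$-equivalence $(X,f)\to(\hat X,\hat f)$ is precisely by the homotopy invariance you are trying to prove. The paper avoids this by taking a different route: it interpolates $X\leftarrow\overline{X}\to R(X_{h\I})$ over $BF$ (where $\overline{X}$ is the homotopy Kan extension along $\mathrm{id}_{\I}$ and $R$ is the lifting functor), shows both arrows become stable equivalences after $T\circ\Gamma$ via a cell-complex induction over the level model structure with a detection-functor argument (Proposition \ref{Tbarf}, Lemma \ref{freeThomlemma}), and then deduces the theorem from Corollary \ref{TRcorollary}. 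Your third step (applying Lemma \ref{basicthomlemma} once one has a level-wise equivalence covered by maps of quasifibrations) is sound, but the preceding reduction to fibrant objects and level-wise control is where the proof breaks down.
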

Here stable equivalence refers to the stable model structure on $\Sp^{\Sigma}$ defined in \cite{HSS} and \cite{MMSS}. It is a subtle property of this model structure that a stable equivalence needs not induce an isomorphism of stable homotopy groups. However, if $X$ and $Y$ are convergent (see  Section \ref{preliminariessection}), then the associated Thom spectra are also convergent, and in this case a stable equivalence is indeed a $\pi_*$-isomorphism in the usual sense. The $T$-goodness requirement in the theorem is not a real restriction since in general any object in $\mathcal U/BF$ can (and should) be replaced by one that is $T$-good. The functor $R$ takes values in the subcategory of convergent $T$-good objects and takes weak homotopy equivalences to $\I$-equivalences (in fact to level-wise equivalences). It follows that the Thom spectrum functor in Theorem \ref{RThomfunctor} is a homotopy functor; see Corollary \ref{TRcorollary}.
  
\begin{example}  
Theorem \ref{introinvariance} also has interesting consequences for Thom spectra that are not convergent. As an example, consider the Thom spectrum $MO(1)^{\wedge\infty}$ that represents the bordism theory of manifolds whose stable normal bundle splits as a sum of line bundles, see \cite{AB}, \cite{Bu}. This is the symmetric Thom spectrum associated to the map of $\I$-spaces $X(n)\to BF(n)$, where $X(n)=BO(1)^n$. 
It is proved in \cite{Sch2} that $X_{h\I}$ is homotopy equivalent to $Q(\mathbb RP^{\infty})$, hence it follows that 
$MO(1)^{\wedge\infty}$ is stably equivalent as a symmetric ring spectrum to the Thom spectrum associated to the map of infinite loop spaces $Q(\mathbb RP^{\infty})\to BF_{h\I}$.
\end{example}
In general any $\I$-space $X$ is $\I$-equivalent to the constant $\I$-space $X_{h\I}$ and consequently any symmetric Thom spectrum is stably equivalent to one arising from a space-level map. However, the added flexibility obtained by working in 
$\I\mathcal U$ is often very convenient. Notably, it is proved in \cite{SS} that any 
$E_{\infty}$ monoid in $\I\mathcal U$ is equivalent to a strictly commutative monoid; something which is well-known not to be the case in $\U$.

\subsection{Applications to the Thom isomorphism}
\label{introThomiso}
As an application of the techniques developed in this paper we present a strictly multiplicative version of the Thom isomorphism. A map $f\co X\to BF_{h\I}$ gives rise to a morphism in 
$\mathcal U/BF_{h\I}$,
$$
\Delta\co (X,f)\to (X\times X,f\circ \pi_2),
$$
where $\Delta$ is the diagonal inclusion and $\pi_2$ denotes the projection onto the second factor of $X\times X$. The Thom spectrum $T(f\circ\pi_2)$ is isomorphic to $X_+\wedge T(f)$, and the Thom diagonal 
$$
\Delta\co T(f)\to X_+\wedge T(f)
$$
is the map of Thom spectra induced by $\Delta$. In Section \ref{orientationsection} we define a canonical orientation $T(f)\to H$, where $H$ denotes (a convenient model of) the Eilenberg-Mac Lane spectrum $H\mathbb Z/2$. Using this we get a map of symmetric spectra
\begin{equation}\label{Z/2Thomequivalence}
T(f)\wedge H\xr{\Delta\wedge H} X_+\wedge T(f)\wedge H\to X_+\wedge H\wedge H\to X_+\wedge H,
\end{equation}
where the last map is induced by the multiplication in $H$. The spectrum level version of the 
$\mathbb Z/2$-Thom isomorphism theorem is the statement that this is a stable equivalence, see \cite{MR}. If $f$ is oriented in the sense that it lifts to a map $f\co X\to BSF_{h\I}$, then we define a canonical integral orientation $T(f)\to H\mathbb Z$ and the spectrum level version of the integral Thom isomorphism theorem is the statement that the induced map
\begin{equation}\label{ZThomequivalence}
T(f)\wedge H\mathbb Z\to X_+\wedge H\mathbb Z
\end{equation}
is a stable equivalence. In our framework these results lift to ``structured ring spectra'' in the sense of the following theorem. Here $\mathcal C$ again denotes an operad that is augmented over $\mathcal E$.

\begin{theorem}\label{structuredThomiso} 
If $f\co X\to BF_{h\I}$ (respectively $f\co X\to BSF_{h\I}$) is a map of $\mathcal C$-spaces, then the spectrum level Thom equivalence (\ref{Z/2Thomequivalence}) (respectively 
(\ref{ZThomequivalence})) is a $\mathcal C$-map.
\end{theorem}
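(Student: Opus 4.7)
The plan is to factor the Thom equivalence $(\ref{Z/2Thomequivalence})$ (respectively $(\ref{ZThomequivalence})$) into its three constituent maps and verify that each is a $\mathcal C$-map of symmetric spectra. By Theorem \ref{introoperad} the composite $T\circ R$ sends $\mathcal C$-algebras in $\U/BF_{h\I}$ to $\mathcal C$-algebras in $\Sp^{\Sigma}$, and together with the strong symmetric monoidality of $T$ from Theorem \ref{monoidaltheorem}, this functor also carries $\mathcal C$-maps to $\mathcal C$-maps. It therefore suffices to exhibit each of the three factors in the Thom equivalence as either $T\circ R$ applied to a $\mathcal C$-map in $\U/BF_{h\I}$, or as a morphism built from such maps by smashing with the commutative ring spectrum $H$. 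I treat the mod $2$ and integral cases in parallel, writing $H$ for $H\mathbb Z/2$ in the first case and $H\mathbb Z$ in the second.

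The first and third factors will be handled by formal considerations. For any operad, the categorical diagonal $X\to X\times X$ of a $\mathcal C$-algebra is a $\mathcal C$-map, since the $\mathcal C$-action on the product is componentwise; the lift $\Delta\co(X,f)\to(X\times X,f\circ\pi_2)$ in $\U/BF_{h\I}$ remains a $\mathcal C$-map because $\pi_2$ is itself $\mathcal C$-equivariant and $f$ is $\mathcal C$-equivariant by assumption. Applying $T\circ R$ and using the isomorphism $T(f\circ\pi_2)\cong X_+\wedge T(f)$ supplied by Theorem \ref{monoidaltheorem} exhibits the Thom diagonal as a $\mathcal C$-map, and smashing with $H$ preserves this property because $H$ is an $\E$-algebra and $\mathcal C$ is augmented over $\E$. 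Similarly, the third factor $X_+\wedge(H\wedge H\to H)$ is a $\mathcal C$-map because the multiplication on the strictly commutative symmetric ring spectrum $H$ is an $\E$-map, and smashing with $X_+$ preserves the property via the already-established $\mathcal C$-action on $X_+\wedge H$.

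The crux of the argument, and the main obstacle, is the middle factor: the canonical orientation $T(f)\to H$ defined in Section \ref{orientationsection} must be recognized as a $\mathcal C$-map. My approach is to construct the orientation at the $\mathcal I$-space level, exhibiting it as the image under $T$ of a morphism of $\mathcal I$-spaces over $BF$ (respectively over $BSF$) into a suitable Thom-type presentation of $H$; Theorem \ref{introoperad} then upgrades the resulting map to a $\mathcal C$-map of symmetric spectra, using once more that $H$ is a $\mathcal C$-algebra via the augmentation to $\E$. Carrying this out rigorously is delicate because, unlike in the Lewis--May setting, no operadic rectification is available in $\I\U$, and so the naturality of the universal Thom class must be enforced strictly at the level of $\mathcal I$-diagrams. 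The integral case adds the further subtlety that the orientation is built from the sub-$\I$-space $BSF\subseteq BF$, so one must check that this sub-$\I$-structure is compatible with the $\mathcal C$-action lifted through the augmentation to $\E$. Once these naturality statements are in place, composing the three $\mathcal C$-maps completes the proof.
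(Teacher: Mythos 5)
Your decomposition into three factors matches the paper's, and your treatment of the first and third factors is essentially the paper's argument. However, for the middle factor --- the orientation $T(f)\to H$ --- you identify this as the crux but do not close it: you sketch an approach via a ``Thom-type presentation of $H$'' which you yourself describe as delicate, and that leaves a real gap. Moreover, the direction you propose is harder than necessary: the paper's $H$ is not naturally a Thom spectrum of an object of $\I\U/BF$ (it is built from a bar construction over $BSF_A$, not over $BF$), so forcing the orientation into the $\I\U/BF$ framework is exactly the wrong place to look.

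The argument that closes the gap is simpler, and you already have all the needed ingredients in hand. By definition in Section \ref{orientationsection}, the orientation is the composite $T(f)\to MF\to H$. The first map $T(f)\to MF$ is $T$ applied to the structure morphism $R_f(X)\to BF$ in $\I\U/BF$; Proposition \ref{operadrectificationprop} makes this a morphism in $\I\U[\mathcal C]/BF$, and Corollary \ref{TUoperad} then makes the resulting map of spectra a $\mathcal C$-map. The second map $MF\to H$ is constructed in Section \ref{orientationsection} as a map of \emph{strictly commutative} symmetric ring spectra, hence is automatically a $\mathcal C$-map for the diagonal $\mathcal C$-actions obtained by projecting $\mathcal C\to\mathcal E$ onto the commutativity operad --- the very same observation you already deploy for the third factor ($H\wedge H\to H$). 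No ``operadic rectification in $\I\U$'' is needed, and the naturality issues you anticipate do not arise once the orientation is factored through $MF$.
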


For example, one may represent the complex cobordism spectrum $MU$ as the Thom spectrum associated to the $\mathcal E$-map $BU_{h\I}\to BSF_{h\I}$ and the Thom equivalence 
(\ref{ZThomequivalence}) is then an equivalence of $E_{\infty}$ symmetric ring spectra. 
This should be compared with the $H_{\infty}$ version in \cite{LMS}. 

\subsection{Diagram Thom spectra and symmetrization}
The definition of the symmetric Thom spectrum functor shows that the category $\I$ is closely related to the category of symmetric spectra. However, many of the Thom spectra that occur in the applications do not naturally arise from a map of $\I$-spaces but rather from a map 
of $\mathcal D$-spaces for some monoidal category $\mathcal D$ equipped with a monoidal functor $\mathcal D\to\I$.  We formalize this in Section  \ref{diagramThomsection} where we introduce the notion of a $\mathcal D$-spectrum associated to such a monoidal functor. For example, the complex cobordism spectrum $MU$ associated to the unitary groups $U(n)$ and the Thom spectrum $M\mathfrak B$ associated to the braid groups $\mathfrak B(n)$ can be realized as diagram ring spectra in this way. 
It is often convenient to replace the $\mathcal D$-Thom spectrum associated to a map of 
$\mathcal D$-spaces $f\co X\to BF$ by a symmetric spectrum, and our preferred way of doing this is to first transform $f$ to a map of $\mathcal I$-spaces and then evaluate the symmetric Thom spectrum functor on this transformed map. In this way we end up with a symmetric spectrum to which we can exploit the structural relationship to the category of 
$\I$-spaces. We shall discuss various ways of carrying out this ``symmetrization'' process and in particular we shall see how to realize the Thom spectra $MU$ and $M\mathfrak B$ as (in the case of $MU$ commutative) symmetric ring spectra. 

\subsection{Organization of the paper}
We begin by recalling the basic facts about Thom spaces and Thom spectra in Section \ref{preliminariessection}, and in Section \ref{symmetricsection} we introduce the symmetric Thom spectrum functor and show that it is strong symmetric monoidal. The $\I$-space lifting functor $R$ is introduced in Section \ref{rectificationsection}, where we prove Theorem \ref{RThomfunctor} in a more precise form; this is the content of Proposition \ref{RU'comparison} and Corollary \ref{TRcorollary}. Here we also compare the Lewis-May Thom spectrum functor to our construction. We prove the homotopy invariance result Theorem \ref{introinvariance} in Section \ref{invariancesection}, and in Section \ref{Thomoperadsection} we analyze to what extent the constructions introduced in the previous sections are preserved under operad actions. In particular, we prove Theorem 
\ref{introoperad} in a more precise form; this is the content of Corollary \ref{TUoperad}. The Thom isomorphism theorem is proved in Section \ref{Thomisosection} and in Section \ref{diagramThomsection} we discuss how to symmetrize other types of diagram Thom spectra and how the analogue of the lifting functor $R$ works in the context of orthogonal spectra. Finally, we have included some background material on homotopy colimits in Appendix \ref{hocolimsection}.

\subsection{Notation and conventions}
We shall work in the categories $\mathcal U$ and $\mathcal T$  
of unbased and based compactly generated weak Hausdorff spaces. By a cofibration we understand a map having the homotopy extension property, see \cite{St}. 
A based space is well-based if the inclusion of the base point is a cofibration. 
In this paper $S^n$ always denotes the one-point compactification of $\mathbb R^n$.
By a spectrum $E$ we understand a sequence $\{E_n\co n\geq 0\}$ of
based spaces together with a sequence of based structure 
maps $S^1\wedge E_n\to E_{n+1}$. 
A map of spectra $f\co E\to F$ is a sequence of based maps $f_n\co E_n\to F_n$
that commute with the structure maps and we write $\Sp$ for the category of
spectra so defined. A spectrum is \emph{connective} if $\pi_n(E)=0$ for $n<0$ and
\emph{convergent} if there is an unbounded, non-decreasing sequence of integers 
$\{\lambda_n:n\geq 0\}$ such that the adjoint structure maps 
$E_n\to \Omega E_{n+1}$ are $(\lambda_n+n)$-connected for all $n$.

\section{Preliminaries on Thom spaces and Thom spectra}\label{preliminariessection}
In this section we recall the basic facts about Thom spaces and Thom spectra that we shall need.  The main reference for this material is Lewis' account in \cite{LMS}, Section IX. Here we emphasize the details relevant for the construction of symmetric Thom spectra in Section 
\ref{symmetricsection}. We begin by recalling the two-sided simplicial bar construction and some of its properties, referring to \cite{Ma2} for more details. Given a 
topological monoid $G$, a right $G$-space $Y$, and a left $G$-space
$X$, this is the simplicial space $B_{\bullet}(Y,G,X)$ with $k$-simplices
$Y\times G^k\times X$ and simplicial operators
$$
d_i(y,g_1,\dots,g_k,x)=
\begin{cases}
(yg_1,g_2\dots,g_k,x),&\text{for $i=0$}\\
(y,g_1,\dots,g_ig_{i+1},\dots,g_k,x),&\text{for $0<i<k$}\\
(y,g_1,\dots,g_kx),&\text{for $i=k$},
\end{cases}
$$
and
$$
s_i(y,g_1,\dots,g_k,x)=(y,\dots,g_{i-1},1,g_i,\dots,x),
\quad\text{for $0\leq i\leq k$}.
$$
We write $B(Y,G,X)$ for the topological realization. In the case
where $X$ and $Y$ equal the one-point space $*$, this is the usual
simplicial construction of the classifying space $BG$.
The projection of $X$ onto $*$ induces a map 
$$
p\co B(Y,G,X)\to B(Y,G,*)
$$ 
whose fibres are homeomorphic to $X$. Furthermore, if
$X$ has a $G$-invariant basepoint, then the inclusion of the base point
defines a section 
$$
s:B(Y,G,*)\to B(Y,G,X).
$$ 
Recall that a topological monoid is \emph{grouplike} if 
the set of components with the induced monoid structure is a group. 
\begin{proposition}[\cite{Le},\cite{Ma2}]\label{barprop}
If $G$ is a well-based grouplike monoid, then the projection $p$ is a
quasifibration, and if $X$ has a $G$-invariant base point such that $X$ is  
(non-equivariantly) well-based, then the section $s$ is a cofibration.
\qed
\end{proposition}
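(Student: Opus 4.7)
The plan is to prove both statements by exploiting the skeletal filtration of the two-sided bar construction. Write $F_k\subseteq B(Y,G,*)$ for the image of the $k$-skeleton of $B_{\bullet}(Y,G,*)$ and let $E_k=p^{-1}(F_k)$. The two assertions will then follow from inductive statements about the filtered maps $E_k\to F_k$ and the inclusions $F_k\hookrightarrow E_k$.

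For the quasifibration claim I would proceed by induction on $k$, showing that $p|_{E_k}\co E_k\to F_k$ is a quasifibration. The base case $k=0$ is automatic since $F_0=Y$ and $E_0=Y\times X$. For the inductive step I would invoke the Dold--Thom patching criterion: cover $F_k$ by $F_{k-1}$ (thickened slightly into $F_k$) and by the image of the open interior of $Y\times G^k\times \Delta^k$, which avoids all degeneracies. Over the open interior the projection is literally a product projection onto $X$, hence trivially a (quasi)fibration. The critical point is that, as one deformation retracts the thickened neighborhood of $F_{k-1}$ onto $F_{k-1}$, the induced maps on fibers over a single point are realized by right (or left) multiplication by elements of $G$; these are homotopy equivalences precisely because $G$ is grouplike and well-based. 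Well-basedness ensures that the simplicial spaces involved are proper, so that the filtration quotients behave correctly and the patching hypotheses of Dold--Thom are satisfied.

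For the cofibration claim, observe that $s$ is the realization of the simplicial map $B_{\bullet}(Y,G,*)\to B_{\bullet}(Y,G,X)$ whose level $k$ component is the inclusion
\[
Y\times G^k \hookrightarrow Y\times G^k\times X, \qquad (y,g_1,\dots,g_k)\mapsto (y,g_1,\dots,g_k,*).
\]
Since the basepoint inclusion $*\to X$ is a cofibration by the well-basedness hypothesis on $X$, and the functor $Y\times G^k\times(-)$ preserves cofibrations, each simplicial level is a cofibration. The well-basedness of $G$ and of $X$ guarantees that both simplicial spaces are proper (the degeneracy maps are cofibrations), and then the standard result on the realization of cofibrations between proper simplicial spaces finishes the argument.

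The main obstacle is the quasifibration half: the cofibration assertion is essentially a formal consequence of the realization lemma once properness is established, but the quasifibration step requires careful bookkeeping across the skeletal filtration and explicit homotopy equivalences of fibers, and this is precisely where the grouplike assumption on $G$ is unavoidable. Consequently I would follow May's strategy from \cite{Ma2} closely rather than reinvent it.
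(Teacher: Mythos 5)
The paper does not prove this proposition; it cites it as a known result of May (\cite{Ma2}, the quasifibration half) and Lewis (\cite{Le}, the cofibration half), so there is no in-paper proof to compare against. Your reconstruction follows the classical strategy of both references and is essentially sound.

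A few points of precision worth tightening. In the quasifibration half, "which avoids all degeneracies" is not quite right as stated: the image of $Y\times G^k\times\operatorname{int}\Delta^k$ still meets points lying over $F_{k-1}$ when some $g_i=1$. What one actually uses is that $F_k\setminus F_{k-1}$ is the homeomorphic image of $Y\times(\text{nondegenerate part of }G^k)\times\operatorname{int}\Delta^k$, and the open set paired with the thickening of $F_{k-1}$ is taken there. Also, the phrase "product projection onto $X$" should read "product projection with fibre $X$". In the cofibration half, the final appeal to "the standard result on the realization of cofibrations between proper simplicial spaces" glosses over what is actually needed: a level-wise cofibration between proper simplicial spaces does not in general realize to a cofibration; one needs a Reedy-type condition that the latching maps $L_kB_\bullet\cup_{L_kA_\bullet}A_k\to B_k$ are cofibrations. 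In the present situation this does hold, because the map in degree $k$ is the pushout-product of the cofibration $*\to X$ with the identity on $Y\times G^k$, and properness of $B_\bullet(Y,G,X)$ (which needs only well-basedness of $G$, since the degeneracies never touch the $X$ factor) reduces the latching condition to the pushout-product of $L_k(G^k)\to G^k$ with $*\to X$, both cofibrations by well-basedness and Lillig's union theorem. So the conclusion is correct, but the responsibility for the latching condition should be made explicit rather than attributed to properness alone.
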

In general we say that a sectioned quasifibration is \emph{well-based} if the section is a cofibration. Let $F(n)$ be the topological monoid of base point preserving homotopy
equivalences of $S^n$, where we recall that the latter denotes the
one-point compactification of $\mathbb R^n$. It follows from
\cite{Le}, Theorem 2.1, that this is a well-based monoid and we let
$V(n)=B(*,F(n),S^n)$. Then $BF(n)$ is
a classifying space for sectioned fibrations with fibre equivalent to $S^n$
and the projection $p_n\co V(n)\to BF(n)$ is a well-based quasifibration.  Given a map $f\co X\to
BF(n)$, let $p_X\co f^*V(n)\to X$ be the pull-back of $V(n)$ along $f$,
and notice that the section $s$ gives rise to a section $s_X\co X\to f^*V(n)$.
The associated Thom space is the quotient space
$$
T(f)=f^*V(n)/s_X(X).
$$
This construction is clearly functorial on the category $\mathcal U/BF(n)$ of spaces over 
$BF(n)$. We often use the notation $(X,f)$ for an object $f\co X\to BF(n)$ in this category. 
In order for the Thom space functor to be homotopically well-behaved we would like
$p_X$ to be a quasifibration and $s_X$ to be a cofibration, but
unfortunately this is not true in general. This is the main technical
difference compared to working with sectioned fibrations.  For our purpose 
it will not do to replace the quasifibration $p_n$ by an equivalent
fibration since we then loose the strict multiplicative  properties of the bar construction required for the definition of strict multiplicative structures on Thom spectra.    
We say that $f$ \emph{classifies a well-based quasifibration} 
if  $p_X$ is a quasi-fibration and $s_X$ is a cofibration. The following well-known results are included here for completeness.  
\begin{lemma}[\cite{LMS}]\label{wellbasedfibrationlemma}
Given a well-based sectioned quasifibration $p\co V\to B$ and a Hurewicz fibration $f\co X\to B$, the pullback $p_X\co f^*V\to X$ is again a well-based quasifibration.
\end{lemma}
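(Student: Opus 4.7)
The plan is to establish the two assertions of the lemma separately: first that $p_X\co f^*V\to X$ is a quasifibration, and second that the pulled-back section $s_X\co X\to f^*V$ is a closed cofibration.

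For the quasifibration part I would appeal to the standard characterisation of quasifibrations in terms of the natural comparison map from the strict fibre $q^{-1}(b)$ to the homotopy fibre $\mathrm{hofib}_b(q)$ being a weak homotopy equivalence for every $b$. Given $x\in X$ with $b=f(x)$, the strict fibres agree on the nose, $p_X^{-1}(x)=p^{-1}(b)$, and there is a natural commutative square comparing these two fibres with the corresponding homotopy fibres of $p_X$ and $p$. Because $f$ is a Hurewicz fibration, the defining pullback square for $f^*V$ is also a homotopy pullback square, and this yields a weak equivalence $\mathrm{hofib}_x(p_X)\xrightarrow{\simeq}\mathrm{hofib}_{b}(p)$ on homotopy fibres. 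The hypothesis that $p$ is a quasifibration makes the comparison map $p^{-1}(b)\to\mathrm{hofib}_b(p)$ a weak equivalence; by two-out-of-three in the commuting square, so is the comparison map for $p_X$, proving the quasifibration property.

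For the cofibration part, the key observation is that the induced map $\tilde f\co f^*V\to V$ is itself a Hurewicz fibration, being a pullback of the Hurewicz fibration $f$. Inspecting the pullback square
\[
\xymatrix{X\ar[r]^{s_X}\ar[d]_{f} & f^*V\ar[d]^{\tilde f}\\ B\ar[r]_{s} & V}
\]
shows that $s_X(X)$ coincides with $\tilde f^{-1}(s(B))$: for if $(x,v)\in f^*V$ has $v=s(b)$, then $p(v)=b=f(x)$ forces $v=s(f(x))$. Since $s\co B\hookrightarrow V$ is a closed cofibration (cofibrations between compactly generated weak Hausdorff spaces are automatically closed), the classical fact that closed cofibrations pull back to closed cofibrations along Hurewicz fibrations implies that $s_X(X)\hookrightarrow f^*V$ is a closed cofibration; as $s_X$ is a homeomorphism onto its image, $s_X$ itself is a cofibration.

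The most delicate input is the pullback-stability of closed cofibrations along Hurewicz fibrations, and this is where I would spend the most care. The argument is to lift NDR data on $(V,s(B))$ to NDR data on $(f^*V,s_X(X))$, using the relative homotopy lifting property of $\tilde f$ to produce a deformation that is stationary on the preimage $s_X(X)$; this is a standard manoeuvre with the covering homotopy property and is treated explicitly in \cite{LMS}.
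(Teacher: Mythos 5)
Your proposal is correct and follows essentially the same route as the paper: the quasifibration claim comes from noting that the pullback square is homotopy cartesian because $f$ is a Hurewicz fibration, and the cofibration claim comes from observing that $s_X$ is the pullback of $s$ along the induced Hurewicz fibration $f^*V\to V$ and that cofibrations are stable under such pullbacks. The paper simply cites Theorem 12 of Str\o m's ``Note on cofibrations II'' for the pullback-stability of cofibrations rather than sketching the NDR argument, but this is the same input.
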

\begin{proof}
Since $f$ is a fibration the pullback diagram defining $f^*V$ is homotopy cartesian, hence 
$f^*V\to X$ is a quasifibration. In order to see that the section $s_X$ is a cofibration, notice that it is the pullback of the section of $p$ along the Hurewicz fibration $f^*V\to V$. The result then follows from Theorem 12 of  \cite{Str} which states that the pullback of a cofibration along a Hurewicz fibration is again a cofibration. 
\end{proof}

Let $\mathit{Top}(n)$ be the topological group of base
point preserving homeomorphisms of $S^n$. The next result is the main reason why the objects in $\mathcal U/BF(n)$ that factor through $B\mathit {Top}(n)$ are easier to handle than general objects.
\begin{lemma}\label{Top(n)-lemma}
If $f\co X\to BF(n)$ factors through $B\mathit{Top}(n)$, 
then $f$ classifies a well-based Hurewicz  fibration, hence a 
well-based quasifibration. \qed
\end{lemma}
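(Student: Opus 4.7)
The plan is to exploit the fact that $\mathit{Top}(n)$ is a topological \emph{group}, not merely a monoid, so that classical principal bundle theory applies and the bar construction acquires the structure of a locally trivial $S^n$-bundle. Assume $f$ factors as $X \xrightarrow{g} B\mathit{Top}(n) \xrightarrow{\iota} BF(n)$. I would proceed in three steps.

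First, I would identify the bar construction $B(*,\mathit{Top}(n),S^n) \to B\mathit{Top}(n)$ with the Borel construction $E\mathit{Top}(n) \times_{\mathit{Top}(n)} S^n \to B\mathit{Top}(n)$, where $E\mathit{Top}(n) = B(*,\mathit{Top}(n),\mathit{Top}(n))$ carries the free right $\mathit{Top}(n)$-action that is available precisely because $\mathit{Top}(n)$ is a group. This exhibits $B(*,\mathit{Top}(n),S^n) \to B\mathit{Top}(n)$ as the $S^n$-bundle associated to the universal principal $\mathit{Top}(n)$-bundle, hence as a locally trivial fibre bundle, and in particular as a Hurewicz fibration. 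The section determined by the $\mathit{Top}(n)$-fixed basepoint of $S^n$ is the inclusion of the subbundle with fibre $\{*\}$, and is a cofibration because $* \hookrightarrow S^n$ is an equivariant cofibration.

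Second, I would verify that the natural map $B(*,\mathit{Top}(n),S^n) \to V(n)$ induced by $\mathit{Top}(n) \hookrightarrow F(n)$ identifies the domain with the honest pullback $B\mathit{Top}(n) \times_{BF(n)} V(n)$. At each simplicial level this is the tautological identification $\mathit{Top}(n)^k \times S^n = \mathit{Top}(n)^k \times_{F(n)^k} (F(n)^k \times S^n)$, and geometric realisation preserves this pullback since the simplicial spaces in question are proper (their degeneracies are cofibrations). Given the factorization $f = \iota \circ g$, this yields a homeomorphism $f^*V(n) \cong g^*B(*,\mathit{Top}(n),S^n)$ over $X$.

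Third, pullbacks of Hurewicz fibrations are Hurewicz fibrations, so $f^*V(n) \to X$ is a Hurewicz fibration. Its section is the pullback of a cofibration along a Hurewicz fibration, hence a cofibration by the instance of Strøm's theorem invoked in the proof of Lemma \ref{wellbasedfibrationlemma}. The final clause of the lemma is then just the observation that Hurewicz fibrations are quasifibrations. The main obstacle is the pullback identification in the second step, where one must argue that geometric realization commutes with the relevant pullback of simplicial spaces; this is where the properness/goodness of the bar construction is essential, and without it one would only get a natural continuous bijection rather than a homeomorphism.
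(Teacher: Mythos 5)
Your first two steps are fine and essentially mirror the paper: identifying $B(*,\mathit{Top}(n),S^n)\to B\mathit{Top}(n)$ as the associated $S^n$-bundle of a principal $\mathit{Top}(n)$-bundle (the paper cites \cite{Ma2}, Corollary 8.4 for the bundle statement), and identifying $f^*V(n)$ with $g^*W(n)$ via the fact that realization preserves pullbacks. The gap is in the third step, precisely at the cofibration claim for the section $s_X\co X\to g^*W(n)$.

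You write that $s_X$ is \emph{``the pullback of a cofibration along a Hurewicz fibration''} and invoke the Str\o m theorem used in the proof of Lemma \ref{wellbasedfibrationlemma}. Concretely, $s_X$ is the pullback of the section $s\co B\mathit{Top}(n)\to W(n)$ along the induced map $q\co g^*W(n)\to W(n)$. But $q$ is a Hurewicz fibration if and only if $g$ is one, and $g$ is not assumed to be a fibration here --- avoiding that hypothesis is exactly what this lemma is for (Lemma \ref{wellbasedfibrationlemma} already handles the case where the base map is a fibration). So Str\o m's pullback theorem does not apply. This is the one place where the paper has to do genuine work: it factors $g$ in the Str\o m model structure as a cofibration $g_1\co X\to Y$ followed by a Hurewicz fibration $g_2\co Y\to B\mathit{Top}(n)$, applies Lemma \ref{wellbasedfibrationlemma} to $g_2$ to get that $s_Y$ is a cofibration, observes that $g^*W(n)\to g_2^*W(n)$ (the pullback of the cofibration $g_1$ along the Hurewicz fibration $p_Y$) is a cofibration, and then deduces that $s_X$ is a cofibration by Str\o m's cancellation lemma (Lemma 5 of \cite{Str2}: if $h=i\circ j$ with $h$ and $i$ cofibrations, then $j$ is too).

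Your approach could be rescued without the factorization trick by upgrading the claim in your first step: one should argue not merely that $s\co B\mathit{Top}(n)\to W(n)$ is a cofibration, but that it is a \emph{fibre-wise} cofibration over $B\mathit{Top}(n)$ (equivalently, that $(W(n),s(B\mathit{Top}(n)))$ admits a fibre-preserving NDR representation induced by a $\mathit{Top}(n)$-equivariant NDR representation of $(S^n,*)$). Fibre-wise cofibrations are preserved under pullback along arbitrary maps, so no fibration hypothesis on $g$ would be needed. As written, however, you only claim ``cofibration'' and then lean on a pullback theorem whose hypothesis fails, so the argument as stated does not go through.
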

\begin{proof}
Let $W(n)=B(*,\mathit{Top}(n),S^n)$. The projection $W(n)\to
B\mathit{Top}(n)$ is a fibre bundle by \cite{Ma2}, Corollary 8.4, and in particular a Hurewicz
fibration. Suppose that 
$f$ factors through a map $g\co X\to B\mathit{Top}(n)$. Then $f^*V(n)$ is 
homeomorphic to $g^*W(n)$ and thus $p_X$ is a Hurewicz
fibration. We must prove that the section is a cofibration. Let us use the Str\o m model structure \cite{Str2} on $\U$ to get a factorization $g=g_2g_1$ where $g_1$ is a cofibration and $g_2$ is a Hurewicz fibration. From this we get a factorization of the pullback diagram defining $g^*W(n)$,
\[
\xymatrix{
g^*W(n) \ar[r] \ar[d]^{p_X} & g_2^*W(n) \ar[r] \ar[d]^{p_Y}& W(n)\ar[d]\\
X \ar[r]^-{g_1} & Y \ar[r]^-{g_2} & B\mathit{Top}(n), 
}
\]
and it follows from Lemma \ref{wellbasedfibrationlemma} that the section $s_Y$ of $p_Y$ is a cofibration. Since $p_Y$ is a Hurewicz fibration it follows by the same argument
that the induced map $g^*W(n)\to g_2^*W(n)$ is also a cofibration. It is
clear that the composition $X\to g^*W(n)\to g_2^*W(n)$ is a cofibration and
the conclusion thus follows from Lemma 5 of \cite{Str2}, 
which states that if $h=i\circ j$ is a composition of maps in which
$h$ and $i$ are both cofibrations, then $j$ is a cofibration as well. 
\end{proof}

This lemma applies in particular if $f$ factors through $BO(n)$. 
In order to get around the difficulty that the Thom space functor is not a homotopy functor on the whole category $\mathcal U/BF(n)$ we 
follow Lewis \cite{LMS}, Section IX, and define a functor 
\begin{equation}\label{fibrantreplacement}
\Gamma\co \mathcal U/BF(n)\to\mathcal U/BF(n),\quad (X,f)\mapsto (\Gamma_f(X),\Gamma(f))
\end{equation}
by replacing a map by a (Hurewicz) fibration in the usual way,
$$
\Gamma_f(X)=\{(x,\omega)\in X\times BF(n)^I\co f(x)=\omega(0)\} ,\quad\Gamma(f)(x,\omega)
=\omega(1).
$$
We sometimes write $\Gamma(X)$ instead of $\Gamma_f(X)$ when the map $f$ is clear from the context. The natural inclusion $X\to\Gamma_f(X)$, whose second coordinate is the constant path at $f(x)$, defines a natural equivalence from the identity functor on $\mathcal U/BF(n)$ to 
$\Gamma$. It follows from Lemma \ref{wellbasedfibrationlemma} that the composition of the Thom space functor $T$ with $\Gamma$ is a homotopy functor.
We think of $T(\Gamma(f))$ as representing the correct homotopy type of the Thom space and say that $f$ is \emph{$T$-good} if the natural map $T(f)\to T(\Gamma(f))$ is a weak homotopy equivalence. In particular, $f$ is $T$-good if it classifies a well-based quasifibration. It follows from the above discussion that the restriction of $T$ to the subcategory of $T$-good objects is a homotopy functor. 
\begin{remark}
The fibrant replacement functors used in \cite{LMS} and \cite{Ma2} are defined using Moore paths instead of paths defined on the unit interval $I$. The use of Moore paths is less convenient for our purposes since we shall use $\Gamma$ in combination with more general homotopy pullback constructions. 
 \end{remark}

The following basic lemma is needed in order to establish the connectivity and convergence properties of the Thom spectrum functor. It may for example be deduced from the dual Blakers-Massey Theorem in \cite{G}.
\begin{lemma}\label{basicthomlemma}
Let
$$
\begin{CD}
V_1@>>> V_2 \\
@VV p_1 V @VV p_2 V\\
B_1@>\beta>> B_2
\end{CD}
$$
be a pullback diagram of well-based quasifibrations $p_1$ and
$p_2$. Suppose that  $p_1$ and $p_2$ are
$n$-connected with $n>1$ and that $\beta$ is $k$-connected. Then the
quotient spaces $V_1/B_1$ and $V_2/B_2$ are well-based and $(n-1)$-connected, and the 
induced map $V_1/B_1\to V_2/B_2$ is $(k+n)$-connected.\qed
\end{lemma}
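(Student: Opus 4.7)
The plan is to prove the two assertions---well-basedness with $(n-1)$-connectedness of each quotient, and $(k+n)$-connectedness of the induced map---in turn, exploiting throughout that the sections $s_i\co B_i\to V_i$ are cofibrations, so each $V_i/B_i\simeq V_i\cup_{s_i}CB_i$ is a genuine homotopy cofibre (and in particular well-based).

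For the connectivity of $V_i/B_i$, I would first deduce from the long exact sequence of the quasifibration $p_i$, split by the section $s_i$, that $\pi_j(V_i)\cong\pi_j(F_i)\oplus\pi_j(B_i)$; since $p_i$ is assumed to be $n$-connected, this forces the fibre $F_i$ of $p_i$ over the base point to be $(n-1)$-connected. The section also splits the long exact sequence on homology of the cofibration $B_i\to V_i\to V_i/B_i$, identifying $\tilde H_j(V_i/B_i)$ with $\ker(H_j(V_i)\to H_j(B_i))$, and this kernel vanishes for $j<n$ by the Serre spectral sequence for the quasifibration (whose $E_2$-page has $H_p(B_i;H_q(F_i))$ with $H_q(F_i)=0$ for $0<q<n$). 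A van Kampen argument on the pushout $V_i\cup_{s_i}CB_i$ shows that $V_i/B_i$ is simply connected---the section induces a surjection on $\pi_1$, so the pushout kills all of $\pi_1(V_i)$---and Hurewicz then upgrades the homological vanishing to $(n-1)$-connectedness.

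For the map $V_1/B_1\to V_2/B_2$, the crucial observation is that in the pullback square the fibres of $V_1\to V_2$ over points of $V_2$ coincide with the fibres of $\beta$ over the corresponding points of $B_2$; because both $p_1$ and $p_2$ are quasifibrations the pullback square is homotopy cartesian, so $V_1\to V_2$ is itself $k$-connected, and moreover the fibres $F_1\to F_2$ of $p_1$ and $p_2$ identify via homeomorphisms. I would then apply the dual Blakers--Massey theorem (as the excerpt hints) to the three-dimensional cube assembled from the pullback square together with the two cofibre sequences $B_i\to V_i\to V_i/B_i$: tracking connectivities through the cube gives an additive estimate yielding $(k+n)$-connectedness of the induced map on cofibres. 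As a sanity check, in the trivial bundle case $V_i=B_i\times F$ the map specializes to $\beta_+\wedge\id_F$, and smashing the $k$-connected map $\beta_+$ with the $(n-1)$-connected based space $F$ raises connectivity by exactly $n$, confirming the bound.

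The main obstacle is working with quasifibrations rather than Hurewicz fibrations: standard Blakers--Massey arguments are typically stated for fibrations, yet quasifibrations need not pull back to quasifibrations in general. The well-basedness hypothesis together with the cofibrancy of the sections is precisely what identifies $V_i/B_i$ with the honest homotopy cofibre, and the assumption that both $p_1$ and $p_2$ are quasifibrations is exactly what makes the pullback square homotopy cartesian, enabling the cube argument; without this one would be forced to replace the $p_i$ by Hurewicz fibrations and verify that the Thom-like quotients $V_i/B_i$ are preserved up to homotopy. The most delicate bookkeeping will be to ensure that no unit of connectivity is lost in the passage between fibration-theoretic and cofibre-theoretic estimates, which is the substance of Goodwillie's quantitative dual Blakers--Massey statement.
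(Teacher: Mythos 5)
Your plan is correct and is exactly the route the paper indicates: the paper offers no argument beyond the citation to Goodwillie, and your proof is a fleshed-out version of that citation, with the supporting observations about the section and the fibre filled in. Two points could be tightened.

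The Serre spectral sequence argument for the $(n-1)$-connectivity of $V_i/B_i$ works but is more than you need (and its applicability to quasifibrations deserves a remark); a quicker route is to note that the section $s_i$ has homotopy fibre $\Omega F_i$ and is therefore $(n-1)$-connected, so that---since $s_i$ is a cofibration and $n>1$---the quotient $V_i/B_i$ is the honest cofibre, is simply connected by your van Kampen argument, and is $(n-1)$-connected by relative Hurewicz.

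More importantly, ``apply the dual Blakers--Massey theorem to the three-dimensional cube'' is the right instinct but not quite the right shape: the dual theorem requires a strongly cartesian cube, and any cube whose faces include the cofibre sequences $B_i\to V_i\to V_i/B_i$ has cocartesian faces and so is not strongly cartesian. The correct object is the \emph{square}
$$
\begin{CD}
B_1 @>\beta>> B_2 \\
@V{s_1}VV @VV{s_2}V \\
V_1 @>>> V_2,
\end{CD}
$$
whose homotopy cartesianness you should justify explicitly: stack it on top of the original pullback square (with vertical maps $p_i$) and use the two-out-of-three property together with $p_i s_i=\mathrm{id}$. With $V_1\to V_2$ being $k$-connected (as you deduce from the original square) and $s_2$ being $(n-1)$-connected, the dual Blakers--Massey theorem says this square is $(k+(n-1)+1)=(k+n)$-cocartesian, so its total cofibre is $(k+n)$-connected. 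That total cofibre is precisely the cofibre of $V_1/B_1\to V_2/B_2$, both of which are simply connected, so relative Hurewicz upgrades this to the desired $(k+n)$-connectedness of the map. Your trivial-bundle sanity check is apt and confirms the arithmetic.
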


We now turn to Thom spectra. Let $\mathcal N$ be as in Section \ref{liftingintro}, and write $\mathcal N\mathcal U$ for the category of $\mathcal N$-spaces, that is, functors 
$X\co \mathcal N\to\mathcal U$. Consider the $\mathcal N$-space
$BF$ defined by the sequence of cofibrations
$$
BF(0)\stackrel{i_0}{\to}BF(1)\stackrel{i_1}{\to}BF(2)\stackrel{i_2}
{\to}\dots   
$$
obtained by applying $B$ to the monoid homomorphisms $F(n)\to F(n+1)$ 
that take an element $u$ to $1_{S^1}\!\wedge u$, the smash product with the identity on $S^1$.  
Notice, that there are pullback diagrams 
\begin{equation}\label{Npullback}
\begin{CD}
S^1\bar\wedge V(n)@>>> V(n+1)\\
@VVV @VVV\\
BF(n)@>i_n>> BF(n+1),
\end{CD}
\end{equation}
where $S^1\bar\wedge-$ denotes fibre-wise smash product with $S^1$. Indeed, there clearly is such a pullback diagram of the underlying simplicial spaces, and topological realization preserves pullback diagrams. We let $\mathcal N\mathcal U/BF$ be the category of $\mathcal
N$-spaces over $BF$. Thus, an object is a sequence of maps
$$
f_n\co X(n)\to BF(n)
$$
that are compatible with the structure maps.
Again we may specify the domain by writing the objects in the 
form $(X,f)$. 
\begin{definition}\label{NThomdefinition}
The Thom spectrum functor $T\co\mathcal N\mathcal U/BF\to \textit{Sp}$ is
defined by applying the Thom space construction level-wise, $T(f)_n=T(f_n)$, 
with structure maps given by
$$
S^1\wedge T(f_n)\cong T(i_n\circ f_n)\to T(f_{n+1}).
$$
\end{definition}
A morphism in $\mathcal N\mathcal U/BF$ induces a map of Thom spectra in the obvious
way. As for the Thom space functor, the Thom spectrum functor is not homotopically well-behaved on the whole category $\mathcal N\mathcal U/BF$. We define a functor
$
\Gamma\co \mathcal N\mathcal U/BF\to \mathcal N\mathcal U/BF
$
by applying the functor $\Gamma$ level-wise, and we say that an object $(X,f)$ is 
\emph{$T$-good} if the induced map $T(f)\to T(\Gamma(f))$ is a stable equivalence. We say that $f$ is \emph{level-wise $T$-good} if the induced map is a level-wise equivalence. The following proposition is an immediate consequence of Lemma \ref{wellbasedfibrationlemma} and Lemma \ref{basicthomlemma}. 
\begin{proposition}
If $f\co X\to BF$ is $T$-good, then $T(f)$ is connective.\qed
\end{proposition}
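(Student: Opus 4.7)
The plan is to reduce to the fibrant replacement $\Gamma(f)$ and then apply the connectivity estimate of Lemma \ref{basicthomlemma} to the pullback of the universal $S^n$-quasifibration. By the very definition of $T$-good, the natural map $T(f)\to T(\Gamma(f))$ is a stable equivalence, so the two spectra have the same stable homotopy groups, and it is enough to prove that $T(\Gamma(f))$ is connective.

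Since each $\Gamma(f_n)\co \Gamma_{f_n}(X(n))\to BF(n)$ is a Hurewicz fibration by construction, Lemma \ref{wellbasedfibrationlemma} applied to the universal well-based quasifibration $V(n)\to BF(n)$ shows that its pullback $\Gamma(f_n)^*V(n)\to \Gamma_{f_n}(X(n))$ is again a well-based quasifibration with fibre $S^n$. The fibre $S^n$ is $(n-1)$-connected, so Lemma \ref{basicthomlemma} (for instance applied to the comparison of this pullback with the trivial quasifibration over a point, or inductively along the pullback squares (\ref{Npullback})) yields a connectivity estimate of the shape ``$T(\Gamma(f_n))$ is roughly $(n-1)$-connected'', with the bound growing linearly in $n$. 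In particular there is a fixed constant $c$ such that $T(\Gamma(f_n))$ is $(n-c)$-connected for all sufficiently large $n$.

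The proposition then follows from the standard colimit description of stable homotopy groups. Since
$$
\pi_k(T(\Gamma(f))) = \colim_n \pi_{k+n}(T(\Gamma(f_n))),
$$
the linear-in-$n$ connectivity of the levels implies that $\pi_{k+n}(T(\Gamma(f_n))) = 0$ whenever $k<0$ and $n$ is large enough. Combined with the identification of the stable homotopy groups of $T(f)$ with those of $T(\Gamma(f))$ provided by $T$-goodness, this forces $\pi_k(T(f)) = 0$ for all $k<0$, establishing connectivity.

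The main obstacle, though a mild one, is purely bookkeeping: one must pin down the exact connectivity constant produced by Lemma \ref{basicthomlemma} for $S^n$-fibres and verify that this bound really does grow with $n$ at a rate sufficient to annihilate all negative stable stems in the colimit. Once that indexing is checked, the argument is entirely routine.
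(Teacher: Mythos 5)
Your proof takes exactly the route the paper intends: the paper labels this proposition an ``immediate consequence of Lemma \ref{wellbasedfibrationlemma} and Lemma \ref{basicthomlemma}'' and gives no further argument, and your reduction to $\Gamma(f)$, pullback of the well-based quasifibration $V(n)\to BF(n)$ along the Hurewicz fibration $\Gamma(f_n)$, and colimit description of the stable homotopy groups is precisely that. The ``bookkeeping obstacle'' you flag at the end is in fact already resolved by Lemma \ref{basicthomlemma}: a well-based quasifibration with fibre $S^n$ is $n$-connected, so the lemma gives that the Thom space $T(\Gamma(f_n))$ is exactly $(n-1)$-connected; hence $\pi_{k+n}T(\Gamma(f_n))=0$ for $k+n\le n-1$, i.e.\ for all $k<0$, and no further constant-chasing is needed. (Be aware, though, that your intermediate formulation ``$(n-c)$-connected for some fixed $c$'' would, if $c>1$, only kill $\pi_k$ for $k\le -c$; the argument really does require the sharp $(n-1)$-connectivity that the lemma supplies.)
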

An $\mathcal N$-space $X$ is said to be \emph{convergent} if there exists
an unbounded, non-decreasing sequence of integers 
$\{\lambda_n\co n\geq0\}$ such that $X(n)\to X(n+1)$ is
$\lambda_n$-connected for each $n$.
\begin{proposition}\label{Tconvergent}
If $f\co X\to BF$ is level-wise $T$-good and $X$ is convergent, then $T(f)$ is also convergent.
\end{proposition}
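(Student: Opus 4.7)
My plan is first to reduce to the case where $f_n$ classifies a well-based quasifibration at every level, and then to bound the connectivity of the adjoint structure maps by combining Lemma \ref{basicthomlemma} applied to the pullback diagram (\ref{Npullback}) with the Freudenthal suspension theorem.

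For the reduction I would replace $f$ by $\Gamma(f)$. Since $f$ is level-wise $T$-good, the map $T(f) \to T(\Gamma(f))$ is a level-wise weak equivalence, so the two spectra are simultaneously convergent; the inclusion $X(n) \to \Gamma_{f_n}(X(n))$ is a weak equivalence, so $\Gamma(X)$ is still convergent with the same connectivity bounds $\lambda_n$; and each $\Gamma(f)_n$ is a Hurewicz fibration, so by Lemma \ref{wellbasedfibrationlemma} it classifies a well-based quasifibration. Hence I may assume from the start that $f_n^{*}V(n) \to X(n)$ is a well-based quasifibration for every $n$.

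Under this assumption I would pull the diagram (\ref{Npullback}) back along $f_{n+1}$ to obtain
\[
\begin{CD}
(i_n\circ f_n)^{*}V(n+1) @>>> f_{n+1}^{*}V(n+1) \\
@VVV @VVV \\
X(n) @>>> X(n+1),
\end{CD}
\]
whose vertical maps are well-based quasifibrations with $n$-connected fiber $S^{n+1}$ and whose bottom map is $\lambda_n$-connected. Lemma \ref{basicthomlemma} then yields that the structure map
\[
S^1 \wedge T(f_n) \cong T(i_n \circ f_n) \to T(f_{n+1})
\]
is $(\lambda_n + n)$-connected (for $n \geq 2$) and that each $T(f_n)$ is $(n-2)$-connected.

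Finally I would pass to the adjoint: by Freudenthal, $T(f_n) \to \Omega(S^1 \wedge T(f_n))$ is $(2n-3)$-connected since $T(f_n)$ is $(n-2)$-connected, and looping the previous estimate shows the adjoint structure map $T(f_n) \to \Omega T(f_{n+1})$ is $\min(2n-3,\, \lambda_n + n - 1)$-connected. Writing this bound as $\mu_n + n$ with $\mu_n = \min(n-3,\, \lambda_n - 1)$, and replacing $\mu_n$ by $\min_{m \geq n}\mu_m$ if necessary to obtain a non-decreasing sequence, yields the unbounded sequence witnessing that $T(f)$ is convergent. I expect the reduction to well-based quasifibrations to be the main conceptual point; the remainder is a mechanical combination of Lemmas \ref{wellbasedfibrationlemma} and \ref{basicthomlemma} with Freudenthal.
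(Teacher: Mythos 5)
Your proposal is correct and takes essentially the same route as the paper: reduce to the level-wise fibration case so that each level classifies a well-based quasifibration, then apply Lemma \ref{basicthomlemma} to the pulled-back diagram (\ref{Npullback}) to get the $(\lambda_n+n)$-connectivity of the structure maps. The only difference is that the paper leaves the passage from structure maps to adjoint structure maps tacit in its final sentence, whereas you make the Freudenthal step explicit; this is a welcome clarification, not a different argument.
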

\begin{proof}
We may assume that $f$ is a level-wise fibration, hence classifies a well-based quasifibration at each level. If $X(n)\to X(n+1)$ is $\lambda_n$-connected, 
it follows from Lemma \ref{basicthomlemma} that the
structure map $S^1\wedge T(f_n)\to T(f_{n+1})$ is
$(\lambda_n+n)$-connected. The convergence of $X$ thus implies
that of $T(f)$.
\end{proof}

Given an $\mathcal N$-space $X$, write $X_{h\mathcal N}$ for its homotopy colimit. This is homotopy equivalent to the usual telescope construction on $X$. We say that a morphism $(X,f)\to (Y,g)$ in $\mathcal N\mathcal U/BF$ is an \emph{$\mathcal N$-equivalence} if the induced map of homotopy colimits $X_{h\mathcal N}\to Y_{h\mathcal N}$ is a weak homotopy equivalence. 
The following theorem can be deduced from \cite{LMS}, Proposition 4.9. We shall indicate a more direct proof in Section \ref{NTinvariancesection}. 
\begin{theorem}\label{NThominvariance}
If $(X,f)\to (Y,g)$ is an $\mathcal N$-equivalence of $T$-good $\mathcal N$-spaces over $BF$, then the induced map $T(f)\to T(g)$ is a stable equivalence.
\end{theorem}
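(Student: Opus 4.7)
The plan is to reduce, using $T$-goodness and the functor $\Gamma$, to the case of well-based quasifibrations with cofibrant structure maps, and then compare stable homotopy groups directly (or equivalently, reduce to the Lewis--May invariance theorem).

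First, by $T$-goodness of both $(X,f)$ and $(Y,g)$, the natural maps $T(f)\to T(\Gamma(f))$ and $T(g)\to T(\Gamma(g))$ are stable equivalences. The functor $\Gamma$ is level-wise connected to the identity by a natural weak equivalence over $BF$, so it preserves $\mathcal N$-equivalences. By two-out-of-three for stable equivalences, it therefore suffices to prove the theorem assuming $f_n$ and $g_n$ are Hurewicz fibrations at each level, in which case Lemma~\ref{wellbasedfibrationlemma} tells us that they classify well-based quasifibrations at each level. A further reduction via the level-wise mapping telescope construction (reapplying $\Gamma$ if needed to restore the fibration property) lets us arrange that the structure maps $X(n)\to X(n+1)$ and $Y(n)\to Y(n+1)$ are Hurewicz cofibrations. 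Under this assumption $X_{h\mathcal N}\simeq X_\infty:=\colim_n X(n)$ and similarly for $Y$, so the $\mathcal N$-equivalence hypothesis becomes a weak homotopy equivalence $X_\infty\to Y_\infty$ of spaces over $BF$.

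The next step is to compare the stable homotopy groups
\[
\pi_k T(f)=\colim_n \pi_{k+n}T(f_n),\qquad \pi_k T(g)=\colim_n \pi_{k+n}T(g_n).
\]
For each $n$ there is a pullback square of well-based quasifibrations
\[
\begin{CD}
f_n^*V(n) @>>> g_n^*V(n) \\
@VVV @VVV \\
X(n) @>>> Y(n),
\end{CD}
\]
and Lemma~\ref{basicthomlemma} translates the connectivity of $X(n)\to Y(n)$ into a connectivity of $T(f_n)\to T(g_n)$. Since $T(f_n)$ is $(n-1)$-connected (being the Thom space of an $S^n$-quasifibration), the colimit in degree $k$ is governed by the behavior of $X(n)\to Y(n)$ in a range that grows with $n$, and this behavior is controlled by the global weak equivalence $X_\infty\to Y_\infty$. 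An equivalent route is to identify $T(f)$, up to stable equivalence, with the Lewis--May Thom spectrum of the space-level map $X_\infty\to BF$ filtered by the preimages of $BF(n)$, and then invoke Proposition~4.9 of \cite{LMS} applied to the weak equivalence $X_\infty\to Y_\infty$.

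The hard part is the passage from the $\mathcal N$-equivalence hypothesis, which is only a weak equivalence in the colimit, to a genuine stable equivalence of Thom spectra. Since no convergence assumption is placed on $X$ or $Y$, there is no uniform bound on the connectivity of the individual maps $X(n)\to Y(n)$, so the argument cannot be reduced to a level-wise statement. One must either exploit the increasing connectivity of the Thom spaces $T(f_n)$ together with a cofinality argument in the colimit defining $\pi_k T(f)$, or else carefully verify that the telescopic construction above is compatible with the pullback quasifibration structures that define the Lewis--May Thom spectrum. Either way, this comparison step is the main technical obstacle.
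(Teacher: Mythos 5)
Your proposal takes a different route from the paper's and, as you yourself acknowledge at the end, it stops short at exactly the point where the real work has to happen. Let me spell out why the gap is genuine and how the paper avoids it.

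Your reduction to level-wise Hurewicz fibrations with cofibration structure maps is fine, and the colimit formula $\pi_k T(f)=\colim_n\pi_{k+n}T(f_n)$ is correct. But as you note, the $\mathcal N$-equivalence hypothesis gives control only on $X_\infty\to Y_\infty$ and says nothing about the connectivity of the individual maps $X(n)\to Y(n)$. The first route you propose --- ``exploit the increasing connectivity of the Thom spaces $T(f_n)$ together with a cofinality argument'' --- does not work as stated: the $(n-1)$-connectivity of $T(f_n)$ controls its homotopy in low degrees but gives no information about the map $T(f_n)\to T(g_n)$ in the range $k+n$ relevant to $\pi_k$, and there is no cofinality trick that substitutes for connectivity estimates on the $\phi_n$'s. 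Your second route (pass to the Lewis--May Thom spectrum of $X_\infty\to BF_{\mathcal N}$ and cite \cite{LMS}, Proposition~4.9) is indeed valid --- the paper states this explicitly before Theorem~\ref{NThominvariance} --- but it hides a nontrivial comparison: the $\mathcal N$-filtration $\{X(n)\}$ of $X_\infty$ is \emph{not} the preimage filtration $\{f_\infty^{-1}BF(n)\}$ used to define the Lewis--May spectrum, and identifying the two Thom spectra up to stable equivalence requires a filtration-independence lemma (essentially \cite{LMS}, Corollary~4.4). You assert ``up to stable equivalence'' without addressing this, so the reduction is incomplete.

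The paper's proof in Section~\ref{NTinvariancesection} avoids both obstacles by a cell-induction argument rather than a direct homotopy-group comparison. It introduces the $\mathcal N$-space analogue $R^{\mathcal N}$ of the lifting functor and proves (Proposition~\ref{Ttildef}) that $T\circ\Gamma$ sends the canonical map $\overline X\to R^{\mathcal N}_{f_{h\mathcal N}}(X_{h\mathcal N})$ to a stable equivalence. Theorem~\ref{NThominvariance} then follows formally, because both $\overline X\to X$ and the $R^{\mathcal N}$-lift depend only on the homotopy colimit. The proof of Proposition~\ref{Ttildef} uses the cofibrantly generated level model structure on $\mathcal N\U$ to reduce to free $\mathcal N$-spaces $F_d(K)=\mathcal N(\mathbf d,-)\times K$. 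For these the comparison diagram consists of \emph{convergent} $\mathcal N$-spaces, so the connectivity estimates that your direct approach lacks are available, and the general case follows by induction over a cell filtration using that $T\circ\Gamma$ preserves colimits and $h$-cofibrations. In short: the paper never needs connectivity control on an arbitrary $X(n)\to Y(n)$; it manufactures convergence by restricting to free objects, and then bootstraps via cell attachments. This is the key idea missing from your argument.
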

In particular, it follows that $T\circ\Gamma$ takes $\mathcal N$-equivalences to stable equivalences.

\section{Symmetric Thom spectra}\label{symmetricsection}
We begin by recalling the definition of a (topological) symmetric spectrum. The
basic references are the papers \cite{HSS} and \cite{MMSS} that deal
respectively with the simplicial and the topological version of the
theory. See also \cite{Schw}.
\subsection{Symmetric spectra}\label{symmetricspectrumsection}
By definition a symmetric spectrum $X$ is a spectrum in which
each of the spaces $X(n)$ is equipped with a base point preserving left 
$\Sigma_n$-action, such that the iterated structure maps 
$$
\sigma^n\co S^m\wedge X(n)\to X(m+n)
$$
are $\Sigma_m\times \Sigma_n$-equivariant. A map of symmetric spectra
$f\co X\to Y$ is a sequence of $\Sigma_n$-equivariant based maps
$X(n)\to Y(n)$ that strictly commute with the structure maps.  We
write $\Sp^{\Sigma}$ for the category of symmetric spectra. 
Following \cite{MMSS} we
shall view symmetric spectra as diagram spectra, and for this reason we introduce some  notation which will be convenient for our purposes. Let the category $\mathcal I$ be as in Section 
\ref{symmetricintro}. Given a morphism $\alpha\co\mathbf
m\to\mathbf n$ in $\mathcal 
I$, let $\mathbf n-\alpha$ denote the complement of $\alpha(\mathbf
m)$ in $\mathbf n$ and let $S^{n-\alpha}$ be the one-point
compactification of $\mathbb R^{\mathbf n-\alpha}$.
Consider then the topological category
$\I_S$ that has the same objects as $\I$, but whose morphism
spaces are defined by 
$$
\I_S(\mathbf m, \mathbf n)=\bigvee_{\alpha\in\I(\mathbf m,\mathbf
  n)}S^{n-\alpha}.
$$
We view $\I_S$ as a category enriched in the category of based spaces $\mathcal T$. Writing the morphisms in the form 
$(\mathbf x,\alpha)$ for $\mathbf x\in S^{n-\alpha}$, the composition is defined by 
$$
\I_S(\mathbf m,\mathbf n)\wedge 
\I_S(\mathbf l,\mathbf m)\to
\I_S(\mathbf l,\mathbf n),\quad (\mathbf x,\alpha)\wedge(\mathbf
y,\beta)\mapsto(\mathbf x\wedge\alpha_*\mathbf y,\alpha\beta),
$$  
where $\mathbf x\wedge \alpha_*\mathbf y$ is defined by the canonical homeomorphism 
$$
S^{n-\alpha}\wedge S^{m-\beta}\cong S^{n-\alpha\beta},\quad \mathbf x\wedge\mathbf y\mapsto \mathbf x\wedge\alpha_*\mathbf y,
$$
obtained by reindexing the coordinates of $S^{m-\beta}$ via $\alpha$. This choice of notation has the advantage of making some of our constructions self-explanatory. By a functor between categories enriched in $\mathcal T$ we understand a functor such that the maps of morphism spaces are based and continuous. 
Thus, if $X\co\I_S\to\mathcal T$ is a functor in this sense, then we have for each morphism 
$\alpha\co\mathbf m\to\mathbf n$ in $\I$ a based continuous map
$$
\alpha_*\co S^{n-\alpha}\wedge X(m)\to X(n).
$$
One easily checks that the maps $S^1\wedge X(n)\to X(n+1)$ induced by the morphisms 
$\mathbf n\to 1\sqcup \mathbf n$ give $X$ the structure of a symmetric spectrum and that the category of (based continuous) functors $\I_S\to\mathcal T$ may be identified with $\Sp^{\Sigma}$ in this way.   
The symmetric monoidal structure of $\I$ gives rise to a symmetric
monoidal structure on $\I_S$. On morphism spaces this is given by
the continuous maps
$$
\sqcup\co
\I_S(\mathbf m_1,\mathbf n_1)\times
\I_S(\mathbf m_2,\mathbf n_2) \to
\I_S(\mathbf m_1\sqcup\mathbf m_2,\mathbf n_1\sqcup\mathbf n_2),\quad
$$
that map a pair of morphisms 
$\big((\mathbf x,\alpha),(\mathbf y,\beta)\big)$ 
to $(\mathbf x\wedge \mathbf y, \alpha\sqcup \beta)$.
As noted in \cite{MMSS}, this implies that the category of symmetric
spectra inherits a symmetric monoidal smash product. Given
symmetric spectra $X$ and $Y$, this is defined by the 
left Kan extension,
\begin{equation}\label{Kansmash}
X\wedge Y(n)=\colim_{\mathbf n_1\sqcup\mathbf n_2\to\mathbf n}
X(n_1)\wedge Y(n_2),
\end{equation}
where the colimit is over the topological category 
$(\sqcup\downarrow\mathbf n)$ of objects and morphisms in $\I_S\times \I_S$ over $\mathbf n$. More explicitly, we may rewrite this as
$$
X\wedge Y(n)=\colim_{\alpha\co\mathbf n_1\sqcup\mathbf n_2\to\mathbf n}
S^{n-\alpha}\wedge X(n_1)\wedge Y(n_2),
$$ 
where the colimit is now over the discrete category $(\sqcup\downarrow\mathbf n)$ of objects and morphisms in $\I\times \I$ over $\mathbf n$. Given a morphism in this category of the form 
$$
(\beta_1,\beta_2)\co (\mathbf n_1,\mathbf n_2,\mathbf n_1\sqcup\mathbf n_2\xr{\alpha}\mathbf n) 
\to(\mathbf n'_1,\mathbf n'_2,\mathbf n'_1\sqcup\mathbf n'_2\xr{\alpha'}\mathbf n), 
$$
the morphism $\alpha'$ specifies a homeomorphism
$$
S^{n-\alpha}\cong S^{n-\alpha'}\wedge S^{n_1'-\beta_1}\wedge S^{n_2'-\beta_2},
$$
and the induced map in the diagram is defined by
\begin{align*}
S^{n-\alpha}\wedge X(n_1)\wedge Y(n_2)
&\to S^{n-\alpha'}\wedge S^{n_1'-\beta_1}\wedge X(n_1)
\wedge S^{n_2'-\beta_2}\wedge Y(n_2)\\
&\to S^{n-\alpha'}\wedge X(n_1')\wedge Y(n_2').
\end{align*}
The unit for the smash product is the sphere spectrum $S$ with $S(n)=S^n$.  
By definition, a \emph{symmetric ring spectrum} is a monoid in this monoidal
category. Spelling this out using the above notation, a symmetric ring spectrum is a symmetric spectrum $X$ equipped with a based map $1_X\co S^0\to X(0)$ and a collection of based maps 
$$
\mu_{m,n}\co X(m)\wedge X(n)\to X(m+n),
$$ 
such that the usual unitality and associativity conditions hold, and such that the diagrams 
$$
\begin{CD}
S^{n-\alpha}\wedge X(m)\wedge S^{n'-\alpha'}
\wedge X(m') @>\mu_{m,m'}\circ
\textit{tw}>>
S^{n+n'-\alpha\sqcup \alpha'}\wedge X(m+m')\\
@VV \alpha\wedge \alpha' V  @VV \alpha\sqcup\alpha' V
\\
X(n)\wedge X(n') @>\mu_{n,n'}>> X(n+n')
\end{CD}
$$
commute for each pair of morphisms $\alpha\co\mathbf m\to \mathbf n$ and
$\alpha'\co\mathbf m'\to \mathbf n'$ in $\I$. Here $\textit{tw}$ flips the factors $X(m)$ and 
$S^{n'-\alpha'}$.  A ring spectrum is \emph{commutative} if it defines a commutative
monoid in $\Sp^{\Sigma}$. Explicitly, this means that there are commutative diagrams
$$
\begin{CD}
X(m)\wedge X(n)@>\mu_{m,n}>>X(m+n)\\
@VV\textit{tw} V @VV\tau_{m,n} V\\
X(n)\wedge X(m)@>\mu_{n,m}>>X(n+m),
\end{CD}
$$
where the right hand vertical map is given by the left action of the $(m,n)$-shuffle $\tau_{m,n}$.

\subsection{Symmetric Thom spectra via $\I$-spaces}\label{Thomdiagramsection}
As in Section \ref{symmetricintro} we write $\I\mathcal U$ for the category of $\I$-spaces. This category inherits the structure of a symmetric monoidal category from that of $\mathcal I$ in the usual way: given $\mathcal I$-spaces $X$ and
$Y$, their product $X\boxtimes Y$ is defined by the Kan extension
$$
(X\boxtimes Y)(n)=\colim_{\mathbf n_1\sqcup \mathbf n_2\to \mathbf n}  
X(n_1)\times Y(n_2),
$$
where the colimit is again over the category $(\sqcup\downarrow\mathbf n)$. The unit for the monoidal structure is the constant $\I$-space $\I(\mathbf 0,-)$. We use term  \emph{$\I$-space monoid} for a monoid in this category. This amounts to an 
$\mathcal I$-space $X$ equipped with a unit $1_X\in X(0)$ and 
a natural transformation of $\I\times \I$-diagrams
$$
\mu_{m,n}:X(m)\times X(n)\to X(m+n)
$$
that satisfies the obvious associativity and unitality conditions. 
An $\I$-space monoid $X$ is commutative if it defines a commutative monoid in 
$\mathcal I\mathcal U$, that is, the diagrams
$$
\begin{CD}
X(m)\times X(n)@>\mu_{m,n} >> X(m+n)\\
@VV\textit{tw}V @VV\tau_{m,n}V \\
X(n) \times X(m) @> \mu_{n,m} >> X(n+m)
\end{CD}
$$
are commutative. 
   
The family of topological monoids $F(n)$ define a functor from $\I$ to the category of topological monoids: a morphism $\alpha\co\mathbf m\to\mathbf n$ in $\I$
induces a monoid homomorphism $\alpha\co F(m)\to F(n)$ by associating to an element 
$f$ in $F(m)$ the composite map
\begin{equation}\label{Ifunctoriality}
S^n\cong S^{n-\alpha}\wedge S^m\xrightarrow{S^{n-\alpha}\wedge
f}S^{n-\alpha}\wedge S^m\cong S^n.
\end{equation}
As usual the homeomorphism $S^{n-\alpha}\wedge S^m\cong S^n$ is induced
by the bijection $(\mathbf n-\alpha)\sqcup\mathbf m\to \mathbf n$
specified by $\alpha$ and the inclusion of $\mathbf n-\alpha$ in
$\mathbf n$. We also have the natural monoid homomorphisms 
$$
F(m)\times F(n)\to F(m+n),\quad (f,g)\mapsto f\wedge g
$$
defined by the usual smash product of based spaces. Applying the classifying space functor degree-wise and using that it commutes with products, we get from this the commutative  
$\I$-space monoid $BF\co\mathbf n\mapsto BF(n)$. 
We write $\mathcal I\mathcal U/BF$ for the category of $\mathcal I$-spaces over $BF$ with objects $(X,f)$ given by a map $f\co X\to BF$ of
 $\mathcal I$-spaces. This category inherits a
symmetric monoidal structure from that of $\mathcal I\mathcal
U$: given objects $(X,f)$ and $(Y,g)$, the product is defined by
the composition
$$
f\boxtimes g\co X\boxtimes Y\stackrel{f\boxtimes g}{\longrightarrow} 
BF\boxtimes BF\to BF,
$$
where the last map is the multiplication in $BF$. The meaning of the symbol 
$f\boxtimes g$ will always be clear from the context. 
By definition, a monoid in this monoidal structure is a pair $(X,f)$ given by an
$\mathcal I$-space monoid $X$ together with a monoid morphism $f\co X\to BF$. 
\begin{definition}
The symmetric Thom spectrum functor 
$
T\co\mathcal I\mathcal U/BF\to
\Sp^{\Sigma}
$
is defined by the level-wise Thom space construction $T(f)(n)=T(f_n)$. 
A morphism $\alpha\co\mathbf m\to \mathbf n$ in $\mathcal I$ 
gives rise to a pullback diagram
$$
\begin{CD}
S^{n-\alpha}\bar\wedge V(m) @>>> V(n) \\
@VVV @VVV \\
BF(m) @>\alpha >> BF(n)
\end{CD}
$$
in which $\bar\wedge$ denotes the fibre-wise smash product.
On fibres this restricts to the homeomorphism $S^{n-\alpha}\wedge
S^m\to S^n$ specified by $\alpha$. Pulling this diagram back via
$f$ and applying the Thom space construction, we get the required 
structure maps 
$$
S^{n-\alpha}\wedge T(f_m)\cong T(\alpha\circ f_m)\to T(f_n).
$$    
\end{definition}

Notice, that this Thom spectrum functor is related to that in Section \ref{preliminariessection} by a commutative diagram of functors
\begin{equation}\label{INThom}
\begin{CD}
\I\mathcal U/BF@>T>> \Sp^{\Sigma}\\
@VVV @VVV\\
\mathcal N\mathcal U/BF @>T>> \Sp,
\end{CD}
\end{equation}
where the vertical arrows represent the obvious forgetful functors. Recall the notion of a strong symmetric monoidal functor from \cite{Mac}, Section XI.2. We now prove Theorem \ref{monoidaltheorem} stating that the symmetric Thom spectrum is strong symmetric monoidal. 

\begin{proof}[Proof of Theorem \ref{monoidaltheorem}]
It is clear that we have a canonical isomorphism $S\to T(*)$. We must show that given objects 
$(X,f)$ and $(Y,g)$ in $\I\U/BF$ there is a natural isomorphism 
$$
T(f)\wedge T(g)\cong T(f\boxtimes g).
$$
By definition, $X\boxtimes Y(n)$ is the colimit of the $(\sqcup\downarrow\mathbf n)$-diagram
$$
(\mathbf n_1,\mathbf n_2, \mathbf n_1\sqcup\mathbf n_2\xr{\alpha} \mathbf n)\mapsto 
X(n_1)\times Y(n_2).
$$
Given $\alpha\co\mathbf n_1\sqcup\mathbf n_2\to\mathbf n$, let $\alpha(f,g)$ be the composite map
$$
X(n_1)\times Y(n_2)\xr{f_{n_1}\times g_{n_2}} BF(n_1)\times BF(n_2)\to BF(n_1+n_2)\xr{\alpha} BF(n).
$$
Using these structure maps we view the above $(\sqcup\downarrow\mathbf n)$-diagram as a diagram over $BF(n)$, and since the Thom space functor preserves colimits by \cite{LMS}, Propositions 1.1 and 1.4, we get the homeomorphism
$$
T(f\boxtimes g)(n)\cong \colim_{(\sqcup\downarrow\mathbf n)}T(\alpha(f,g)).
$$ 
Furthermore, since pullback commutes with topological realization and fibre-wise smash products, we have an isomorphism
$$
\alpha(f,g)^*V(n)\cong S^{n-\alpha}\bar\wedge f^*_{n_1}V(n_1)\bar\wedge g^*_{n_2}V(n_2) 
$$ 
of sectioned spaces over $BF(n)$, hence a homeomorphism of the associated Thom spaces
$$
T(\alpha(f,g))\cong S^{n-\alpha}\wedge T(f_{n_1})\wedge T(g_{n_2}).
$$
Combining the above, we get the homeomorphism
$$
T(f)\wedge T(g)(n)\cong\colim_{\alpha}S^{n-\alpha}\wedge T(f_{n_1})\wedge T(g_{n_2})
\cong T(f\boxtimes g)(n),
$$
specifying the required isomorphism of symmetric spectra.
\end{proof}

\begin{corollary}\label{ringspectrumcorollary}
If $X$ is an $\mathcal I$-space monoid and 
$f\co X\to BF$ a monoid morphism, then $T(f)$
is a symmetric ring spectrum which is commutative if $X$ is.\qed
\end{corollary}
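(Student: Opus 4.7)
The plan is to deduce this directly from Theorem \ref{monoidaltheorem} by the standard principle that a strong symmetric monoidal functor sends monoids to monoids and commutative monoids to commutative monoids. First I would observe that, as noted just before the definition of the symmetric Thom spectrum functor, a monoid in the symmetric monoidal category $\I\U/BF$ is precisely a pair $(X,f)$ consisting of an $\I$-space monoid $X$ together with a monoid morphism $f\co X\to BF$, and $(X,f)$ is commutative exactly when $X$ is.

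Next, given such a monoid $(X,f)$ with multiplication $\mu\co X\boxtimes X\to X$ over $BF$ and unit $1\co \I(\mathbf 0,-)\to X$ over $BF$, I would apply $T$ to $\mu$ and $1$ and compose with the natural isomorphism $T(f)\wedge T(f)\cong T(f\boxtimes f)$ and the unit isomorphism $S\cong T(*)$ supplied by Theorem \ref{monoidaltheorem}. This yields a multiplication
\[
T(f)\wedge T(f)\cong T(f\boxtimes f)\xr{T(\mu)} T(f)
\]
and a unit $S\cong T(*)\xr{T(1)} T(f)$ on $T(f)$. The associativity, unitality, and (in the commutative case) commutativity diagrams for this multiplication follow by applying $T$ to the corresponding diagrams for $\mu$ and $1$ in $\I\U/BF$ and using the naturality and coherence of the monoidal isomorphisms from Theorem \ref{monoidaltheorem}.

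There is really no obstacle here beyond bookkeeping: the content of the corollary is entirely captured by Theorem \ref{monoidaltheorem}, once one notes that the forgetful functor from monoids to objects is faithful on morphisms of monoidal diagrams. The only minor point worth verifying is that the unit isomorphism $S\cong T(*)$ identifies the standard unit of the sphere spectrum with the image of $T(1)$, which is immediate from the construction $T(f)(n)=T(f_n)$ applied to the terminal $\I$-space $\I(\mathbf 0,-)$ mapping into $BF$ by the unit of $BF$.
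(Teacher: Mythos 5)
Your proposal is correct and follows exactly the route the paper intends: the corollary is stated with an immediate $\qed$, i.e.\ as a formal consequence of Theorem \ref{monoidaltheorem} together with the observation (made explicitly just before the corollary in Section \ref{Thomdiagramsection}) that a (commutative) monoid in $\I\U/BF$ is the same thing as a pair $(X,f)$ with $X$ a (commutative) $\I$-space monoid and $f$ a monoid morphism to $BF$. Your spelled-out version of the standard fact that strong symmetric monoidal functors preserve (commutative) monoids is precisely the bookkeeping the paper leaves to the reader.
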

Recall that the \emph{tensor} of an unbased space $K$ with a symmetric spectrum $X$ is defined by the level-wise smash product $X\wedge K_+$. Similarly, the tensor of $K$ with an 
$\I$-space $X$ is defined by the level-wise product $X\times K$. For an object $(X,f)$ in $\I\mathcal U/BF$, the tensor is given by $(X\times K,f\circ\pi_X)$, where $\pi_X$ denotes the projection onto $X$. We refer to \cite{Bo}, Chapter 6, for a general discussion of tensors in enriched categories. 
\begin{proposition}\label{Thomcolimit}
The symmetric Thom spectrum functor preserves colimits and tensors with unbased spaces. 
\end{proposition}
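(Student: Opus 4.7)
The plan is to reduce everything to a level-wise statement and then invoke the fact that the unstable Thom space functor preserves colimits, which is exactly what was used already in the proof of Theorem \ref{monoidaltheorem}.

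First I would observe that both colimits in $\I\U/BF$ and colimits in $\Sp^{\Sigma}$ are computed level-wise: an $\I\U$-colimit is the level-wise colimit in $\U$ equipped with the induced map to $BF$, and similarly the underlying sequence of a colimit of symmetric spectra is the level-wise colimit in $\mathcal T$. So it suffices to check that for each $n$ the unstable Thom space functor
\[
T\co \U/BF(n)\to \mathcal T,\qquad (X,f)\mapsto f^*V(n)/s_X(X)
\]
preserves colimits, and that the structure maps of the symmetric Thom spectrum are compatible with the colimit. The first point is \cite{LMS}, Propositions 1.1 and 1.4 (already cited in the proof of Theorem \ref{monoidaltheorem}). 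For the compatibility of structure maps, note that for a morphism $\alpha\co\mathbf m\to\mathbf n$ in $\I$ the structure map $S^{n-\alpha}\wedge T(f_m)\to T(f_n)$ is defined via pullback of $V(n)\to BF(n)$ along $\alpha\circ f_m$ and the canonical map to $V(n)$; both pullback along a fixed map and smash product with a fixed based space commute with colimits, so the level-wise isomorphism of colimits respects the structure maps.

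For tensors, given $(X,f)$ in $\I\U/BF$ and an unbased space $K$, the tensor is $(X\times K, f\circ\pi_X)$ where $\pi_X$ projects off $K$. At level $n$ we pull $V(n)$ back along $f_n\circ\pi_{X(n)}\co X(n)\times K\to BF(n)$; since pullback commutes with products, the result is $f_n^*V(n)\times K$, and the section identifies with $s_{X(n)}\times\id_K$. Collapsing the image of the section therefore gives
\[
T(f_n\circ\pi_{X(n)})=\bigl(f_n^*V(n)\times K\bigr)\bigm/\bigl(s_{X(n)}(X(n))\times K\bigr)\cong T(f_n)\wedge K_+,
\]
using the general identification $(A\times K)/(B\times K)\cong (A/B)\wedge K_+$ for a subspace $B\subset A$. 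Again one checks directly that these level-wise homeomorphisms commute with the structure maps (the $S^{n-\alpha}$-factor is unaffected by $K$), producing an isomorphism $T(f\circ\pi_X)\cong T(f)\wedge K_+$ of symmetric spectra.

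The only point that requires a little care is the first one, since colimits in $\U/BF(n)$ are not quite as naive as one might hope; but they are created by the forgetful functor to $\U$, and for pullback along a fixed map $V(n)\to BF(n)$ the assertion is standard (or follows from the fact that this pullback functor is itself a left adjoint on over-categories). Everything else is bookkeeping, so I expect no genuine obstacle.
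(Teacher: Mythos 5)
Your proof is correct and takes the same route as the paper: both reduce to the fact that the Thom space functor on $\U/BF(n)$ preserves colimits level-wise, citing \cite{LMS} for this, and both obtain the tensor identification $T(f\circ\pi_X)\cong T(f)\wedge K_+$ directly from the definition. The one thing to watch is your parenthetical remark that the pullback along $V(n)\to BF(n)$ is a left adjoint on over-categories --- $\U$ is not locally cartesian closed, so pullback along an arbitrary map need not preserve colimits and this is not a valid general argument; but it is an inessential aside, and the LMS reference (which exploits the specific structure of $V(n)\to BF(n)$) is what actually carries the argument.
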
 
\begin{proof}
The first statement follows \cite{LMS}, Proposition 1.1 and Corollary 1.4, which combine to show that the Thom space functor preserves colimits. The second claim is that 
$T(f\circ \pi_X)$  is isomorphic to $T(f)\wedge K_+$ which follows directly from the definition.
\end{proof}

\section{Lifting space level data to $\I$-spaces} \label{rectificationsection}
The homotopy colimit construction induces a functor 
\begin{equation}\label{hocolimU/BF}
\hocolim_{\I}\co \I\mathcal U/BF\to \mathcal U/BF_{h\I},\quad (X\to BF)
\mapsto (X_{h\I}\to BF_{h\I}),
\end{equation}
where, given an $\I$-space $X$, we write $X_{h\I}$ for its homotopy colimit. Our first task in this section is to verify that this is the left adjoint in a Quillen equivalence. 

\subsection{The right adjoint of $\hocolim_{\I}$}\label{rightadjointsection}
Recall first that the homotopy colimit functor $\I\U\to \U$ has a right adjoint that to a space $Y$ associates the $\I$-space $\mathbf n\mapsto \Map(B(\mathbf n\downarrow\I),Y)$. Here $(\mathbf n\downarrow \I)$ denotes the category of objects in $\I$ under $\mathbf n$. We refer to 
\cite{BK}, Section XII.2.2, and \cite{HV} for the details of this adjunction. The right adjoint in turn induces a functor 
\[
U\co\U/BF_{h\I}\to \I\U/BF,\quad (X,f)\mapsto (U_f(X),U(f))
\] 
by associating to a map $f\co X\to BF_{h\I}$ the map of $\I$-spaces defined by the upper row in the pullback diagram
\[
\begin{CD}
U_f(X)@>U(f)>>BF\\
@VVV @VVV\\
\Map(B(-\downarrow \I),X)@>>>  \Map(B(-\downarrow \I),BF_{h\I})
\end{CD}
\]
The map on the right is the unit of the adjunction. It is immediate that $U$ is right adjoint to the homotopy colimit functor in (\ref{hocolimU/BF}) and we shall prove in 
Proposition \ref{Q-equivalence} below that the adjunction 
\begin{equation}\label{hocolimU}
\hocolim_{\I}\co
\xymatrix{
\I\mathcal U/BF \ar@<0.5ex>[r] &
\mathcal U/BF_{h\I} \ar@<0.5ex>[l]\thinspace\thinspace\co\!\! U  
}
\end{equation}
is a Quillen equivalence when we give $\U/BF_{h\I}$ the model structure induced by the Quillen model structure on $\U$ and 
$\I\U/BF$ the model structure induced by the $\I$-model structure on $\I\U$ established by Sagave-Schlichtkrull \cite{SS}. Before describing the $\I$-model structure we recall that $\I\U$ has a level model structure in which the weak equivalences and fibrations are defined level-wise.  
Given $d\geq 0$, let $F_d\co \mathcal U\to\I\U$ be the functor that to a space $K$ associates the $\I$-space $F_d(K)=\I(\mathbf d,-)\times K$. Thus, $F_d$ is left adjoint to the evaluation functor that takes an $\I$-space $X$ to $X(d)$. The level structure on $\I\U$ is cofibrantly generated with set of generating cofibrations  
\[
FI=\{F_d(S^{n-1})\to F_d(D^n): d\geq 0, n\geq 0\}
\]
obtained by applying the functors $F_d$ to the set $I$ of generating cofibrations for the Quillen model structure on $\mathcal U$. By a relative cell complex in $\I\U$ we understand a map $X\to Y$ that can be written as the transfinite composition of a sequence of maps
\[
X=Y_0\to Y_1\to Y_2\to\dots \to \colim_{n\geq 0}Y_n=Y
\]
where each map $Y_n\to Y_{n+1}$ is the pushout of a coproduct of generating cofibrations. It follows from the general theory for cofibrantly generated model categories that a cofibration in $\I\U$ is a retract of a cell complex. We refer the reader to \cite{Hi}, Section 11.6, for a general discussion of level model structures on diagram categories.   

As explained in Section \ref{liftingintro}, the weak equivalences in the $\I$-model structure on 
$\I\U$, that is, the $\I$-equivalences, are the maps that induce weak homotopy equivalences of homotopy colimits. The cofibrations in the $\I$-model structure are the same as for the level structure and the fibrations can be characterized as the maps having the right lifting property with respect to acyclic cofibrations. Again, the $\I$-model structure is cofibrantly generated with $FI$ the set of generating cofibrations. There also is an explicit description of the generating acyclic cofibrations and the fibrations but we shall not need this here. 
\begin{lemma}
The adjunction in (\ref{hocolimU}) is a Quillen adjunction
\end{lemma}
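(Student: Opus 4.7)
The plan is to verify that the left adjoint $\hocolim_{\I}$ preserves cofibrations and acyclic cofibrations, which is enough to conclude that $(\hocolim_\I, U)$ is a Quillen adjunction. Since both model structures are the slice model structures lifted from $\I\U$ and $\U$ respectively, a morphism is a cofibration or weak equivalence if and only if its underlying morphism is such, and by naturality $\hocolim_{\I}$ sends a structure map $X \to BF$ to the induced structure map $X_{h\I} \to BF_{h\I}$. It therefore suffices to check the two conditions at the level of the underlying functor $\I\U \to \U$.

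For cofibrations, I would use that the $\I$-model structure is cofibrantly generated by the set $FI = \{F_d(S^{n-1}) \to F_d(D^n) : d, n \geq 0\}$ recalled above. Being a left adjoint, $\hocolim_{\I}$ preserves colimits, so it is enough to verify that each element of $FI$ maps to a cofibration in $\U$. The key input is the standard identification
\[
\hocolim_{\I} F_d(K) \;\cong\; B(\mathbf d \downarrow \I) \times K,
\]
which reduces the question to showing that $B(\mathbf d \downarrow \I) \times S^{n-1} \hookrightarrow B(\mathbf d \downarrow \I) \times D^n$ is a Quillen cofibration in $\U$. Since $(\mathbf d \downarrow \I)$ has $\id_{\mathbf d}$ as an initial object, $B(\mathbf d \downarrow \I)$ is a (contractible) CW complex, and the product of a CW complex with a generating cofibration of $\U$ is again a Quillen cofibration.

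For acyclic cofibrations the argument is essentially tautological: $\I$-equivalences were defined as exactly those maps whose image under $\hocolim_{\I}$ is a weak homotopy equivalence. Consequently $\hocolim_{\I}$ preserves \emph{all} weak equivalences of $\I\U$, and in particular preserves acyclic cofibrations.

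I do not expect any real obstacle here; the lemma is formal once the description of the generating cofibrations and the definition of $\I$-equivalence are in hand. The only technical point worth citing carefully is the computation $\hocolim_{\I} F_d(K) \cong B(\mathbf d \downarrow \I) \times K$, which is a standard homotopy colimit manipulation of the kind collected in Appendix \ref{hocolimsection}.
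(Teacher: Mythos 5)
Your proof is correct and takes essentially the same route as the paper's: reduce to the generating cofibrations $FI$, identify $\hocolim_{\I} F_d(K)$ with $B(\mathbf d\downarrow\I)\times K$, observe that $B(\mathbf d\downarrow\I)$ is a CW complex, and note that $\hocolim_{\I}$ preserves all weak equivalences by the very definition of $\I$-equivalence. The extra remark about slice model structures is a correct (implicit in the paper) bookkeeping step, not a deviation.
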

\begin{proof}
We claim that $\hocolim_{\I}$ preserves cofibrations and acyclic cofibrations. By definition,
 $\hocolim_{\I}$ preserves weak equivalences in general and the first claim therefore implies the second. For the first claim it suffices to show that $\hocolim_{\I}$ takes the generating cofibrations for $\I\U$ to cofibrations in $\U$. The homotopy colimit of a map $F_d(S^{n-1})\to F_d(D^n)$ may be identified with the map
 \[
 B(\mathbf d\downarrow \I)\times S^{n-1}\to B(\mathbf d\downarrow \I)\times D^n
 \] 
 and the claim follows since $B(\mathbf d\downarrow \I)$ is a cell complex.
\end{proof}
In preparation for the proof that the above Quillen adjunction is in fact a Quillen equivalence we make some general comments on homotopy colimits of $\I$-spaces. 
In general, given an $\I$-space $X$, the homotopy type of $X_{h\I}$ may be very different from that of $X_{h\mathcal N}$. However, if the underlying $\mathcal N$-space is convergent, then the natural map $X_{h\mathcal N}\to X_{h\I}$ is a weak homotopy equivalence by the following lemma due to B\"okstedt; see \cite{M}, Lemma 2.3.7, for a published version. 
\begin{lemma}[\cite{Bok}]\label{Blemma}
Let $X$ be an $\I$-space and suppose that there exists an unbounded non-decreasing sequence of integers $\lambda_m$ such that any morphism $\mathbf m\to\mathbf n$ in $\I$ induces a $\lambda_m$-connected map $X(m)\to X(n)$. Then the inclusion $\{\mathbf n\}\to \I$ induces a $(\lambda_n-1)$-connected map $X(n)\to X_{h\I}$ for all $n$. \qed
\end{lemma}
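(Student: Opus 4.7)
The plan is to model $X_{h\I}$ by its simplicial replacement $\sigma X_{\bullet}$ (with $\sigma X_k=\coprod_{\mathbf n_0\to\dots\to\mathbf n_k} X(n_0)$ indexed over composable chains in $\I$) and to factor the inclusion $X(n)\to X_{h\I}$ of the $0$-simplex at $\mathbf n$ through a homotopy colimit over the overcategory at $\mathbf n$. Let $U\co(\mathbf n\downarrow\I)\to\I$ denote the forgetful functor and consider the factorisation
\[
X(n)\xrightarrow{\varphi}\hocolim_{(\mathbf n\downarrow\I)}(X\circ U)\xrightarrow{\psi}X_{h\I}.
\]

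The map $\varphi$ arises from the natural transformation $\mathrm{const}_{X(n)}\Rightarrow X\circ U$ whose component at $(\mathbf k,\alpha)$ is $\alpha_*\co X(n)\to X(k)$. Since $(\mathbf n,\mathrm{id})$ is initial in $(\mathbf n\downarrow\I)$, the nerve $B(\mathbf n\downarrow\I)$ is contractible and the homotopy colimit of the constant diagram on $X(n)$ is $X(n)$ itself. The hypothesis makes each $\alpha_*$ a $\lambda_n$-connected map, so the induced map $\varphi$ on homotopy colimits is $\lambda_n$-connected by the usual cell-by-cell argument.

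For $\psi$ I plan to establish a $(\lambda_n-1)$-connectivity bound by a quantitative cofinality argument. Classical Thomason cofinality fails here because the comma categories $(\mathbf k\downarrow U)$---whose objects parametrise pairs of injections $\mathbf n\rightarrowtail\mathbf m\leftarrowtail\mathbf k$---are in general disconnected, distinguishing different overlap patterns in $\mathbf m$. Instead I filter $\sigma X$ by the minimum cardinality among the objects occurring in a chain and compare against the corresponding filtration on the overcategory model. Any chain with an entry $\mathbf m$ of cardinality $m\leq n$ can be absorbed, at the cost of an error of connectivity $\lambda_m$, by pushing out into $\mathbf m\sqcup\mathbf n$; combined with monotonicity $\lambda_m\leq\lambda_n$ for $m\leq n$, a Blakers--Massey-style induction over the filtration then accumulates these corrections and yields the stated bound on $\psi$.

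Composing, the map $X(n)\to X_{h\I}$ is $(\lambda_n-1)$-connected, as required. The main obstacle is the connectivity analysis of $\psi$; its failure to be a weak equivalence---owing to the non-cofinality of $U$---is precisely what produces the unit loss in the final bound, as $\varphi$ alone is $\lambda_n$-connected.
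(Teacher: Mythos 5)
The paper does not prove this lemma; it is quoted from B\"okstedt's preprint, with a pointer to Madsen's published account \cite{M}, Lemma 2.3.7, so there is no in-paper argument to compare against. Judged on its own terms, your proposal establishes the first half cleanly but not the second, and the second half is where the content of the lemma lives.

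Your analysis of $\varphi$ is correct: $(\mathbf n,\mathrm{id})$ is initial in $(\mathbf n\downarrow\I)$, so $B(\mathbf n\downarrow\I)$ is contractible and the constant diagram on $X(n)$ has homotopy colimit $X(n)$; the natural transformation to $X\circ U$ is objectwise $\lambda_n$-connected, and homotopy colimits preserve objectwise connectivity of maps, so $\varphi$ is $\lambda_n$-connected. No issue there.

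For $\psi$ your text is not yet an argument, and I do not believe the heuristic it appeals to points in the right direction. The objects of $\I$ missing from $(\mathbf n\downarrow\I)$ are those of cardinality $m<n$, and the gluing data they contribute to $X_{h\I}$ is controlled only by $\lambda_m$ with $\lambda_m\leq\lambda_n$. If one naively ``accumulates corrections'' from those layers, monotonicity of $\lambda$ works \emph{against} you: it bounds each correction from above by $\lambda_n$, which is useless, while the actual errors could a priori be as bad as $\lambda_0$. Getting $\lambda_n-1$ rather than $\lambda_0$ requires showing that the low-cardinality strata create no new homotopy --- morally because every chain passing through a small object can be coherently slid forward along the structure maps into the range $\geq n$, and this sliding is organized by the monoidal structure of $\I$ (concatenation with $\mathbf n$). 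That is the genuine combinatorial input of B\"okstedt's lemma, and nothing in the ``absorb at cost $\lambda_m$'' or ``Blakers--Massey induction'' remarks supplies it. There is also a mechanical problem: the filtration of $\sigma X$ by \emph{minimum} cardinality of the entries of a chain is \emph{decreasing} (face maps can only raise the minimum), so it does not exhaust $X_{h\I}$ from below and cannot directly anchor the inductive exhaustion argument you describe; the natural increasing filtration is by \emph{maximum} cardinality, i.e.\ by the full subcategories $\I_{\leq N}$, and that filtration does not separate the $(\mathbf n\downarrow\I)$-part from the rest in the way your sketch needs. As written, the connectivity bound on $\psi$ is asserted, not proved, and the proposed bookkeeping points the wrong way; you would need to replace it with an actual argument (e.g.\ the finite-approximation/sliding argument via the shear functor $\mathbf m\mapsto \mathbf n\sqcup\mathbf m$, as in Madsen's treatment) before the factorization $\psi\circ\varphi$ yields the stated bound.
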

The structure maps $F(m)\to F(n)$ are $(m-1)$-connected by the Freudentahl suspension theorem and consequently the induced maps $BF(m)\to BF(n)$ are $m$-connected. Thus, the proposition applies to the $\I$-space $BF$ and we see that the canonical map $BF(n)\to BF_{h\I}$ is 
$(n-1)$-connected. This map can be written as the composition 
\[
BF(n)\to \Map(B(\mathbf n\downarrow \I),BF_{h\I})\to BF_{h\I} 
\]
where the first map is the unit of 
the adjunction and the second map is defined by evaluating at the vertex represented by the initial object. Since the second map is clearly a homotopy equivalence it follows that the first map is also $(n-1)$-connected.

\begin{proposition}\label{Q-equivalence}
The adjunction (\ref{hocolimU}) is a Quillen equivalence.
\end{proposition}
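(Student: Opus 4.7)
The plan is to invoke the standard criterion that a Quillen adjunction $(L,R)$ is a Quillen equivalence whenever $L$ reflects weak equivalences between cofibrant objects and the derived counit is a weak equivalence on fibrant objects. The first condition is essentially tautological: by construction, an $\I$-equivalence in $\I\U/BF$ is a morphism whose image under $\hocolim_{\I}$ is a weak equivalence in $\U/BF_{h\I}$, so $\hocolim_{\I}$ reflects $\I$-equivalences on all morphisms, not just on cofibrant ones. Hence the content of the proposition reduces to verifying the derived counit condition.

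For this, fix a fibrant object $(Y,g)$ in $\U/BF_{h\I}$, which amounts to $g\co Y\to BF_{h\I}$ being a Serre fibration. Since $\hocolim_{\I}$ preserves all weak equivalences, it suffices to show that the (non-derived) counit $(U_gY)_{h\I}\to Y$ is a weak equivalence. Because $g$ is a fibration, it induces a fibration on mapping spaces, so the defining pullback
$$
U_g(Y)(\mathbf n)=BF(n)\times_{\Map(B(\mathbf n\downarrow\I),BF_{h\I})}\Map(B(\mathbf n\downarrow\I),Y)
$$
is a homotopy pullback. Since $\mathbf n$ is initial in $(\mathbf n\downarrow\I)$, the classifying space $B(\mathbf n\downarrow\I)$ is contractible, and evaluation at the initial vertex furnishes weak equivalences from the two mapping spaces to $Y$ and $BF_{h\I}$ respectively, under which the adjunction unit $BF(n)\to\Map(B(\mathbf n\downarrow\I),BF_{h\I})$ corresponds to the canonical map $BF(n)\to BF_{h\I}$. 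Comparing the two homotopy pullbacks then yields a level-wise weak equivalence of $\I$-diagrams
$$
U_g(Y)(\mathbf n)\xr{\sim} BF(n)\times_{BF_{h\I}}Y,
$$
the target being itself a homotopy pullback since $g$ is a fibration.

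The remaining step is a descent argument to conclude that
$$
(U_gY)_{h\I}\simeq \hocolim_{\mathbf n\in\I}\bigl(BF(n)\times_{BF_{h\I}}Y\bigr)\simeq \bigl(\hocolim_{\mathbf n\in\I}BF(n)\bigr)\times_{BF_{h\I}}Y = BF_{h\I}\times_{BF_{h\I}}Y\simeq Y,
$$
after which one verifies that this equivalence agrees with the counit.

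The hard part will be the middle equivalence above, asserting that homotopy colimits over $\I$ commute with homotopy pullback along the Serre fibration $g$. My plan is either to invoke a general descent-type lemma (a natural candidate for the appendix on homotopy colimits) or to argue directly: the level-wise pullback of $g$ along the diagram $BF$ is a diagram of fibrations whose homotopy colimit is a quasifibration over $BF_{h\I}=\hocolim_{\I}BF$ with the same fibers as $g$, and hence weakly equivalent to $Y$ over $BF_{h\I}$.
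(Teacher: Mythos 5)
Your reduction of the Quillen-equivalence criterion to showing that the counit $\varepsilon\co (U_gY)_{h\I}\to Y$ is a weak equivalence for fibrant $(Y,g)$ is exactly the paper's first step, and your observation that the defining pullback of $U_g(Y)(\mathbf n)$ is homotopy cartesian when $g$ is a fibration also matches the paper. The problem is with the claimed ``level-wise weak equivalence of $\I$-diagrams'' $U_g(Y)(\mathbf n)\xr{\sim}BF(n)\times_{BF_{h\I}}Y$. Evaluation at the initial vertex $\id_{\mathbf n}$ of $(\mathbf n\downarrow\I)$ does \emph{not} define a natural transformation out of $\Map(B(-\downarrow\I),Y)$: for a morphism $\alpha\co\mathbf m\to\mathbf n$ the structure map precomposes with $B(\alpha^*)\co B(\mathbf n\downarrow\I)\to B(\mathbf m\downarrow\I)$, which sends the initial vertex $\id_{\mathbf n}$ to the non-initial vertex $\alpha$ of $(\mathbf m\downarrow\I)$, so the naturality square for evaluation fails (the two composites agree only up to the canonical homotopy coming from the unique morphism $\id_{\mathbf m}\to\alpha$). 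Consequently you do not actually have a map of $\I$-spaces to push through $\hocolim_{\I}$, and the descent step that follows is not available as stated. Note also that without a natural map you cannot ``verify that this equivalence agrees with the counit,'' since there is no single comparison morphism to identify.

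The paper sidesteps all of this by factoring the counit as $(U_gY)_{h\I}\to\Map(B(-\downarrow\I),Y)_{h\I}\to Y$. The first arrow is induced by the \emph{other} projection $U_g(Y)\to\Map(B(-\downarrow\I),Y)$ of the pullback square, which is an honest natural transformation of $\I$-spaces; since the square is homotopy cartesian, its level-$n$ connectivity equals that of the unit $BF(n)\to\Map(B(\mathbf n\downarrow\I),BF_{h\I})$, namely $(n-1)$ by B\"okstedt's Lemma \ref{Blemma}, so the induced map of homotopy colimits is a weak equivalence. The second arrow is an equivalence because $B(-\downarrow\I)$ is level-wise contractible. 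This avoids the descent argument entirely and requires no commutation of $\hocolim_{\I}$ with pullback. Your approach could perhaps be rescued by replacing the pointwise evaluation with a genuine natural comparison (e.g.\ by a further bar-resolution or a homotopy-coherence argument), but as written there is a real gap at the naturality step.
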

\begin{proof}
Given a cofibrant object $f\co X\to BF$ in $\I\U/BF$ and a fibrant object $g\co Y\to BF_{h\I}$ in $\U/BF_{h\I}$  we must show that a morphism $\phi\co X_{h\I}\to Y$ of spaces over $BF_{h\I}$ is a weak homotopy equivalence if and only if the adjoint $\psi\co X\to U_g(Y)$ is an $\I$-equivalence of $\I$-spaces over $BF$. The maps $\phi$ and  $\psi$ are related by the commutative diagram 
\[
\xymatrix{
X_{h\I}\ar[rr]^{\psi_{h\I}} \ar[dr]_{\phi} & & U_g(Y)_{h\I}\ar[dl]^{\varepsilon}\\
& Y &
}
\]
where $\varepsilon$ denotes the counit for the adjunction. It therefore suffices to show that 
$\varepsilon$ is a weak homotopy equivalence. The assumption that $(Y,g)$ be a fibrant object means that $g$ is a fibration and the pullback diagram
\[
\begin{CD}
U_g(Y)(n) @>>> BF(n)\\
@VVV @VVV \\
\Map(B(\mathbf n\downarrow \I), Y)@>>> \Map(B(\mathbf n\downarrow \I),BF_{h\I})
\end{CD}
\] 
used to define $U_g(Y)$ is therefore homotopy cartesian. By the remarks following Lemma \ref{Blemma} it follows that the vertical maps are $(n-1)$-connected. The counit $\varepsilon$ admits a factorization 
\[
U_g(Y)_{h\I}\to \Map(B(-\downarrow \I),Y)_{h\I}\to Y
\]  
where the first map is a weak homotopy equivalence by the above discussion and the second map is a weak homotopy equivalence since $B(-\downarrow \I)$ is level-wise contractible. This completes the proof.
\end{proof}

The functor $U$ is only homotopically well-behaved when applied to fibrant objects. We define a (Hurewicz) fibrant replacement functor $\Gamma$ on $\U/BF_{h\I}$ as in (\ref{fibrantreplacement}) (replacing $BF(n)$ by $BF_{h\I}$) and we write $U'$ for the composite functor $U\circ \Gamma$. This is up to natural homeomorphism the same as the functor obtained by evaluating the homotopy pullback instead of the pullback in the diagram defining $U_f(X)$. 

\subsection{The $\I$-space lifting functor $R$}\label{Rliftingsection}
As discussed in Section \ref{liftingintro}, the functor $U$ does not have all the properties one may wish when constructing Thom spectra from maps to $BF_{h\I}$. In this section we introduce the $\I$-space lifting functor $R$ and we establish some of its properties. 
Given a space $X$ and a map $f\co X\to BF_{h\I}$, we shall view this as a map of constant 
$\I$-spaces. In order to lift it to a map with target $BF$, consider the $\I$-space $\overline{BF}$ defined by
$$
\overline{BF}(n)=\hocolim_{(\I\downarrow \mathbf n)}BF\circ \pi_n,
$$
where $\pi_n\co (\I\downarrow\mathbf n)\to\I$ is the forgetful functor that maps an object 
$\mathbf m\to\mathbf n$ to $\mathbf m$. By definition, $\overline{BF}$ is the homotopy left Kan extension of $BF$ along the identity functor on $\I$, see Appendix \ref{homotopyKansection}. Since the identity on $\mathbf n$ is a terminal object in $(\I\downarrow \mathbf n)$ there results a canonical homotopy equivalence $t_n\co\overline{BF}(n)\to BF(n)$ for each $n$. 
\begin{lemma}\label{barBFlemma}
The map $\pi_n\co\overline{BF}(n)\to BF_{h\I}$ induced by the functor $\pi_n$ is $(n-1)$-connected.
\end{lemma}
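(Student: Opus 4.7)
The plan is to factor $\pi_n$ (on the nose) through the canonical vertex inclusion $j_n\co BF(n)\to BF_{h\I}$, which is already known to be $(n-1)$-connected by the remarks following Lemma \ref{Blemma}: the Freudenthal suspension theorem makes the maps $F(m)\to F(n)$ in $\I$ be $(m-1)$-connected, so $BF(m)\to BF(n)$ is $m$-connected, and Lemma \ref{Blemma} applied with $\lambda_m=m$ then yields the stated connectivity of $j_n$.

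First I would introduce the vertex inclusion $s_n\co BF(n)\to\overline{BF}(n)$ of $BF(n)$ into the bar construction $\overline{BF}(n)=\hocolim_{(\I\downarrow\mathbf n)}(BF\circ\pi_n)$ corresponding to the terminal object $(\mathbf n,\id_n)\in(\I\downarrow\mathbf n)$. The same terminal-object argument that produces the homotopy equivalence $t_n\co\overline{BF}(n)\to BF(n)$ shows that $s_n$ is itself a weak equivalence: one checks directly in the bar construction that $t_n\circ s_n=\id_{BF(n)}$, and since $t_n$ is a homotopy equivalence so is $s_n$. Next I would unwind the definitions to see that $\pi_n\circ s_n=j_n$, since the functor $\pi_n\co(\I\downarrow\mathbf n)\to\I$ sends the terminal object $(\mathbf n,\id_n)$ to $\mathbf n\in\I$, and vertex inclusions are strictly compatible under the map on homotopy colimits induced by a functor of indexing categories. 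The conclusion then follows formally: $s_n$ is a weak equivalence and $j_n$ is $(n-1)$-connected, so $\pi_n$ is $(n-1)$-connected as required.

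There is no serious obstacle here: the entire content sits in the observation that the terminal object of $(\I\downarrow\mathbf n)$ simultaneously produces the equivalence $s_n$ and identifies $\pi_n\circ s_n$ with the already-understood map $j_n$. The only routine check is the tautological identification $\pi_n\circ s_n=j_n$ in the simplicial bar construction model of the homotopy colimit.
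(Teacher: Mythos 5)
Your proof is correct and takes essentially the same approach as the paper: both factor the canonical map $j_n\co BF(n)\to BF_{h\I}$ as $\pi_n\circ s_n$ using the section $s_n$ of $t_n$ coming from the terminal object of $(\I\downarrow\mathbf n)$, and then invoke the $(n-1)$-connectivity of $j_n$ established after Lemma \ref{Blemma}. You have merely spelled out the tautological identifications that the paper leaves implicit.
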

\begin{proof}
The homotopy equivalence $t_n$ has a section 
induced by the inclusion of the terminal object in $(\I\downarrow\mathbf n)$, such that the canonical map 
$BF(n)\to BF_{h\I}$ factors through $\overline{BF}(n)$. The result therefore follows from Lemma \ref{Blemma} and the above discussion.
\end{proof}
Consider now the diagram of $\I$-spaces
$$
BF_{h\I}\xl{\pi} \overline{BF}\xr{t} BF,
$$
where the right hand map is the level-wise equivalence specified above and the left hand map is induced by the functors $\pi_n$. Here we again view $BF_{h\I}$ as a constant $\I$-space. We define the $\I$-space $R_f(X)$ to be the level-wise homotopy pullback of the diagram of $\I$-spaces
\begin{equation}\label{Rdiagram}
X\xr{f}BF_{h\I}\xl{\pi}\overline{BF},
\end{equation}
that is, $R_f(X)(n)$ is the space
$$
\{(x,\omega,b)\in X\times BF_{h\I}^I\times\overline{BF}(n)
\co \omega(0)=f(x),\ \omega(1)=\pi(b)\}.
$$
Notice, that the two projections $R_f(X)\to X$ and $R_f(X)\to\overline{BF}$ are level-wise Hurewicz fibrations of $\I$-spaces. The functor $R$ is defined by
\begin{equation}\label{Rfunctor} 
R\co \U/BF_{h\I}\to \I\U/BF,\quad (f\co X\to BF_{h\I})\mapsto 
(R(f)\co R_f(X)\to\overline{BF}\xr{t} BF).
\end{equation}
When there is no risk of confusion we write $R(X)$ instead of $R_f(X)$.
\begin{proposition}\label{RTgood}
The $\I$-space $R_f(X)$ is convergent and $R(f)$ is level-wise $T$-good.
\end{proposition}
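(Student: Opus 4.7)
The plan is to verify the two claims separately, both resting on the fact that $R_f(X)(n)$ is defined as a level-wise homotopy pullback together with connectivity estimates for $BF$ and the well-based quasifibration structure of $V(n)\to BF(n)$.

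For convergence I would first note that by the Freudenthal suspension theorem any morphism $\mathbf m\to\mathbf n$ in $\I$ induces an $(m-1)$-connected map $F(m)\to F(n)$ and hence an $m$-connected map $BF(m)\to BF(n)$. The equivalence $t_n\co\overline{BF}(n)\to BF(n)$ is defined via the terminal-vertex cocone and therefore fits into strictly commutative squares $t_n\circ\alpha_*=\alpha\circ t_m$ for every morphism $\alpha\co \mathbf m\to\mathbf n$, which forces $\alpha_*\co \overline{BF}(m)\to\overline{BF}(n)$ to be $m$-connected as well. Next, the projection $R_f(X)(n)\to X$ is a Hurewicz fibration coming from the path-space factor in the homotopy pullback, and its fiber over $x\in X$ is the homotopy fiber of $\pi_n\co \overline{BF}(n)\to BF_{h\I}$ over $f(x)$. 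The map $R_f(X)(m)\to R_f(X)(n)$ lies over the identity on $X$, and a standard homotopy-fiber-sequence argument (using that the base map on $BF_{h\I}$ is an equivalence) shows that its connectivity equals that of $\overline{BF}(m)\to\overline{BF}(n)$, namely $m$. Taking $\lambda_m=m$ gives the required unbounded, non-decreasing sequence.

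For level-wise $T$-goodness I would factor $R(f)_n$ as $R_f(X)(n)\xr{p_n}\overline{BF}(n)\xr{t_n}BF(n)$, where $p_n$ is a Hurewicz fibration by the construction of $R_f(X)$ as a homotopy pullback. Then $R(f)_n^*V(n)\cong p_n^*(t_n^*V(n))$, and Lemma \ref{wellbasedfibrationlemma} reduces the claim to showing that $t_n^*V(n)\to\overline{BF}(n)$ is a well-based quasifibration. For each object $\alpha\co\mathbf m\to\mathbf n$ of $(\I\downarrow\mathbf n)$ the pullback of $V(n)$ along $\alpha\co BF(m)\to BF(n)$ is $S^{n-\alpha}\bar\wedge V(m)\to BF(m)$, a well-based quasifibration by Proposition \ref{barprop}; this is the evident generalization of (\ref{Npullback}). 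Since pullbacks commute with the bar construction defining the homotopy colimit, $t_n^*V(n)\to\overline{BF}(n)$ is identified with the homotopy colimit over $(\I\downarrow\mathbf n)$ of this diagram of well-based quasifibrations, and appealing to the preservation result for homotopy colimits of well-based quasifibrations recorded in Appendix \ref{hocolimsection} concludes the argument.

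The principal technical obstacle is precisely this preservation of well-based quasifibrations under homotopy colimits, a Dold-Thom-style statement which must be handled separately in the appendix; once that is in hand, the remaining steps are routine applications of the five lemma and the pullback formula for the universal quasifibration $V(n)\to BF(n)$.
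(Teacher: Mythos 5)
Your proof is correct and the $T$-goodness half is essentially identical to the paper's: you factor $R(f)_n$ through the Hurewicz fibration $R_f(X)(n)\to\overline{BF}(n)$, reduce to showing $t_n^*V(n)$ is a well-based quasifibration, and invoke Lemma \ref{hocolimquasifibration} together with Lemma \ref{wellbasedfibrationlemma} -- exactly the paper's route. For convergence you take a slightly different but equally valid path: the paper reads off from Lemma \ref{barBFlemma} that $\pi_n$ is $(n-1)$-connected, so each projection $R_f(X)(n)\to X$ is $(n-1)$-connected, from which convergence follows by a two-out-of-three estimate on the structure maps; you instead estimate the connectivity of the structure maps $\overline{BF}(m)\to\overline{BF}(n)$ directly (via $t_n\alpha_*=\alpha t_m$) and transfer this through the fibration $R_f(X)(n)\to X$. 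Both give $\lambda_m\approx m$ and the difference is cosmetic.
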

\begin{proof}
Since $R_f(X)$ is defined as a homotopy pullback, we see from Lemma \ref{barBFlemma} that the map $R_f(X)(n)\to X$ is $(n-1)$-connected for each $n$, hence $R_f(X)$ is convergent. We claim that 
$R(f)$ classifies a well-based quasifibration at each level. In order to see this we first observe that $t^*V(n)$ is a well-based quasifibration over $\overline{BF}(n)$ by Lemma 
\ref{hocolimquasifibration}. Thus, 
$R(f)^*V(n)$ is a pullback of a well-based quasifibration along the Hurewicz fibration 
$R_f(X)(n)\to \overline{BF}(n)$, hence is itself a well-based quasifibration by Lemma \ref{wellbasedfibrationlemma}.
\end{proof} 
 
\begin{proposition}\label{RU'comparison}
There is a natural level-wise equivalence $R_f(X)\xr{\sim} U'_f(X)$ over $BF$. 
\end{proposition}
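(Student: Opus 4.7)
The plan is to realize both $R_f(X)(n)$ and $U'_f(X)(n)$ as Hurewicz-style models of essentially the same homotopy pullback and to construct an explicit natural comparison between them. First I unpack $U'_f(X)(n)$: since $\Gamma$ is built from Hurewicz path spaces and $\Map(B(\mathbf n\downarrow\I),-)$ commutes with pullbacks and path objects, one sees that $U'_f(X)(n)$ is naturally homeomorphic to the path-space model of the homotopy pullback of
\[
\Map(B(\mathbf n\downarrow\I),X)\xr{f_*}\Map(B(\mathbf n\downarrow\I),BF_{h\I})\xl{\eta}BF(n),
\]
where $\eta$ is the unit of the adjunction $\hocolim_{\I}\dashv\Map(B(-\downarrow\I),-)$. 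An element of $U'_f(X)(n)$ is therefore a triple $(v,\sigma,H)$ with $v\in BF(n)$, $\sigma\co B(\mathbf n\downarrow\I)\to X$, and $H\co B(\mathbf n\downarrow\I)\times I\to BF_{h\I}$ a homotopy from $f\circ\sigma$ to $\eta(v)$.

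Given $(x,\omega,b)\in R_f(X)(n)$, I define the image in $U'_f(X)(n)$ to be $(t_n(b),\text{const}_x,H)$, where $H$ is the concatenation of (i) the constant-in-$s$ extension of $\omega$, a path from $\text{const}_{f(x)}$ to $\text{const}_{\pi_n(b)}$ inside $\Map(B(\mathbf n\downarrow\I),BF_{h\I})$, with (ii) a canonical natural homotopy $\kappa_b$ from $\text{const}_{\pi_n(b)}$ to $\eta(t_n(b))$. The homotopy $\kappa_b$ is built from the simplicial structure of $\overline{BF}(n)=\hocolim_{(\I\downarrow\mathbf n)}BF$: for a simplex $(w,\mathbf m_0\to\cdots\to\mathbf n)$ representing $b$ and a vertex $\alpha\co\mathbf n\to\mathbf p$ of $B(\mathbf n\downarrow\I)$, the required interpolating $1$-simplex of $BF_{h\I}$ is the one determined by the composite $\alpha\circ(\mathbf m_0\to\mathbf n)\in\I(\mathbf m_0,\mathbf p)$, and these fit together into a continuous homotopy depending naturally on $b$ by the universal property of geometric realization. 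By construction the resulting assignment is natural in $f$ and commutes with the projections to $BF$ via $t_n$.

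To show the map is a level equivalence I use the gluing lemma for homotopy pullbacks. Both $R_f(X)(n)$ and $U'_f(X)(n)$ admit natural maps to the ``common model'' homotopy pullback $X\times^h_{BF_{h\I}}BF(n)$ of $X\to BF_{h\I}\leftarrow BF(n)$: from $R_f(X)(n)$ one pushes $\overline{BF}(n)$ down to $BF(n)$ via the equivalence $t_n$, inserting the canonical natural path from $\pi_n(b)$ to $t_n(b)$ in $BF_{h\I}$ that also underlies $\kappa_b$; from $U'_f(X)(n)$ one evaluates at the initial object $\text{id}_{\mathbf n}\in(\mathbf n\downarrow\I)$, inducing the equivalences $\Map(B(\mathbf n\downarrow\I),Y)\xr{\sim}Y$ for $Y=X$ and $Y=BF_{h\I}$. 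Both are weak equivalences by the gluing lemma applied to the evident commutative diagrams, and my comparison fits into a commutative triangle with them (by the very description of $\kappa_b$ at the vertex $\text{id}_{\mathbf n}$), so it is itself a weak equivalence by two-out-of-three.

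The main obstacle is verifying that $\kappa_b$ is continuous and natural in $b\in\overline{BF}(n)$: the interpolating $1$-simplices of $BF_{h\I}$ have to be packaged in a way that is compatible with the face and degeneracy operators of both $B(\I\downarrow\mathbf n)$ and $B(\mathbf n\downarrow\I)$. Once this coherence is settled, the gluing-lemma argument is routine, using Lemma \ref{barBFlemma} for the $BF$ vertex and the contractibility of $B(\mathbf n\downarrow\I)$ for the $X$ and $BF_{h\I}$ vertices.
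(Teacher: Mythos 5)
Your overall strategy is the same as the paper's: view both $R_f(X)(n)$ and $U'_f(X)(n)$ as homotopy pullbacks of three-term diagrams and produce a natural comparison by mapping one diagram to the other. The paper does this by stacking the diagram $X\to BF_{h\I}\leftarrow\overline{BF}$ over the diagram $\Map(B(-\downarrow\I),X)\to\Map(B(-\downarrow\I),BF_{h\I})\leftarrow BF$, observing that the left square commutes strictly and the right square commutes up to a canonical natural homotopy, and then invoking homotopy invariance of homotopy pullbacks since all three vertical maps are level equivalences. Your additional detour through a ``common model'' homotopy pullback and two-out-of-three is slightly more roundabout but would work once the comparison map exists.

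The genuine gap is exactly the point you flag yourself: the construction of the homotopy $\kappa_b$, or equivalently the homotopy filling the right-hand square. What is needed is a \emph{continuous} natural homotopy, compatible with the face and degeneracy maps of the bisimplicial structure on $\hocolim_{\I}\overline{BF}$, between the two maps $\hocolim_{\I}\overline{BF}\to BF_{h\I}$ given by $\pi$ and by $t$ followed by the canonical map $BF(n)\to BF_{h\I}$; its adjoint then fills the square. Your informal recipe (``for each simplex of $\overline{BF}(n)$ and each vertex $\alpha$ of $B(\mathbf n\downarrow\I)$, take the $1$-simplex of $BF_{h\I}$ determined by the composite'') only specifies the homotopy on vertices and does not address the coherence over higher simplices on either side, nor what happens for higher-dimensional simplices of $\overline{BF}(n)$; appealing to ``the universal property of geometric realization'' does not by itself produce this compatibility. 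This is precisely the content of Lemma~\ref{homotopyKanlemma}, proved in Appendix~\ref{hocolimsection} by passing to the diagonal of a bisimplicial space and writing down the explicit homotopy $[(b_0\leftarrow\cdots\leftarrow b_i\leftarrow\phi(a_0)\leftarrow\cdots\leftarrow\phi(a_i),\,y);(su,(1-s)u)]$. Your sketch gestures at the right idea, but without that lemma (or an equivalent argument) the proof is incomplete at its central step, and the remaining ``routine gluing-lemma'' part rests on an unconstructed map.
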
 
\begin{proof}
Given a map $f\co X\to BF_{h\I}$, consider the diagram of $\I$-spaces 
\[
\xymatrix{
X \ar[r]^f \ar[d] & BF_{h\I}\ar[d] & \overline{BF}\ar[l]_{\pi}\ar[d]^t \\
\Map(B(-\downarrow \I),X)\ar[r]^-f & \Map(B(-\downarrow \I),BF_{h\I})& \ar[l] BF 
}
\]
where we view $X$ and $BF_{h\I}$ as constant $\I$-spaces and the corresponding vertical maps are induced by the projection $B(-\downarrow \I)\to *$. The left hand square is strictly commutative and we claim that the right hand square is homotopy commutative. Indeed, with notation as in Appendix \ref{homotopyKansection}, $\overline{BF}$ is the homotopy Kan extension $\mathit{id}^h_*BF$ along the identity functor on $\I$ and the adjoints of the two compositions in the diagram are the two maps $\hocolim_{\I}\overline{BF}\to BF_{h\I}$ shown to be homotopic in Lemma \ref{homotopyKanlemma}. Using the canonical homotopy from that lemma we therefore get a canonical homotopy relating the two composites in the right hand square. The latter homotopy in turn gives rise to a natural maps of the associated homotopy pullbacks, that is, to a natural map $R_f(X)\to U'_f(X)$. Since the vertical maps in the above diagram are level-wise equivalences the same holds for the map of homotopy pullbacks. 
\end{proof} 
 
\begin{corollary}\label{Rhomotopyinverse}
The functors $R$ and $\hocolim_{\I}$ are homotopy inverses in the sense that there is a chain of natural weak homotopy equivalences $R_f(X)_{h\I}\simeq X$ of spaces over $BF_{h\I}$ and a chain of natural $\I$-equivalences $R_{f_{h\I}}(X_{h\I})\simeq X$ of $\I$-spaces over $BF$.
\end{corollary}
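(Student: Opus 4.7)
The plan is to reduce both statements to two ingredients already established: the natural level-wise equivalence $R\xr{\sim}U'$ of Proposition \ref{RU'comparison}, and the Quillen equivalence of Proposition \ref{Q-equivalence}, whose content is that the derived counit of the $(\hocolim_{\I},U)$-adjunction is a weak equivalence on fibrant objects and the derived unit is an $\I$-equivalence on cofibrant objects.

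For the first statement I would set up the natural chain
\[
R_f(X)_{h\I} \xr{\sim} U'_f(X)_{h\I} \xr{\sim} \Gamma_f(X) \xleftarrow{\sim} X
\]
of spaces over $BF_{h\I}$. The first map is $\hocolim_{\I}$ applied to the level-wise equivalence of Proposition \ref{RU'comparison}, hence a weak homotopy equivalence because homotopy colimits preserve level equivalences. The second is the counit of the adjunction at the object $(\Gamma_f(X),\Gamma(f))$, which is fibrant since $\Gamma(f)$ is a Hurewicz and hence Serre fibration; by Proposition \ref{Q-equivalence} this derived counit is a weak equivalence. The third is the natural homotopy equivalence $x\mapsto (x,c_{f(x)})$ of $X$ into its path-space replacement, which is manifestly a map over $BF_{h\I}$.

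For the second statement I would again start from Proposition \ref{RU'comparison}, now applied to the map $f_{h\I}\co X_{h\I}\to BF_{h\I}$, and combine it with a functorial cofibrant replacement $q\co X^c\xr{\sim} X$ in the $\I$-model structure on $\I\U/BF$. The proposed chain of natural $\I$-equivalences of $\I$-spaces over $BF$ is
\[
R_{f_{h\I}}(X_{h\I}) \xr{\sim} U'_{f_{h\I}}(X_{h\I}) \xleftarrow{\sim} U'_{f^c_{h\I}}(X^c_{h\I}) \xleftarrow{\sim} X^c \xr{q} X.
\]
Here the first arrow is the level-wise equivalence of Proposition \ref{RU'comparison}, the last is $q$ itself, and the third is the derived unit at the cofibrant object $X^c$, an $\I$-equivalence by Proposition \ref{Q-equivalence}. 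For the remaining arrow I would observe that $q_{h\I}\co X^c_{h\I}\to X_{h\I}$ is a weak equivalence, so $\Gamma(X^c_{h\I})\to\Gamma(X_{h\I})$ is a weak equivalence of Serre fibrations over $BF_{h\I}$, and then invoke the auxiliary fact that the functor $U$ preserves weak equivalences between fibrant objects.

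The main technical obstacle I anticipate is precisely this auxiliary fact. The argument is a standard cube-lemma computation: each $U_g(Y)(n)$ is defined as an honest pullback of
\[
BF(n) \to \Map(B(\mathbf n\downarrow\I),BF_{h\I}) \xleftarrow{} \Map(B(\mathbf n\downarrow\I),Y)
\]
along the fibration on the right, and $\Map(B(\mathbf n\downarrow\I),-)$ preserves fibrations and weak equivalences between fibrant objects because $B(\mathbf n\downarrow\I)$ is a cell complex. With this lemma in place, naturality of both chains follows from the naturality of $R$, $U$, $\Gamma$ and the chosen cofibrant replacement, completing the proof.
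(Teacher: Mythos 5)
Your proof is correct and follows essentially the same route as the paper, which simply observes that $U'$ has the asserted homotopy-inverse property by Proposition \ref{Q-equivalence} (and its proof) and then transfers it to $R$ via the level-wise equivalence $R\to U'$ of Proposition \ref{RU'comparison}; you fill in the details of exactly that argument.

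One small simplification is available in your second chain: the cofibrant replacement $X^c\to X$ is superfluous. The proof of Proposition \ref{Q-equivalence} shows that the counit $\varepsilon_Y\co U_g(Y)_{h\I}\to Y$ is a weak equivalence for \emph{every} fibrant $(Y,g)$, without any cofibrancy hypothesis on the source. Combining this with the triangle identity and naturality of $\varepsilon$, one finds that $\hocolim_{\I}$ of the modified unit $X\to U'(\hocolim_{\I}X)$ equals the natural fibrant replacement $X_{h\I}\to\Gamma(X_{h\I})$ followed by the inverse (in the homotopy category) of $\varepsilon_{\Gamma(X_{h\I})}$, hence is a weak equivalence for all $X$. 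Thus $X\to U'_{f_{h\I}}(X_{h\I})$ is already an $\I$-equivalence over $BF$, and your chain collapses to the two-step chain $R_{f_{h\I}}(X_{h\I})\xr{\sim}U'_{f_{h\I}}(X_{h\I})\xl{\sim}X$; the auxiliary lemma on $U$ preserving weak equivalences between fibrant objects (which in any case is automatic by Ken Brown's lemma for the right Quillen functor $U$) is then not needed. The cofibrant replacement would become essential only if one worked with $U$ rather than $U'$.
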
  
 
\begin{proof}
It follows easily from Proposition \ref{Q-equivalence} and its proof that the functor $U'$ has this property and the same therefore holds for $R$ by Proposition \ref{Rhomotopyinverse}.
\end{proof} 

The functor $R$ has good properties both formally and homotopically.
\begin{proposition}\label{RUequivalence}
The functor $R$ in (\ref{Rfunctor}) takes weak homotopy equivalences over $BF_{h\I}$ to level-wise equivalences over $BF$ and preserves colimits and tensors with unbased spaces.  
\end{proposition}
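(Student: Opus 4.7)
My plan is to verify the three assertions in turn: preservation of weak equivalences, of colimits, and of tensors.

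For weak equivalences, I would observe that by construction $R_f(X)(n)$ is the standard homotopy pullback of the diagram $X\xr{f}BF_{h\I}\xl{\pi}\overline{BF}(n)$ computed via the path space. A morphism $(X,f)\to(Y,g)$ in $\U/BF_{h\I}$ whose underlying map is a weak homotopy equivalence induces, at each level $n$, a map of such homotopy pullbacks in which the remaining two corners $BF_{h\I}$ and $\overline{BF}(n)$ are unchanged. The standard invariance of homotopy pullbacks under weak equivalences in one variable then gives a level-wise weak equivalence $R_f(X)\to R_g(Y)$ in $\I\U$, and this lives over $BF$ by construction of the structure map.

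For colimits, I would isolate the space $E_n=BF_{h\I}^I\times_{BF_{h\I}}\overline{BF}(n)$ (the fibre product via the endpoint evaluation $\omega\mapsto\omega(1)$ and $\pi_n$) together with its projection $e_n\co E_n\to BF_{h\I}$, $(\omega,b)\mapsto\omega(0)$. The map $e_n$ is a Hurewicz fibration as a pullback along $\pi_n$ of the path-loop fibration $BF_{h\I}^I\to BF_{h\I}$, and the defining formula exhibits a natural homeomorphism
$$R_f(X)(n)\cong X\times_{BF_{h\I}}E_n.$$
Since colimits in $\I\U/BF$ and $\U/BF_{h\I}$ are computed on underlying spaces (level-wise in the former), the claim reduces to checking that pullback along $e_n$ preserves colimits in $\U/BF_{h\I}$ for each $n$. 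This is the main technical point, and it is precisely the same property that underlies Proposition \ref{Thomcolimit}; the argument of \cite{LMS}, Proposition 1.1 and Corollary 1.4, applies in our setting, exploiting the convenience of the category $\U$ of compactly generated weak Hausdorff spaces.

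For tensors, the tensor of $(X,f)$ in $\U/BF_{h\I}$ with an unbased space $K$ is $(X\times K,f\circ\pi_X)$, where $\pi_X\co X\times K\to X$ is the projection. Because the structure map factors through $\pi_X$, the pullback over $BF_{h\I}$ splits off the factor $K$:
$$R_{f\circ\pi_X}(X\times K)(n)\cong(X\times K)\times_{BF_{h\I}}E_n\cong K\times(X\times_{BF_{h\I}}E_n)=K\times R_f(X)(n).$$
This identifies $R$ applied to the tensor with the tensor of $R_f(X)$ in $\I\U/BF$, with the correct structure map to $BF$, finishing the verification. The hard part throughout will be the pullback-preserves-colimits step noted above, which is subtle in full generality but standard in our setting thanks to the convenience properties of $\U$ and the analysis already carried out by Lewis.
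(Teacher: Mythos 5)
Your treatment of weak equivalences and tensors is correct and matches the paper. For colimits you also follow the paper's strategy---rewrite $R_f(X)(n)$ as a pullback of $X$ along a Hurewicz fibration with target $BF_{h\I}$---but there is a genuine gap in how you justify that this pullback preserves colimits.

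You attribute the colimit preservation to ``the convenience of the category $\U$'' and cite \cite{LMS}, Proposition 1.1 and Corollary 1.4 (the reference underlying Proposition \ref{Thomcolimit} about the Thom space functor). That is the wrong tool: pullback along a Hurewicz fibration does \emph{not} preserve colimits in $\U/B$ for arbitrary base $B$, even in a convenient category of spaces. The result you actually need is \cite{LMS}, Propositions 1.1 and 1.2, which say that pullback along a Hurewicz fibration preserves colimits \emph{provided the base space is locally equiconnected} (the diagonal $B\to B\times B$ is a cofibration). So the missing step is the observation that $BF_{h\I}$ is locally equiconnected, which the paper deduces from \cite{Le}, Corollary 2.4. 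Without that hypothesis the argument you gesture at does not go through; with it, your identification $R_f(X)(n)\cong X\times_{BF_{h\I}}E_n$ (equivalently the paper's pullback of $X$ along $\Gamma_{\pi}(\overline{BF})\to BF_{h\I}$) does the job.
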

\begin{proof}
The first statement follows from the homotopy invariance of homotopy pullbacks. In order to verify that $R$ preserves colimits, we first observe that $BF_{h\I}$ is locally equiconnected (the diagonal $BF_{h\I}\to BF_{h\I}\times BF_{h\I}$ is a cofibration) by \cite{Le}, Corollary 2.4. We then view $R_f(X)$ as the pullback of $X$ along the level-wise Hurewicz fibrant replacement 
$\Gamma_{\pi}(\overline{BF})\to BF_{h\I}$ and the result follows from \cite{LMS}, Propositions 1.1 and 1.2, which together state that the pullback functor along a Hurewicz fibration preserves colimits provided the base space is locally equiconnected. 
The last statement about preservation of tensors is the claim that if $K$ is an unbased space and $(X,f)$ an object of $\mathcal U/BF_{h\I}$, then  $R$ takes $(X\times K,f\circ \pi_X)$ to $R_f(X)\times K$; this follows immediately from the definition.
\end{proof}

Combining this result with Proposition \ref{Tconvergent}, Proposition \ref{Thomcolimit} and Proposition \ref{RTgood}, we get the following corollary in which we define the Thom spectrum functor on $\U/BF_{h\I}$ using $R$.

\begin{corollary}\label{TRcorollary}
The Thom spectrum functor 
\begin{equation}\label{ThomR}
T\co \U/BF_{h\I}\xr{R} \I\U/BF\xr{T} \Sp^{\Sigma}
\end{equation}
takes values in the subcategory of well-based, connective and convergent symmetric spectra. It takes weak homotopy equivalences over $BF_{h\I}$ to level-wise equivalences and preserves colimits and tensors with unbased spaces.\qed
\end{corollary}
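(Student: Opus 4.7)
The plan is to derive Corollary \ref{TRcorollary} as a bookkeeping assembly of the previously established Propositions \ref{RTgood}, \ref{Tconvergent}, \ref{Thomcolimit} and \ref{RUequivalence}; no new construction is needed, and the argument is essentially a composition of preservation properties.

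For the claim that $T \circ R(f)$ is well-based, connective and convergent for every $f \co X \to BF_{h\I}$, I would work level by level. By Proposition \ref{RTgood}, the map $R(f)$ classifies a well-based quasifibration at each spectrum level and the underlying $\mathcal N$-space of $R_f(X)$ is convergent. Well-basedness of $T(R(f))(n)$ is then immediate since the section is a cofibration; connectivity at level $n$ follows from Lemma \ref{basicthomlemma} applied to the level-$n$ pullback diagram (the fibre $S^n$ being $(n-1)$-connected forces the Thom space to be $(n-1)$-connected). Convergence of the whole symmetric spectrum $T(R(f))$ is then exactly the content of Proposition \ref{Tconvergent}, since both hypotheses (level-wise $T$-goodness and convergence of the domain $\mathcal N$-space) are supplied by Proposition \ref{RTgood}.

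For the functorial part, I would chase each property through the two factors of the composite $T \circ R$. Given a weak homotopy equivalence $(X,f)\to (Y,g)$ over $BF_{h\I}$, Proposition \ref{RUequivalence} produces a level-wise equivalence $R_f(X)\to R_g(Y)$ of $\I$-spaces over $BF$; at each level both sides are $T$-good by Proposition \ref{RTgood}, so the homotopy invariance of the Thom space functor on well-based quasifibrations (a standard consequence of Lemma \ref{basicthomlemma}) upgrades this to a level-wise equivalence of symmetric spectra. The preservation of colimits and of tensors with unbased spaces is then formal: $R$ preserves both by Proposition \ref{RUequivalence}, $T$ preserves both by Proposition \ref{Thomcolimit}, and any composition of colimit-preserving (resp.\ tensor-preserving) functors has the same property.

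I do not expect a genuine obstacle at this stage, since all the real technical content has already been absorbed into the earlier propositions—particularly the quasifibration argument underlying Proposition \ref{RTgood} (which uses Lemma \ref{hocolimquasifibration} to handle $t^*V(n)$) and the fibrant-replacement comparison in Proposition \ref{RU'comparison}. If anything deserves care in the write-up, it is simply spelling out the level-wise application of $T$ in the functorial step, so that one does not accidentally invoke a stable-equivalence statement where a level equivalence is what is actually being claimed.
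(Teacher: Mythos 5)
Your proposal is correct and follows essentially the same route as the paper, which presents the corollary as an immediate consequence of Propositions \ref{RTgood}, \ref{Tconvergent}, \ref{Thomcolimit} and \ref{RUequivalence}; you have simply spelled out how those four statements compose, which is exactly what the paper leaves implicit. The one inessential remark is the mention of Proposition \ref{RU'comparison}, which plays no role in this particular corollary (it compares $R$ with $U'$ rather than entering the properties of $T\circ R$).
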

 
The functor $R$ also behaves well with respect to cofibrations as we explain next. We follow \cite{MMSS} in using the term \emph{$h$-cofibration} for a morphism having the homotopy extension property. Thus, a map $i\co A\to X$ in $\mathcal U$ is an $h$-cofibration if and only if the induced map from the mapping cylinder
\begin{equation}\label{h-cofibration}
X\cup_i(A\times I)\to X\times I
\end{equation}
admits a retraction. By our conventions, this is precisely what we mean by a cofibration of spaces in this paper. Given a base space $B$, a morphism $i$ in $\mathcal U/B$ is an $h$-cofibration if the analogous morphism (\ref{h-cofibration}) admits a retraction in $\mathcal U/B$; we emphasize this by saying that $i$ is a \emph{fibre-wise} $h$-cofibration. These conventions also apply to define $h$-cofibrations in $\I\mathcal U$ and, given an $\I$-space $B$, fibre-wise 
$h$-cofibrations in  $\I\mathcal U/B$ with the corresponding mapping cylinders defined level-wise.  A morphism $i\co A\to X$ in $\Sp^{\Sigma}$ is an $h$-cofibration if the mapping cylinder $X\cup_{i}(A\wedge I_+)$ is a retract of $X\wedge I_+$. 

\begin{proposition}
The functor $R$ takes maps over $BF_{h\I}$ that are cofibrations in $\mathcal U$ to fibre-wise $h$-cofibrations in $\I\mathcal U/BF$ and the Thom spectrum functor (\ref{ThomR}) takes such maps to $h$-cofibrations of symmetric spectra.
\end{proposition}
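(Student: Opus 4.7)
The plan is to handle the two claims separately, starting with a level-wise analysis of $R(i)$ and then invoking the formal properties of $T$ for the spectrum-level statement. Let $i\co A\to X$ be a morphism in $\U/BF_{h\I}$ whose underlying map is a cofibration in $\U$. Writing $R_fX(n)$ as the ordinary pullback $X\times_{BF_{h\I}}\Gamma_{\pi}(\overline{BF}(n))$, the projection $p_n\co R_fX(n)\to X$ is a Hurewicz fibration, being the pullback of the path-space fibration $\Gamma_{\pi}(\overline{BF}(n))\to BF_{h\I}$. Moreover $R_fA(n)\to R_fX(n)$ is the pullback of $i$ along $p_n$, and hence is a cofibration in $\U$ by Theorem 12 of \cite{Str}.

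To upgrade this to a fibre-wise $h$-cofibration, I use that the structural map to $BF(n)$ factors through $\overline{BF}(n)$, so it suffices to produce a retraction commuting with the projection to $\overline{BF}(n)$. In Str\o m's argument the retraction is built by lifting the retraction $r\co X\times I\to X\cup_i(A\times I)$ through $p_n$ using the path-lifting function of the Hurewicz fibration $p_n$. In our pullback description a point $(x,\omega,b)\in R_fX(n)$ is lifted by modifying $x$ along the chosen path in $X$ and prepending the corresponding path in $BF_{h\I}$ to $\omega$, while the coordinate $b\in\overline{BF}(n)$ remains fixed. Consequently the lifted retraction commutes with the projection to $\overline{BF}(n)$, and since morphisms of $\I$ act only on the $\overline{BF}$-coordinate the construction is natural in $\mathbf n$, giving the desired fibre-wise $h$-cofibration in $\I\U/BF$.

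For the second claim, suppose $j\co A'\to X'$ is any fibre-wise $h$-cofibration in $\I\U/BF$, witnessed by a retraction $r\co X'\times I\to X'\cup_j(A'\times I)$ over $BF$. Since $T$ preserves colimits and tensors with unbased spaces by Proposition \ref{Thomcolimit}, there are natural isomorphisms $T(X'\times I)\cong T(X')\wedge I_+$ and $T(X'\cup_j(A'\times I))\cong T(X')\cup_{T(j)}(T(A')\wedge I_+)$ of symmetric spectra, and $T(r)$ exhibits $T(j)$ as an $h$-cofibration. Applying this with $j=R(i)$ completes the proposition.

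The step I expect to be the main obstacle is justifying that Str\o m's retraction can be chosen compatibly with the projection to $\overline{BF}(n)$. Although it is intuitively clear that the canonical path-lifting function of $\Gamma_{\pi}(\overline{BF}(n))\to BF_{h\I}$ leaves the $\overline{BF}(n)$-coordinate fixed, writing down an explicit continuous path-lifting (most cleanly via Moore paths to sidestep reparametrization issues) and verifying naturality with respect to morphisms in $\I$ is the most delicate point of the argument.
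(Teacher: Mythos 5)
Your treatment of the first claim takes a genuinely different route from the paper and has a gap precisely at the step you flag as delicate. You write $R_fX$ as $X\times_{BF_{h\I}}\Gamma_{\pi}(\overline{BF})$ so that $p_n\colon R_fX(n)\to X$ is a Hurewicz fibration, and Str\o m's Theorem 12 then shows $R_iA(n)\to R_fX(n)$ is a cofibration in $\U$. But that only gives an ordinary cofibration, not a fibre-wise $h$-cofibration over $\overline{BF}(n)$: nothing in Str\o m's theorem ensures that the retraction commutes with the projection to $\overline{BF}(n)$. Your remedy --- lifting the space-level retraction while adjusting only $(x,\omega)$ and holding $b\in\overline{BF}(n)$ fixed --- is only sketched; you yourself note that the interval-path reparametrization (Moore paths are deliberately not used in this paper) and the compatibility with morphisms of $\I$ (needed so the level-wise retractions assemble into one retraction of $\I$-spaces over $BF$) are the delicate points, and neither is carried out. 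As written, the first claim is not established.

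The paper avoids all of this by using the \emph{other} description of the homotopy pullback: $R_f(X)$ is the pullback of $\overline{BF}$ along the fibration $\Gamma_f(X)\to BF_{h\I}$, where $\Gamma_f(X)$ is the Hurewicz fibrant replacement. The key input is then \cite{LMS}, IX, Proposition 1.11, which says that $\Gamma$ takes cofibrations to fibre-wise $h$-cofibrations over the base. So $\Gamma_f(A)\to\Gamma_g(X)$ is already a fibre-wise $h$-cofibration over $BF_{h\I}$, and since fibre-wise $h$-cofibrations are stable under base change, $R_f(A)\to R_g(X)$ is a fibre-wise $h$-cofibration over $\overline{BF}$, hence over $BF$. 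The construction you try to do by hand is precisely the content of the cited LMS lemma. Your argument for the second claim, via preservation of colimits and tensors by $T$ (Proposition \ref{Thomcolimit}), is correct and matches the paper.
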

\begin{proof}
Notice first that we may view $R_f(X)$ as the pullback of $\overline{BF}$ along the fibrant replacement $\Gamma_f(X)\to BF_{h\I}$. Given a morphism $(A,f)\to(X,g)$ in $\mathcal U/BF_{h\I}$ such that $A\to X$ is a cofibration, the induced map $\Gamma_f(A)\to \Gamma_g(X)$ is a fibre-wise $h$-cofibration by \cite{LMS}, IX, Proposition 1.11. Since fibre-wise $h$-cofibrations are preserved under pullback, this in turn implies that $R_f(A)\to R_g(X)$ is a fibre-wise $h$-cofibration over $\overline{BF}$, hence over $BF$. It follows from Proposition \ref{Thomcolimit} that the Thom spectrum functor on $\I\mathcal U/BF$ takes fibre-wise $h$-cofibrations to $h$-cofibrations. Combining this with the above gives the result.
\end{proof}

\subsection{Preservation of monoidal structures}\label{monoidalsection}
Recall from \cite{Mac}, Section XI.2, that given monoidal categories  $(\mathcal A,\Box,1_{\mathcal A})$ and $(\mathcal B,\triangle,1_{\mathcal B})$, a \emph{monoidal functor} 
$\Phi\co \mathcal A\to\mathcal B$ is a functor $\Phi$ together with a morphism $1_{\mathcal B}\to\Phi(1_{\mathcal A})$ and a natural transformation
$$
\Phi(X)\triangle\Phi(Y)\to\Phi(X\Box Y),
$$ 
satisfying the usual associativity and unitality conditions. It follows from the definition that if $A$ is a monoid in $\mathcal A$, then $\Phi(A)$ inherits the structure of a monoid in $\mathcal B$. Since (unbased) homotopy colimits commute with products, we may view 
$
\hocolim_{\I}
$ 
as a monoidal functor $\I\mathcal U\to \mathcal U$ with structure maps
$$
\hocolim_{\I}X\times \hocolim_{\I}Y\cong\hocolim_{\I\times\I}X\times Y\to \hocolim_{\I}X\boxtimes Y
$$
induced by the universal natural transformation
$
X(\mathbf m)\times Y(\mathbf n)\to X\boxtimes Y(\mathbf m\sqcup\mathbf n) 
$
of $\I\times\I$-diagrams. The unit morphism is induced by the inclusion of the initial object 
$\mathbf 0$, thought of as a vertex in $B\I$. Since $BF$ is a monoid in $\I\mathcal U$, $BF_{h\I}$ inherits the structure of a topological monoid. It follows that we may also view $\mathcal U/BF_{h\I}$ as a monoidal category and the following result is then clear from the definition.
\begin{proposition}
The functor $\hocolim_{\I}$ in (\ref{hocolimU/BF}) is monoidal.\qed
\end{proposition}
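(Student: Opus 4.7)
The plan is to invoke the general principle that a monoidal functor $\Phi\co(\mathcal A,\otimes,1_{\mathcal A})\to(\mathcal B,\triangle,1_{\mathcal B})$ automatically descends to a monoidal functor $\mathcal A/A\to \mathcal B/\Phi(A)$ for any monoid $A$ in $\mathcal A$, where $\Phi(A)$ carries the induced monoid structure. The preceding paragraph has already equipped $\hocolim_{\I}\co \I\U\to\U$ with a monoidal functor structure and has exhibited the topological monoid structure on $BF_{h\I}$ as the one inherited from $BF$ in exactly this way. All that remains is a routine verification that the structure maps restrict appropriately to the slice categories.

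Concretely, given objects $(X,f)$ and $(Y,g)$ of $\I\U/BF$, I would check that the monoidal structure map
\[
\mu_{X,Y}\co X_{h\I}\times Y_{h\I}\to (X\boxtimes Y)_{h\I}
\]
is a morphism over $BF_{h\I}$. Naturality of $\mu_{-,-}$ applied to the map $f\boxtimes g\co X\boxtimes Y\to BF\boxtimes BF$ in $\I\U$ yields a commutative square relating $\mu_{X,Y}$ to $\mu_{BF,BF}$. Further post-composing the right-hand vertical of this square with the hocolim of the monoid multiplication $BF\boxtimes BF\to BF$ gives, by definition of the product in $\I\U/BF$, the map $(f\boxtimes g)_{h\I}$ into $BF_{h\I}$, while the analogous post-composition on the bottom produces the topological monoid multiplication on $BF_{h\I}$. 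The unit is handled similarly: the unit object of $\I\U/BF$ is the identity element $\I(\mathbf 0,-)\to BF$, and post-composing the hocolim of this morphism with the unit $*\to B\I$ of the monoidal functor $\hocolim_{\I}$ recovers, by construction, the unit $*\to BF_{h\I}$ of the topological monoid.

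The associativity and unitality coherence diagrams in $\U/BF_{h\I}$ then follow at once from the corresponding coherences for $\hocolim_{\I}\co \I\U\to\U$, since the forgetful functor $\U/BF_{h\I}\to\U$ reflects commutativity of diagrams. There is no real obstacle here; the content is just a careful bookkeeping of how the two slice monoidal structures are assembled from the monoid $BF$ and of how naturality of $\mu_{-,-}$ interacts with these assembly maps.
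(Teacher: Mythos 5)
Your proof is correct and fills in exactly the routine verification the paper leaves implicit: the paper gives no argument here, merely asserting after the setup paragraph that the result is ``clear from the definition.'' Your invocation of the general principle that a monoidal functor $\Phi\co\mathcal A\to\mathcal B$ descends to slice categories over a monoid $A$ and its image $\Phi(A)$, together with the observation that naturality of the structure map $\mu_{-,-}$ is what makes the square over $BF_{h\I}$ commute, is precisely the content the paper is silently relying on.
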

However, the functor $\hocolim_{\I}$ is not symmetric monoidal, hence does not take commutative monoids in $\I\mathcal U$ to commutative topological monoids. In particular, $BF_{h\I}$ is not a commutative monoid which is already clear from the fact that it is not equivalent to a product of Eilenberg-Mac Lane spaces. We prove in Section \ref{hocolimoperadsection} that $BF_{h\I}$ has a canonical $E_{\infty}$ structure and that more generally   $\hocolim_{\I}$ takes $E_{\infty}$ objects in $\I\mathcal U$ to $E_{\infty}$ spaces.  
\begin{proposition}\label{Rmonoidalprop}
The functor $R$ in (\ref{Rfunctor}) is monoidal. 
\end{proposition}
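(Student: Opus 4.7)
The plan is to construct a unit morphism $\eta\co \I(\mathbf 0,-)\to R_*(*)$ and a natural transformation
\[
\phi_{f,g}\co R_f(X)\boxtimes R_g(Y)\to R_{\mu_{BF_{h\I}}\circ(f\times g)}(X\times Y)
\]
in $\I\U/BF$ satisfying the associativity and unitality coherence diagrams for a monoidal functor. The entire argument rests on first equipping $\overline{BF}$ with the structure of an $\I$-space monoid such that both $t\co\overline{BF}\to BF$ and $\pi\co\overline{BF}\to BF_{h\I}$ become monoid homomorphisms.

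For this preliminary step, I exploit that $\overline{BF}$ is the homotopy left Kan extension of $BF$ along $\id_{\I}$ and that $BF$ is a commutative $\I$-space monoid. The pairing of comma categories $(\I\downarrow\mathbf n_1)\times(\I\downarrow\mathbf n_2)\to(\I\downarrow\mathbf n_1\sqcup\mathbf n_2)$ induced by $\sqcup$, combined with the fact that $\hocolim$ commutes with products and with the multiplication $BF(m_1)\times BF(m_2)\to BF(m_1+m_2)$, yields a canonical multiplication $\mu_{\overline{BF}}\co \overline{BF}\boxtimes \overline{BF}\to\overline{BF}$. Associativity and unitality are inherited from those of $BF$. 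The map $t$ is then a monoid homomorphism by construction, and $\pi$ is one because the monoidality of $\hocolim_{\I}$ established in the preceding proposition identifies the multiplication on $BF_{h\I}$ as the image of $\pi$ applied to $\mu_{\overline{BF}}$.

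With the monoid structure on $\overline{BF}$ in place, I define $\phi_{f,g}$ by specifying, for each $\alpha\co\mathbf n_1\sqcup\mathbf n_2\to\mathbf n$, the component
\[
R_f(X)(\mathbf n_1)\times R_g(Y)(\mathbf n_2)\to R_{\mu(f\times g)}(X\times Y)(\mathbf n),\quad ((x,\omega,b),(y,\tau,c))\mapsto ((x,y),\omega\cdot\tau,\alpha_*(b\cdot c)),
\]
where $\omega\cdot\tau$ is the pointwise product of paths in the topological monoid $BF_{h\I}$ and $b\cdot c$ is the image under $\mu_{\overline{BF}}$. The endpoint conditions $\omega\cdot\tau(0)=\mu(f(x),g(y))$ and $\omega\cdot\tau(1)=\pi(\alpha_*(b\cdot c))$ hold because $\pi$ is a monoid homomorphism, and the map lies over $BF$ because $t$ is. These components assemble into $\phi_{f,g}$ via the universal property of Day convolution. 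The unit $\eta$ sends $\I(\mathbf 0,\mathbf n)$ to the triple consisting of the unique point of $*$, the constant path at the basepoint of $BF_{h\I}$, and the unit of $\overline{BF}(\mathbf n)$ transported from $1_{BF}\in BF(0)$.

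The associativity and unitality axioms for $R$ then follow by direct inspection from the corresponding identities in $\overline{BF}$, $BF_{h\I}$, and the Cartesian structure of $\U$. The main obstacle is the preliminary construction realizing $\overline{BF}$ as a genuine $\I$-space monoid: it requires careful bookkeeping between the two-sided bar construction defining $\overline{BF}(\mathbf n)$ and the coend defining Day convolution, and verifying that the pairings of comma categories interact strictly (rather than merely up to homotopy) with $\sqcup$. Once this is in place, the verification of the coherence axioms for $R$ reduces to a routine diagram chase.
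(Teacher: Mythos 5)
Your proof follows essentially the same strategy as the paper's: define the unit as the inclusion of the constant loop in degree $0$, use the universal property of the $\boxtimes$-product (Day convolution) to reduce to a natural transformation of $\I\times\I$-diagrams, and induce the multiplication $R(X)(\mathbf m)\times R(Y)(\mathbf n)\to R(X\times Y)(\mathbf m\sqcup\mathbf n)$ from a monoid structure on $\overline{BF}$ for which $t$ and $\pi$ are monoid maps. The paper simply asserts that $\overline{BF}$ inherits a monoid structure from $BF$ without spelling out the pairing of comma categories, whereas you give the construction explicitly; your worry about strictness is easily dispatched, since $\sqcup$ is a strict monoidal product on $\I$ and therefore the pairings $(\I\downarrow\mathbf n_1)\times(\I\downarrow\mathbf n_2)\to(\I\downarrow\mathbf n_1\sqcup\mathbf n_2)$ satisfy strict associativity on the nose, and $\overline{BF}(n)$ is the (one-sided) homotopy colimit over $(\I\downarrow\mathbf n)$, i.e.\ the classifying space of the category $\I/\mathbf n(BF)$ from Lemma \ref{overlineBFaction}, not a two-sided bar construction.
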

\begin{proof}
By definition, $R(*)(0)$ is the loop space of $BF_{h\I}$ and we let $*\to R(*)$ be the map of $\I$-spaces that is the inclusion of the constant loop in degree $0$. We must define an associative and unital natural transformation of $\I$-spaces $R(X)\boxtimes R(Y)\to R(X\times Y)$ over 
$BF$. By the universal property of the $\boxtimes$-product, this amounts to an associative and unital natural transformation  of $\I^2$-diagrams
$$
R(X)(\mathbf m)\times R(Y)(\mathbf n)\to R(X\times Y)(\mathbf m\sqcup\mathbf n).
$$ 
The domain is the homotopy pullback of the diagram 
$$
X\times Y\to BF_{h\I}\times BF_{h\I}\leftarrow \overline{BF}(m)\times\overline{BF}(n),
$$
and the target is the homotopy pullback of the diagram 
$$
X\times Y\to BF_{h\I}\leftarrow \overline{BF}(m+n).
$$
The $\I$-space $\overline{BF}$ inherits a monoid structure from that of $BF$ such that
$\pi\co\overline{BF}\to BF_{h\I}$ is a map of monoids. Using these structure maps, we define a map from the first diagram to the second, giving the required multiplication.
\end{proof}
Since the monoids in the monoidal category $\mathcal U/BF_{h\I}$ are precisely the topological monoids over $BF_{h\I}$, this has the following corollary.
\begin{corollary}\label{monoid-ring}
If $X$ is a topological monoid and $f\co X\to BF_{h\I}$ a monoid morphism, then $T(f)$ is a symmetric ring spectrum.\qed
\end{corollary}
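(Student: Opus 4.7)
The plan is to deduce this purely formally from the monoidal structures already established, rather than constructing the ring structure on $T(f)$ directly. The key observation is that the Thom spectrum functor on $\mathcal{U}/BF_{h\I}$ is, by definition, the composition $T \circ R$, and we have just shown in Proposition \ref{Rmonoidalprop} that $R$ is monoidal while Theorem \ref{monoidaltheorem} gives that $T\co \I\mathcal{U}/BF \to \Sp^{\Sigma}$ is strong symmetric monoidal. Since monoidal functors carry monoids to monoids, the composite $T \circ R$ carries monoids in $\mathcal{U}/BF_{h\I}$ to monoids in $\Sp^{\Sigma}$, i.e., to symmetric ring spectra.

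The only step that requires a short unpacking is the identification of the monoids in $\mathcal{U}/BF_{h\I}$. The monoidal structure on $\mathcal{U}/BF_{h\I}$ is induced from the topological monoid structure on $BF_{h\I}$ (which exists because $\hocolim_{\I}$ is monoidal and $BF$ is a monoid in $\I\mathcal{U}$): the product of $(X,f)$ and $(Y,g)$ is $X\times Y$ mapping to $BF_{h\I}$ via the multiplication of $f$ and $g$. A monoid in this category is precisely a topological monoid $X$ equipped with a monoid map $f\co X \to BF_{h\I}$, which is exactly the input to the corollary. Applying $R$ then gives a monoid $R(f)$ in $\I\mathcal{U}/BF$, which unpacks to an $\I$-space monoid together with a monoid morphism to $BF$ (this is the content of Corollary \ref{ringspectrumcorollary} applied after one unravels the definitions).

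There is no substantive obstacle here because all of the real work was done in proving that $R$ is monoidal and that $T$ is strong symmetric monoidal; the corollary is essentially a bookkeeping consequence. I would simply state that monoidal functors preserve monoids and invoke Corollary \ref{ringspectrumcorollary} applied to $R(f)$ to conclude that $T(R(f)) = T(f)$ is a symmetric ring spectrum.
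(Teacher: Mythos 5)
Your argument is precisely the one the paper gives: the corollary follows formally from Proposition \ref{Rmonoidalprop} ($R$ is monoidal) together with Corollary \ref{ringspectrumcorollary} ($T$ takes monoids in $\I\mathcal U/BF$ to symmetric ring spectra), once one observes that monoids in $\mathcal U/BF_{h\I}$ are exactly topological monoids over $BF_{h\I}$. Correct, and the same route as the paper.
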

This may be reformulated as saying that the Thom spectrum functor preserves the action of the associativity operad whose $k$th space is the symmetric group $\Sigma_k$, see \cite{Ma}, Section 3. More generally, we show in Section \ref{Thomoperadsection} that $T$ preserves all operad actions of operads that are augmented over the Barratt-Eccles operad. 

\subsection{Comparison with the Lewis-May Thom spectrum functor}\label{May-Lewis}
Let as before $BF_{\mathcal N}$ denote the colimit of $BF$ over $\mathcal N$. In this section we recall the Thom spectrum functor on $\mathcal U/BF_{\mathcal N}$ considered in \cite{LMS}, Section IX, and we relate this to our symmetric Thom spectrum functor on 
$\mathcal U/BF_{h\I}$. We shall use the same notation for the $\I$-space $BF$ and its restriction  to an $\mathcal N$-space. The colimit functor $\mathcal N\U/BF\to \U/BF_{\mathcal N}$ has a right adjoint, again denoted $U$, that to an object $f\co X\to BF_{\mathcal N}$ associates the map of $\mathcal N$-spaces defined by the upper row in the pullback diagram
\[
\begin{CD}
U_f(X)@>U(f)>> BF\\
@VVV @VVV\\
X@>f>> BF_{\mathcal N}
\end{CD}
\] 
where the vertical map on the right is the unit of the adjunction relating the 
colimit and the constant functors. Here we view $X$ and $BF_{\mathcal N}$ as constant 
$\mathcal N$-spaces. We again write $U'$ for the functor obtained by composing with the Hurewicz fibrant replacement functor $\Gamma$ on $\U/BF_{\mathcal N}$.
The Thom spectrum functor  considered in \cite{LMS} is the composition
$$
\mathcal U/BF_{\mathcal N}\xr{U} \mathcal N\mathcal U/BF\xr{T}\Sp,
$$
where $T$ is the functor from Section \ref{preliminariessection}. (In the language of \cite{LMS} this is the Thom \emph{prespectrum} associated to $f$.  The authors go on to define a spectrum $M(f)$ with the property that the adjoint structure maps are homeomorphisms, but this will not be relevant for the discussion here). The first step in the comparison to our symmetric Thom spectrum functor on $\U/BF_{h\I}$ is to relate the spaces $BF_{\mathcal N}$ and 
$BF_{h\I}$. Consider the diagram of weak homotopy equivalences
$$
BF_{h\I}\xl{i} BF_{h\mathcal N}\xr{t} BF_{\mathcal N},
$$ 
where $i$ is induced from the inclusion $i\co\mathcal N\to\I$ and $t$ is the canonical projection from the homotopy colimit to the colimit. The former is a weak homotopy equivalence by 
Lemma \ref{Blemma} and the latter is a weak homotopy equivalence since the structure maps are cofibrations. Let us choose a homotopy inverse $j\co BF_{h\I}\to BF_{h\mathcal N}$ of $i$ and a homotopy relating $i\circ j$ to the identity on $BF_{h\I}$. Here we of course use that these spaces have the homotopy type of a CW-complex.  The precise formulation of the comparison will depend on these choices. Let $\zeta$ be the composite homotopy equivalence
$$
\zeta\co BF_{h\I}\xr{j} BF_{h\mathcal N}\xr{t} BF_{\mathcal N}.
$$
In general, given a map $\phi\co B_1\to B_2$ in $\mathcal U$, we write $\phi_*\co \U/B_1\to \U/B_2$ for the functor defined by post-composing with $\phi$.

\begin{lemma}\label{B_1B_2}

Suppose that  $\phi$ and $\psi$ are maps from $B_1$ to $B_2$ that are homotopic by a homotopy $h\co B_1\times I\to B_2$. Then the functors
$\phi_*$ and $\psi_*$ from $\U/B_1$ to  $\U/B_2$ 
are related by a chain of natural weak homotopy equivalences depending on $h$.
\end{lemma}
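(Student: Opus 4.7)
The plan is to interpolate between $\phi_*$ and $\psi_*$ by an intermediate functor built directly from the homotopy $h$. First I would define
\[
H_* \co \U/B_1 \to \U/B_2, \quad (X,f) \mapsto \bigl(X\times I,\; h\circ(f\times\id_I)\bigr),
\]
with action on morphisms given by $\alpha \mapsto \alpha\times\id_I$. The endpoint inclusions $i_0,i_1\co X \to X\times I$ will then serve as the components of two natural transformations assembling into a zigzag
\[
\phi_* \Longrightarrow H_* \Longleftarrow \psi_*.
\]

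To see that $i_0$ and $i_1$ are morphisms in $\U/B_2$, note that for an object $(X,f)$ of $\U/B_1$ the composite $h\circ(f\times\id_I)\circ i_0$ sends $x$ to $h(f(x),0)=\phi(f(x))$, which is exactly the structure map of $\phi_*(X,f)$, and analogously $h\circ(f\times\id_I)\circ i_1 = \psi\circ f$. Naturality in $(X,f)$ is immediate from the pointwise formula for $H_*$ on morphisms. Each $i_0$ and $i_1$ is a homotopy equivalence in $\U$ with common homotopy inverse the projection $X\times I \to X$, so the components of the zigzag are weak homotopy equivalences over $B_2$. This gives the required chain of natural weak homotopy equivalences, and its dependence on $h$ is manifest in the definition of $H_*$.

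There is no substantive obstacle here: once the intermediate functor is written down, everything reduces to the observation that the endpoint inclusions into a cylinder are homotopy equivalences. The only thing worth checking carefully is that $i_0$ and $i_1$ respect the structure maps to $B_2$, and that is a one-line computation.
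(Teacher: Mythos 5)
Your proof is correct and is essentially identical to the paper's: you define the same intermediate functor (called $h_*$ there) sending $(X,f)$ to $X\times I \to B_1\times I \xrightarrow{h} B_2$, and use the two endpoint inclusions of $X$ into $X\times I$ as the connecting natural weak homotopy equivalences.
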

\begin{proof}
Let $h_*\co \U/B_1\to \U/B_2$ be the functor that takes $f\co X\to B_1$ to  
$$
X\times I\xr{f\times I}B_1\times I\xr{h}B_2.
$$ 
The two endpoint inclusions of $X$ in $X\times I$ then give rise to the natural weak homotopy equivalences $\phi_*\to h_*\leftarrow \psi_*$.
\end{proof}

Applied to the homotopy relating $i\circ j$ to the identity on $BF_{h\I}$ this result gives a chain of natural weak homotopy equivalences relating the composite functor
\[
\U/BF_{h\I}\xr{j_*}\U/BF_{h\mathcal N}\xr{i_*} \U/BF_{h\I}
\]
to the identity on $\U/BF_{h\I}$.

\begin{lemma}\label{RFGamma}
The two compositions in the diagram
$$
\begin{CD}
\U/BF_{h\I}@>U'>> \I\U/BF\\
@VV\zeta_* V @VVi^* V\\
\U/BF_{\mathcal N} @>U'>>\mathcal N\U/BF
\end{CD}
$$ 
are related by a chain of natural level-wise equivalences.
\end{lemma}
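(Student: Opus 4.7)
The plan is to identify both composite functors with level-wise homotopy pullback constructions, and then to chain these together through an intermediate homotopy pullback over $BF_{h\mathcal N}$ using the chosen homotopy $i\circ j\simeq \id_{BF_{h\I}}$. The principal tool will be Lemma \ref{B_1B_2}, which converts such a homotopy into a natural zig-zag of weak equivalences of objects over the target.

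First I would show that for $n\in\mathcal N$ the space $i^*U'_f(X)(n)$ is naturally level-wise equivalent over $BF(n)$ to the homotopy pullback $P_1(n)=X\times^h_{BF_{h\I}}BF(n)$ formed from $f$ and the canonical map $BF(n)\to BF_{h\I}$. Since $U_f(X)(n)$ is the strict pullback along the unit $BF(n)\to\Map(B(\mathbf n\downarrow\I),BF_{h\I})$, and the category $(\mathbf n\downarrow\I)$ has $\id_{\mathbf n}$ as initial object so that $B(\mathbf n\downarrow\I)$ is contractible, evaluation at $\id_{\mathbf n}$ provides the required natural equivalence, while passage to $\Gamma_f$ converts strict to homotopy pullbacks along the fibration. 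The analogous argument in the $\mathcal N$-setting identifies $U'_{\zeta f}(X)(n)$ with the homotopy pullback $P_2(n)=X\times^h_{BF_{\mathcal N}}BF(n)$ formed from $\zeta f$ and the canonical inclusion $BF(n)\hookrightarrow BF_{\mathcal N}$.

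Next I would introduce the intermediate $\mathcal N$-space $P_3(n)=X\times^h_{BF_{h\mathcal N}}BF(n)$ built from $j\circ f$ and the canonical map $BF(n)\to BF_{h\mathcal N}$. Post-composing the defining diagram with $t\co BF_{h\mathcal N}\to BF_{\mathcal N}$ yields a natural level-wise equivalence $P_3\xr{\sim}P_2$, since $t$ is a weak equivalence and its composition with $BF(n)\to BF_{h\mathcal N}$ is the canonical inclusion $BF(n)\hookrightarrow BF_{\mathcal N}$. Post-composing instead with $i\co BF_{h\mathcal N}\to BF_{h\I}$ gives a natural level-wise equivalence from $P_3$ to the homotopy pullback $P_3'(n)=X\times^h_{BF_{h\I}}BF(n)$ formed using $i\circ j\circ f$. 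Finally, Lemma \ref{B_1B_2} applied to the chosen homotopy $i\circ j\simeq\id_{BF_{h\I}}$ produces a natural zig-zag of weak equivalences in $\U/BF_{h\I}$ between $(X,ijf)$ and $(X,f)$, and forming the level-wise homotopy pullback with $BF(n)\to BF_{h\I}$ carries this zig-zag to a natural level-wise zig-zag $P_3'\simeq P_1$. Concatenating produces the desired chain
\[
i^*U'_f(X)\simeq P_1\simeq P_3'\simeq P_3\simeq P_2\simeq U'_{\zeta f}(X).
\]

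The main obstacle will be bookkeeping: ensuring that each intermediate equivalence is simultaneously natural in $(X,f)$, compatible with the $\mathcal N$-structure maps, and a morphism over $BF$. Most of this is formal since every object in sight is built by level-wise homotopy pullback in a diagram category. The delicate point is that the cylinder functor $h_*$ from Lemma \ref{B_1B_2} must be constructed so as to take values in $\U/BF_{h\I}$, so that forming the level-wise homotopy pullback with the canonical $BF(n)\to BF_{h\I}$ automatically yields a zig-zag of $\mathcal N$-spaces over $BF$; once this is in place the maps over $BF$ and compatibility with the $\mathcal N$-structure are built in.
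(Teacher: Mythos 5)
Your overall strategy---chain both composites through intermediate homotopy pullbacks over $BF_{h\mathcal N}$, and use Lemma \ref{B_1B_2} to absorb the homotopy $i\circ j\simeq\mathrm{id}$---is the same as the paper's, and your steps $P_3\to P_2$, $P_3\to P_3'$, $P_3'\simeq P_1$, and $U'_{\zeta f}(X)=P_2$ (the latter being an equality, since the $\mathcal N$-space version of $U'$ is defined by a pullback over $BF_{\mathcal N}$ rather than over a mapping space) are all fine. The one genuine gap is in your first step, the identification of $i^*U'_f(X)$ with $P_1$.

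The issue is naturality in $\mathbf n$, which is required since the Lemma asks for a chain of \emph{maps of $\mathcal N$-spaces}. Evaluation at $\id_{\mathbf n}$ gives a map $\Map(B(\mathbf n\downarrow\I),Y)\to Y$, but this is not a natural transformation of $\I$-spaces (or $\mathcal N$-spaces): for a morphism $\alpha\co\mathbf m\to\mathbf n$, the structure map $\Map(B(\mathbf m\downarrow\I),Y)\to\Map(B(\mathbf n\downarrow\I),Y)$ is induced by the functor $(\mathbf n\downarrow\I)\to(\mathbf m\downarrow\I)$ that sends $\id_{\mathbf n}$ to $\alpha$, not to $\id_{\mathbf m}$, so the evaluation squares do not commute. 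The natural comparison goes the other way, via the constant-function inclusion $Y\to\Map(B(\mathbf n\downarrow\I),Y)$ induced by $B(\mathbf n\downarrow\I)\to*$; but then the resulting square involving $BF(n)$ fails to commute strictly, because the unit $\eta\co BF(n)\to\Map(B(\mathbf n\downarrow\I),BF_{h\I})$ is only homotopic to, not equal to, the composite $BF(n)\to BF_{h\I}\to\Map(B(\mathbf n\downarrow\I),BF_{h\I})$. So some additional zig-zag, or an explicit natural homotopy in the style of Proposition \ref{RU'comparison}, is needed to relate $i^*U'_f(X)$ to $P_1$ as $\mathcal N$-spaces. The paper avoids this by never collapsing the mapping-space construction all the way down to $P_1$: instead it interpolates through the $\mathcal N$-space analogue built from $\Map(B(-\downarrow\mathcal N),-)$, comparing it to both $i^*U'(i_*f)$ and $U'(t_*f)$ by zig-zags of level-wise equivalences and then invoking the remarks following Lemma \ref{B_1B_2}. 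Your scheme can be repaired along those lines, but as written the claim that evaluation at $\id_{\mathbf n}$ ``provides the required natural equivalence'' is not correct.
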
  
\begin{proof}
We shall interpolate between these functors by relating both to the $\mathcal N$-space analogue of the functor $U'$ on $\U/BF_{h\I}$. Thus, given $f\co X\to BF_{h\mathcal N}$, the diagram of 
$\mathcal N$-spaces
\[
\Map(B(-\downarrow \mathcal N), X) \xr{f}
\Map(B(-\downarrow \mathcal N), BF_{h\mathcal N}) \xl{}BF
\]
is related by evident chains of term-wise level equivalences to the diagrams 
\[
\Map(i^*B(-\downarrow \I),X)\xr{i\circ f}
\Map(i^*B(-\downarrow\I),BF_{h\I}) \xl{}BF
\]
and
\[
X\xr{t\circ f}BF_{\mathcal N}\xl{} BF.
\] 
Evaluating the homotopy pullbacks of these diagrams we get a chain of natural level-wise equivalences relating the two compositions in the diagram
\[
\xymatrix{
\U/BF_{\mathcal N}\ar[d]^{U'} &\U/BF_{h\mathcal N} \ar[l]_{t_*}\ar[r]^{i_*} & 
\U/BF_{h\I}\ar[d]^{U'}\\
\mathcal N\U/BF & &\I\U/BF\ar[ll]_{i^*}. 
}
\]
By the remarks following Lemma \ref{B_1B_2} we therefore get a chain of natural transformations  
\begin{equation}\label{Lewis-Maychain}
i^*\circ U'\sim i^*\circ U'\circ i_*\circ j_*\sim U'\circ t_*\circ j_*\sim U'\circ \zeta_*,
\end{equation}
each of which is a level-wise weak homotopy equivalence.
\end{proof}

We can now compare our symmetric Thom spectrum functor to the Lewis-May Thom spectrum functor on $\U/BF_{\mathcal N}$.  Since the functors $TR$ and $TU'$ on $\U/BF_{h\I}$ are 
level-wise equivalent by Proposition \ref{RU'comparison}, it suffices to consider $TU'$.

\begin{proposition}\label{Lewis-Mayprop}
The two compositions in the diagram 
\[
\begin{CD}
\mathcal U/BF_{h\I} @>TU'>> \Sp^{\Sigma}\\
@VV \zeta_* V @VVV\\
\mathcal U/BF_{\mathcal N} @>T U'>> \Sp
\end{CD}
\]
are related by a chain of level-wise equivalences. 
\end{proposition}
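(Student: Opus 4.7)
The strategy is to reduce the statement to Lemma \ref{RFGamma} by factoring the right-hand vertical arrow of the square through the forgetful functor $i^{*}\co \I\U/BF \to \mathcal{N}\U/BF$. First I would invoke the commutative diagram \eqref{INThom}, which identifies the forgetful functor $\Sp^{\Sigma}\to \Sp$ applied after the symmetric Thom spectrum functor with the $\mathcal{N}$-Thom spectrum functor applied after $i^{*}$. Under this identification, the top composition in the square becomes $T \circ i^{*}\circ U'$ and the bottom composition becomes $T\circ U'\circ \zeta_{*}$, both viewed as functors $\U/BF_{h\I}\to \Sp$.

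Next I would apply Lemma \ref{RFGamma}, which supplies a chain of natural level-wise equivalences
\[
i^{*}\circ U' \sim i^{*}\circ U'\circ i_{*}\circ j_{*} \sim U'\circ t_{*}\circ j_{*} \sim U'\circ \zeta_{*}
\]
in $\mathcal{N}\U/BF$. It then remains to check that applying the $\mathcal{N}$-Thom spectrum functor to each stage of this chain yields a level-wise equivalence of spectra.

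For this I would verify that every object appearing in the chain is level-wise $T$-good. Each such object is obtained by evaluating a homotopy pullback along a Hurewicz fibration; indeed, the functor $U'$ is by construction the composite $U\circ \Gamma$, so the reference map into $BF$ (respectively $BF(n)$) at each level is the pullback of a Hurewicz fibration and is therefore itself a Hurewicz fibration. Consequently $V(n)$ pulls back to a well-based quasifibration at level $n$ by Lemma \ref{wellbasedfibrationlemma}, and the object is level-wise $T$-good. The same analysis applies to the intermediate terms $i^{*}\circ U'\circ i_{*}\circ j_{*}$ and $U'\circ t_{*}\circ j_{*}$, since they differ from $i^{*}\circ U'$ and $U'\circ \zeta_{*}$ only by a reparametrisation of the base and are constructed by the same pullback recipe.

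Finally, since the $\mathcal{N}$-Thom spectrum functor $T\co \mathcal{N}\U/BF\to \Sp$ takes level-wise equivalences of level-wise $T$-good $\mathcal{N}$-spaces to level-wise equivalences of spectra (apply Lemma \ref{basicthomlemma} level-wise, as in the proof of Proposition \ref{Tconvergent}), the chain above yields a chain of natural level-wise equivalences of spectra between $T\circ i^{*}\circ U'$ and $T\circ U'\circ \zeta_{*}$, as required. The most delicate point is the verification of level-wise $T$-goodness of the mapping-space models $\Map(i^{*}B(-\downarrow \I), X)$ used in the middle term of the chain; here I would simply invoke the fact that this object is assembled from homotopy pullbacks whose structure maps to $BF(n)$ arise by pullback along the Hurewicz fibration produced by $\Gamma$, so Lemma \ref{wellbasedfibrationlemma} applies uniformly.
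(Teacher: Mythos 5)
Your proposal is correct and follows the same strategy as the paper's proof: compose diagram (\ref{INThom}) with the chain supplied by Lemma \ref{RFGamma}, observe that the chain lives in the subcategory of level-wise $T$-good objects, and apply $T$. The paper states the $T$-goodness fact without elaboration, whereas you spell out why it holds (the reference maps to $BF(n)$ are Hurewicz fibrations produced by $\Gamma$ or by homotopy-pullback projections, so Lemma \ref{wellbasedfibrationlemma} applies), which is a welcome clarification but not a different argument.
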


\begin{proof}
The diagram in question is obtained by composing the diagram in Proposition \ref{RFGamma} with the commutative diagram (\ref{INThom}). Since the chain of weak homotopy equivalences in   (\ref{Lewis-Maychain}) is contained in the full subcategory of level-wise $T$-good objects in $\mathcal N\U/BF$, applying $T$ gives a chain of level-wise equivalences.
\end{proof}

\section{Homotopy invariance of symmetric Thom spectra}\label{invariancesection}
In this section we prove the homotopy invariance result stated in Theorem \ref{introinvariance} and we show how  the proof can be modified to give the $\mathcal N$-space analogue in 
Theorem \ref{NThominvariance}. As for the Thom space functor, the symmetric Thom spectrum functor is not homotopically well-behaved on the whole domain category 
$\I\U/BF$. We define a level-wise Hurewicz fibrant replacement functor on $\I\U/BF$ by 
applying the functor $\Gamma$ in (\ref{fibrantreplacement}) at each level.
\begin{definition}\label{T-good-definition}
An object $(X,f)$ in $\I\U/BF$ is \emph{$T$-good} if the canonical map 
$T(f)\to T(\Gamma(f))$ is a stable equivalence (a weak equivalence in the stable model structure) of symmetric spectra.
\end{definition}
 
 As before we say that $(X,f)$ is level-wise $T$-good if $T(f)\to T(\Gamma(f))$ is a level-wise equivalence. The first step in the proof of Theorem \ref{introinvariance} is to generalize the definition of 
$\overline{BF}$ to any $\I$-space $X$ by associating to $X$ the $\I$-space $\overline X$ 
defined by
$$
\overline X(n)=\hocolim_{(\I\downarrow\mathbf n)}X\circ \pi_n.
$$
We then have a diagram of $\I$-spaces
$$
X_{h\I}\xl{\pi} \overline{X}\xr{t}X,
$$ 
where we view $X_{h\I}$ as a constant $\I$-space. The map $t$ is a level-wise equivalence and $\pi$ is an $\I$-equivalence by Lemma \ref{homotopyKanequivalence}. If $f\co X\to BF$ is a map of 
$\I$-spaces, then we have a commutative diagram
$$
\begin{CD}
\overline X@>\bar f >>\overline{BF}\\
@VV\pi V @VV \pi V\\
X_{h\I}@>f_{h\I}>> BF_{h\I},
\end{CD} 
$$
hence there is an induced morphism
\begin{equation}\label{barXR}
(\overline X,t\circ \bar f)\to (R_{f_{h\I}}(X_{h\I}), R(f_{h\I}))
\end{equation}
of $\I$-spaces over $BF$.
\begin{proposition}\label{Tbarf}
Applying $T\circ \Gamma$ to the morphism (\ref{barXR}) gives a stable equivalence of symmetric spectra.
\end{proposition}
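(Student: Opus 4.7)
The plan is to reduce the proposition to a levelwise connectivity estimate via Lemma \ref{basicthomlemma}. After applying $\Gamma$, both $\Gamma(t\circ\bar f)$ and $\Gamma(R(f_{h\I}))$ are levelwise Hurewicz fibrations over $BF$, hence classify well-based quasifibrations at every level by Lemma \ref{wellbasedfibrationlemma}, and so are levelwise $T$-good. By Proposition \ref{RTgood} the $\I$-space $R_{f_{h\I}}(X_{h\I})$ is convergent, so Proposition \ref{Tconvergent} gives that the target $T(\Gamma(R(f_{h\I})))$ is a convergent spectrum; any map into it whose level-$n$ component is $\lambda_n$-connected for some unbounded non-decreasing sequence $\{\lambda_n\}$ is then automatically a stable equivalence.

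I would first factor the morphism (\ref{barXR}), up to a canonical homotopy, as
\[
(\overline{X},\, t\circ\bar f)\xr{t}(X,\, f)\lra (R_{f_{h\I}}(X_{h\I}),\, R(f_{h\I})),
\]
where the second arrow is the natural map of $\I$-spaces over $BF$ induced, at each level $n$, by the canonical map $X(n)\to X_{h\I}$ together with the inclusion $BF(n)\hookrightarrow\overline{BF}(n)$ of the terminal object. Since $t$ is a levelwise equivalence of $T$-good objects after $\Gamma$, applying $T\circ\Gamma$ turns the first arrow into a level equivalence of Thom spectra, reducing the proposition to showing that $T(\Gamma(f))\to T(\Gamma(R(f_{h\I})))$ is a stable equivalence.

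The main analysis is then levelwise. At level $n$ both fibrant replacements are Hurewicz fibrations over $BF(n)$, and Lemma \ref{basicthomlemma} gives that the induced Thom space map is $(k_n+n)$-connected, where $k_n$ is the connectivity of the map between total spaces. I would compute $k_n$ by a homotopy-fiber analysis over $BF(n)$: the fiber of $\Gamma(R(f_{h\I}))_n$ at $b$ is the homotopy fiber of $f_{h\I}\co X_{h\I}\to BF_{h\I}$ at the image of $b$ in $BF_{h\I}$, by construction of $R_{f_{h\I}}(X_{h\I})(n)$ as the levelwise homotopy pullback of $X_{h\I}\to BF_{h\I}\la \overline{BF}(n)$, while the fiber of $\Gamma(f)_n$ at $b$ is the homotopy fiber of $f_n\co X(n)\to BF(n)$. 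The two fit in a diagram controlled by the canonical map $BF(n)\to BF_{h\I}$, which is $(n-1)$-connected by Lemma \ref{Blemma} applied to the convergent $\I$-space $BF$.

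The main obstacle is that homotopy fibers do not commute with homotopy colimits, so the two homotopy fibers need not coincide for a general (possibly non-convergent) $\I$-space $X$; this is the delicate step of the proof. The resolution exploits the $(n-1)$-connectivity of $BF(n)\to BF_{h\I}$ by comparing the two long exact homotopy sequences, showing that the discrepancy between the fibers is itself controlled by the connectivity of $BF(n)\to BF_{h\I}$ rather than by the uncontrolled map $X(n)\to X_{h\I}$. This yields $k_n\to\infty$, which combined with Lemma \ref{basicthomlemma} and the convergence of the target produces the required stable equivalence.
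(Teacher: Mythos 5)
Your strategy hinges on a levelwise connectivity estimate that cannot be established for a general (non-convergent) $\I$-space $X$, and in fact would prove something strictly stronger than what is true. Both points are fatal.

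First, the claim that you can control the map of homotopy fibres using only the $(n-1)$-connectivity of $BF(n)\to BF_{h\I}$ ``rather than by the uncontrolled map $X(n)\to X_{h\I}$'' is false. Take the simplest degenerate case, the constant map $f\co X\to BF$ to the base point. Then the fibre of $\Gamma(f_n)$ over the base point is (up to equivalence) $X(n)\times\Omega BF(n)$, while the fibre of $\Gamma(R(f_{h\I}))_n$ is $X_{h\I}\times\Omega BF_{h\I}$, and the map between them is the product map. Its connectivity is bounded above by the connectivity of $X(n)\to X_{h\I}$, which can be $0$ for all $n$: for the free $\I$-space $X=F_d(*)=\I(\mathbf d,-)$ with $d\geq 1$, $X(n)$ is a discrete set of $n!/(n-d)!$ points while $X_{h\I}\simeq B(\mathbf d\downarrow\I)$ is contractible. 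No five-lemma comparison of the two long exact sequences can repair this; the term involving $\pi_*X(n)\to\pi_*X_{h\I}$ genuinely enters.

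Second, and more fundamentally, even if you could produce unbounded connectivity $\lambda_n$, the conclusion you would extract is a $\pi_*$-isomorphism, but the proposition only asserts a stable equivalence, and for non-convergent symmetric spectra these are genuinely different notions. Your own $F_2(*)$ example (taking the constant map to $BF$) has $T(\Gamma(f))$ isomorphic to the free symmetric spectrum $F_2^S(S^0)$ and $T(\Gamma(R(f_{h\I})))$ a convergent model of $\Sigma^{-2}S$; the canonical map is a stable equivalence but \emph{not} a $\pi_*$-isomorphism, because $\pi_*F_2^S(S^0)$ does not agree with shifted stable stems. So the statement your argument would need to prove at the base of the induction is literally false. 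This is exactly why the paper's proof of Lemma \ref{freeThomlemma} passes through Shipley's detection functor $D$, which is designed to detect stable equivalences that fail to be $\pi_*$-isomorphisms; and why the general case is reduced, via the level model structure, to a cell-by-cell induction over pushouts of free objects $F_d(K)$ rather than by a direct connectivity estimate. Your observations about levelwise $T$-goodness (Lemma \ref{wellbasedfibrationlemma}, Proposition \ref{RTgood}) and about the factorization of (\ref{barXR}) through $(X,f)$ via $t$ are correct, but they reduce the problem to precisely the map you cannot handle by connectivity alone.
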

In order to prove this proposition we shall make use of the level model structure on $\I\U$ recalled in Section \ref{rightadjointsection}. Let $F_d\co \U\to \I\U$ be the functor defined in that section and let us write $F_d(u)\co F_d(K)\to BF$ for the map of $\I$-spaces 
associated to a map of spaces $u\co K\to BF(d)$.

\begin{lemma}\label{ugood}
If $u$ is a Hurewicz fibration, then $F_d(u)$ is level-wise $T$-good.
\end{lemma}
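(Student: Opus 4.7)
The plan is to show that $F_d(u)$ classifies a well-based quasifibration at each level, since this implies level-wise $T$-goodness by the discussion following Lemma~\ref{Top(n)-lemma} (concretely, a well-based quasifibration is preserved under the fibrant replacement $\Gamma$, which makes the map $T(f)\to T(\Gamma(f))$ a weak equivalence level-wise).

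First I would unpack the definition at level $n$: by construction $F_d(K)(n) = \I(\mathbf d,\mathbf n)\times K = \bigsqcup_{\alpha\in\I(\mathbf d,\mathbf n)} K$, and the restriction of $F_d(u)_n$ to the summand indexed by $\alpha$ is the composite
\[
K\xr{u}BF(d)\xr{\alpha_*}BF(n).
\]
Since pullback, Thom space formation, and the relevant notions of quasifibration are all compatible with disjoint unions, it suffices to show that each composite $\alpha_*\circ u$ classifies a well-based quasifibration.

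Next I would use the pullback square
\[
\begin{CD}
S^{n-\alpha}\bar\wedge V(d) @>>> V(n)\\
@VVV @VVV\\
BF(d) @>\alpha>> BF(n)
\end{CD}
\]
exhibited in Section~\ref{Thomdiagramsection}. The left vertical map is a well-based quasifibration: it is obtained from $V(d)=B(*,F(d),S^d)$ by fibre-wise smash with $S^{n-\alpha}$, which corresponds on the simplicial level to replacing the $F(d)$-space $S^d$ by the well-based $F(d)$-space $S^{n-\alpha}\wedge S^d\cong S^n$, so Proposition~\ref{barprop} applies. Now pulling back along the Hurewicz fibration $u\co K\to BF(d)$ and invoking Lemma~\ref{wellbasedfibrationlemma} gives that the pullback of $S^{n-\alpha}\bar\wedge V(d)$ along $u$, which is identified with $(\alpha_*\circ u)^* V(n)$, is a well-based quasifibration over $K$.

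Assembling these well-based quasifibrations over the summands $\{\alpha\}\times K$ of $F_d(K)(n)$ gives that $F_d(u)_n$ classifies a well-based quasifibration, completing the argument. The only nontrivial point is identifying $S^{n-\alpha}\bar\wedge V(d)$ as a well-based quasifibration over $BF(d)$; everything else is formal manipulation of the pullback diagrams and disjoint unions. This identification is essentially forced by the bar-construction description of $V(d)$ together with Proposition~\ref{barprop}, and is the one place where a small verification is required.
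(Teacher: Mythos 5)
Your proof is correct and follows essentially the same route as the paper's: both arguments decompose the pullback of $V(n)$ along $F_d(u)_n$ into a coproduct indexed by $\alpha\in\I(\mathbf d,\mathbf n)$ of pullbacks along $u$ of the fibre-wise suspensions $S^{n-\alpha}\bar\wedge V(d)$, identify these as well-based quasifibrations via Proposition~\ref{barprop}, and conclude by Lemma~\ref{wellbasedfibrationlemma} using that $u$ is a Hurewicz fibration. The only difference is that you spell out the bar-construction identification $S^{n-\alpha}\bar\wedge V(d)\cong B(*,F(d),S^{n-\alpha}\wedge S^d)$ explicitly, which the paper leaves implicit.
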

\begin{proof}
The pullback of $V(n)$ along $F_d(u)$ is isomorphic to the coproduct of the pullbacks along $u$ of the fibre-wise suspensions $S^{n-\alpha}\bar\wedge V(d)$ over $BF(d)$, where $\alpha$ runs through the injective maps $\mathbf d\to\mathbf n$. These are well-based quasifibrations by Proposition \ref{barprop} and since $u$ is a fibration, the same holds for the pullbacks by Lemma 
\ref{wellbasedfibrationlemma} and the claim follows. 
\end{proof}

The idea is to first prove Proposition \ref{Tbarf} for objects of the form $F_d(u)$. 
\begin{lemma}\label{freeThomlemma}
Applying $T\circ \Gamma$ to the map of $\I$-spaces
$
\overline{F_d(K)}\to R(F_d(K)_{h\I})
$
over $BF$ gives a stable equivalence of symmetric spectra.
\end{lemma}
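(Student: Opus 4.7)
The plan is to compare both Thom spectra to the explicit ``free'' symmetric Thom spectrum $T(F_d(u))$ through a zig-zag, and to conclude by convergence and connectivity estimates exploiting the free $\I$-space structure of $F_d(K)$.

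First I would reduce to the case that $u\colon K\to BF(d)$ is a Hurewicz fibration, by factoring $u$ as a homotopy equivalence followed by a Hurewicz fibration and observing that both sides of the map in the lemma are preserved by this replacement up to level-wise equivalence. Under this assumption Lemma~\ref{ugood} makes $F_d(u)$ level-wise $T$-good. I then fit the map of the lemma into a commutative diagram of $\I$-spaces over $BF$
\[
F_d(K)\xleftarrow{\;t\;} \overline{F_d(K)} \longrightarrow R(F_d(K)_{h\I}),
\]
with $t$ the canonical level-wise equivalence from Section~\ref{Rliftingsection}. After applying $T\Gamma$, the map induced by $t$ becomes a level-wise equivalence of symmetric spectra: it is a map of Thom spaces obtained by pulling back along a level-wise equivalence of Hurewicz fibrations between level-wise $T$-good $\I$-spaces, and Lemma~\ref{basicthomlemma} identifies such a map as a level-wise weak equivalence. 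Thus the lemma reduces to showing that the composite $T(\Gamma F_d(u))\to T(\Gamma R(F_d(u)_{h\I}))$ (obtained by inverting the level-wise equivalence) is a stable equivalence.

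On the source, the free structure $F_d(K)(\mathbf{n})=\I(\mathbf{d},\mathbf{n})\times K$ yields the explicit formula $T(F_d(u))_n\cong \bigvee_{\alpha\colon\mathbf{d}\hookrightarrow\mathbf{n}} S^{n-\alpha}\wedge T(u)$. On the target, Propositions~\ref{RTgood} and~\ref{Tconvergent} combine to show that $T(\Gamma R(F_d(u)_{h\I}))$ is convergent, so its stable homotopy can be computed as ordinary colimits of level homotopy groups. Using the identification $F_d(K)_{h\I}\simeq B(\mathbf{d}\downarrow\I)\times K\simeq K$ and the factorization of the resulting map $K\to BF_{h\I}$ through $u\colon K\to BF(d)$, together with the $(n-1)$-connectivity of $\overline{BF}(\mathbf{n})\to BF_{h\I}$ from Lemma~\ref{barBFlemma}, I would identify $R(F_d(K)_{h\I})(\mathbf{n})$ up to $(n-1)$-connected maps with the pullback of $V(n)$ along a representative $\alpha\circ u\colon K\to BF(n)$. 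Lemma~\ref{basicthomlemma} then delivers connectivity estimates on the comparison of $n$th Thom spaces that grow with $n$ and yield the required stable equivalence in the colimit.

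The principal obstacle is this last comparison: the source $T(F_d(u))_n$ decomposes into a wedge indexed by all injections $\mathbf{d}\hookrightarrow\mathbf{n}$, but only the contribution of any single injection is relevant stably; matching this contribution with the homotopy pullback structure on $R(F_d(K)_{h\I})(\mathbf{n})$, while respecting the $\Sigma_n$-action and the symmetric spectrum structure maps, requires a careful quantitative use of Lemma~\ref{basicthomlemma} at each level.
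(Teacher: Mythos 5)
Your reduction is sound: factoring the map of the lemma through the square (\ref{freeRdiagram}) and invoking Lemma \ref{ugood} and Proposition \ref{RTgood} to land on the comparison $T(F_d(u))\to T(\tilde u)$ is exactly what the paper does. But the proposed finish has a genuine gap, and you actually flagged the danger spot yourself in your last paragraph without resolving it.

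The problem is that $T(F_d(u))\cong F_d^S(T(u))$ is \emph{not} a convergent symmetric spectrum when $d>0$: its $n$th space is a wedge of $n!/(n-d)!$ copies of suspensions of $T(u)$, so the level-wise maps to the convergent target $T(\tilde u)$ do not become highly connected as $n\to\infty$ (take $K=*$, $u$ the basepoint; then $T(F_d(u))_n$ is a wedge of $n!/(n-d)!$ spheres $S^n$ mapping to a single $S^n$, which is nowhere near $(n+c)$-connected). This is not a technical inconvenience but the crux: the comparison map $T(F_d(u))\to T(\tilde u)$ is a stable equivalence in the model-category sense but it is \emph{not} a $\pi_*$-isomorphism. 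Any argument that deduces the conclusion from connectivity estimates growing with $n$ (i.e.\ from Lemma \ref{basicthomlemma} applied level by level) would, if correct, prove the stronger and false claim that this map is a $\pi_*$-isomorphism. So the approach cannot be completed as stated.

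The paper's actual proof navigates this by invoking Shipley's detection functor $D$, which has the defining property that a map of well-based symmetric spectra is a stable equivalence if and only if $D$ of it is a $\pi_*$-isomorphism (after also smashing with $S^d$ and comparing to the convergence-adapted variant $M$). Your single-injection heuristic and the connectivity estimates from Lemma \ref{basicthomlemma} do reappear in the paper, but only at the level of Lemma \ref{ulemma}, where the comparison is between $F_0^S(T(u))$ and the shift $\Sigma_L^d T(\tilde u)$, both of which are well behaved; the passage from the full wedge $F_d^S(T(u))$ to this single-injection comparison is handled by the formal properties of $D$, not by a connectivity argument on the original spectra. To repair your plan you would need to insert the $D$-machinery (or some equivalent device for detecting stable equivalences of non-semistable symmetric spectra) before attempting any level-wise connectivity comparison.
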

The proof of this requires some preparation. We view $K$ as a space over $BF_{h\I}$ via the map
\begin{equation}\label{utilde}
\tilde u\co K\to BF(d)\to BF_{h\I},
\end{equation}
where the second map is induced by the inclusion of $\{\mathbf d\}$ in $\I$.  

\begin{lemma}\label{Fhocolim}
There is a weak homotopy equivalence $K\to F_d(K)_{h\I}$ of spaces over $BF_{h\I}$.  
\end{lemma}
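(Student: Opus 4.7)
The plan is to identify $F_d(K)_{h\I}$ explicitly and then construct the desired map via an obvious vertex inclusion. First I would note that since the homotopy colimit commutes with products by a fixed space, we have a natural homeomorphism
\[
F_d(K)_{h\I}=\bigl(\hocolim_{\mathbf n\in\I}\I(\mathbf d,\mathbf n)\bigr)\times K\cong B(\mathbf d\downarrow\I)\times K.
\]
The category $(\mathbf d\downarrow\I)$ has the identity $\id_{\mathbf d}$ as initial object, so $B(\mathbf d\downarrow\I)$ is contractible, and the projection $B(\mathbf d\downarrow\I)\times K\to K$ is a weak homotopy equivalence.

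The map $K\to F_d(K)_{h\I}$ I would use is the inclusion at the vertex $\id_{\mathbf d}\in\I(\mathbf d,\mathbf d)=F_d(K)(\mathbf d)/K$ corresponding to the initial object of $(\mathbf d\downarrow\I)$; in the identification above this is the section $k\mapsto(\ast,k)$ where $\ast$ denotes the basepoint coming from $\id_{\mathbf d}$. This is a weak homotopy equivalence because $B(\mathbf d\downarrow\I)$ is contractible.

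It then remains to check that this map is compatible with the structure maps to $BF_{h\I}$, i.e.\ that the composite $K\to F_d(K)_{h\I}\to BF_{h\I}$ agrees with $\tilde u$ from (\ref{utilde}). This is immediate by unwinding definitions: the vertex $(\id_{\mathbf d},k)$ at $\mathbf n=\mathbf d$ maps under $F_d(u)$ to $(\id_{\mathbf d})_*u(k)=u(k)\in BF(\mathbf d)$, and the further map to $BF_{h\I}$ is precisely inclusion at level $\mathbf d$, which is the definition of $\tilde u$.

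There is no real obstacle here; the only point requiring a little care is keeping the identifications natural in $K$ and compatible with the maps down to $BF_{h\I}$, which follows because every identification used (commutation of hocolim with products, contractibility of the over-category, inclusion at the initial vertex) is canonical.
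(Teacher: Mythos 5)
Your proof is correct and matches the paper's argument: both identify $F_d(K)_{h\I}$ with $B(\mathbf d\downarrow\I)\times K$ and use that $(\mathbf d\downarrow\I)$ has an initial object, hence contractible classifying space. Your explicit verification of compatibility with the maps to $BF_{h\I}$ is a minor elaboration the paper leaves implicit.
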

\begin{proof}
By definition of the homotopy colimit we may identify $F_d(K)_{h\I}$ with 
$B(\mathbf d\downarrow \I)\times K$, where $(\mathbf d\downarrow\I)$ is the category of objects in $\I$ under $\mathbf d$. Since this category has an initial object its classifying space is contractible and the result follows. 
\end{proof}

In the case of the $\I$-space $F_d(K)$, the level-wise equivalence 
$t\co \overline{F_d(K)}\to F_d(K)$ has a section induced by the canonical map 
$K\to \overline{F_d(K)}(d)$. Using this, we get a commutative diagram in 
$\I\mathcal U/BF$,
\begin{equation}\label{freeRdiagram}
\begin{CD}
F_d(K)@>\sim >> \overline{F_d(K)}\\
@VVV @VVV\\
R(K)@>\sim >> R(F_d(K)_{h\I}).
\end{CD}
\end{equation}
The upper horizontal map is a level-wise equivalence since $t$ is and the lower horizontal map is a level-wise equivalence by the above lemma. Thus, in order to prove Lemma 
\ref{freeThomlemma}, we may equally well consider the vertical map on the left hand side of the diagram.

Given a based space $T$, let $F_d^S(T)$ be the symmetric spectrum 
$\I_S(\mathbf d,-)\wedge T$. The functor $F_d^S$ so defined is left adjoint to the functor 
$\Sp^{\Sigma}\to \mathcal T$ that takes a symmetric spectrum to its $d$th space, see \cite{MMSS}.  In particular it follows from the definition that $F_0^S(T)$ is the suspension spectrum of $T$. Notice also that the Thom spectrum $T(F_d(u))$ associated to $F_d(u)$ may be identified with $F_d^S(T(u))$, where as usual $T(u)$ denotes the Thom space of the map $u$. Let $T(\tilde u)$ be the symmetric Thom spectrum of the map $\tilde u$ in (\ref{utilde}) and let 
$\Sigma_L^dT(\tilde u)$ be the left shift by 
$\mathbf d$, that is, the composition of $T(\tilde u)$ with the concatenation functor $\I_S\to\I_S$, $\mathbf n\mapsto \mathbf d\sqcup\mathbf n$. Thus, the $n$th space of 
$\Sigma^d_LT(\tilde u)$ is $T(\tilde u)(\mathbf d\sqcup\mathbf n)$ with $\Sigma_n$ acting via the inclusion $\Sigma_n\to\Sigma_{d+n}$ induced by $\mathbf n\mapsto \mathbf d\sqcup\mathbf n$.
The condition that $u$ be a Hurewicz fibration in the following lemma is unnecessarily restrictive, but the present formulation is sufficient for our purposes. 
\begin{lemma}\label{ulemma}
If $u$ is a Hurewicz fibration, then the canonical map of spaces $T(u)\to T(\tilde u)(d)$ induces a $\pi_*$-isomorphism $F_0^S(T(u))\to\Sigma_L^dT(\tilde u)$.
\end{lemma}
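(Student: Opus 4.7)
The plan is to realize the level-$n$ component of the proposed map $F_0^S(T(u))\to\Sigma_L^dT(\tilde u)$ as the Thom-space map of a pullback square of well-based quasifibrations, and then apply Lemma \ref{basicthomlemma} to show that its connectivity grows without bound in $n$, forcing a $\pi_*$-isomorphism after passage to the colimit.

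To describe the level-$n$ map, choose an inclusion $\iota\co\mathbf d\to\mathbf{d+n}$ as an object of $(\I\downarrow\mathbf{d+n})$. The composite $K\xr{u}BF(d)\to\overline{BF}(d+n)$ via the vertex at $\iota$, paired with constant paths in $BF_{h\I}$, produces a section $s_{d,n}\co K\to R_{\tilde u}(K)(d+n)$ of the projection to $K$ satisfying $R(\tilde u)\circ s_{d,n}=\iota_*\circ u$. Pulling $V(d+n)$ back along $R(\tilde u)$ and further along $s_{d,n}$ yields the pullback square
\[
\begin{CD}
(\iota_*u)^*V(d+n) @>>> R(\tilde u)^*V(d+n)\\
@VVV @VVV\\
K @>s_{d,n}>> R_{\tilde u}(K)(d+n)
\end{CD}
\]
whose induced Thom-space map may be identified, via the canonical isomorphism $T(\iota_*u)\cong S^n\wedge T(u)$, with the level-$n$ component $S^n\wedge T(u)\to T(\tilde u)(d+n)$ of the symmetric spectrum morphism determined by the level-$0$ map $T(u)\to T(\tilde u)(d)$ coming from $s_{d,0}$.

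For the connectivity estimates, Proposition \ref{RTgood} together with Lemma \ref{barBFlemma} gives that the projection $R_{\tilde u}(K)(d+n)\to K$ is $(d+n-1)$-connected, so its section $s_{d,n}$ is $(d+n-2)$-connected. Proposition \ref{RTgood} also ensures that the right-hand vertical in the square is a well-based quasifibration; since $u$ is a Hurewicz fibration and $\iota_*^*V(d+n)\cong S^n\bar\wedge V(d)$ is a well-based quasifibration over $BF(d)$, Lemma \ref{wellbasedfibrationlemma} gives the same for the left-hand vertical. Both have fibre $S^{d+n}$, hence are $(d+n-1)$-connected, so Lemma \ref{basicthomlemma} implies that the level-$n$ map is $(2d+2n-3)$-connected. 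Passing to the colimit in $n$ then yields
\[
\pi_k(F_0^S(T(u)))=\colim_n\pi_{k+n}(S^n\wedge T(u))\xr{\cong}\colim_n\pi_{k+n}(T(\tilde u)(d+n))=\pi_k(\Sigma_L^dT(\tilde u)).
\]

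The main nuisance I anticipate is verifying the compatibility of $s_{d,n}$ with the structure maps of $T(\tilde u)$, so that the pullback-square construction genuinely reproduces the level-$n$ component of the spectrum morphism. This reduces to the naturality of vertex inclusions in the homotopy colimit defining $\overline{BF}$ and is routine bookkeeping, but must be done carefully to match the conventions for iterated suspensions and the $\I_S$-action; once that is established, the connectivity argument itself is essentially a single application of Lemma \ref{basicthomlemma}.
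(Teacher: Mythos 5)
Your proposal is correct and follows essentially the same route as the paper: identify the level-$n$ component as the Thom-space map of a pullback square over the section $K\to R_{\tilde u}(K)(\mathbf d\sqcup\mathbf n)$, note that its connectivity grows without bound because it is a section of the projection $R_{\tilde u}(K)(d+n)\to K$ which is $(d+n-1)$-connected by Proposition \ref{RTgood}, and conclude via Lemma \ref{basicthomlemma}. (One harmless slip: a well-based quasifibration with fibre $S^{d+n}$ is $(d+n)$-connected rather than $(d+n-1)$-connected, but this only improves your estimate.)
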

\begin{proof}
 In spectrum degree $n$ this is the map of Thom spaces induced by the map 
$$
K\to  R_{\tilde u}(K)(\mathbf d)\to R_{\tilde u}(K)(\mathbf d\sqcup\mathbf n),
$$
viewed as a map of $T$-good spaces over $BF(d+n)$. This is also a map of spaces over $K$ via the projection $R_{\tilde u}(K)\to K$, and it therefore follows from the proof of Proposition 
\ref{RTgood} that its connectivity tends to infinity with $n$. The result then follows from Lemma \ref{basicthomlemma}.
\end{proof}

We shall prove Lemma \ref{freeThomlemma} using the detection functor $D$ from \cite{Sh}. We recall that this functor associates to a symmetric spectrum $T$ the symmetric spectrum $DT$ whose $n$th space is the based homotopy colimit
$$
DT(n)=\hocolim_{\mathbf m\in \I}\Omega^m(T(m)\wedge S^n).
$$ 
By \cite{Sh}, Theorem 3.1.2, a map of (level-wise well-based) symmetric spectra $T\to T'$ is a stable equivalence if and only if the induced map $DT\to DT'$ is a $\pi_*$-isomorphism. There is a closely related functor $T\mapsto MT$, where $MT$ is the symmetric spectrum with $n$th space
$$
MT(n)=\hocolim_{\mathbf m\in \I}\Omega^m(T(\mathbf m\sqcup \mathbf n)).
$$
Thus, $MT$ is the homotopy colimit of the $\I$-diagram of symmetric spectra 
$\mathbf m\mapsto \Omega^m(\Sigma^m_LT)$.
There is a canonical map $DT\to MT$, which is a level-wise equivalence if $T$ is convergent and level-wise well-based. 

\medskip
\noindent\textit{Proof of Lemma \ref{freeThomlemma}.}
We claim that applying $T\circ\Gamma$ to the vertical map on the left hand side of  
(\ref{freeRdiagram}) gives a stable equivalence, and for this we may assume without loss of generality that $u$ is a Hurewicz fibration. Then $F_d(u)$ is $T$-good by Lemma \ref{ugood} 
and since $R(\tilde u)$ is $T$-good by Proposition \ref{RTgood}, it suffices to show that 
$T(F_d(u))\to T(\tilde u)$ is a stable equivalence. 
Furthermore, by  \cite{MMSS}, Theorem 8.12, it is enough to show that this map is a stable equivalence after smashing with $S^d$ and by the above remarks this in turn follows if applying $D$ gives a $\pi_*$-isomorphism. We identify  $T(F_d(u))$ with $F_d^S(T(u))$ and
claim that there is a commutative diagram
$$
\begin{CD}
F_0^S(T(u))@>\sim>> \Sigma_L^dT(\tilde u)@>\sim >> 
\Omega^d(S^d\wedge \Sigma^d_LT(\tilde u))\\
@VV\sim V @. @VV\sim V\\
D(S^d\wedge F^S_d(T(u)))@>>> D(S^d\wedge T(\tilde u)) @>\sim >> M(S^d\wedge T(\tilde u)),
\end{CD}
$$
where the maps are $\pi_*$-isomorphisms as indicated. The vertical map on the left hand side is induced by the space-level map 
$$
T(u)\to F_d^S(T(u))(d)\to \Omega^d(S^d\wedge F_d^S(T(u))(d))\to 
D(S^d\wedge F^S_d(T(u)))(0).
$$
It is a fundamental property of the model structure on $\Sp^{\Sigma}$ that the induced map of symmetric spectra is a $\pi_*$-isomorphism, see the proof of \cite{Sh}, Lemma 3.2.5. The first map in the upper row is the stable equivalence from Lemma \ref{ulemma}, and the remaining indicated arrows are $\pi_*$-isomorphisms since $T(\tilde u)$ is connective and convergent. This proves the claim.\qed

\medskip
We now wish to prove Proposition \ref{Tbarf} by an inductive argument based on the filtration 
\begin{equation}\label{cellfiltration}
\emptyset=X_0\to X_1\to X_2\to \dots\to \colim_{n}X_n=X
\end{equation}
of a cell complex $X$ in $\I\U$, cf.\ the discussion of the level model structure in Section \ref{rightadjointsection}. 
In order to carry out the induction step, we need to ensure that the induced maps of Thom spectra are $h$-cofibrations in the sense of Section \ref{Rliftingsection}.  
The following is the $\I$-space analogue of \cite{LMS}, IX, Lemma 1.9 and Proposition 1.11. The proof is essentially the same as in the space-level case.
\begin{proposition}
The functor $\Gamma$ on $\I\mathcal U/BF$ preserves colimits and takes morphisms in $\I\mathcal U/BF$ that are $h$-cofibrations in $\I\mathcal U$ to fibre-wise $h$-cofibrations.\qed
\end{proposition}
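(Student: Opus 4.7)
The plan is to observe that every construction involved --- the functor $\Gamma$, colimits in $\I\U/BF$, and the mapping cylinder used to define $h$-cofibrations in $\I\U$ --- is computed level-wise. It therefore suffices to verify each claim at each object $\mathbf n\in\I$, where they reduce to the space-level statements \cite{LMS}, IX.1.9 and IX.1.11, applied to $\U/BF(n)$.

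For preservation of colimits, at level $n$ the space $\Gamma_f(X)(n)$ is the pullback of the diagram $X(n)\to BF(n)\leftarrow BF(n)^I$, in which the right-hand map is a Hurewicz fibration. Since $BF(n)$ is locally equiconnected by \cite{Le}, Corollary 2.4, pullback along this fibration preserves colimits by \cite{LMS}, Propositions 1.1 and 1.2. Because colimits in $\I\U/BF$ are formed level-wise, it follows that $\Gamma$ preserves colimits.

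For the cofibration statement, let $i\co A\to X$ be a morphism in $\I\U/BF$ that is an $h$-cofibration in $\I\U$; by definition this provides a retraction in $\I\U$ of the map $X\cup_i(A\times I)\to X\times I$, and in particular each $i_n$ is an $h$-cofibration in $\U$ over $BF(n)$. The proof of \cite{LMS}, IX.1.11 produces an explicit fibre-wise retraction for the mapping cylinder of $\Gamma(i_n)$ by combining the given retraction for $i_n$ with the structure of the mapping path space, using only pullback and composition. The main point to check --- and essentially the only real obstacle --- is that this construction is natural in the morphisms of $\I$, so that the level-wise fibre-wise retractions assemble into a retraction in $\I\U/BF$. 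This naturality is immediate from the functoriality of the ingredients, using that the retraction for $i$ is already a morphism in $\I\U$ and that $\Gamma$, pullbacks, and the cylinder are all computed level-wise.
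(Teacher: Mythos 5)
Your proof is correct and takes essentially the same approach as the paper, whose own ``proof'' is simply the remark that it is ``essentially the same as in the space-level case,'' citing \cite{LMS}, IX, Lemma 1.9 and Proposition 1.11. You have supplied the intended reduction by noting that $\Gamma$, colimits, and the mapping cylinder are all computed level-wise, and your observation that the explicit LMS retraction formula is natural in the morphisms of $\I$ (so that the level-wise fibre-wise retractions assemble to a morphism in $\I\U/BF$) fills in exactly the point the paper leaves implicit.
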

Since the symmetric Thom spectrum functor on $\I\mathcal U/BF$ preserves colimits and takes fibre-wise $h$-cofibrations to $h$-cofibrations by Proposition \ref{Thomcolimit}, this has the following consequence.
\begin{proposition}
The composite functor $T\circ\Gamma$ preserves colimits and takes morphisms in $\I\mathcal U/BF$ that are $h$-cofibrations in $\I\mathcal U$ to $h$-cofibrations of symmetric spectra.  \qed
\end{proposition}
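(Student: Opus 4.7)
The plan is to deduce this proposition directly as a formal consequence of the two preceding results together with the definition of $h$-cofibrations in terms of retractions of mapping cylinder inclusions. No new geometric input is required; the argument is purely categorical, so the main task is simply to verify that the characterizing diagrams are respected.

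First I would handle the colimit statement. The previous proposition asserts that $\Gamma\co \I\U/BF\to\I\U/BF$ preserves colimits, and Proposition \ref{Thomcolimit} asserts that $T\co\I\U/BF\to\Sp^{\Sigma}$ preserves colimits. Since a composite of colimit-preserving functors preserves colimits, this immediately gives the first half of the statement.

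For the cofibration statement, let $i\co A\to X$ be a morphism in $\I\U/BF$ that is an $h$-cofibration in $\I\U$. By the previous proposition, $\Gamma(i)\co \Gamma(A)\to\Gamma(X)$ is a fibre-wise $h$-cofibration in $\I\U/BF$; this means that the canonical map
\[
\Gamma(X)\cup_{\Gamma(i)}(\Gamma(A)\times I)\to \Gamma(X)\times I
\]
admits a retraction over $BF$. The key observation is then that the formation of this mapping cylinder is built from pushouts and tensors with the unbased space $I$, both of which are preserved by $T$ according to Proposition \ref{Thomcolimit}. Applying $T$ to the above retraction diagram therefore yields a retraction
\[
T(\Gamma(X))\wedge I_+ \to T(\Gamma(X))\cup_{T(\Gamma(i))}\bigl(T(\Gamma(A))\wedge I_+\bigr)
\]
of the corresponding mapping cylinder inclusion in $\Sp^{\Sigma}$, which by definition says that $T(\Gamma(i))$ is an $h$-cofibration of symmetric spectra.

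Since both halves are direct, there is no real obstacle; the only step requiring care is the translation of the fibre-wise $h$-cofibration condition into a diagram visibly preserved by $T$. Once one observes that the fibre-wise mapping cylinder is a pushout of objects that are either already in play or obtained by tensoring with $I$, the argument closes immediately by invoking Proposition \ref{Thomcolimit}.
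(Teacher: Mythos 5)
Your argument is correct and follows the same approach as the paper: the colimit claim is a composite of colimit-preserving functors, and the cofibration claim follows by using the preceding proposition to get a fibre-wise $h$-cofibration and then applying $T$, noting that the mapping cylinder diagram is built from pushouts and tensors with $I$, both preserved by Proposition \ref{Thomcolimit}. The paper treats this as immediate; you have merely spelled out the step where $T$ takes fibre-wise $h$-cofibrations to $h$-cofibrations, which is exactly the verification the paper invokes implicitly.
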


\begin{proof}[Proof of Proposition \ref{Tbarf}.]
Using the level model structure we may choose a cofibrant $\I$-space $X'$ and a level-wise equivalence $X'\to X$, hence it suffices to consider the case where $X$ is a cofibrant $\I$-space. Then $X$ is a retract of a cell complex which we may view as a cell complex over $BF$ via the retraction. By functoriality we are thus reduced to the case where $X$ is a cell complex with a filtration by $h$-cofibrations as in (\ref{cellfiltration}). In order to handle this case we use that both functors in (\ref{barXR}) preserve colimits and tensors with unbased spaces, hence they also preserve (not necessarily fibre-wise) $h$-cofibrations. Applying the functor $T\circ\Gamma$, we see that both functors in the proposition preserve colimits and take $h$-cofibrations of $\I$-spaces over $BF$ to $h$-cofibrations of symmetric spectra. We prove by induction that the result holds for each of the $\I$-spaces $X_n$ in the filtration. By definition, $X_{n+1}$ is a pushout of a diagram of the form
$
B\leftarrow A\to X_n,
$
where $A\to B$ is a coproduct of generating cofibrations, hence in particular an $h$-cofibration. We view this as a diagram of $\I$-spaces over $BF$ via the inclusion of $X_{n+1}$ in $X$ and get a diagram of Thom spectra
$$
\begin{CD}
T\Gamma(\overline{X}_n)@<<< T\Gamma(\overline{A})@>>> 
T\Gamma(\overline{B})\\\
@VVV @VVV @VVV\\
T\Gamma(R((X_n)_{h\I}))@<<<T\Gamma(R(A_{h\I}))@>>> T\Gamma(R(B_{h\I})),
\end{CD}
$$    
such that the map for $X_{n+1}$ is the induced map of pushouts. By the above discussion it follows that the horizontal maps on the right hand side of the diagram are $h$-cofibrations and the vertical maps are stable equivalences by Lemma \ref{freeThomlemma} and the induction hypothesis. Consequently the map of pushouts is also a stable equivalence, see \cite{ MMSS}, Theorem 8.12.
\end{proof}

\begin{proof}[Proof of Theorem \ref{introinvariance}]
We prove that applying the functor $T\circ\Gamma$ to an $\I$-equivalence $X\to Y$ over $BF$ gives a stable equivalence of symmetric spectra. Consider the commutative diagram
$$
\begin{CD}
X@<<<\overline{X}@>>> R(X_{h\I})\\
@VVV @VVV @VVV\\
Y@<<< \overline{Y} @>>> R(Y_{h\I})
\end{CD}
$$
of $\I$-spaces over $BF$. Applying $T\circ\Gamma$ to this diagram we get a diagram of symmetric spectra where the horizontal maps are stable equivalence by Proposition \ref{Tbarf} and the fact that $T\circ\Gamma$ preserves level-wise equivalences. The result now follows from Corollary \ref{TRcorollary} which ensures that the map $R(X_{h\I})\to R(Y_{h\I})$ induces a stable equivalence. 
\end{proof}

Notice, that as a consequence of the theorem, the composite functor $T\circ\Gamma$ is a homotopy functor on $\I\U/BF$ in the sense that it takes $\I$-equivalences to stable equivalences.

\subsection{The proof of Theorem of \ref{NThominvariance}}\label{NTinvariancesection}
The proof of Theorem \ref{NThominvariance} is similar to but simpler than the proof of Theorem 
\ref{introinvariance}. We first introduce a functor
\[
R^{\mathcal N}\co \U/BF_{h\mathcal N}\to \mathcal N\U/BF,
\]
which is the $\mathcal N$-space analogue of the functor $R$. Let us temporarily write 
$\overline{BF}$ for the homotopy Kan extension of the $\mathcal N$-space $BF$ along the identity functor of $\mathcal N$, 
that is, 
$$
\overline{BF}(n)=\hocolim_{(\mathcal N\downarrow\mathbf n)} BF\circ\pi_n,
$$  
where $\pi_n$ is the forgetful functor $(\mathcal N\downarrow\mathbf n)\to\mathcal N$. 
Given a map $f\co X\to BF_{h\mathcal N}$, we define $R^{\mathcal N}_f(X)$ to be the level-wise homotopy pullback of the diagram of $\mathcal N$-spaces
$$
X\xr{f}BF_{h\mathcal N}\xl{\pi}\overline{BF}, 
$$
and we define $R^{\mathcal N}(f)$ to be the composite map of $\mathcal N$-spaces
$$
R^{\mathcal N}(f)\co R^{\mathcal N}_f(X)\to \overline{BF}\xr{t} BF.
$$    
Exactly as in the $\I$-space case there is a map of $\mathcal N$-spaces 
\begin{equation}\label{tildeXR}
(\overline{X},t\circ \overline f)\to (R^{\mathcal N}_{f_{h\mathcal N}}(X_{h\mathcal N}), 
R^{\mathcal N}(f_{h\mathcal N}))
\end{equation}
over $BF$, where we again use the (temporary) notation $\overline X$ for the homotopy Kan extension along the identity on $\mathcal N$.
Theorem \ref{NThominvariance} then follows from the following proposition in the same way that Theorem \ref{introinvariance} follows from Proposition \ref{Tbarf}. 

\begin{proposition}\label{Ttildef}
Applying $T\circ\Gamma$ to (\ref{tildeXR}) gives a stable equivalence of spectra. 
\end{proposition}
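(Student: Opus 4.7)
The plan is to mirror the proof of Proposition \ref{Tbarf} with the simplifications available in the $\mathcal N$-space setting and in the category of ordinary spectra. I would first set up the level model structure on $\mathcal N\U$ whose generating cofibrations are $F^{\mathcal N}_d(S^{n-1}) \to F^{\mathcal N}_d(D^n)$, where $F^{\mathcal N}_d\co \U \to \mathcal N\U$ is left adjoint to evaluation at $\mathbf d$. Explicitly $F^{\mathcal N}_d(K)$ is the $\mathcal N$-space that equals $K$ from degree $d$ onwards and is empty in lower degrees, with identity structure maps. Since $(\mathbf d\downarrow\mathcal N)$ has $\mathbf d$ as initial object, $B(\mathbf d\downarrow\mathcal N)$ is contractible and one obtains a weak equivalence $K\to F^{\mathcal N}_d(K)_{h\mathcal N}$ over $BF_{h\mathcal N}$, the target of $K$ being the composite $\tilde u\co K\to BF(d)\to BF_{h\mathcal N}$.

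The base case of the induction is the $\mathcal N$-analogue of Lemma \ref{freeThomlemma}: $T\circ\Gamma$ applied to $\overline{F^{\mathcal N}_d(K)}\to R^{\mathcal N}(F^{\mathcal N}_d(K)_{h\mathcal N})$ is a stable equivalence. Using the section of $t$ induced by the terminal object $\mathbf d$ in $(\mathcal N\downarrow\mathbf d)$ we reduce, as in diagram (\ref{freeRdiagram}), to showing that $T(F^{\mathcal N}_d(u))\to T(R^{\mathcal N}(\tilde u))$ is a stable equivalence for $u$ a Hurewicz fibration. The source is level-wise $T$-good since the pullback of $V(n)$ along $F^{\mathcal N}_d(u)$ for $n\geq d$ is $S^{n-d}\bar\wedge u^*V(d)$, a well-based quasifibration by Proposition \ref{barprop} and Lemma \ref{wellbasedfibrationlemma}. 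The target is level-wise $T$-good and convergent by the evident $\mathcal N$-space analogue of Proposition \ref{RTgood}. Since both spectra are connective and convergent, stable equivalence reduces to a $\pi_*$-isomorphism, and this follows because at each level $n\geq d$ the comparison map fits into a diagram of Thom spaces whose connectivity grows with $n$ by the homotopy-pullback definition of $R^{\mathcal N}$ and Lemma \ref{basicthomlemma}. This is the step where working with ordinary spectra pays off: the detection-functor argument needed in the $\I$-case is unnecessary.

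For the inductive step, use the level model structure to reduce to the case of a cofibrant $\mathcal N$-space, which we may take to be a cell complex with cellular filtration $\emptyset=X_0\to X_1\to\cdots$ by $h$-cofibrations. The $\mathcal N$-space analogues of the two propositions preceding the proof of Proposition \ref{Tbarf} (whose proofs go through verbatim, since $\Gamma$ is applied level-wise and $T$ on $\mathcal N\U/BF$ preserves colimits and takes fibre-wise $h$-cofibrations to $h$-cofibrations) show that both functors in (\ref{tildeXR}) preserve colimits and tensors with unbased spaces, hence both compositions with $T\circ\Gamma$ carry $h$-cofibrations of $\mathcal N$-spaces over $BF$ to $h$-cofibrations of spectra. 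Writing each $X_{n+1}$ as a pushout $B\leftarrow A\to X_n$ along a coproduct of generating cofibrations, the induction hypothesis and the base case supply stable equivalences for $A$, $B$ and $X_n$, and the gluing lemma for pushouts of spectra along $h$-cofibrations concludes the inductive step. The main obstacle, just as in the $\I$-case, is the base case for free objects; but as noted the simpler stable homotopy theory available here makes that argument essentially direct.
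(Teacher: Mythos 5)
Your proof is correct and follows essentially the same approach as the paper. The paper's own proof of Proposition~\ref{Ttildef} is a terse three-sentence reduction: set up the cofibrantly generated level model structure on $\mathcal N\U$, reduce to cell complexes, and invoke the inductive argument of Proposition~\ref{Tbarf}, having already noted just before the proposition that the $\mathcal N$-space analogue of Lemma~\ref{freeThomlemma} holds ``with a simpler proof'' because the relevant $\mathcal N$-spaces are convergent and the connectivity of the level-wise maps grows. Your proposal fills in exactly those details: the explicit description of $F_d^{\mathcal N}(K)=\mathcal N(\mathbf d,-)\times K$, the identification of the pullback $S^{n-d}\bar\wedge u^*V(d)$ (a single term rather than the coproduct appearing in the $\I$-case, since $\mathcal N(\mathbf d,\mathbf n)$ is a singleton), the observation that the base case goes through by connectivity growth and Lemma~\ref{basicthomlemma} alone, and the inductive step via colimit- and $h$-cofibration-preservation. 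You correctly identify that the detection-functor/shift machinery of Lemmas~\ref{ulemma} and \ref{freeThomlemma} is unnecessary here, which is precisely the simplification the paper alludes to. One small imprecision: you write that ``stable equivalence reduces to a $\pi_*$-isomorphism'' by connectivity and convergence, but for ordinary (non-symmetric) spectra stable equivalence simply \emph{is} $\pi_*$-isomorphism by definition, so no reduction is needed --- that reduction is a feature of the symmetric-spectrum setting. This does not affect the correctness of the argument.
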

In order to prove this we first consider the $\mathcal N$-spaces $F_d(K)$ defined by $\mathcal N(\mathbf d,-)\times K$, where $\mathbf d$ is an object in $\mathcal N$ and $K$ is a space. Given a map $u\co K\to BF(d)$, we have the following $\mathcal N$-space analogue of 
(\ref{freeRdiagram}),
$$
\begin{CD}
F_d(K)@>>> \overline{F_d(K)}\\
@VVV @VVV\\
R(K) @>>> R(F_d(K)_{h\mathcal N}).
\end{CD}
$$
However, in contrast to the $\I$-space setting, this is a diagram of convergent $\mathcal N$-spaces and the connectivity of the maps in degree $n$ tends to infinity with $n$. 
Thus, the $\mathcal N$-space analogue of Lemma \ref{freeThomlemma} holds with a simpler proof. 

\medskip
\noindent\textit{Proof of Proposition \ref{Ttildef}.}
We use that $\mathcal N\U$ has a cofibrantly generated level model structure and as in the 
$\I$-space case we reduce to the case of a cell complex. Using that the functors in (\ref{tildeXR}) preserve colimits and $h$-cofibrations, the inductive argument used in the proof of Proposition \ref{Tbarf} then also applies in the $\mathcal N$-space setting.\qed
 
\section{Preservation of operad actions}\label{Thomoperadsection}
Let $\mathcal C$ be an operad as defined in \cite{Ma} and notice that $\mathcal C$ defines a monad $C$ on the symmetric monoidal category $\I\U$ in the usual way by letting
$$
C(X)=\coprod_{k=0}^{\infty}\mathcal C(k)\times_{\Sigma_k}X^{\boxtimes k}.
$$
We define a \emph{$\mathcal C$-$\I$-space} to be an algebra for this monad and write 
$\I\mathcal U[\mathcal C]$ for the category of such algebras. More explicitly, a $\mathcal C$-$\I$-space is an $\I$-space $X$ together with a sequence of maps of $\I$-spaces 
$$
\theta_k\co\mathcal C(k)\times X^{\boxtimes k}\to X,
$$
satisfying the associativity, unitality and equivariance relations listed in \cite{Ma}, Lemma 1.4.  
By the universal property of the $\boxtimes$-product, $\theta_k$ is determined by a natural transformation of $\I^k$-diagrams 
\begin{equation}\label{Ioperadaction}
\theta_k\co \mathcal C(k)\times X(n_1)\times\dots\times X(n_k)\to 
X(n_1+\dots+ n_k)
\end{equation}
and the equivariance condition amounts to the commutativity of the diagram
$$
\begin{CD}
\mathcal C(k)\times X(n_1)\times\dots\times X(n_k)@>\theta_k\circ(\sigma\times\text{id})>>
X(n_1+\dots+n_k)\\
@VV \text{id}\times \sigma V  @VV{\sigma(n_1,\dots,n_k)_*}V\\
\mathcal C(k)\times X(n_{\sigma^{-1}(1)})\times \dots\times X(n_{\sigma^{-1}(k)})
@>\theta_k>>
X(n_{\sigma^{-1}(1)}+\dots+n_{\sigma^{-1}(k)})
\end{CD}
$$
for all elements $\sigma$ in $\Sigma_k$. Here $\sigma$ permutes the factors on the left hand side of the diagram and 
$\sigma(n_1,\dots,n_k)$ denotes the permutation of 
$\mathbf n_1\sqcup\dots\sqcup\mathbf n_k$ that permutes the $k$ summands as 
$\sigma$ permutes the elements of $\mathbf k$.  
As defined in \cite{Ma}, the $0$th space of $\mathcal C$ is a one-point space, so 
that $\theta_0$ specifies a base point of $X$. 
Notice, that an action of the one-point operad $*$ on an $\I$-space $X$ is the same thing as a commutative monoid structure on $X$. In this case the projection $\mathcal C\to *$ induces a $\mathcal C$-action on $X$ for any operad $\mathcal C$.  This applies in particular to the commutative $\I$-space monoid $BF$. 

In similar fashion an operad $\mathcal C$ defines a monad $C$ on the category $\Sp^{\Sigma}$ by letting
$$
C(X)=\bigvee_{k=0}^{\infty}\mathcal C(k)_+\wedge_{\Sigma_k}X^{\wedge k}
$$
and we write $\Sp^{\Sigma}[\mathcal C]$ for the category of algebras for this monad. Thus, an object of $\Sp^{\Sigma}[\mathcal C]$ is a symmetric spectrum $X$ together with a sequence of maps of symmetric spectra
$$
\theta_k\co \mathcal C(k)_+\wedge X^{\wedge k}\to X,
$$
satisfying the analogous associativity, unitality and equivariance relations. By the universal property of the smash product, $\theta_k$ is determined by a natural transformation of $\I_S^k$-diagrams,
\begin{equation}\label{naturaloperadaction}
\theta_k\co\mathcal C(k)_+\wedge X(n_1)\wedge\dots\wedge X(n_k)\to X(n_1+\dots+n_k).
\end{equation}
The naturality condition can be formulated explicitly as follows. Given a family of morphisms 
$\alpha_i\co\mathbf m_i\to\mathbf n_i$ in $\mathcal I$ for $i=1,\dots, k$, let $\alpha=\alpha_1\sqcup\dots\sqcup\alpha_k$. Writing $n=n_1+\dots+n_k$ and making the identification 
$$
S^{n_1-\alpha_1}\wedge\dots\wedge S^{n_k-\alpha_k}=S^{n-\alpha},
$$ 
we require that the diagram
$$
\begin{CD}
S^{n-\alpha}\wedge\mathcal C(k)_+\wedge X(m_1)\wedge\dots\wedge X(m_k) 
@>S^{n-\alpha}\wedge\theta_k>>S^{n-\alpha}\wedge X(m_1+\dots+m_k)\\
@VVV @VVV\\
\mathcal C(k)_+\wedge X(n_1)\wedge\dots\wedge X(n_k)@>\theta_k>> X(n_1+\dots+n_k)
\end{CD}
$$
be commutative.
 We now show that the symmetric Thom spectrum functor behaves well with respect to operad actions. Given an operad $\mathcal C$ and a map of $\I$-spaces $f\co X\to BF$, let $C(f)$ be the composite map
$$
C(f)\co C(X)\to C(BF)\to BF.
$$
The following is the analogue in our setting of \cite{LMS}, Theorem IX 7.1. It is a formal consequence of the fact that $T$ is a strong symmetric monoidal functor that preserves colimits and tensors with unbased spaces. 
\begin{proposition}
There is a canonical isomorphism of symmetric spectra
$$
T(C(f))=C(T(f)).
\eqno\qed
$$
\end{proposition}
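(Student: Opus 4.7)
The plan is to derive the isomorphism from three formal properties of $T$ that have already been established in the paper: the strong symmetric monoidal structure (Theorem \ref{monoidaltheorem}), preservation of colimits, and preservation of tensors with unbased spaces (both from Proposition \ref{Thomcolimit}). The bookkeeping over $BF$ is automatic since the action of $\mathcal C$ on $BF$ factors through the augmentation $\mathcal C\to *$ to the trivial operad, using commutativity of the $\I$-space monoid $BF$.

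First I would unpack the two monads:
$$
C(X)=\coprod_{k\geq 0}\mathcal C(k)\times_{\Sigma_k}X^{\boxtimes k}
\quad\text{and}\quad
C(T(f))=\bigvee_{k\geq 0}\mathcal C(k)_+\wedge_{\Sigma_k}T(f)^{\wedge k}.
$$
Since $T$ preserves colimits and sends the initial $\I$-space over $BF$ (the empty $\I$-space) to the basepoint of $\Sp^{\Sigma}$, it converts coproducts in $\I\mathcal U/BF$ into wedges in $\Sp^{\Sigma}$. Hence it suffices to produce, for each $k\geq 0$, a natural $\Sigma_k$-equivariant isomorphism
$$
T(\mathcal C(k)\times X^{\boxtimes k}\to BF)\cong \mathcal C(k)_+\wedge T(f)^{\wedge k}
$$
and then pass to $\Sigma_k$-orbits on both sides.

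For the second step, iterating the strong symmetric monoidal isomorphism of Theorem \ref{monoidaltheorem} supplies a natural isomorphism $T(X^{\boxtimes k}\to BF)\cong T(f)^{\wedge k}$, and the coherence axioms built into the notion of a strong symmetric monoidal functor guarantee that this isomorphism intertwines the $\Sigma_k$-action permuting $\boxtimes$-factors with the $\Sigma_k$-action permuting $\wedge$-factors. The object $\mathcal C(k)\times X^{\boxtimes k}\to BF$ is the tensor of $X^{\boxtimes k}\to BF$ with the unbased $\Sigma_k$-space $\mathcal C(k)$, so applying the tensor-preservation clause of Proposition \ref{Thomcolimit} upgrades this to the $\Sigma_k$-equivariant isomorphism displayed above. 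Finally, $\Sigma_k$-orbits are themselves colimits in both categories, so a further application of colimit preservation yields the $k$-th wedge summand, and assembling over $k$ gives the desired $T(C(f))\cong C(T(f))$.

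The only point that is not entirely automatic is verifying that the symmetry isomorphism of $\boxtimes$ on $\I$-spaces and that of $\wedge$ on symmetric spectra are genuinely intertwined by the monoidal structure maps of $T$; this is the coherence check underlying ``strong symmetric monoidal'' and I expect it to be the main (though still purely formal) obstacle. Once the $\Sigma_k$-equivariance is in place, compatibility of the resulting isomorphism with the monad unit and multiplication of $C$ reduces to a diagram chase using naturality of the monoidal constraints, so no substantive homotopical input beyond Theorem \ref{monoidaltheorem} and Proposition \ref{Thomcolimit} should be required.
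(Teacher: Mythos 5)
Your argument unpacks exactly the one-line justification the paper itself gives immediately before the proposition: that the isomorphism is a formal consequence of $T$ being strong symmetric monoidal (Theorem \ref{monoidaltheorem}) and preserving colimits and tensors with unbased spaces (Proposition \ref{Thomcolimit}). The decomposition into coproduct over $k$, iterated $\boxtimes$/$\wedge$, tensoring with $\mathcal C(k)$, and $\Sigma_k$-orbits is the intended formal derivation, so your proposal matches the paper's approach.
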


\begin{corollary}\label{thomoperadcorollary}
The Thom spectrum functor on $\I\mathcal U/BF$ preserves operad actions in the sense that there is an induced functor
$$
T\co\mathcal I\mathcal U[\mathcal C]/BF\to\Sp^{\Sigma}[\mathcal C].
\eqno\qed
$$
\end{corollary}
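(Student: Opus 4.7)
The plan is to exploit the preceding proposition formally: given a $\mathcal C$-$\I$-space $(X,\theta)$ together with a morphism $f\co X\to BF$ of $\mathcal C$-$\I$-spaces (where $BF$ carries its trivial $\mathcal C$-action coming from the augmentation $\mathcal C\to *$ and its commutative monoid structure), we simply transport the action through $T$. More precisely, applying $T$ to the structure map $\theta\co C(X)\to X$, viewed as a morphism over $BF$, gives a map of symmetric spectra $T(\theta)\co T(C(f))\to T(f)$; composing with the inverse of the isomorphism $T(C(f))\cong C(T(f))$ of the preceding proposition produces the desired action map $\theta_T\co C(T(f))\to T(f)$.

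The first step is to check that $T(\theta)$ really is defined as a morphism in $\Sp^{\Sigma}$, i.e.\ that the two maps $C(X)\to BF$ given by $C(f)$ and by $f\circ\theta$ agree. This is exactly the condition that $f$ be a map of $\mathcal C$-algebras, using that the $\mathcal C$-action on $BF$ factors through the augmentation. The second step is to verify that $\theta_T$ satisfies the associativity and unitality axioms of a $\mathcal C$-algebra. This reduces to the commutativity of diagrams of the form
\[
\xymatrix{
T(C(C(f))) \ar[r]^-{\cong} \ar[d]_{T(C\theta)} & C(T(C(f))) \ar[r]^-{\cong} \ar[d]^{C(T\theta)} & C(C(T(f))) \ar[d]^{C(\theta_T)} \\
T(C(f)) \ar[r]^-{\cong} & C(T(f)) \ar@{=}[r] & C(T(f))
}
\]
together with an analogous diagram for the monad multiplication $\mu\co C\circ C\to C$. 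These commute because the isomorphism $T(C(-))\cong C(T(-))$ is natural in its argument and compatible with the iterated smash/box products, a formal consequence of $T$ being strong symmetric monoidal and colimit-preserving; the monad unit axiom is similar and even easier. Functoriality in $(X,f)$ is immediate from the functoriality of $T$ together with the naturality of the isomorphism.

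The only genuine content, and hence the main thing to pin down, is the naturality and monad-coherence of the isomorphism $T(C(f))\cong C(T(f))$. Unwinding the definition of $C$ as the free $\mathcal C$-algebra monad
\[
C(f)\;=\;\coprod_{k\ge 0}\mathcal C(k)\times_{\Sigma_k}f^{\boxtimes k},
\]
and using that $T$ preserves colimits, tensors with unbased spaces, and the monoidal product ($T(f^{\boxtimes k})\cong T(f)^{\wedge k}$ $\Sigma_k$-equivariantly by Theorem \ref{monoidaltheorem}), one obtains a natural $\Sigma_k$-equivariant isomorphism
\[
T\bigl(\mathcal C(k)\times_{\Sigma_k} f^{\boxtimes k}\bigr)\;\cong\;\mathcal C(k)_+\wedge_{\Sigma_k} T(f)^{\wedge k},
\]
and assembling over $k$ yields the required natural isomorphism of monads. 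Given this, all the diagrams above commute by inspection on each summand, and the corollary follows formally.
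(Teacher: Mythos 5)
Your proposal is correct and follows essentially the same route as the paper: the paper marks the corollary as immediate (\(\qed\)) from the preceding proposition asserting the natural isomorphism \(T(C(f))\cong C(T(f))\), and your argument is precisely the formal unwinding of that implication — transporting the structure map through the isomorphism and checking the monad axioms via naturality. Your additional paragraph on how the isomorphism itself arises (from \(T\) being strong symmetric monoidal, colimit-preserving, and tensor-preserving) is exactly the justification the paper gives for the preceding proposition.
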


\subsection{Operad actions preserved by $\hocolim_{\I}$}\label{hocolimoperadsection}
As in Section \ref{liftingintro} we use the notation $\mathcal E$ for the Barratt-Eccles operad. We recall that the $k$th space $\mathcal E(k)$ is the classifying space of the translation category $\tilde \Sigma_k$ that has the elements of $\Sigma_k$ as its objects. A morphism 
$\rho\co\sigma\to\tau$ in $\tilde \Sigma_k$ is an element $\rho\in\Sigma_k$ such that $\rho\sigma=\tau$; see  \cite{Ma1}, Section 4 (but notice that the order of the composition in $\tilde\Sigma_k$ is defined differently here). In the following proposition, $\mathcal C$ denotes an arbitrary operad and $\mathcal E\times \mathcal C$ denotes the product operad whose $k$th space is the product $\mathcal E(k)\times \mathcal C(k)$. 
\begin{proposition}\label{Einftyprop}
The functor $\hocolim_{\I}$ induces a functor 
$$
\hocolim_{\I}\co\I\mathcal U[\mathcal C] \to\mathcal U[\mathcal E\times\mathcal C].
$$
\end{proposition}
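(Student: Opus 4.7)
The plan is to construct, for each $k$, a natural map $\mathcal E(k)\times\mathcal C(k)\times(X_{h\I})^k\to X_{h\I}$ starting from the structure maps $\theta_k\co\mathcal C(k)\times X^{\boxtimes k}\to X$. Since $\hocolim_{\I}$ is monoidal with respect to the Cartesian product on $\U$ (using that hocolim commutes with products with a fixed space), the $\mathcal C(k)$ factor and the operation $\theta_k$ present no difficulty: we will get for free a map $\mathcal C(k)\times(X^{\boxtimes k})_{h\I}\to X_{h\I}$ by applying $\hocolim_{\I}$ to $\theta_k$ and pulling the constant factor $\mathcal C(k)$ out. The real task is to produce a canonical map $\mathcal E(k)\times(X_{h\I})^k\to (X^{\boxtimes k})_{h\I}$, and this is where the Barratt-Eccles operad enters: because concatenation in $\I$ is commutative only up to shuffles, the obvious map $(X_{h\I})^k\to (X^{\boxtimes k})_{h\I}$ depends on an ordering of the factors, and the space $\mathcal E(k)=B\tilde\Sigma_k$ will parametrize these choices coherently.

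More precisely, I would first identify $(X_{h\I})^k$ with $\hocolim_{\I^k}X^{\times k}$ and then observe that for every $\sigma\in\Sigma_k$ the functor $\mu_k^{\sigma}\co \I^k\to\I$, defined by $(\mathbf n_1,\dots,\mathbf n_k)\mapsto \mathbf n_{\sigma^{-1}(1)}\sqcup\dots\sqcup\mathbf n_{\sigma^{-1}(k)}$, comes equipped with a natural transformation of $\I^k$-diagrams $X(\mathbf n_1)\times\dots\times X(\mathbf n_k)\to X^{\boxtimes k}(\mu_k^{\sigma}(\mathbf n_1,\dots,\mathbf n_k))$ obtained from the universal property of the $\boxtimes$-product. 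This gives a map $\alpha_{\sigma}\co(X_{h\I})^k\to(X^{\boxtimes k})_{h\I}$. For a morphism $\rho\co\sigma\to\tau$ in $\tilde\Sigma_k$, the shuffle $\tau_{n_{\sigma^{-1}(1)},\dots,n_{\sigma^{-1}(k)}}^{\rho}$ specifies a natural isomorphism $\mu_k^{\sigma}\Rightarrow \mu_k^{\tau}$ compatible with the natural transformations above. In this way we obtain a functor from $\tilde\Sigma_k$ to the category of continuous maps $(X_{h\I})^k\to (X^{\boxtimes k})_{h\I}$, and applying the classifying space (or equivalently using the simplicial realization construction from Appendix \ref{hocolimsection}) we extract a single map
\[
\mathcal E(k)\times (X_{h\I})^k\to (X^{\boxtimes k})_{h\I}.
\]
Composing with $\hocolim_{\I}(\theta_k)$ yields the desired structure map.

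To see that these maps assemble into a $(\mathcal E\times\mathcal C)$-action, one must verify the equivariance, unit, and associativity axioms. Equivariance for the $\Sigma_k$-action on $\mathcal E(k)\times\mathcal C(k)$ follows because the action of $\Sigma_k$ on $B\tilde\Sigma_k$ by right translation has precisely the effect of permuting the factors on $(X_{h\I})^k$ via the shuffle functoriality built into $\alpha_\sigma$. The unit axiom is immediate from the fact that the basepoint $1\in\mathcal E(1)=B\tilde\Sigma_1$ corresponds to $\alpha_e$ being the canonical map $X_{h\I}\to X_{h\I}$. Associativity reduces to two compatibilities: on the $\mathcal C$-side it comes from the original $\mathcal C$-$\I$-space structure and the fact that $\hocolim_{\I}$ is a monoidal functor; on the $\mathcal E$-side it amounts to checking that the operadic composition in $\mathcal E$, viewed as concatenation of sequences in the translation categories, matches nested concatenation in $\I$ up to the expected shuffle identities.

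The principal obstacle is the last verification, since one must keep track of how the Barratt-Eccles operad composition $\gamma\co\mathcal E(k)\times\mathcal E(j_1)\times\dots\times\mathcal E(j_k)\to\mathcal E(j_1+\dots+j_k)$ translates into identifications between nested applications of the $\alpha_\sigma$ and the corresponding concatenation functors $\I^{j_1+\dots+j_k}\to \I^k\to \I$. Once this combinatorial bookkeeping is organized—most cleanly by packaging the whole construction as the geometric realization of a simplicial map between the bar-type resolutions computing the relevant homotopy colimits—the $\mathcal E\times\mathcal C$-algebra axioms drop out formally. Naturality in the $\mathcal C$-$\I$-space $X$ is built into every step, so the assignment is functorial, giving the asserted functor $\I\U[\mathcal C]\to \U[\mathcal E\times\mathcal C]$.
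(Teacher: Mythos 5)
Your proof takes essentially the same route as the paper's, with a minor reorganization: the paper directly builds a single functor of topological categories $\psi_k\colon\tilde\Sigma_k\times\mathcal C(k)\times\I(X)^k\to\I(X)$ (where $\I(X)$ denotes the topological category with $B\I(X)=X_{h\I}$) and then takes classifying spaces, whereas you factor the construction through the intermediate object $(X^{\boxtimes k})_{h\I}$, first producing $\mathcal E(k)\times(X_{h\I})^k\to(X^{\boxtimes k})_{h\I}$ and then composing with $\hocolim_{\I}(\theta_k)$ after pulling out the constant $\mathcal C(k)$ factor. Both use the translation category $\tilde\Sigma_k$ as the source of the $\mathcal E(k)$-parameter, the permuted concatenation functors $\mu_k^\sigma$, and the fact that $B$ preserves products, and both reduce the operad axioms to categorical-level verifications.

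One phrase deserves tightening: ``a functor from $\tilde\Sigma_k$ to the category of continuous maps'' is not quite the right formalism, since a mere set-valued functor does not realize to a map out of $\mathcal E(k)\times(-)$. What you need (and what your parenthetical about simplicial realizations correctly gestures at) is a genuine functor of topological categories $\tilde\Sigma_k\times\I^k(X^{\times k})\to\I(X^{\boxtimes k})$, defined on objects by $\mu_k^\sigma$ together with the universal map to $X^{\boxtimes k}$, and on morphisms by the shuffle permutations in $\I$; realizing and using that $B$ preserves products then gives the required map $\mathcal E(k)\times(X_{h\I})^k\to(X^{\boxtimes k})_{h\I}$. This is exactly the precision the paper supplies by writing out $\psi_k$ explicitly on objects and on both kinds of morphisms. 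Your factorization through the $\boxtimes$-product is conceptually pleasant because it separates the ``purely monoidal'' role of $\mathcal E$ from the $\mathcal C$-action and exhibits the symmetry $\Sigma_k$-action on $(X^{\boxtimes k})_{h\I}$ as the source of equivariance, at the mild cost of an extra intermediate step; the paper's single $\psi_k$ is more direct and keeps all the coherence checks in one place.
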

\begin{proof}
Let $\mathcal I(X)$ be the topological category whose space of objects is the disjoint union
of the spaces $X(n)$ indexed by the objects $\mathbf n$ in $\I$, and in which a
morphism $(\mathbf m,x)\to(\mathbf n,y)$ is specified by a morphism $\alpha\co\mathbf m\to\mathbf n$ in $\I$ such that $\alpha_*(x)=y$. Then it follows from the definition of the homotopy colimit that $X_{h\I}$ may be identified with the classifying space $B\I(X)$; see 
Appendix \ref{hocolimsection} for details. In the following we shall view the spaces $\mathcal C(k)$ as topological categories with only identity morphisms. For each $k$, consider the functor of topological categories 
$$
\psi_k\co\tilde\Sigma_k\times \mathcal C(k)\times \I(X)^k\to\I(X),
$$
that maps a tuple of objects $\sigma\in \Sigma_k$, $c\in \mathcal C(k)$, and $(\mathbf n_1,x_1),\dots,(\mathbf n_k,x_k)$, to
$$
(\mathbf n_{\sigma^{-1}(1)}\sqcup\dots\sqcup\mathbf n_{\sigma^{-1}(k)},
\sigma(n_1,\dots,n_k)_*\theta_k(c,x_1,\dots,x_k)).
$$
Here $\theta_k$ denotes the $\mathcal C(k)$-action on $X$ and $\sigma(n_1,\dots,n_k)$ is defined as at the beginning of this section. If $\rho\co\sigma\to\tau$ is a morphism in $\tilde\Sigma_k$, then the induced morphism in $\I(X)$ is specified by
$$
\psi_k(\rho)=\rho(n_{\sigma^{-1}(1)},\dots,n_{\sigma^{-1}(k)}),
$$
and if $\vec\alpha$ denotes a $k$-tuple of morphisms in $\I(X)$ whose $i$th component is specified by $\alpha_i\co\mathbf n_i\to\mathbf m_i$, then the induced morphism in $\I(X)$ is specified by
$$
\psi_k(\vec\alpha)=\alpha_{\sigma^{-1}(1)}\sqcup\dots\sqcup\alpha_{\sigma^{-1}(k)}.
$$
Since the classifying space functor preserves products, these functors give rise to maps
$$
\psi_k\co\mathcal E(k)\times \mathcal C(k)\times B\I(X)^k\to B\I(X),
$$
and it is straightforward to check that this defines an $\mathcal E\times \mathcal C$-action on $B\I(X)$. The associativity, unitality, and equivariance conditions may all be checked on the categorical level.
\end{proof}
Letting $\mathcal C$ be the commutativity operad $*$, it follows in particular that if $X$ is a commutative $\I$-space monoid, then $X_{h\I}$ inherits an $\mathcal E$-action. In this case the action is induced by a permutative structure on the category $\I(X)$ introduced in the above proof, cf.\ \cite{Ma1}, Section 4. This applies in particular to the $\I$-space $BF$ giving an 
$\mathcal E$-action on $BF_{h\I}$. We say that an operad $\mathcal C$ is augmented over 
$\mathcal E$ if there is a specified morphism of operads $\mathcal C\to \mathcal E$. In this case we may restrict an $(\mathcal E\times \mathcal C)$-action to the diagonal $\mathcal C$-action via the morphism $\mathcal C\to\mathcal E\times \mathcal C$.  
\begin{corollary}\label{hocolimUoperad}
If $\mathcal C$ is augmented over $\mathcal E$, then $\hocolim_{\mathcal I}$ induces a functor
$$
\hocolim_{\I}\co \I\mathcal U[\mathcal C]/BF\to \mathcal U[\mathcal C]/BF_{h\I}.
\eqno\qed
$$
\end{corollary}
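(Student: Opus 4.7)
The plan is to reduce the corollary to Proposition \ref{Einftyprop} together with restriction along the augmentation $\epsilon\co\mathcal C\to\mathcal E$. First I would observe that $BF$ carries a canonical $\mathcal C$-$\I$-space structure obtained from its commutative $\I$-space monoid structure via the unique operad map $\mathcal C\to *$, so that the forgetful functor $\I\U[\mathcal C]/BF\to\I\U[\mathcal C]$ makes sense, and Proposition \ref{Einftyprop} applies functorially in morphisms over $BF$. Thus, given an object $f\co X\to BF$ of $\I\U[\mathcal C]/BF$, the map $f_{h\I}\co X_{h\I}\to BF_{h\I}$ is naturally a morphism in $\U[\mathcal E\times \mathcal C]$.

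Next I would pull back the $(\mathcal E\times\mathcal C)$-action along the morphism of operads
$$
(\epsilon,\id)\co \mathcal C\to\mathcal E\times\mathcal C,
$$
so that both $X_{h\I}$ and $BF_{h\I}$ become $\mathcal C$-spaces and $f_{h\I}$ a $\mathcal C$-equivariant map. The assignment $f\mapsto f_{h\I}$ is then functorial by the functoriality of Proposition \ref{Einftyprop} together with the functoriality of operadic restriction.

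The main thing to check, and which I expect to be the only real content, is that the $\mathcal C$-action induced on $BF_{h\I}$ by this procedure agrees with the canonical one used to define the category $\U[\mathcal C]/BF_{h\I}$; the latter is the restriction along $\epsilon$ of the $\mathcal E$-action produced by Proposition \ref{Einftyprop} applied to $BF$ as a commutative $\I$-space monoid. To verify this, I would inspect the explicit formula for $\psi_k$ in the proof of Proposition \ref{Einftyprop}: the definition involves the operad action $\theta_k(c,x_1,\dots,x_k)$, and when the $\mathcal C$-action on the underlying $\I$-space $BF$ factors through $\mathcal C\to *$, this expression is independent of $c\in\mathcal C(k)$. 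Consequently the functor
$$
\psi_k\co \tilde\Sigma_k\times \mathcal C(k)\times \I(BF)^k\to \I(BF)
$$
factors through the projection to $\tilde\Sigma_k\times \I(BF)^k$, and on classifying spaces the resulting $(\mathcal E\times\mathcal C)$-action on $BF_{h\I}$ is the pullback of the intrinsic $\mathcal E$-action along the projection $\mathcal E\times\mathcal C\to\mathcal E$. Restricting along $(\epsilon,\id)$ therefore produces precisely the $\mathcal C$-action obtained by restricting the intrinsic $\mathcal E$-action along $\epsilon$, as required.

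With this compatibility in hand the corollary follows formally: we have produced a functor with values in $\U[\mathcal C]$, the map $f_{h\I}$ lands in $BF_{h\I}$ with its canonical $\mathcal C$-action, and the construction is natural in $f$. The hard part is thus not any model-categorical or homotopical input, but only the bookkeeping needed to match two a priori different descriptions of the $\mathcal C$-structure on $BF_{h\I}$; once the explicit action in the proof of Proposition \ref{Einftyprop} is unwound, the identification is essentially tautological.
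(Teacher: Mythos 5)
Your argument is correct and matches the paper's intended proof, which is left implicit (the corollary is stated with an immediate \qed following the remark that one restricts the $(\mathcal E\times\mathcal C)$-action along the diagonal map $\mathcal C\to\mathcal E\times\mathcal C$). Your additional verification — that because the $\mathcal C$-action on $BF$ factors through the projection $\mathcal C\to *$, the expression $\theta_k(c,x_1,\dots,x_k)$ in the formula for $\psi_k$ is independent of $c$, so the induced $(\mathcal E\times\mathcal C)$-action on $BF_{h\I}$ is pulled back from the intrinsic $\mathcal E$-action along $\mathcal E\times\mathcal C\to\mathcal E$, and hence restricting along $(\epsilon,\id)$ recovers the restriction of the intrinsic $\mathcal E$-action along $\epsilon$ — is exactly the bookkeeping the paper elides, and it is the right thing to check to ensure that the target category $\U[\mathcal C]/BF_{h\I}$ is the one the paper means.
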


\subsection{Operad actions preserved by $R$}
In order to prove that the $\I$-space lifting functor $R$ preserves operad actions, we need the following lemma in which we view $BF_{h\I}$ as a constant $\mathcal E$-$\mathcal I$-space.
\begin{lemma}\label{overlineBFaction}
The $\I$-space $\overline{BF}$ has an $\mathcal E$-action such that $\pi\co \overline{BF}\to BF_{h\I}$ is a morphism of $\mathcal E$-$\mathcal I$-spaces. 
\end{lemma}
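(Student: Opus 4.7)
The plan is to mimic the categorical construction in the proof of Proposition \ref{Einftyprop}, parametrized by the morphisms of $\I$ with a fixed target. For each $\mathbf n$, identify $\overline{BF}(n)$ with the classifying space $B\I(BF,\mathbf n)$ of the topological category whose space of objects is $\coprod_{\alpha\co\mathbf m\to\mathbf n}BF(m)$, and whose morphisms $(\alpha,b)\to(\alpha',b')$ are given by maps $\beta\co\mathbf m\to\mathbf m'$ in $\I$ satisfying $\alpha'\beta=\alpha$ and $\beta_*(b)=b'$. The forgetful functor $\I(BF,\mathbf n)\to\I(BF)$ that remembers only $(\mathbf m,b)$ realizes to the canonical projection $\pi\co\overline{BF}(n)\to BF_{h\I}$.

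Using the commutative $\I$-space monoid structure on $BF$, I would define for each $k$ a functor
\[
\psi_k\co\tilde\Sigma_k\times\I(BF,\mathbf n_1)\times\dots\times\I(BF,\mathbf n_k)\to \I(BF,\mathbf n_1\sqcup\dots\sqcup \mathbf n_k)
\]
sending an object $(\sigma,(\alpha_i,b_i)_{i=1}^k)$ with $\alpha_i\co\mathbf m_i\to\mathbf n_i$ to the object whose $BF$-coordinate is $\sigma(m_1,\dots,m_k)_*\mu(b_1,\dots,b_k)\in BF(m_{\sigma^{-1}(1)}+\dots+m_{\sigma^{-1}(k)})$, with $\mu$ the $k$-fold multiplication in $BF$, and whose $\I$-coordinate is the map $\mathbf m_{\sigma^{-1}(1)}\sqcup\dots\sqcup\mathbf m_{\sigma^{-1}(k)}\to\mathbf n_1\sqcup\dots\sqcup\mathbf n_k$ obtained by concatenating the $\alpha_i$'s in the order dictated by $\sigma$ and post-composing with the appropriate block permutation. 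On morphisms $\psi_k$ is defined by the same formulas as in the proof of Proposition \ref{Einftyprop}. Applying the classifying space functor level-wise in $\mathbf n=\mathbf n_1\sqcup\dots\sqcup\mathbf n_k$ and using the identification $\mathcal E(k)=B\tilde\Sigma_k$ then yields the required natural transformation of $\I^k$-diagrams
\[
\theta_k\co\mathcal E(k)\times\overline{BF}(n_1)\times\dots\times\overline{BF}(n_k)\to\overline{BF}(n_1+\dots+n_k).
\]

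The associativity, unitality and equivariance of the resulting structure can be verified on the categorical level exactly as in Proposition \ref{Einftyprop}, reducing to those of the commutative monoid structure on $BF$ together with the coherence of the block permutations under composition in $\tilde\Sigma_k$. That $\pi$ is a morphism of $\mathcal E$-$\I$-spaces is then immediate, since by construction the forgetful functors $\I(BF,\mathbf n)\to\I(BF)$ intertwine the functors $\psi_k$ defined here with those of Proposition \ref{Einftyprop} applied to the commutative $\I$-space monoid $BF$. The main obstacle is purely notational: one must fix compatible conventions for the block permutations $\sigma(n_1,\dots,n_k)$ and for the ordered concatenation of the $\alpha_i$'s so that the $\I$-coordinate of $\psi_k$ is well-defined and so that the two types of permutations appearing in $\psi_k$ interact coherently with the equivariance relations for the commutative monoid $BF$. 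Once these are aligned with the conventions already used in Section \ref{Thomoperadsection}, the categorical verifications are routine.
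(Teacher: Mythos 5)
Your proposal matches the paper's proof almost line for line: the paper likewise models $\overline{BF}(n)$ as the classifying space of the category $\I/\mathbf n(X)$ with objects $\coprod_{\alpha\co\mathbf m\to\mathbf n}X(m)$, defines the same functors $\psi_k$ parametrized by $\tilde\Sigma_k$ using the commutative monoid structure on $BF$ and the block permutation $\sigma^{-1}(n_{\sigma^{-1}(1)},\dots,n_{\sigma^{-1}(k)})$, and checks everything on the categorical level before passing to classifying spaces. Your expression $\sigma(m_1,\dots,m_k)_*\mu(b_1,\dots,b_k)$ agrees with the paper's $\mathbf x_{\sigma^{-1}(1)}\cdots\mathbf x_{\sigma^{-1}(k)}$ by commutativity of $BF$, so the two are the same construction.
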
 
\begin{proof}
Consider more generally a commutative $\I$-space monoid $X$, and let $\overline X$ be 
the $\I$-space defined in Section \ref{invariancesection}. For each object $\mathbf n$ in $\I$, let $\I/\mathbf n(X)$ be the topological category whose classifying space is $\overline X(n)$. Thus, the object space is given by
$$
\coprod_{\alpha\co\mathbf m\to\mathbf n}X(m),
$$
where the coproduct is over the objects in $(\I\downarrow\mathbf n)$; see Appendix \ref{hocolimsection} for details. Consider for each $k$ the functor 
$$
\psi_k\co\tilde\Sigma_k\times \I/\mathbf n_1(X)\times\dots\times \I/\mathbf n_k(X)\to \I/
(\mathbf n_1\sqcup\dots\sqcup\mathbf n_k)(X)
$$
that maps a tuple of objects $\sigma$ in $\tilde\Sigma_k$ and $(\alpha_i,x_i)$ in $\I/\mathbf n_i(X)$ for $i=1,\dots,k$, to the object
$$
(\alpha,\mathbf x_{\sigma^{-1}(1)}\dots\mathbf x_{\sigma^{-1}(k)}),
$$ 
where $\alpha$ is the morphism
\begin{align*}
\alpha\co\mathbf m_{\sigma^{-1}(1)}\sqcup\dots\sqcup\mathbf m_{\sigma^{-1}(k)}
&\xr{\alpha_{\sigma^{-1}(1)}\sqcup\dots\sqcup\alpha_{\sigma^{-1}(k)}}\mathbf n_{\sigma^{-1}(1)}
\sqcup\dots\sqcup\mathbf n_{\sigma^{-1}(k)}\\
&\xr{\sigma^{-1}(n_{\sigma^{-1}(1)},\dots,n_{\sigma^{-1}(k)})}\mathbf n_{1}\sqcup\dots\sqcup\mathbf n_{k}
\end{align*}
and the second factor is the product of the elements $\mathbf x_{\sigma^{-1}(1)},\dots,
\mathbf x_{\sigma^{-1}(k)}$ using the monoid structure. The induced maps of classifying spaces
$$
\psi_k\co\mathcal E(k)\times\overline X(n_1)\times\dots\times\overline X(n_k)\to
\overline X(n_1+\dots+n_k)
$$
then specify the required $\mathcal E$-action on $\overline X$. With this definition it is clear that the canonical morphism $\overline X\to X_{h\I}$ is a morphism of $\mathcal E$-$\I$-spaces.
\end{proof} 

\begin{proposition}\label{operadrectificationprop}
Let $\mathcal C$ be an operad augmented over the Barratt-Eccles operad. Then the $\I$-space lifting functor $R$ induces a functor
$$
R\co\mathcal U[\mathcal C]/BF_{h\I}\to \I\mathcal U[\mathcal C]/BF.
$$
\end{proposition}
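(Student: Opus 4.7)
The plan is to construct the $\mathcal C$-action on $R_f(X)$ coordinate-by-coordinate, assembling it from the $\mathcal C$-actions on $X$, on $\overline{BF}$, and on the path space $BF_{h\I}^I$, all restricted along the augmentation $\varepsilon\co\mathcal C\to \mathcal E$ when necessary.

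First I would upgrade the $\mathcal E$-action on $\overline{BF}$ from Lemma \ref{overlineBFaction} to a $\mathcal C$-action via $\varepsilon$. Then $\pi\co\overline{BF}\to BF_{h\I}$ is a morphism of $\mathcal C$-$\I$-spaces, where $BF_{h\I}$ carries the $\mathcal C$-action supplied by Corollary \ref{hocolimUoperad}. I would also verify that the level-wise equivalence $t\co\overline{BF}\to BF$ is a $\mathcal C$-map: unfolding the functor $\psi_k$ defined in the proof of Lemma \ref{overlineBFaction}, the composite $t\circ\psi_k(c,\vec b)$ multiplies the elements $\alpha_{i,*}t(b_i)$ in $BF$ in the order dictated by the permutation $\sigma$ associated with $c\in\mathcal E(k)$, and this expression is independent of $\sigma$ since $BF$ is commutative. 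Thus $t$ factors the $\mathcal C$-action through the terminal augmentation $\mathcal C\to\ast$, matching the $\mathcal C$-structure on $BF$.

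Next, for a $\mathcal C$-map $f\co X\to BF_{h\I}$, I would define the $\mathcal C$-action on $R_f(X)$ as the natural transformation of $\I^k$-diagrams
\[
\theta_k\co \mathcal C(k)\times R_f(X)(n_1)\times\dots\times R_f(X)(n_k)\to R_f(X)(n_1+\dots+n_k)
\]
sending $\big(c,(x_i,\omega_i,b_i)_{i=1}^{k}\big)$ to
\[
\big(\theta^X_k(c,\vec x),\ \theta^{BF_{h\I}}_k(\varepsilon(c),\vec\omega),\ \psi_k(\varepsilon(c),\vec b)\big),
\]
where $\theta^X_k$ is the given $\mathcal C$-action on $X$ and $\theta^{BF_{h\I}}_k$ denotes the pointwise $\mathcal E$-action on paths in $BF_{h\I}$. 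The endpoint conditions $\omega'(0)=f(x')$ and $\omega'(1)=\pi(b')$ needed for the image to lie in $R_f(X)(n_1+\dots+n_k)$ hold precisely because $f$ is a $\mathcal C$-map and $\pi$ is an $\mathcal E$-map. Naturality in $(\mathbf n_1,\dots,\mathbf n_k)$ reduces to the naturality of $\psi_k$ already established in Lemma \ref{overlineBFaction}, as the first two coordinates of $\theta_k$ do not depend on $\I$.

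Finally, the associativity, unitality, and equivariance axioms of $\theta_k$ decompose coordinate-by-coordinate and reduce to the corresponding axioms for $\theta^X_k$, $\psi_k$, and the pointwise $\mathcal E$-action on $BF_{h\I}^I$. The fact that $R(f)\co R_f(X)\to BF$ is a $\mathcal C$-map follows because $R(f)$ factors as the projection $R_f(X)\to\overline{BF}$ (a $\mathcal C$-map by construction) composed with $t$ (a $\mathcal C$-map by the first paragraph). I expect the main obstacle to be establishing that $t$ is an $\mathcal E$-map: this requires tracing through the categorical definition of $\psi_k$ in the proof of Lemma \ref{overlineBFaction} and using the commutativity of $BF$ to eliminate the dependence on the permutation $\sigma$. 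Once this is in place, the remaining checks are routine diagram chases using the universal property of $\boxtimes$.
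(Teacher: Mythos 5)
Your proof takes essentially the same approach as the paper: upgrade the $\mathcal E$-action on $\overline{BF}$ to a $\mathcal C$-action via the augmentation, observe that $X\xr{f}BF_{h\I}\xl{\pi}\overline{BF}$ is then a diagram of $\mathcal C$-$\I$-spaces, and conclude that the homotopy pullback $R_f(X)$ inherits a $\mathcal C$-action with $R(f)=t\circ\mathrm{pr}$ a $\mathcal C$-map. The paper compresses the coordinate-wise construction of the action on the homotopy pullback into a citation to May's treatment of limits of $\mathcal C$-algebras, and asserts without proof that $t$ is a $\mathcal C$-map; your explicit unpacking of both, including the observation that commutativity of $BF$ kills the $\sigma$-dependence in $t\circ\psi_k$, is a correct and worthwhile elaboration of the same argument.
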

\begin{proof}
We give $BF_{h\I}$ the $\mathcal C$-action defined by the augmentation to $\mathcal E$. Let 
$f\co X\to BF_{h\I}$ be a map of $\mathcal C$-spaces and consider the diagram 
$$
X\xr{f} BF_{h\I}\xl{\pi} \overline{BF}
$$
defining $R_f(X)$. Pulling the $\mathcal E$-action on $\overline{BF}$ defined in Lemma 
\ref{overlineBFaction} back to a $\mathcal C$-action, this is a diagram of $\mathcal C$-$\I$-spaces. Thus, $R_f(X)$ is a homotopy pullback of a diagram in $\I\mathcal U[\mathcal C]$, hence is itself an object in this category and the projections $R_f(X)\to X$ and 
$R_f(X)\to \overline{BF}$ are maps of $\mathcal C$-$\mathcal I$-spaces, see \cite{Ma}, Section 1. Since the equivalence 
$t\co \overline{BF}\to BF$ is also a map of $\mathcal C$-$\I$-spaces, the conclusion follows. 
\end{proof}
Combining this with Corollary \ref{thomoperadcorollary} we get the following.
\begin{corollary}\label{TUoperad}
If $\mathcal C$ is an operad that is augmented over $\mathcal E$, then the Thom spectrum functor on $\mathcal U/BF_{h\I}$ induces a functor
$
T\co\mathcal U[\mathcal C]/BF_{h\I}\to\Sp^{\Sigma}[\mathcal C].
$\qed
 \end{corollary}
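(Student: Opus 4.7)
The plan is to observe that Corollary \ref{TUoperad} is a direct composition of two results already established in the previous subsections, so the main content is to verify that the composite of functors behaves correctly on $\mathcal{C}$-algebras. Recall that the Thom spectrum functor on $\mathcal{U}/BF_{h\I}$ is defined in \eqref{ThomR} as the composition $T \circ R$ of the $\I$-space lifting functor $R \co \mathcal{U}/BF_{h\I}\to \I\mathcal{U}/BF$ with the symmetric Thom spectrum functor $T\co \I\mathcal{U}/BF\to \Sp^{\Sigma}$.

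First, I would apply Proposition \ref{operadrectificationprop}, which tells us that when $\mathcal{C}$ is augmented over $\mathcal{E}$, the functor $R$ lifts to a functor
\[
R \co \mathcal{U}[\mathcal{C}]/BF_{h\I} \to \I\mathcal{U}[\mathcal{C}]/BF.
\]
Then I would apply Corollary \ref{thomoperadcorollary}, which shows that the symmetric Thom spectrum functor on $\I\mathcal{U}/BF$ preserves operad actions for any operad, yielding a functor
\[
T \co \I\mathcal{U}[\mathcal{C}]/BF \to \Sp^{\Sigma}[\mathcal{C}].
\]
Composing these two functors produces the desired functor
\[
T \co \mathcal{U}[\mathcal{C}]/BF_{h\I} \to \Sp^{\Sigma}[\mathcal{C}],
\]
and this composite agrees with the Thom spectrum functor $T \circ R$ on underlying objects by construction.

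There is essentially no obstacle here beyond tracking that the $\mathcal{C}$-action one starts with on $X \to BF_{h\I}$ is transported to the appropriate action on the output: the action on $BF_{h\I}$ is the one induced from its canonical $\mathcal{E}$-action via the augmentation $\mathcal{C} \to \mathcal{E}$ (as in the proof of Proposition \ref{operadrectificationprop}), and this is consistent with the $\mathcal{E}$-action on $\overline{BF}$ from Lemma \ref{overlineBFaction} used to equip $R_f(X)$ with its $\mathcal{C}$-structure. Once this bookkeeping is noted, the corollary follows immediately by composition and no further argument is required.
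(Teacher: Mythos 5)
Your proof is correct and takes exactly the same route as the paper: the corollary follows by composing the lift of $R$ from Proposition \ref{operadrectificationprop} with the lift of $T$ from Corollary \ref{thomoperadcorollary}. The paper states this in a single sentence ("Combining this with Corollary \ref{thomoperadcorollary} we get the following"), and your bookkeeping about the transported $\mathcal{C}$-actions is exactly what that combination entails.
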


\section{The Thom isomorphism}\label{Thomisosection}
Let $MF$ be the symmetric Thom spectrum associated to the identity $BF\to BF$, and let 
$MSF$ be the symmetric Thom spectrum associated to the inclusion $BSF\to BF$. Here 
$SF(n)$ denotes the submonoid of orientation preserving based homotopy equivalences 
(those that are homotopic to the identity) and $BSF$ is the corresponding commutative 
$\I$-space monoid. 
 We first construct canonical orientations of these Thom spectra, and for this we need convenient models of Eilenberg-Mac Lane spectra. 
\subsection{Eilenberg-Mac Lane spectra and orientations}\label{orientationsection}
Let $A$ be a discrete ring, and write $A[-]$ for the functor that to a topological space $X$ associates the free topological $A$-module $A[X]$ generated by $X$, see e.g. \cite{W}, Section 2.3. In the special case where $X$ is the realization of a simplicial set $X_{\bullet}$ this may be identified with the realization of the simplicial $A$-module $A[X_{\bullet}]$. If $X$ is based, we write 
$A(X)$ for the topological $A$-module $A[X]/A[*]$. The functor $A(-)$ defined in this way is left adjoint to the forgetful functor from topological $A$-modules to based spaces.  It is well-known that $A(S^n)$ is a model of the Eilenberg-Mac Lane space $K(A,n)$, and that when equipped with the obvious structure maps this defines a model of the Eilenberg-Mac Lane spectrum for 
$A$ as a symmetric ring spectrum. In order to define the orientations, we shall consider a variant of this construction. Let $F_A(n)$ be the topological monoid of continuous $A$-linear endomorphisms of $A(S^n)$ and notice that by the above remarks this is homotopy equivalent to $A$ considered as a discrete multiplicative monoid. Writing $SF_A(n)$ for the connected component corresponding to the unit of $A$, this is then a contractible topological monoid. Applying the bar construction as in Section \ref{preliminariessection}, we get a well-based quasifibration
$$
B(*,SF_A(n),A(S^n))\to BSF_A(n),
$$
and we define the Eilenberg-Mac Lane spectrum $HA$ to be the symmetric spectrum with $n$th space 
$$
HA(n)=B(*,SF_A(n),A(S^n))/BSF_A(n).
$$
It is easy  to check that this is a commutative symmetric ring spectrum which is level-wise equivalent to the usual model for the Eilenberg-Mac Lane spectrum considered above. 
Since $HA$ is \emph{flat} in the sense of \cite{BCS}, the functor 
$HA\wedge(-)$ preserves stable equivalences between well-based spectra; this follows from a slight refinement of the argument used in  \cite{BCS}. (Alternatively, one can check that the arguments in \cite{Schw}, Proposition 5.14, works equally well with Quillen cofibrations of spaces replaced by our notion of ($h$-)cofibrations.) 
Let now  $A=\mathbb Z/2$ and observe that the functor $\mathbb Z/2(-)$ defines a map of sectioned quasifibrations
$$
B(*,F(n),S^n)\to B(*,SF_{\mathbb Z/2}(n),\mathbb Z/2(n)).
$$
The canonical orientation of $MF$ is the induced map of commutative symmetric ring spectra $MF\to H\mathbb Z/2$. Similarly, the functor $\mathbb Z(-)$ defines a map
of sectioned quasifibrations
$$
B(*,SF(n),S^n)\to B(*,SF_{\mathbb Z}(n),\mathbb Z(S^n))
$$
and the canonical orientation of $MSF$ is the induced map of commutative symmetric ring spectra $MSF\to H\mathbb Z$. 

\subsection{The Thom isomorphism}
We first consider the Thom isomorphism with $\mathbb Z/2$-coefficients. Given a map 
$f\co X\to BF_{h\I}$, the $\I$-space lift $R_f(X)\to BF$ induces a map of symmetric spectra $T(f)\to MF$, and we define the $H\mathbb Z/2$-orientation of $T(f)$ to be the composition
$$
T(f)\to MF\to H\mathbb Z/2.
$$
As explained in Section \ref{introThomiso}, the orientation induces a map of symmetric spectra
\begin{equation}\label{Z/2Thomequation}
T(f)\wedge H\mathbb Z/2\to X_+\wedge H\mathbb Z/2.
\end{equation}
Since our construction of the Thom spectrum functor has good properties both formally and homotopically, the proof that this is a stable equivalence is almost completely formal. 
\begin{theorem}
The map of symmetric spectra (\ref{Z/2Thomequation}) is a stable equivalence.
\end{theorem}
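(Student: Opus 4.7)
The plan is to view (\ref{Z/2Thomequation}) as a natural transformation $\Phi_f\co L(X,f)\to R(X,f)$ between the functors $L,R\co \U/BF_{h\I}\to\Sp^{\Sigma}$ given by $L(X,f)=T(f)\wedge H\mathbb Z/2$ and $R(X,f)=X_+\wedge H\mathbb Z/2$, and then to reduce to a trivial base case by a cell-attachment induction that runs in parallel with the proof of Theorem \ref{introinvariance}.

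The first step is to verify that $L$ and $R$ are both well-behaved homotopically and formally. By Corollary \ref{TRcorollary}, $T$ takes weak homotopy equivalences over $BF_{h\I}$ to level-wise equivalences of well-based, connective, convergent symmetric spectra, and it preserves colimits, $h$-cofibrations, and tensors with unbased spaces. Since $H\mathbb Z/2$ is flat (as recalled in Section \ref{orientationsection}), smashing with it preserves stable equivalences between well-based spectra, and it obviously preserves colimits and $h$-cofibrations. Consequently both $L$ and $R$ send weak homotopy equivalences in $\U/BF_{h\I}$ to stable equivalences and take pushouts along cofibrations to homotopy pushouts of well-based symmetric spectra.

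Having established these properties, CW approximation in $\U/BF_{h\I}$ lets us assume that $(X,f)$ is built by a transfinite composition of cell attachments starting from the empty object. The gluing lemma for pushouts of well-based symmetric spectra (as used in \cite{MMSS}, Theorem 8.12, and in the proof of Proposition \ref{Tbarf}) then yields an induction which reduces the theorem to checking that $\Phi_f$ is a stable equivalence when $X=D^n$ is a single cell over $BF_{h\I}$. Since $D^n$ is contractible, $f$ is homotopic to the constant map at any chosen basepoint $x_0\in BF_{h\I}$, and homotopy invariance further reduces to the case $(X,f)=(\ast,x_0)$. In this case the composition (\ref{Z/2Thomequation}) simplifies: the Thom diagonal becomes the identity after the canonical identification $\ast_+\wedge T(f)\cong T(f)$, so (\ref{Z/2Thomequation}) equals the orientation $T(f)\to H\mathbb Z/2$ smashed with $H\mathbb Z/2$ and postcomposed with the multiplication. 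Unwinding the definition of $R_{x_0}(\ast)$ and using convergence (Lemma \ref{barBFlemma} and Corollary \ref{TRcorollary}), the Thom spectrum $T(f)$ is canonically stably equivalent to the sphere spectrum $S$, and under this identification the orientation is the unit $S\to H\mathbb Z/2$. Thus (\ref{Z/2Thomequation}) becomes the unit map $S\wedge H\mathbb Z/2\to H\mathbb Z/2\wedge H\mathbb Z/2\to H\mathbb Z/2$, which is a stable equivalence.

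The main obstacle is the inductive step: one must verify that for a cell attachment in $\U/BF_{h\I}$, applying $L$ produces a homotopy pushout of well-based symmetric spectra. This relies on the $h$-cofibration preservation statement for $T$ recorded at the end of Section \ref{Rliftingsection}, the flatness of $H\mathbb Z/2$, and the well-basedness asserted in Corollary \ref{TRcorollary}; once these are in place the argument is a routine gluing-lemma invocation as in the proof of Proposition \ref{Tbarf}.
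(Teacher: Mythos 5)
Your proposal is correct and takes essentially the same route as the paper: use Corollary \ref{TRcorollary} together with the flatness of $H\mathbb{Z}/2$ to see that both sides of the Thom equivalence are homotopy functors on $\U/BF_{h\I}$ preserving pushouts along $h$-cofibrations, reduce by CW-induction to a point, and there identify $T(f)$ with the sphere spectrum via the unit of the $\I$-space monoid $R_f(\ast)$ so that the composite becomes the unit isomorphism of $H\mathbb{Z}/2$. The only slight imprecision is that your cell-attachment induction should be organized by skeletal dimension, as the paper makes explicit by treating $D^n$ and then $S^n$ (as the pushout $D^n\leftarrow S^{n-1}\to D^n$) before passing to general $X$, so that the attaching spheres $\coprod S^{n-1}$ are already covered when they appear in the gluing step.
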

\begin{proof}
Both functor in the theorem are homotopy functors on $\mathcal U/BF_{h\I}$ in the sense that 
they take weak homotopy equivalences to stable equivalences; this follows from Corollary 
\ref{TRcorollary} and the fact that $H\mathbb Z/2$ is flat. Thus, we may assume that $X$ is a CW-complex and consider the filtration of $X$ by skeleta $X^{n}$ such that $X^{-1}$ is the empty set and $X^n$ is homeomorphic to the pushout of a diagram of the form
$$
X^{n-1}\leftarrow\coprod S^{n-1}\to \coprod D^n.
$$  
Since both functors in the theorem preserve pushouts and $h$-cofibrations, it suffices by 
\cite{MMSS}, Theorem 8.12, to consider the case where the domain of $f$ is of the form $D^n$ or $S^n$. If $f$ is the inclusion of the basepoint $*\to BF_{h\I}$, then the unit of the $\I$-space monoid $R_f(*)$ gives a stable equivalence $S\to T(f)$ and the composition
$$
S\wedge H\mathbb Z/2\xr{\sim}T(f)\wedge H\mathbb Z/2\to *_+\wedge H\mathbb Z/2
$$
is the identity on $H\mathbb Z/2$. Using the homotopy invariance of the Thom spectrum functor, this easily implies the result for $D^n$. Identifying $S^n$ with the pushout of the diagram 
$D^n\leftarrow S^{n-1}\to D^n$,
the result for $S^n$ then follows by an inductive argument.
\end{proof}
 
\subsection{The integral Thom isomorphism} 
Using the commutative $\I$-space monoid $BSF$ instead of $BF$,  we get a a monoidal $\I$-space 
lifting  functor
$$
R\co \mathcal U/BSF_{h\I}\to \I\mathcal U/BSF
$$ 
defined in analogy with the $\I$-space lifting functor on $\mathcal U/BF_{h\I}$. The two lifting functors are related by a diagram
$$
\begin{CD}
\mathcal U/BSF_{h\I}@>R>> \I\mathcal U/BSF\\
@VVV @VVV\\
\mathcal U/BF_{h\I}@>R>> \I\mathcal U/BF,
\end{CD}
$$
which is commutative up to natural $\I$-equivalence. Thus, the two natural ways to define a Thom spectrum functor on $\mathcal U/BSF_{h\I}$ are equivalent up to stable equivalence. For the definition of orientations it is most convenient to define the Thom spectrum functor on 
$\mathcal U/BSF_{h\I}$ to be the composition 
$$
T\co \mathcal U/BSF_{h\I}\xr{R}\I\mathcal U/BSF\to \I\mathcal U/BF\xr{T}\Sp^{\Sigma}.
$$
With this definition we have a canonical integral orientation of the Thom spectrum associated to a map $X\to BSF_{h\I}$, defined by the composition
$
T(f)\to MSF\to H\mathbb Z.
$
The orientation again gives rise to a map of symmetric spectra
\begin{equation}\label{ZThomequation}
T(f)\wedge H\mathbb Z\to X_+\wedge H\mathbb Z
\end{equation}
and the proof of the integral version of the Thom isomorphism theorem is completely analogous to the $H\mathbb Z/2$-version.
\begin{theorem}
The map (\ref{ZThomequation}) is a stable equivalence.\qed
\end{theorem}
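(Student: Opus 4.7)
The plan is to mirror the proof of the $\mathbb Z/2$--Thom isomorphism essentially verbatim, with $BSF$ in place of $BF$ and $H\mathbb Z$ in place of $H\mathbb Z/2$. First I would check that both sides of (\ref{ZThomequation}) are homotopy functors on $\mathcal U/BSF_{h\I}$. The integral Thom spectrum functor $T$ is defined via the monoidal $\I$-space lifting functor $R\co \U/BSF_{h\I}\to\I\U/BSF$ followed by inclusion into $\I\U/BF$ and the symmetric Thom spectrum functor, so the analogue of Corollary \ref{TRcorollary} shows that $T$ takes weak homotopy equivalences to level-wise equivalences between well-based convergent symmetric spectra. Since $H\mathbb Z$ is flat in the sense of \cite{BCS}, smashing with $H\mathbb Z$ preserves stable equivalences between well-based spectra, so the source of (\ref{ZThomequation}) is a homotopy functor. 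The target $X_+\wedge H\mathbb Z$ is manifestly homotopy invariant in $X$.

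Next I would reduce to the case where $f$ has domain a single cell. Both functors preserve colimits and take maps that are cofibrations of spaces over $BSF_{h\I}$ to $h$-cofibrations of symmetric spectra (by the $BSF$-analogue of the statements in Section \ref{Rliftingsection}). Hence by \cite{MMSS}, Theorem 8.12, applied skeleton by skeleton to a CW-approximation of $X$, it suffices to verify that (\ref{ZThomequation}) is a stable equivalence when the domain of $f$ is a disk $D^n$ or a sphere $S^n$.

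For the base case $f\co *\to BSF_{h\I}$, the $\I$-space $R_f(*)$ is a commutative $\I$-space monoid whose unit induces a morphism of commutative symmetric ring spectra $S\to T(f)$. Using the definition of the integral orientation in Section \ref{orientationsection}, the composite
\[
S\wedge H\mathbb Z\to T(f)\wedge H\mathbb Z\to *_+\wedge H\mathbb Z
\]
is identified with the unit map $S\wedge H\mathbb Z\to H\mathbb Z$, which is a stable equivalence. Homotopy invariance then handles the case $f\co D^n\to BSF_{h\I}$ (since $D^n$ is contractible over $BSF_{h\I}$), and the case of $S^n$ follows by induction on $n$ using the pushout decomposition $S^n=D^n\cup_{S^{n-1}}D^n$ together with the gluing lemma \cite{MMSS}, Theorem 8.12.

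I do not expect a genuine obstacle. The only bookkeeping to be done is to confirm that each structural result used above for $\mathcal U/BF_{h\I}$---namely homotopy invariance of $T$, preservation of colimits and $h$-cofibrations, and flatness of the relevant Eilenberg--Mac Lane spectrum---carries over to $\mathcal U/BSF_{h\I}$ with $H\mathbb Z$. These all follow by the same arguments already given, since $BSF$ is a commutative $\I$-space submonoid of $BF$ and $H\mathbb Z$ is constructed by the same bar construction used for $H\mathbb Z/2$.
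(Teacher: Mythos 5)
Your proposal is exactly the argument the paper has in mind: the paper simply states that the integral version is ``completely analogous to the $H\mathbb Z/2$-version,'' and you have carried out that analogy step by step, using the same reduction via homotopy invariance, CW-skeleta, preservation of pushouts and $h$-cofibrations, and the base case at a point, with $BSF$ and $H\mathbb Z$ replacing $BF$ and $H\mathbb Z/2$. No changes needed.
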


We can now verify the claim in Theorem \ref{structuredThomiso} that the Thom equivalence is strictly multiplicative.

\medskip
\begin{proof}[Proof of Theorem \ref{structuredThomiso}]
Let $H$ denote either one of the commutative symmetric ring spectra $H\mathbb Z/2$ or 
$H\mathbb Z$, and view $H$ as an object in $\Sp^{\Sigma}[\mathcal C]$ by 
projecting $\mathcal C$ onto the commutativity operad. We claim that 
(\ref{Z/2Thomequivalence}) is a diagram in $\Sp^{\Sigma}[\mathcal C]$ when we give each of the terms the diagonal $\mathcal C$-action.  For the first two maps this follows from Proposition 
\ref{operadrectificationprop} and Corollary \ref{TUoperad}, which imply that the Thom diagonal and $T(f)\to MF$ (or $T(f)\to MSF$) are both 
$\mathcal C$-maps. For the last map the claim follows from the fact that the multiplication $H\wedge H\to H$ is a map of commutative symmetric ring spectra, hence in particular a 
$\mathcal C$-map.
\end{proof}

\section{Symmetrization of diagram Thom spectra}\label{diagramThomsection}
In this section we first generalize the definition of the symmetric Thom spectrum functor to other types of diagram spectra. We then show how the results in the previous sections can be used to turn such diagram Thom spectra into symmetric spectra.

\subsection{Diagram spaces and diagram Thom spectra} 
Given a small category $\mathcal D$, we define a \emph{$D$-space} to be a functor 
$X\co\mathcal D\to \mathcal U$ and we write $\mathcal D\mathcal U$ for the category of such functors. Suppose that we are given a functor $\phi\co \mathcal D\to\mathcal I$. 
Then we can generalize the notion of a symmetric spectrum by introducing the topological category $\mathcal D_S$ that has the same objects as $\mathcal D$, but whose morphism spaces are defined by
$$
\mathcal D_S(a,b)=\bigvee_{\alpha\in \mathcal D(a, b)}S^{b-\alpha},
$$
where $S^{b-\alpha}$ is shorthand notation for $S^{\phi(b)-\phi(\alpha)}$, cf.\ Section 
\ref{symmetricspectrumsection}. The composition is defined as for $\I_S$. We define a \emph{$\mathcal D$-spectrum} to be a continuous based functor 
$\mathcal D_S\to \mathcal T$ and we write $\mathcal D_S\mathcal T$ for the category of such functors. Thus, a $\mathcal D$-spectrum is given by a family of based spaces $X(a)$ indexed by the objects $a$ in $\mathcal D$, together with a family of based structure maps 
$S^{b-\alpha}\wedge X(a)\to X(b)$ indexed by the morphisms $\alpha\co a\to b$ in $\mathcal D$. It is required (i) that the structure map associated to an identity morphism $1_a\co a\to a$ is the canonical identification $S^0\wedge X(a)\to X(a)$, and (ii) that  given a pair of composable morphisms $\alpha\co a\to b$ and $\beta\co b\to c$, the following diagram is commutative
\[
\begin{CD}
S^{c-\beta}\wedge S^{b-\alpha}\wedge X(a)@>>> S^{c-\beta}\wedge X(b)\\
@VVV @VVV\\
S^{c-\beta\alpha}\wedge X(a)@>>> X(c).
\end{CD}
\]
In particular, if $\phi$ denotes the identity functor on $\I$, then  $\mathcal I_S\mathcal T$ is an alternative notation for the category of symmetric spectra. Suppose now that $\mathcal D$ has the structure of a strict monoidal category. As in the case of $\I$-spaces, 
$\mathcal D\mathcal U$ inherits a monoidal structure from $\mathcal D$ which is symmetric monoidal if $\mathcal D$ is. If in addition $\phi$ is strict monoidal, then the monoidal structure of $\mathcal D$ also induces a monoidal structure on $\mathcal D_S$ which is symmetric monoidal if $\mathcal D$ and $\phi$ are. This in turn induces a monoidal structure on the category of $\mathcal D$-spectra $\mathcal D_S\mathcal T$ which again is symmetric monoidal if 
$\mathcal D$ and  $\phi$ are. 
The $\I$-space $BF$ pulls back to a $\mathcal D$-space via $\phi$ and the definition of the symmetric Thom spectrum functor immediately generalizes to give a Thom spectrum functor
$$
T\co \mathcal D\mathcal U/BF\to\mathcal D_S\mathcal T.
$$
The proof of Theorem \ref{monoidaltheorem} generalizes to show that this is a strong monoidal functor which is symmetric monoidal if $\mathcal D$ and $\phi$ are.
  
\subsection{Examples of diagram Thom spectra}\label{examplesection}
Many examples of Thom spectra arise from compatible families of groups over the topological monoids $F(n)$. It often happens that such a family defines a $\mathcal D$-diagram of groups for some strict monoidal category $\mathcal D$ over $\I$ and if the induced maps of classifying spaces define a 
$\mathcal D$-space over $BF$ we get an associated $\mathcal D$-Thom spectrum. We begin by fixing notation for some of the relevant categories. For each $k\geq 1$ we have the strict symmetric monoidal faithful functor
$$
\psi_k\co \I\to\I,\quad \mathbf n\mapsto \underbrace{\mathbf k\sqcup\dots\sqcup\mathbf k}_{n}.
$$ 
and we write $\I[k]$ for its image in $\I$. Thus, $\I[k]$ is a strict symmetric monoidal category whose objects have cardinality a multiple of $k$, and whose morphisms permute blocks of $k$ letters simultaneously. Let us write $\mathcal M$ for the subcategory of injective order preserving morphisms in $\I$. This inherits a strict monoidal (but not symmetric monoidal) structure from 
$\I$ and we similarly define monoidal subcategories $\mathcal M[k]$ for $k\geq 1$. 

\medskip
\begin{example}[The classical groups]
The orthogonal groups $O(n)$ and the special orthogonal groups $SO(n)$ define the commutative  $\I$-space monoids $BO$ and $BSO$ that give rise to the commutative symmetric ring spectra $MO$ and $MSO$. The unitary groups $U(n)$ and the special unitary groups $SU(n)$ define the commutative $\I[2]$-space monoids $BU$ and $BSU$ that give rise to the commutative $\I[2]$-ring spectra $MU$ and $MSU$. The symplectic groups $Sp(n)$ define the commutative 
$\I[4]$-space monoid $BSp$ that gives rise to the commutative $\I[4]$-spectrum $MSp$. 
\end{example}

\begin{example}[Discrete groups and $\I$-spaces]
The symmetric groups $\Sigma_n$ define the commutative $\I$-space monoid $B\Sigma$ in which the monoid structure is induced by concatenation of permutations. This gives rise to the commutative symmetric ring spectrum $M\Sigma$ whose associated bordism theory has been studied by Bullett \cite{Bu}. Other systems of discrete groups that give rise to symmetric ring spectra include the general linear groups $GL_n(\mathbb Z)$, the groups $(\mathbb Z/2)^n$ of diagonal matrices with entries $\pm 1$, and the groups $\Sigma_n\wr \mathbb Z/2$ of permutation matrices with entries $\pm 1$. For details and more examples, see \cite{Bu} and \cite{CV}.
\end{example}

\begin{example}[Braid groups and $\mathcal M$-spaces]
The family of braid groups $\mathfrak B(n)$ defines an $\mathcal M$-space monoid 
$B\mathfrak B$ in a natural way. We refer to \cite{Bi} for the definition and basic properties of the braid groups. If we view an element of $\mathfrak B(n)$ as a system of $n$ strings in the usual way, then the monoid structure on $B\mathfrak B$ is induced by concatenation of such systems.
Let $\rho$ denote the sequence of monoid homomorphisms 
\[
\rho_n\co\mathfrak B(n)\to \Sigma_n\to F(n)
\]
where the first map takes a system of strings to the induced permutation of the endpoints and the second map in the canonical inclusion. This defines a map $B\rho\co B\mathfrak B\to BF$ of $\mathcal M$-space monoids and we write $M\mathfrak B$ for the associated 
$\mathcal M$-Thom ring spectrum. The underlying spectrum of 
$M\mathfrak B$ has been analyzed in \cite{Bu} and \cite{Co}, where it is shown to be equivalent to the Eilenberg-Mac Lane spectrum $H\mathbb Z/2$. Suppose that $G$ is an $\mathcal M$-diagram of groups over the monoids $F(n)$ and that the homomorphisms $\rho_n$ can be factored as $\mathfrak B(n)\to G(n)\to F(n)$. If the $\mathcal M$-space $BG$ admits a monoid structure such that the induced map $B\mathfrak B\to BG$ is a map of $\mathcal M$-space monoids over $BF$ it then follows that $MG$ is an $\mathcal M$-module spectrum over $M\mathfrak B$. For example, this applies to $M\Sigma$ and $MO$ but not to $MSO$. 
We show how to symmetrize the constructions so as to get symmetric Thom spectra in Section 
\ref{symmetrizationsubsection}.
Again we refer to \cite{Bu}, \cite{Co},  \cite{CV} for further examples.
\end{example}

\begin{example}[Maps to $BF(k)$ and $\I{[}k{]}$-spaces]
For our next class of examples we need some preliminary definitions. Let $X$ be a based space and let $X^{\bullet}$ be the $\mathcal I$-space defined by $\mathbf n\mapsto X^n$. Given a morphism $\alpha\co\mathbf m\to\mathbf n$, the induced map $\alpha_*\co X^m\to X^n$ is defined by
$$
\alpha_*(x_1,\dots,x_m)=(x_{\alpha^{-1}(1)},\dots,x_{\alpha^{-1}(n)}),
$$ 
with the convention that $x_{\emptyset}$ is the base point in $X$. We give $X^{\bullet}$ the structure of a commutative $\I$-space monoid using the identifications $X^m\times X^n=X^{m+n}$.  Suppose now that $f\co X\to BF(k)$ is a based map. Then we view $X^{\bullet}$ as an $\I[k]$-space via the isomorphism $\psi_k\co\I\to\I[k]$, and  the maps 
$$
X^n\to BF(k)^n\xr{\mu}BF(\underbrace{\mathbf k\sqcup\dots\sqcup\mathbf k}_{n})
$$
define a map of $\I[k]$-space monoids $X^{\bullet}\to BF$, where we view $BF$ as an $\I[k]$-space by restriction. We write $MX^{\wedge\infty}$ for the associated commutative $\I[k]$-ring spectrum, the function $f$ being understood. In the cases $X=BO(1)$ and $X=BU(1)$, we get the Thom spectra  $MO(1)^{\wedge\infty}$ and $MU(1)^{\wedge\infty}$ that represent the bordism  theories of manifolds with stable normal bundle given as an ordered sum of real or complex line bundles. These Thom spectra have been analyzed by Arthan and Bullett \cite{AB}, \cite{Bu}. Letting $X=BO(k)$ or $X=BU(k)$, we similarly get the $\I[k]$-spectrum $MO(k)^{\wedge\infty}$ and 
the $\I[2k]$-spectrum $MU(k)^{\wedge\infty}$.
\end{example}

\subsection{Symmetrization of diagram Thom spectra}\label{symmetrizationsubsection}
As demonstrated in the last section, many Thom spectra naturally arise as $\mathcal D$-Thom spectra associated to maps of $\mathcal D$-spaces $f\co X\to BF$ for suitable monoidal 
categories $\mathcal D$ over $\I$. In the applications it is often convenient to replace such a 
$\mathcal D$-Thom spectrum by a symmetric Thom spectrum and our preferred way of doing is to first transform $f$ to a map of $\I$-spaces and then evaluate the symmetric Thom spectrum functor on this transformed map. We shall discuss two ways of performing this ``symmetrization'' procedure: in this section we consider symmetrizations using the $\I$-space lifting functor $R$ and in the next section we consider symmetrizations via (homotopy) Kan extension. 

For simplicity we shall from now on assume that $\mathcal D$ is a monoidal subcategory of $\I$ such that the intersection $\mathcal D\cap\mathcal N$ is a cofinal subcategory of $\mathcal N$. 
Thus, any $\mathcal D $-spectrum has an underlying $\mathcal D\cap\mathcal N$-spectrum and we define the spectrum homotopy groups in the usual way by evaluating the colimit of the associated $\mathcal D\cap \mathcal N$-diagram of homotopy groups. Given a map 
$f\co X\to BF$, we write (by abuse of notation) $f_{h\mathcal D}$ for the composite map
\[
f_{h\mathcal D}\co X_{h\mathcal D}\xr{f_{h\mathcal D}} BF_{h\mathcal D}\to BF_{h\I}.
\] 
Applying the $\I$-space lifting functor $R$ to this map we get a functor 
\begin{equation}\label{RDlifting}
\mathcal D\mathcal U/BF\to \I\mathcal U/BF,\qquad (X\xr{f} BF)
\mapsto (R_{f_{h\mathcal D}}(X_{h\mathcal D})\xr{R(f_{h\mathcal D})} BF).
\end{equation}
We say that a map of $\mathcal D$-spaces $X\to Y$ is a \emph{$\mathcal D$-equivalence}
if the induced map $X_{h\mathcal D}\to Y_{h\mathcal D}$ is a weak homotopy equivalence.

\begin{lemma}
The restriction of $R_{f_{h\mathcal D}}(X_{h\mathcal D})$ to a $\mathcal D$-space is related to $X$ by a chain of $\mathcal D$-equivalences over $BF$.
\end{lemma}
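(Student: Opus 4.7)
The plan is to construct a zigzag of $\D$-spaces over $BF$
\[
X \xl{\,t\,} \overline{X} \xr{\,\iota\,} R_{f_{h\D}}(X_{h\D})|_{\D}
\]
and to verify that both arrows are $\D$-equivalences. Let $\overline{X}$ be the $\D$-space defined by $\overline{X}(d) = \hocolim_{(\D\downarrow d)} X\circ \pi_d$, where $\pi_d\co (\D\downarrow d)\to\D$ is the forgetful functor; this is the $\D$-analogue of the $\I$-space construction used before Proposition \ref{Tbarf}. Since each $(\D\downarrow d)$ has a terminal object, the canonical map $t\co\overline{X}\to X$ is a level equivalence, hence a $\D$-equivalence.

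I would define $\iota$ simplicially, starting from the assignment on vertices
\[
(d'\xr{\beta} d,\; x\in X(d')) \;\longmapsto\; \bigl([x]_{d'},\, \mathrm{const},\, (\phi(\beta),f(x))\bigr),
\]
where $[x]_{d'}$ denotes the class in $X_{h\D}$, the middle component is the constant path in $BF_{h\I}$, and $(\phi(\beta),f(x))$ is the vertex of $\overline{BF}(\phi(d))$ at the object $\phi(\beta)\in(\I\downarrow\phi(d))$ with value $f(x)\in BF(\phi(d'))$. Both relevant images in $BF_{h\I}$ reduce to $[f(x)]_{\phi(d')}$, so the constant path is legitimate. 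Strict naturality in $d$ is immediate from the compatibility between the $\D$-functoriality of $X$ and $BF$ and the action of $\I$ on $\overline{BF}$ via $\phi$; compatibility with the maps down to $BF$ reduces to the identity $\phi(\beta)_*f(x)=f(\beta_*x)$, already seen in the $\I$-space case.

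For $\iota$, observe that the composite $\overline{X}\xr{\iota} R_{f_{h\D}}(X_{h\D})|_\D \xr{\,\mathrm{proj}_1\,} \underline{X_{h\D}}$ (where the underline denotes the constant $\D$-space) becomes, after applying $\hocolim_\D$, homotopic to $t_{h\D}$ followed by the identity on $X_{h\D}$, via the natural transformation from the forgetful functor $(\D\downarrow d)\to\D$ to the constant functor at $d$. Hence it suffices to check that $(R_{f_{h\D}}(X_{h\D})|_\D)_{h\D}\to X_{h\D}$ is a weak equivalence. This follows by combining Corollary \ref{Rhomotopyinverse}, which gives $R_{f_{h\D}}(X_{h\D})_{h\I}\xr{\sim} X_{h\D}$, with the claim that $Z_{h\D}\to Z_{h\I}$ is a weak equivalence for any convergent $\I$-space $Z$, applied to $Z=R_{f_{h\D}}(X_{h\D})$, which is convergent by Proposition \ref{RTgood}.

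The main obstacle lies in this last comparison $Z_{h\D}\simeq Z_{h\I}$ for convergent $\I$-spaces. Here one invokes the standing assumption that $\D\cap\mathcal N$ is cofinal in $\mathcal N$: a cofinal sequence in $\D\cap\mathcal N$ exhibits both $Z_{h\D}$ and $Z_{h\I}$ as homotopy colimits along this sequence, and the connectivity estimates of Lemma \ref{Blemma} applied to each of $Z|_\D$ and $Z$ force both canonical maps from the telescope to be weak equivalences. Modulo this Bökstedt-type comparison the argument is essentially formal.
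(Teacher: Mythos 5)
Your proposal is correct and matches the paper's approach: the paper's proof is a one-line appeal to analogy with the $\I$-space construction of Section \ref{invariancesection}, producing exactly the zigzag $X\leftarrow\overline{X}\to R_{f_{h\mathcal D}}(X_{h\mathcal D})$ that you build. You have simply spelled out the details the paper leaves implicit, including the explicit map into the homotopy pullback and the reduction to the B\"okstedt-type comparison $Z_{h\mathcal D}\simeq Z_{h\I}$ for convergent $Z$ via cofinality of $\mathcal D\cap\mathcal N$ in $\mathcal N$.
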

\begin{proof}
In analogy with the case of $\I$-spaces considered in Section \ref{invariancesection}, there is a diagram of $\mathcal D$-equivalences 
$
X\leftarrow \overline X\to R_{f_{h\mathcal D}}(X_{h\mathcal D})
$ 
over $BF$.
\end{proof}
It follows from the $\mathcal D$-space analogue of B\"okstedt's approximation Lemma \ref{Blemma} that if $X\to Y$ is a $\mathcal D$-equivalence of convergent $\mathcal D$-spaces $X$ and $Y$, then the connectivity of the maps $X(d)\to Y(d)$ tends to infinity with $d$. The previous lemma therefore has the following consequence.

\begin{proposition}\label{D-restriction-equivalence}
If $X$ is a convergent $\mathcal D$-space and $f\co X\to BF$ is a map of $\mathcal D$-spaces which is level-wise $T$-good, then the restriction of the symmetric spectrum $T(R(f_{h\mathcal D}))$ to a $\mathcal D$-spectrum is 
$\pi_*$-equivalent to $T(f)$.\qed
\end{proposition}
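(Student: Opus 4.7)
My plan is to combine the zig-zag of $\mathcal{D}$-equivalences supplied by the preceding lemma with the $\mathcal{D}$-analogue of B\"okstedt's approximation lemma and Lemma \ref{basicthomlemma}, in a manner entirely parallel to the arguments used for $\I$-spaces in Section \ref{invariancesection}. The preceding lemma gives a diagram of $\mathcal{D}$-equivalences
\[
X \longleftarrow \overline{X} \longrightarrow R_{f_{h\mathcal{D}}}(X_{h\mathcal{D}})
\]
over $BF$, where the right-hand $\I$-space is restricted to a $\mathcal{D}$-space via $\mathcal{D}\to\I$, so it suffices to show that applying $T$ level-wise produces maps of $\mathcal{D}$-spectra that are $\pi_*$-isomorphisms after further restriction to the cofinal subcategory $\mathcal{D}\cap\mathcal{N}$ of $\mathcal{N}$.

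First I would verify that all three $\mathcal{D}$-spaces in the zig-zag are convergent. The left-hand term $X$ is convergent by hypothesis, the right-hand term is convergent as an $\I$-space by the proof of Proposition \ref{RTgood} (and hence as a $\mathcal{D}$-space by the assumed cofinality), and $\overline{X}$ is related to $X$ by the canonical level equivalence $t$, so is also convergent. Next, by the $\mathcal{D}$-analogue of Lemma \ref{Blemma} invoked just above the proposition, any $\mathcal{D}$-equivalence between convergent $\mathcal{D}$-spaces is $\lambda_d$-connected on the object $\mathbf{d}\in\mathcal{D}\cap\mathcal{N}$ for some unbounded non-decreasing sequence $\{\lambda_d\}$; so each map in the zig-zag has this property.

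Next I would verify that each of the three $\mathcal{D}$-spaces over $BF$ is level-wise $T$-good. This holds for $X\to BF$ by hypothesis, for $R(f_{h\mathcal{D}})$ by Proposition \ref{RTgood}, and for $\overline{X}\to BF$ by Lemma \ref{hocolimquasifibration} in the appendix, which says that the homotopy colimit of a diagram of well-based quasifibrations remains a well-based quasifibration (applied level-wise to the defining diagram of $\overline{X}$). Consequently each morphism in the zig-zag fits into pullback squares of well-based quasifibrations at every level, so Lemma \ref{basicthomlemma} gives that the induced maps of Thom spaces at level $\mathbf{d}$ are $(\lambda_d+d)$-connected.

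Finally, passing to spectrum-level homotopy groups along the cofinal subcategory $\mathcal{D}\cap\mathcal{N}\subseteq\mathcal{N}$, the unboundedness of $\{\lambda_d\}$ forces each map of underlying spectra to be a $\pi_*$-isomorphism, which is the desired conclusion. The main obstacle I expect is the verification that $\overline{X}\to BF$ is level-wise $T$-good: one must check that the level-wise homotopy colimit defining $\overline{X}(d)$, pulled back along the well-based quasifibration $V(\phi(d))\to BF(\phi(d))$, again yields a well-based quasifibration so that Lemma \ref{basicthomlemma} applies as stated; granting the appendix results this is routine, and the rest of the argument is a direct transcription of the $\I$-space case.
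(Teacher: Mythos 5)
Your overall architecture — the zig-zag $X \leftarrow \overline{X}\to R_{f_{h\mathcal D}}(X_{h\mathcal D})$ supplied by the preceding lemma, the $\mathcal D$-analogue of B\"okstedt's approximation Lemma \ref{Blemma} to obtain connectivity estimates for the individual maps $X(d)\to Y(d)$, Lemma \ref{basicthomlemma} to propagate these to Thom spaces, and cofinality of $\mathcal D\cap\mathcal N$ at the end — is the right one. But there is a real gap at the step where you pass from ``level-wise $T$-good'' to ``each morphism in the zig-zag fits into pullback squares of well-based quasifibrations at every level.'' By Definition \ref{T-good-definition}, level-wise $T$-goodness only asserts that $T(f_d)\to T(\Gamma(f_d))$ is a weak equivalence for each $d$; it does \emph{not} assert that $f_d$ itself classifies a well-based quasifibration. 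Lemma \ref{basicthomlemma} genuinely needs the latter, and Lemma \ref{hocolimquasifibration} likewise takes as input a diagram whose members classify well-based quasifibrations, so it cannot be used to upgrade level-wise $T$-goodness of $f$ to level-wise $T$-goodness of $\overline{X}\to BF$.

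The repair, which is what the paper's terse $\qed$ is pointing at, is to apply the level-wise Hurewicz fibrant replacement $\Gamma$ of (\ref{fibrantreplacement}) to the entire zig-zag before invoking Lemma \ref{basicthomlemma}. After applying $\Gamma$, every object in the zig-zag is a level-wise Hurewicz fibration over $BF$, hence by Lemma \ref{wellbasedfibrationlemma} classifies a well-based quasifibration at every level, and the connectivity estimate from the $\mathcal D$-analogue of Lemma \ref{Blemma} together with Lemma \ref{basicthomlemma} shows that $T\circ\Gamma$ carries the zig-zag to $\pi_*$-isomorphisms of the underlying $\mathcal D\cap\mathcal N$-spectra. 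The hypothesis of level-wise $T$-goodness is then used only at the two endpoints: $T(f)\to T(\Gamma(f))$ is a level equivalence by hypothesis, and $T(R(f_{h\mathcal D}))\to T(\Gamma(R(f_{h\mathcal D})))$ is a level equivalence by Proposition \ref{RTgood}. In particular, no $T$-goodness statement about the middle term $\overline{X}$ is needed at all, and the difficulty you flag at the end of your proposal dissolves.
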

This construction preserves multiplicative structures in the sense that if $X$ is a $\mathcal D$-space monoid and $f\co X\to BF$ a map of $\mathcal D$-space monoids, then $f_{h\mathcal D}$ is a map of topological monoids and $T(f_{h\mathcal D})$ is a symmetric ring spectra by Lemma  
\ref{monoid-ring}. In the following we consider the effect of applying the construction to the examples considered in the previous section. 

\begin{example}\label{I[k]-example}
Let $X$ be an $\I[k]$-space with an action of an operad $\mathcal C$ that is augmented over the Barratt-Eccles operad. If $f\co X\to BF$ is a map of $\mathcal C$-$\I[k]$-spaces, then the induced map
$$
f_{h\I[k]}\co X_{h\I[k]}\to BF_{h\I[k]}\to BF_{h\I}
$$
is map of $\mathcal C$-spaces and it follows from Corollary \ref{TUoperad} that the symmetric spectrum $T(f_{h\I[k]})$ inherits a $\mathcal C$-action. Here we use the canonical isomorphism of categories $\psi_k\co \I\to\I[k]$ to identify $X_{h\I[k]}$ with $(\psi_k^*X)_{h\I}$, and we transfer the $\mathcal C$-action on $(\psi_k^*X)_{h\I}$ defined in Corollary \ref{hocolimUoperad} to 
$X_{h\I[k]}$ via this identification. This applies in particular to the map of commutative 
$\I[2]$-spaces $BU\to BF$ to give a model of the Thom spectrum $MU$ as a symmetric ring spectrum with an action of the Barratt-Eccles operad. We shall see how to realize $MU$ as a strictly commutative symmetric ring spectrum in Example \ref{MUexample}.
\end{example}

\begin{example}
Let as before $B\mathfrak B$ denote the $\mathcal M$-space monoid defined by the braid groups. We shall identify the map $B\mathfrak B_{h\mathcal N}\to B\mathfrak B_{h\mathcal M}$ in terms of Quillen's plus construction. Firstly, it follows from the homological stability of the braid groups (see \cite{CLM}, III, Appendix) and the homological version of Lemma \ref{Blemma}, that this map is a homology isomorphism. Secondly, the monoidal structure of 
$\mathcal M$ gives 
$B\mathfrak B_{h\mathcal M}$ the structure of a topological monoid, which in particular implies that its fundamental group is abelian. Thus, the map in question has the effect of abelianizing the fundamental group. The space $B\mathfrak B_{h\mathcal N}$ may be identified with the classifying space of the infinite braid group $\mathfrak B(\infty)$. Since the commutator subgroup of the latter is perfect, it follows from the above that we may identify 
$B\mathfrak B_{h\mathcal M}$ with Quillen's plus construction $B\mathfrak B_{h\mathcal N}^+$. 
It is proved in \cite{Co} that there is a homotopy commutative diagram
$$
\begin{CD}
B\mathfrak B_{h\mathcal N}@>\theta >>\Omega^2(S^3)\\
@VV(B\rho)_{h\mathcal N}V @VV\eta V\\
BF_{h\mathcal N}@=BF_{h\mathcal N},
\end{CD}
$$
where $\eta$ denotes the ``Mahowald orientation'', that is, the extension of the non-trivial map $S^1\to BF_{h\mathcal N}$ to a 2-fold loop map. It is a theorem of Mahowald  \cite{Mah1}, that the Thom spectrum of $\eta$ is stably equivalent to the Eilenberg-Mac Lane spectrum $H\mathbb Z/2$. By the universal property of the plus construction, we conclude from the above that there is a homotopy commutative diagram
$$
\begin{CD}
B\mathfrak B_{h\mathcal M}@>\sim >> \Omega^2(S^3)\\
@VV(B\rho)_{h\mathcal M} V @VV \eta V\\
BF_{h\I}@= BF_{h\I},
\end{CD}
$$
where the upper map is a homotopy equivalence as indicated. Consequently, the symmetric ring spectrum $T((B\rho)_{h\mathcal M})$ is a model of $H\mathbb Z/2$. 
\end{example} 

\begin{example}
Let $BGL(\mathbb Z)$ be the commutative $\I$-space monoid associated to the general linear groups $GL_n(\mathbb Z)$. As in the case of the braid groups, we may identify $BGL(\mathbb Z)_{h\mathcal N}\to BGL(\mathbb Z)_{h\I}$ in terms of Quillen's plus construction. Indeed, by the homological stability of the groups $GL_n(\mathbb Z)$, it follows that this map is a homology equivalence. Since $BGL(\mathbb Z)_{h\I}$ is a topological monoid it has abelian fundamental group, hence it may be identified with $BGL_{\infty}(\mathbb Z)^+$; the base point component of Quillen's algebraic K-theory space. In similar fashion, starting with the $\I$-space 
$B\Sigma$, we may identify $B\Sigma_{h\I}$ with 
$B\Sigma_{\infty}^+$, which by the Barratt-Priddy-Quillen Theorem is equivalent to the base point component of $Q(S^0)$. 
\end{example}

\begin{example}\label{Xbulletexample}
Let $f\co X\to BF(k)$ be  a based map and consider the associated map of $\I[k]$-spaces $X^{\bullet}\to BF$. It is proved in \cite{Sch2} that if $X$ is well-based and connected, then 
$X^{\bullet}_{h\I}$ is a model of the infinite loop space $Q(X)$. Identifying $X^{\bullet}_{h\I}$ with $X^{\bullet}_{h\I[k]}$ via the isomorphism $\psi_k$, it follows as in Example \ref{I[k]-example} that the induced map $X^{\bullet}_{h\I[k]}\to BF_{h\I}$ is a map of 
$\mathcal E$-spaces which models the usual extension of $f$ to a map of infinite loop spaces. 
If we instead think of $X^{\bullet}$ as an $\mathcal M$-space by restriction, then one can show that $X_{h\mathcal M}$ is homotopy equivalent to the colimit $X^{\bullet}_{\mathcal M}$, that is, to the free based monoid generated by $X$. By a theorem of James \cite{Ja} 
the latter is a model of 
$\Omega\Sigma(X)$, and the map $X^{\bullet}_{h\mathcal M}\to X^{\bullet}_{h\mathcal I}$ corresponds to the inclusion of $\Omega\Sigma(X)$ in $Q(X)$.   
\end{example}

\begin{example}
Let $\mathcal E_k$ be the $k$th stage of the Smith filtration of the Barratt-Eccles operad 
$\mathcal E$ and write $X\mapsto E_k(X)$ for the associated monad on based spaces, see \cite{Ber}, \cite{Sm}. Thus, $\mathcal E_k$ is equivalent to the little $k$-cubes operad, and if 
$X$ is a well-based connected space, then $E_k(X)$ is a combinatorial model of 
$\Omega^k\Sigma^k(X)$. Given a based map $f\co X\to BF_{h\I}$, we use that 
$\mathcal E_k$ is augmented over $\mathcal E$ to extend $f$ to a map of $\mathcal E_k$-spaces
$$
E_k(f)\co E_k(X)\to E_k(BF_{h\I})\to BF_{h\I},
$$
which for connected $X$ is a model of the usual extension of $f$ to a $k$-fold loop map. It follows that the associated symmetric Thom spectrum $T(E_k(f))$ is equipped with 
an $\mathcal E_k$-action.
\end{example}

\subsection{Symmetrization via Kan Extension}
Let again $\mathcal D$ be a monoidal subcategory of $\I$ such that 
$\mathcal D\cap \mathcal N$ is cofinal in $\mathcal N$ and let us write $j\co \mathcal D\to \I$ for the inclusion. We first consider homotopy Kan extensions along $j$. Recall that given a 
$\mathcal D$-space $X$, the homotopy Kan extension is the $\I$-space $j_*^h(X)$ defined by
\[
j_*^h(X)(n)=\hocolim_{(j\downarrow \mathbf n)}X\circ \pi_n,
\]
see Appendix \ref{homotopyKansection}. The functor $j_*^h(-)$ induces a functor
\[
j^h_*\co \mathcal D\U/BF\to \I\U/BF,\quad (f\co X\to BF)
\mapsto (j_*^h(f)\co j_*^h(X)\to j_*^h(BF)\to BF)
\]
which is $\I$-equivalent to the functor (\ref{RDlifting}) in the sense of the following lemma.
\begin{lemma}\label{j-Kan-lemma}
There is a natural $\I$-equivalence $j_*^h(X)\to R_{f_{h\mathcal D}}(X_{h\mathcal D})$ of $\I$-spaces over $BF$.
\end{lemma}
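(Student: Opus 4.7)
The plan is to build the comparison map by factoring through $X_{h\mathcal D}$ (viewed as a constant $\mathcal I$-space) and then invoking the universal property of the homotopy pullback. First, the forgetful functor $(j\downarrow\mathbf n)\to\mathcal D$ and naturality in $\mathbf n$ give a canonical map of $\I$-spaces $j_*^h(X)\to X_{h\mathcal D}$, and applying $j_*^h$ to $f$ gives a map $j_*^h(f)\co j_*^h(X)\to j_*^h(BF)=\overline{BF}$ of $\I$-spaces. I want to complete these to a map of diagrams
\[
\bigl(X_{h\mathcal D}\xr{f_{h\mathcal D}} BF_{h\I}\xl{\pi}\overline{BF}\bigr)\la \bigl(j_*^h(X)\to j_*^h(X)\la j_*^h(X)\bigr)
\]
of $\I$-spaces. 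Strict commutativity fails on the right-hand square, but, exactly as in the proof of Proposition \ref{RU'comparison}, one obtains a natural homotopy from Lemma \ref{homotopyKanlemma} applied to the two composites $\hocolim_{\I}j_*^h(X)\to BF_{h\I}$, one going through $\overline{BF}$ and $\pi$ and the other through $X_{h\mathcal D}$ and $f_{h\mathcal D}$; both are maps out of a homotopy Kan extension along the identity functor on $\I$ and hence are identified by that lemma. Taking level-wise homotopy pullbacks yields the desired natural map $j_*^h(X)\to R_{f_{h\mathcal D}}(X_{h\mathcal D})$ of $\I$-spaces, and it is over $BF$ since both projections to $\overline{BF}$ (hence to $BF$ via $t$) match by construction.

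To see that this map is an $\I$-equivalence, I apply $\hocolim_{\I}$ to both sides and compare to $X_{h\mathcal D}$. On the left, the canonical map $(j_*^h(X))_{h\I}\to X_{h\mathcal D}$ is a weak equivalence since homotopy left Kan extension along $j$ followed by $\hocolim_{\I}$ recovers $\hocolim_{\mathcal D}$; this is a special case of Lemma \ref{homotopyKanequivalence} as used elsewhere in Section \ref{invariancesection}. On the right, Corollary \ref{Rhomotopyinverse} gives a chain of natural weak equivalences $R_{f_{h\mathcal D}}(X_{h\mathcal D})_{h\I}\simeq X_{h\mathcal D}$ of spaces over $BF_{h\I}$. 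A diagram chase shows that the two equivalences are compatible with the map induced on homotopy colimits, so the comparison map is an $\I$-equivalence.

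The main obstacle I expect is bookkeeping the natural homotopy in the first step and verifying that after passing to homotopy pullbacks the resulting map really lies over $BF$ and is natural in $(X,f)$; this is the same technical point that appears in Proposition \ref{RU'comparison}, and should be handled by the same appeal to the canonical homotopy supplied by Lemma \ref{homotopyKanlemma}. Everything else is formal from the defining properties of $R$ and homotopy Kan extensions along $j$.
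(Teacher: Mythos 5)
The key issue is your claim that ``strict commutativity fails on the right-hand square.'' In fact the square
\[
\begin{CD}
j_*^h(X)@>>> \overline{BF}\\
@VVV @VV\pi V\\
X_{h\mathcal D}@>f_{h\mathcal D}>>BF_{h\I}
\end{CD}
\]
\emph{does} commute strictly, and the paper's proof simply uses this to get a direct map into the homotopy pullback $R_{f_{h\mathcal D}}(X_{h\mathcal D})$ (using constant paths). To see strict commutativity, trace a simplex of $j_*^h(X)(n)$: it is given by data $(a_0\la\dots\la a_k,\ \beta_0\co a_0\to\mathbf n,\ x\in X(a_k))$ with the $a_i$ objects of $\mathcal D$. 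The top-then-right composite applies $f$ and forgets $\beta_0$, the left-then-bottom composite forgets $\beta_0$ and then applies $f$; since $j$ is just the inclusion $\mathcal D\hookrightarrow\I$, these agree on the nose. So there is no homotopy to bookkeep here, and the ``over $BF$'' claim is immediate from the third projection to $\overline{BF}$.

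Your appeal to Lemma \ref{homotopyKanlemma} is also a misapplication: that lemma concerns a homotopy Kan extension $\phi_*^h\phi^*Y$ of a \emph{restricted} $\mathcal B$-diagram, and in the proof of Proposition \ref{RU'comparison} it is invoked with $\phi=\mathrm{id}_\I$ to reconcile two maps out of $\hocolim_\I\overline{BF}$. Here $j_*^h(X)$ is a homotopy Kan extension of the $\mathcal D$-space $X$ along $j\co\mathcal D\to\I$, not along the identity on $\I$, and $X$ need not be of the form $j^*Y$; so the lemma does not identify the two composites you describe. The reason Proposition \ref{RU'comparison} needed a homotopy is that the bottom row of its diagram passes through $\Map(B(-\downarrow\I),-)$, which genuinely does not commute strictly with $\overline{BF}$. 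That complication is absent here. Once the comparison map is in hand, your second paragraph --- showing it is an $\I$-equivalence by observing that both source and target are $\I$-equivalent to $X_{h\mathcal D}$ --- is essentially the paper's two-out-of-three argument; the paper phrases it via the projections to $X_{h\mathcal D}$ and cites Lemma \ref{homotopyKanequivalence} and the proof of Proposition \ref{RTgood} rather than Corollary \ref{Rhomotopyinverse}, but the substance is the same.
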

\begin{proof}
Notice first that there is a commutative diagram
\[
\begin{CD}
j_*^h(X)@>>> \overline{BF}\\
@VVV @VVV\\
X_{h\mathcal D}@>>>BF_{h\I},
\end{CD}
\]
inducing a map of $\I$-spaces $j_*^h(X)\to R_{f_{h\mathcal D}}(X_{h\mathcal D})$ over $BF$. 
Since this is also a map over $X_{h\mathcal D}$, the result follows from 
the fact that $j_*^h(X)\to X_{h\mathcal D}$ and 
$R_{f_{h\mathcal D}}(X_{h\mathcal D})\to X_{h\mathcal D}$ are $\I$-equivalences, see Lemma \ref{homotopyKanequivalence} and the proof of Proposition \ref{RTgood}. 
\end{proof}
We now turn to (categorical) Kan extensions. Given a $\mathcal D$-space, the Kan extension $j_*(X)$ is defined as the homotopy Kan extension except that we use the colimit instead of the homotopy colimit, that is,
\[
j_*(X)(n)=\colim_{(j\downarrow \mathbf n)}X\circ \pi_{\mathbf n}.
\]
The functor $j_*$ is left adjoint to the functor that pulls an $\I$-space back to a $\mathcal D$-space via $j$ and it induces a functor 
\[
j_*\co \mathcal D\mathcal U/BF\to \I\mathcal U/BF,\qquad (X\to BF)\mapsto 
(j_*(X)\to j_*(BF)\to BF)
\]
where the map $j_*(BF)\to BF$ is the counit of the adjunction. This functor is strong monoidal and is symmetric monoidal if $\mathcal D$ and $j$ are. Thus, in the latter case it takes commutative $\mathcal D$-space monoids to commutative $\I$-space monoids. The drawback of using the categorical Kan extension is of course that it is homotopically well-behaved only under suitable cofibration conditions on the $\mathcal D$-space $X$ and the main purpose of this section is to formulate such conditions. More precisely, we shall consider an inclusion $j\co \mathcal D\to \I$ of a (not necessarily monoidal) subcategory $\mathcal D$ of $\I$ 
and we shall formulate conditions on $\mathcal D$ and $X$ which ensure that the canonical map $j_*^h(X)\to j_*(X)$ is a level-wise equivalence. 
Given an object $\mathbf d_0$ of $\mathcal D$, consider the  category $(\mathcal D\downarrow \mathbf d_0)$ of objects in 
$\mathcal D$ over $\mathbf d_0$ and let $\partial \mathbf d_0$ be the subcategory 
obtained by excluding the terminal objects.

\begin{lemma}\label{modelcriterion}
Let $j\co\mathcal D\to\I$ be the inclusion of a subcategory $\mathcal D$ and suppose that 
$X$ is a $\mathcal D$-space such that the map
\[
\colim_{\partial\mathbf d_0} X\circ \pi_{\mathbf d_0}\to 
\colim_{(\mathcal D\downarrow\mathbf d_0)}
X\circ\pi_{\mathbf d_0}=X( d_0)
\]
is a cofibration for all objects $\mathbf d_0$ in $\mathcal D$. Then
$j_*^h(X)\to j_*(X)$ is a level-wise equivalence. 
\end{lemma}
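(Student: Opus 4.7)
The plan is to fix an object $\mathbf n$ of $\I$ and prove that the canonical map
\[
\hocolim_{(j\downarrow \mathbf n)} X\circ\pi_{\mathbf n} \to \colim_{(j\downarrow \mathbf n)} X\circ\pi_{\mathbf n}
\]
is a weak homotopy equivalence. Since morphisms in $\I$ are injections, the indexing category $(j\downarrow \mathbf n)$ carries a natural grading $(\mathbf d,\alpha)\mapsto|\mathbf d|$. I will filter by this grading: let $A_k\subseteq (j\downarrow\mathbf n)$ denote the full subcategory on objects of degree at most $k$. The $A_k$ exhaust $(j\downarrow\mathbf n)$ and both colimits and homotopy colimits commute with this sequential colimit of categories, so it suffices to prove by induction on $k$ that the comparison restricted to each $A_k$ is a weak equivalence. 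I will prove this simultaneously with the statement that the natural map $hL_{\mathbf d_0}X\to L_{\mathbf d_0}X$ is a weak equivalence for every object $\mathbf d_0$ of $\mathcal D$ of degree at most $k$, where $hL_{\mathbf d_0}X:=\hocolim_{\partial\mathbf d_0} X\circ\pi_{\mathbf d_0}$.

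For the inductive step I will analyze how $A_k$ is built from $A_{k-1}$ by adjoining the new objects $(\mathbf d_0,\alpha_0)$ of degree $k$. Because any morphism in $(j\downarrow\mathbf n)$ with target $(\mathbf d_0,\alpha_0)$ and source in $A_{k-1}$ is determined by a non-identity morphism $\mathbf d\to\mathbf d_0$ in $\mathcal D$, the corresponding over-category is canonically identified with $\partial\mathbf d_0$. A direct verification will then show that $\colim_{A_k}$ is obtained from $\colim_{A_{k-1}}$ as the pushout along
\[
\bigsqcup_{(\mathbf d_0,\alpha_0)} L_{\mathbf d_0}X \to \bigsqcup_{(\mathbf d_0,\alpha_0)} X(\mathbf d_0)
\]
(disjoint union indexed by the degree-$k$ objects), and that the homotopy colimit admits the analogous homotopy pushout description in which $L_{\mathbf d_0}X$ is replaced by $hL_{\mathbf d_0}X$. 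The hypothesis of the lemma is exactly that the left-hand vertical map of the colim square is a cofibration, so that square is itself a homotopy pushout. The gluing lemma for homotopy pushouts, together with the inductive hypothesis on $A_{k-1}$ and the inductive hypothesis for $hL_{\mathbf d_0}X\to L_{\mathbf d_0}X$ (which is itself an instance of the same lemma applied to the smaller indexing category $\partial\mathbf d_0$, with latching cofibration hypotheses inherited directly from those in the lemma), then closes the induction.

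The main technical point I expect will be the clean identification of the pushout and homotopy pushout decompositions at the filtration step $A_{k-1}\subseteq A_k$, and in particular the treatment of any non-trivial automorphisms among the degree-$k$ objects of $\mathcal D$; in the applications treated in the paper, such as $\mathcal D=\mathcal M$ and $\mathcal D=\mathcal N$, no non-identity automorphisms are present and the argument runs as sketched, but in general one has to replace the naive coproducts above by an appropriate quotient that accounts for the groupoid of degree-$k$ objects. Once these decompositions are in hand, the remainder of the argument is a formal combination of the gluing lemma with the nested inductive hypotheses.
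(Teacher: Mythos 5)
Your proposal is correct in outline but follows a genuinely different route from the paper's. The paper observes that $(j\downarrow\mathbf n)$ is a preorder, passes to a skeleton subcategory $\mathcal A(\mathbf n)$ (a finite poset), and then directly invokes the general criterion of Dwyer--Spalinski \cite{DS}, Section 10, in the Str\o m model structure: for a diagram on a direct, very small category the comparison map from the homotopy colimit to the colimit is a weak equivalence provided the latching maps are cofibrations. The remainder of the paper's proof is the identification of the latching maps in $\mathcal A(\mathbf n)$ with the maps appearing in the hypothesis of the lemma, which is essentially the same comma-category identification $(\mathcal A(\mathbf n)\downarrow a)\cong(\mathcal D\downarrow\mathbf d_0)$ that you sketch. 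Your proposal instead reproves that criterion by hand: a filtration of $(j\downarrow\mathbf n)$ by the degree $|\mathbf d|$, a pushout/homotopy-pushout decomposition at each stage, and a double induction using the gluing lemma. This is a perfectly sound way to give a self-contained argument; what the paper's route buys is brevity by outsourcing the filtration induction to the cited general theory.

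You correctly flag the one genuine subtlety in your version, but the phrasing is slightly off: it is not that degree-$k$ objects of $(j\downarrow\mathbf n)$ may have nontrivial automorphisms (they cannot, since morphisms in $\I$ are injective, so any endomorphism $\delta$ of $(\mathbf d_0,\alpha_0)$ satisfying $\alpha_0\delta=\alpha_0$ is the identity). The actual issue is that there may be \emph{distinct but isomorphic} degree-$k$ objects $(\mathbf d_0,\alpha_0)\cong(\mathbf d_0,\alpha_0')$ whenever $\mathcal D$ contains nontrivial automorphisms of $\mathbf d_0$, and then your naive coproduct over degree-$k$ objects over-counts. The fix is exactly what the paper does up front: pass to a skeleton of the preorder $(j\downarrow\mathbf n)$, after which there is one object per isomorphism class and the pushout decomposition you want holds on the nose. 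Once you insert that skeleton step before the filtration argument, your double induction closes as described, and your identification of the over-category $(A_{k-1}\downarrow(\mathbf d_0,\alpha_0))$ with $\partial\mathbf d_0$ is correct and is the same identification the paper uses to match the latching condition with the lemma's hypothesis.
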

\begin{proof}
Notice first that the category $(j\downarrow\mathbf n)$ is a preorder in the sense that the morphism sets have at most one element. Choosing a representative for each isomorphism class we get an equivalent skeleton subcategory $\mathcal A(\mathbf n)$ (in fact a partially ordered set), and it suffices to show that the map
\[
\hocolim_{\mathcal A(\mathbf n)}X\circ \pi_{\mathbf n}\to\colim_{\mathcal A(\mathbf n)}
X\circ\pi_{\mathbf n} 
\] 
is a weak homotopy equivalence. The advantage of this is that the category 
$\mathcal A(\mathbf n)$ is \emph{very small} in the sense that its nerve only has finitely many non-degenerate simplices. In this situation there is a general model categorical criterion for comparing the homotopy colimit to the colimit, see \cite{DS}, Section 10. Working in the Str\o m model category on $\mathcal U$ \cite{Str2}, we must check that for each object $a$ in 
$\mathcal A(n)$ the map
$$
\colim_{\partial a}X\circ \pi_{\mathbf n}\circ\pi_a\to 
\colim_{(\mathcal A(n)\downarrow a)}X\circ\pi_{\mathbf n}\circ\pi_a
=X(\pi_{\mathbf n}(a))   
$$
is a cofibration. Here we use the notation $\partial a$ for the subcategory of 
$(\mathcal A(\mathbf n)\downarrow a)$ obtained by excluding the terminal object. It remains to see that if $a$ is an object of the form $\mathbf d_0\to\mathbf n$, then this criterion is the same as that stated in the lemma. On the one hand we may view $(\mathcal A(n)\downarrow a)$ as a skeleton subcategory of $((j\downarrow\mathbf n)\downarrow a)$ and on the other hand we may identify the latter category with $(\mathcal D\downarrow\mathbf d_0)$. Taken together this gives a homeomorphism
$$
\colim_{\partial a}X\circ\pi_{\mathbf n}\circ\pi_a\cong\colim_{\partial \mathbf d_0}X\circ\pi_{\mathbf d_0}
$$
and the conclusion follows.      
\end{proof}

The criterion in Lemma \ref{modelcriterion} is not very practical and in order to have a more convenient formulation we impose conditions on the subcategory $\mathcal D$ of $\I$. We say that $\mathcal D$ has the \emph{intersection property} if each diagram in $\mathcal D$ of the form
$$
\mathbf d_1\xr{\delta_1}\mathbf d_{12}\xl{\delta_2}\mathbf d_2
$$ 
can be completed to a commutative square
\begin{equation}\label{Dsquare}
\begin{CD}
\mathbf d_0@>>>\mathbf d_1\\
@VVV @VV\delta_1 V\\
\mathbf d_2@>\delta_2 >>\mathbf d_{12}
\end{CD}
\end{equation}
in $\mathcal D$ such that the image of the composite morphism equals the intersection of the images of $\delta_1$ and $\delta_2$. For example, 
the monoidal subcategories $\I[k]$ and $\mathcal J[k]$ have the intersection property for all $k\geq 1$. We say that a $\mathcal D$-space $X$ is \emph{intersection cofibrant} if for any diagram of the form (\ref{Dsquare}), such that the intersection of the images of $\delta_1$ and $\delta_2$ equals the image of the composite morphism, the induced map
$$
X(d_1)\cup_{X(d_0)}X(d_2)\to X(d_{12})
$$
is a cofibration. By Lillig's union theorem \cite{Li} for cofibrations, this is equivalent to the requirement that (i) any morphism $\mathbf d_1\to\mathbf d_2$ in $\mathcal D$ induces a cofibration $X(d_1)\to X(d_2)$, and (ii) that the intersection of the images of $X(d_1)$ and $X(d_2)$ in $X(d_{12})$ equals the image of $X(d_0)$.

\begin{proposition}\label{intersectionproposition}
Let $j\co \mathcal D\to\I$ be the inclusion of a subcategory $\mathcal D$ which has the intersection property and let $X$ be a $\mathcal D$-space which is intersection cofibrant. Then the map $j_*^h(X)\to j_*(X)$ is a level-wise equivalence. 
\end{proposition}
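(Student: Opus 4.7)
The plan is to apply Lemma \ref{modelcriterion}, which reduces the claim to showing that for every object $\mathbf d_0$ of $\mathcal D$, the canonical map
\[
\mu_{\mathbf d_0}\co \colim_{\partial\mathbf d_0}X\circ \pi_{\mathbf d_0}\to X(d_0)
\]
is a cofibration. Passing to a skeleton of $\partial\mathbf d_0$ as in the proof of that lemma, I may replace the indexing category by a finite poset $\mathcal P$ of non-terminal morphisms $\alpha\co \mathbf d\to \mathbf d_0$ in $\mathcal D$, ordered by factorization through $\mathbf d_0$. Since $\mathcal D\subseteq \I$ consists of injections and terminal objects in $(\mathcal D\downarrow\mathbf d_0)$ are precisely the isomorphisms, every such $\mathbf d$ satisfies $|\mathbf d|<|\mathbf d_0|$. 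I will proceed by strong induction on $|\mathbf d_0|$.

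First I would use the intersection property to see that $\mathcal P$ is a meet-semilattice: given $\alpha_i\co \mathbf d_i\to\mathbf d_0$ in $\mathcal P$ for $i=1,2$, applying the intersection property with $\mathbf d_{12}=\mathbf d_0$ and $\delta_i=\alpha_i$ produces a completing square in $\mathcal D$ whose diagonal represents an object of $\mathcal P$ (or a terminal object, in the degenerate case) with image $\alpha_1(\mathbf d_1)\cap\alpha_2(\mathbf d_2)$. Write $V_a\subseteq X(d_0)$ for the image of the structure map $X(d)\to X(d_0)$ corresponding to $a\in\mathcal P$. Intersection cofibrancy (i) makes every structure map of $X$ a closed cofibration, so $V_a\hookrightarrow X(d_0)$ is a cofibration and $X(d)\to V_a$ is a homeomorphism; condition (ii) combined with the meet description yields the key identity
\[
V_{a_1}\cap V_{a_2}=V_{a_1\wedge a_2}
\]
inside $X(d_0)$. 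Moreover, since structure maps are closed cofibrations and we work in compactly generated spaces, the natural continuous bijection from $\colim_{\partial \mathbf d_0}X\circ\pi_{\mathbf d_0}$ onto $\bigcup_{a\in\mathcal P}V_a$ is a homeomorphism, so $\mu_{\mathbf d_0}$ is identified with this inclusion.

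Next I would enumerate the elements of $\mathcal P$ as $a_1,\dots,a_N$ in an order compatible with the partial ordering, set $U_k=V_{a_1}\cup\dots\cup V_{a_k}$, and prove by induction on $k$ that $U_k\hookrightarrow X(d_0)$ is a cofibration. The step from $U_{k-1}$ to $U_k$ is an application of Lillig's union theorem \cite{Li}: by the key identity,
\[
U_{k-1}\cap V_{a_k}=\bigcup_{i<k}V_{a_i\wedge a_k},
\]
and this, transported via the homeomorphism $V_{a_k}\cong X(d_k)$, is a union of images of $X$ indexed by factorizations of the $a_i\wedge a_k$ through $a_k$. That indexing set is a sub-poset of the skeleton of $\partial\mathbf d_k$, and intersection cofibrancy together with the intersection property of $\mathcal D$ restrict to the analogous properties on it. By the outer induction hypothesis, applied for $|\mathbf d_k|<|\mathbf d_0|$, the inclusion $U_{k-1}\cap V_{a_k}\hookrightarrow V_{a_k}$ is a cofibration; Lillig's theorem then yields $U_k\hookrightarrow X(d_0)$, and taking $k=N$ completes the induction.

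The hard part will be justifying the inner step, namely identifying $U_{k-1}\cap V_{a_k}$ with a colimit of the shape to which the outer induction hypothesis applies. Concretely, one must verify that the sub-semilattice of $\mathcal P$ of elements below $a_k$ corresponds under the identification $V_{a_k}\cong X(d_k)$ to a cofinal sub-semilattice of the skeleton of $\partial\mathbf d_k$, and that the restrictions of intersection cofibrancy and the intersection property remain in force. Once this bookkeeping is in place, the conclusion follows from an entirely formal iteration of Lillig's theorem.
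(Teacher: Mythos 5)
Your reduction to Lemma \ref{modelcriterion}, your identification of the poset $\mathcal P$ as a meet-semilattice via the intersection property, the identity $V_{a_1}\cap V_{a_2}=V_{a_1\wedge a_2}$, and the identification of the colimit with the union $\bigcup_{a\in\mathcal P}V_a$ all match the paper's argument. Where you diverge is the final step: the paper simply notes that the family $\{V_a\}$ is closed under (pairwise, hence all finite) intersections and that each inclusion $V_a\hookrightarrow X(d_0)$ is a cofibration, so Lillig's union theorem \cite{Li} -- which is stated for an arbitrary finite family with this property -- applies directly. You instead set up a double induction: an inner one on $k$ and an outer one on $|\mathbf d_0|$.

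The outer induction on $|\mathbf d_0|$ is where the trouble is, and you flag it yourself. To conclude from the outer hypothesis that $U_{k-1}\cap V_{a_k}\hookrightarrow V_{a_k}$ is a cofibration, you need the family $\{a_i\wedge a_k : i<k\}$, transported across $V_{a_k}\cong X(d_k)$, to be \emph{all} of (a skeleton of) $\partial\mathbf d_k$, not merely a sub-semilattice: a proper sub-union of a cofibration is not generally a cofibration, and cofinality in the index poset would only let you conclude the two unions coincide as sets, which is the thing you would need to prove. This identity does hold, but it requires an argument: one must use the intersection property a second time to show that every proper subset of $\mathrm{im}(a_k)$ realized by a $\mathcal D$-morphism into $\mathbf d_0$ is also realized by a $\mathcal D$-morphism into $\mathbf d_k$ (apply the intersection property to $\delta\co\mathbf d\to\mathbf d_0$ and $a_k\co\mathbf d_k\to\mathbf d_0$ when $\mathrm{im}(\delta)\subseteq\mathrm{im}(a_k)$). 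You label this "bookkeeping" and defer it, so as written there is a gap.

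The simpler repair -- and what makes the paper's route cleaner -- is to drop the outer induction on $|\mathbf d_0|$ entirely and induct instead on the \emph{number of subspaces in the union}. In the inner step, $U_{k-1}\cap V_{a_k}=\bigcup_{i<k}V_{a_i\wedge a_k}$ is a union of at most $k-1$ subspaces of $X(d_0)$ drawn from an intersection-closed family of cofibrant inclusions, so the inductive hypothesis (on cardinality of the union, not on $|\mathbf d_0|$) applies with no need to re-identify the index poset with any $\partial\mathbf d_k$. This is exactly the standard inductive derivation of the finite Lillig theorem from the two-subspace case, and is what citing \cite{Li} for a general finite family buys you in one stroke. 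In short: your key observations are right, but the choice of induction variable creates a verification burden the paper's argument never incurs.
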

\begin{proof}
We show that the assumptions on $\mathcal D$ and $X$ imply that the criterion in Lemma \ref{modelcriterion} is satisfied.
Given an object $\mathbf d_0$ in $\mathcal D$, consider the \emph{range functor}
\[
r\co (\mathcal D\downarrow\mathbf d_0)\to(\I\downarrow\mathbf d_0)\to\mathcal P(\mathbf d_0),\quad r(\mathbf d\xr{\delta}\mathbf d_0)=\delta(\mathbf d)\subseteq \mathbf d_0, 
\]
where $\mathcal P(\mathbf d_0)$ denotes the category of subsets and inclusions in 
$\mathbf d_0$. The assumption that $\mathcal D$ has the intersection property implies that  
the image of $r$ is a full subcategory of $\mathcal P(\mathbf d_0)$ that is closed under inclusions and that $r$ defines an equivalence of categories between 
$(\mathcal D\downarrow\mathbf d_0)$ and its image. Thus, we might as well view 
$X\circ \pi_{\mathbf d_0}$ as a diagram $U\mapsto X(U)$ indexed on the objects $U$ in a full subcategory $\mathcal A$ of $\mathcal P(\mathbf d_0)$ that is closed under intersections. By assumption (i) above we may view $X(U)$ as a closed subspace of $X(d_0)$ for all $U\in \mathcal A$ and by assumption (ii) we have the equality
$$
X(U)\cap X(V)= X(U\cap V)
$$   
for all pairs of objects $U$ and $V$ in $\mathcal A$. It therefore follows from the gluing lemma for continuous functions on a union of closed subspaces that $\colim_{\partial\mathbf d_0}X\circ\pi_{\mathbf d_0}$ is homeomorphic to the union of the subspaces $X(U)$ of $X(d_0)$ for 
$U\neq \mathbf d_0$. The conclusion then follows from Lillig's union theorem for cofibrations \cite{Li}.
\end{proof}

\begin{example}\label{MUexample}
Let $j\co \I[2]\to\I$ be the inclusion of the symmetric monoidal subcategory $\I[2]$. Since $\I[2]$ has the intersection property and the commutative $\I[2]$-space monoid $BU$ is intersection cofibrant, it follows from Lemma \ref{j-Kan-lemma} and Proposition \ref{intersectionproposition}  that there is a chain of $\I$-equivalences 
\[
j_*(BU)\xl{\sim}j_*^h(BU)\xr{\sim}R(BU_{h\I})
\]
over $BF$. Thus, it follows from Proposition \ref{D-restriction-equivalence} together with Theorem \ref{introinvariance} and Lemma \ref{Top(n)-lemma} that
applying the symmetric Thom spectrum functor to the commutative $\I$-space monoid $j_*(BU)$ gives a commutative symmetric ring spectrum which is a model of $MU$.  
\end{example}

\subsection{Orthogonal Thom spectra and diagram lifting}\label{orthogonalsection}
Recall from \cite{MMSS} that an orthogonal spectrum is a spectrum $X$ such that the $n$th space $X(n)$ has an action of the orthogonal group $O(n)$, and such that the iterated structure maps 
$
S^m\wedge X(n)\to X(m+n)
$
are $O(m)\times O(n)$-equivariant. We write $\Sp^O$ for the category of orthogonal spectra. Let $\mathcal V$ be the topological category whose objects are the vector spaces $\mathbb R^n$, and whose morphisms are the linear isometries. A $\mathcal V$-space is a continuous functor 
$\mathcal V\to \mathcal U$, and we write $\mathcal V\mathcal U$ for the category of such functors. The symmetric monoidal structure of $\mathcal V$ induces a symmetric monoidal structure on $\mathcal V\mathcal U$ in the usual way, and the $\I$-space $BF$ extends to a commutative $\mathcal V$-space monoid. Applying the Thom space construction level-wise as in the definition of the symmetric Thom spectrum functor, we get the symmetric monoidal orthogonal Thom spectrum functor
$$
T\co \mathcal V\mathcal U/BF\to \Sp^{O}.
$$
In order to construct orthogonal Thom spectra from space level data, we need a $\mathcal V$-space version
$$
R\co \mathcal U/BF_{h\mathcal V}\to \mathcal V\mathcal U/BF 
$$
of the $\I$-space lifting functor. Here the homotopy colimit $BF_{h\mathcal V}$ denotes the realization of the simplicial space 
$$
[k]\mapsto \coprod_{n_0,\dots,n_k}\mathcal V(\mathbb R^{n_1},\mathbb R^{n_0})\times
\dots\times \mathcal V(\mathbb R^{n_{k}},\mathbb R^{n_{k-1}})\times BF(n_k).
$$
The statement in Lemma \ref{Blemma} remains true with $\mathcal V$ instead of $\mathcal I$, and we conclude from this that the canonical map $BF_{h\mathcal N}\to BF_{h\mathcal V}$ is a weak homotopy equivalence. The definition of the $\mathcal V$-space lifting functor is then completely analogous to the definition of the $\I$-space lifting functor in Section 
\ref{Rliftingsection}: Let $\overline{BF}$ be the $\mathcal V$-space defined by the homotopy Kan extension along the identity functor on $\mathcal V$. Given a map 
$f\co X\to BF_{h\mathcal V}$, we define $R_f(X)$ to be the homotopy pullback of the diagram of $\mathcal V$-spaces
$$
X\xr{f}BF_{h\mathcal V}\xl{\pi} \overline{BF},
$$
and we define $R(f)$ to be the composition
$$
R(f)\co R_f(X)\to\overline{BF}\to BF.
$$ 
The Barratt-Eccles operad acts on $BF_{h\mathcal V}$ and the results on preservation of operad actions from Section \ref{Thomoperadsection} carry over to this setting.
\appendix

\section{Homotopy colimits}\label{hocolimsection}
We here collect the facts about homotopy colimits needed in the paper. We shall adapt the definitions of Bousfield and Kan \cite{BK}, except that we work with topological spaces instead of simplicial sets. Thus, given a small category $\mathcal A$ and an $\mathcal A$-diagram $X\co \mathcal A\to \mathcal U$, the homotopy colimit $\hocolim_{\mathcal C}X$ is defined to be the realization of the simplicial space
\begin{equation}\label{hocolimdefinition}
[k]\mapsto \coprod_{a_0\leftarrow\dots\leftarrow a_k}X(a_k),
\end{equation}
where the coproduct is over the $k$-simplices of the nerve $N_{\bullet}\mathcal C$. It is sometimes convenient to view this as the classifying space of the topological category $\mathcal A(X)$ whose space of objects is the disjoint union of the spaces $X(a)$ where $a$ runs through the objects of $\mathcal A$. A morphism $(a,x)\to(a',x')$ in $\mathcal A(X)$ is specified by a morphism $\alpha\co a\to a'$ in $\mathcal A$ such that $\alpha_*x=x'$.
If $X$ is a based $\mathcal A$-diagram, that is, a functor $X\co \mathcal A\to \mathcal T$, then the inclusion of the base points gives a map $B\mathcal A\to B\mathcal A(X)$ and we define the based homotopy colimit to be the quotient space. Equivalently, this is the realization of the simplicial space obtained by replacing the disjoint union in (\ref{hocolimdefinition}) by the wedge product. 

\subsection{Homotopy Kan extensions}\label{homotopyKansection}
Let $\phi\co\mathcal A\to\mathcal B$ be a functor between small categories. Given an $\mathcal A$-diagram $X$, the (left) homotopy Kan extension $\phi^h_*X$ is the $\mathcal B$-diagram defined by
$$
\phi^h_*X(b)=\hocolim_{(\phi\downarrow b)}X\circ \pi_b.
$$
The homotopy colimit is over the category $(\phi\downarrow b)$ whose objects are pairs 
$(a,\beta)$ in which $a$ is an object of $\mathcal A$ and $\beta\co\phi(a)\to b$ is a morphism in $\mathcal B$. A morphism $(a,\beta)\to(a',\beta')$ is given by a morphism $\alpha\co a\to a'$ in $\mathcal A$ such that 
$\beta=\beta'\circ \phi(\alpha)$. The functor $\pi_b\co (\phi\downarrow b)\to \mathcal A$ is defined by $(a,\beta)\mapsto a$.  We recall that the categorical Kan extension $\phi_*X$ is defined using the categorical colimit instead of the homotopy colimit, see \cite{Mac}. If $\mathcal B$ is the terminal category $*$ and $p\co\mathcal A\to *$ the projection, then $p_*X$ and $p^h_*X$ are respectively the colimit and the homotopy colimit of the $\mathcal A$-diagram $X$. Notice, that the functors $\pi_b$ define a map of $\mathcal B$-diagrams from $\phi_*^hX$ to the constant $\mathcal B$-diagram $\hocolim_{\mathcal A}X$. A proof of the following well-known lemma can be found in \cite{Sch2}. 

\begin{lemma}\label{homotopyKanequivalence}
The induced map
$$
\pi\co\hocolim_{\mathcal B}\phi^h_*X\xr{\sim}\hocolim_{\mathcal A}X.
$$
is a weak homotopy equivalence. \qed 
\end{lemma}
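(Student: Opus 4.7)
The plan is to factor $\pi$ through a homotopy colimit over an auxiliary Grothendieck-style category $\mathcal G$. Let $\mathcal G$ have objects $(a, b, \beta)$ with $a \in \mathcal A$, $b \in \mathcal B$, and $\beta \co \phi(a) \to b$, and morphisms $(a, b, \beta) \to (a', b', \beta')$ the pairs $(\alpha \co a \to a', \gamma \co b \to b')$ satisfying $\beta' \circ \phi(\alpha) = \gamma \circ \beta$. Write $p_1 \co \mathcal G \to \mathcal A$ and $p_2 \co \mathcal G \to \mathcal B$ for the two projections; the fibre of $p_2$ over $b$ is precisely $(\phi \downarrow b)$, and $p_1$ restricts to $\pi_b$ on this fibre. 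I will construct a zig-zag
$$
\hocolim_{\mathcal B} \phi^h_* X \xl{\sim} \hocolim_{\mathcal G} X \circ p_1 \xr{\sim} \hocolim_{\mathcal A} X
$$
whose composition represents the map $\pi$.

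The left-hand arrow is the standard Fubini identity for iterated homotopy colimits applied to the Grothendieck construction: both sides arise naturally as realizations of a bisimplicial space whose $(k,l)$-simplices are indexed by chains $b_0 \leftarrow \cdots \leftarrow b_k$ in $\mathcal B$ together with chains $(a_0, \beta_0) \leftarrow \cdots \leftarrow (a_l, \beta_l)$ in $(\phi \downarrow b_k)$, and the classical comparison between the diagonal and the two iterated realizations supplies the equivalence. The right-hand arrow follows from the standard cofinality criterion for homotopy colimits applied to the functor $p_1$, for which it suffices to show that the comma category $(p_1 \downarrow a)$ has contractible nerve for every $a \in \mathcal A$. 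The inclusion $(\mathcal A \downarrow a) \hookrightarrow (p_1 \downarrow a)$ sending $(a', \alpha)$ to $(a', \phi(a'), \mathrm{id}_{\phi(a')}, \alpha)$ admits a natural transformation from the identity on $(p_1 \downarrow a)$ whose component at $(a', b, \beta, \alpha)$ is the morphism $(\mathrm{id}_{a'}, \beta)$; this exhibits the inclusion as a deformation retract, and since $(\mathcal A \downarrow a)$ has the terminal object $(a, \mathrm{id}_a)$, both categories are contractible.

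The main obstacle is not conceptual but bookkeeping: one must arrange the bisimplicial Fubini equivalence of the first step so that its composition with the cofinality equivalence of the second step is manifestly compatible with the natural transformation from $\phi^h_* X$ to the constant $\mathcal B$-diagram at $\hocolim_{\mathcal A} X$ that induces $\pi$. Once this compatibility is secured at the level of the defining simplicial models, the remaining checks are formal.
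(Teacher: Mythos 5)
Your route via the comma category $\mathcal G=(\phi\downarrow\mathcal B)$ is reasonable, but it contains a direction error in the cofinality step. For a functor $u\co\mathcal C\to\mathcal D$, the criterion making $\hocolim_{\mathcal C}u^*Y\to\hocolim_{\mathcal D}Y$ a weak equivalence for all diagrams $Y$ is contractibility of the \emph{under}categories $(d\downarrow u)$; contractibility of the overcategories $(u\downarrow d)$ is the dual hypothesis, governing homotopy \emph{limits}, and does not imply cofinality for colimits. (With $\mathcal D=\{0\to 1\}$, $\mathcal C=\{0\}$ and $u$ the inclusion, the overcategories $(u\downarrow d)$ are contractible for both objects, yet $\hocolim_{\mathcal C}u^*Y\to\hocolim_{\mathcal D}Y$ is, up to equivalence, the map $Y(0)\to Y(1)$.) You verify contractibility of $(p_1\downarrow a)$, which is the wrong one. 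Luckily the undercategories $(a\downarrow p_1)$ are also contractible, by your argument applied on the other side: the inclusion $(a\downarrow\mathcal A)\hookrightarrow(a\downarrow p_1)$ sending $(a',\alpha\co a\to a')$ to $(a',\phi(a'),\mathrm{id}_{\phi(a')},\alpha)$ admits the forgetful retraction $r$, and the morphisms $(\mathrm{id}_{a'},\beta)\co(a',\phi(a'),\mathrm{id}_{\phi(a')},\alpha)\to(a',b,\beta,\alpha)$ assemble to a natural transformation $ir\Rightarrow\mathrm{id}$ (note this also fixes a small inversion in your version: the transformation must run from $ir$ to the identity, since $\beta$ points from $\phi(a')$ to $b$). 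As $(a\downarrow\mathcal A)$ has initial object $(a,\mathrm{id}_a)$, contractibility follows; but as written your proof establishes the wrong hypothesis.

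A secondary point: the Fubini identification is stated more strongly than it holds. The diagonal of the bisimplicial space you describe is not literally the nerve of the transport category of $X\circ p_1$ on $\mathcal G$: an $m$-simplex of the diagonal carries a single map $\beta_0\co\phi(a_0)\to b_m$, whereas an $m$-simplex of $N_\bullet\mathcal G$ carries a compatible family of maps $\beta'_i\co\phi(a_i)\to b_i$, and these are not in bijection. There is a natural comparison that is a weak equivalence (a Thomason-type theorem with coefficients), so the conclusion stands, but what you invoke is a theorem rather than an identity of simplicial spaces. In fact the same bisimplicial space already interpolates directly between the two sides of the lemma, which makes the detour through $\mathcal G$ (and hence cofinality) unnecessary: realizing it first in the $\mathcal A$-direction replaces each $X(a_l)$ by $B(\phi(a_0)\downarrow\mathcal B)\times X(a_l)$ with the first factor contractible, yielding a thickening of $\hocolim_{\mathcal A}X$; this direct argument is close in spirit to the proof of Lemma \ref{homotopyKanlemma} given in this appendix.
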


This lemma may be viewed as a statement about the composition of two derived functors. Given an additional  functor $\psi\co\mathcal B\to\mathcal C$, one can more generally show that there is a natural equivalence of functors $\psi^h_*\phi^h_*\xr{\sim}(\psi\phi)^h_*$. In the lemma below, we shall consider the case where $X$ has the form $\phi^*Y$ for a $\mathcal B$-diagram $Y$, and we shall relate $\pi$ to the map of homotopy colimits induced by the natural transformation of $\mathcal B$-diagrams
$$
t\co \phi_*^h\phi^*Y\to \phi_*\phi^* Y\to Y, 
$$ 
where the first arrow is the canonical projection from the homotopy colimit to the colimit and the second arrow is given by the universal property of the categorical Kan extension.
\begin{lemma}\label{homotopyKanlemma}
Given a $\mathcal B$-diagram $Y$, the diagram
\[
\xymatrix{
\displaystyle\hocolim_{\mathcal B}\phi^h_*\phi^*Y\ar[rr]^{\pi}\ar[dr]_t & & 
\displaystyle\hocolim_{\mathcal A}\phi^*Y\ar[dl]^{\phi}\\
& \displaystyle\hocolim_{\mathcal B}Y  &
}
\]
is homotopy commutative by a canonical choice of a natural homotopy.
\end{lemma}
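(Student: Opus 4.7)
The plan is to realize both composites as maps out of a common homotopy colimit over the Grothendieck construction, and then to obtain the canonical homotopy from a natural transformation of functors $\mathcal C\to\mathcal B$. Let $\mathcal C=\phi\downarrow\mathcal B$ denote the comma category whose objects are triples $(a,b,\beta)$ with $\beta\co\phi(a)\to b$ and whose morphisms $(a,b,\beta)\to(a',b',\beta')$ are pairs $(\alpha,\gamma)$ of morphisms in $\mathcal A$ and $\mathcal B$ satisfying $\gamma\beta=\beta'\phi(\alpha)$. Write $p_1\co\mathcal C\to\mathcal A$ and $p_2\co\mathcal C\to\mathcal B$ for the projections. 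Iterating the argument behind Lemma \ref{homotopyKanequivalence}, one obtains for any $\mathcal A$-diagram $Z$ a natural weak homotopy equivalence
\[
\hocolim_{\mathcal B}\phi^h_*Z\xr{\sim}\hocolim_{\mathcal C}Z\circ p_1,
\]
obtained by comparing two ways to realize a common bisimplicial space.

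Applying this with $Z=\phi^*Y$, the map $\pi$ becomes the map induced by $p_1$, so the composite $\hocolim_{\mathcal B}\phi^h_*\phi^*Y\to\hocolim_{\mathcal A}\phi^*Y\to\hocolim_{\mathcal B}Y$ appearing in the lemma corresponds to the map $(\phi p_1)_*\co \hocolim_{\mathcal C}Y\phi p_1\to\hocolim_{\mathcal B}Y$ induced by the functor $\phi p_1\co \mathcal C\to\mathcal B$. Tracing through the definition of $t$ one verifies that $t$ corresponds, under the same equivalence, to the map
\[
\hocolim_{\mathcal C}Y\phi p_1\to\hocolim_{\mathcal C}Yp_2\xr{(p_2)_*}\hocolim_{\mathcal B}Y,
\]
in which the first map is induced by the natural transformation $\nu\co \phi p_1\Rightarrow p_2$ of functors $\mathcal C\to\mathcal B$ whose component at $(a,b,\beta)$ is $\beta$ itself.

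With this reformulation the construction of the homotopy is standard. The natural transformation $\nu$ assembles into a functor $H\co \mathcal C\times [1]\to\mathcal B$ with $H(-,0)=\phi p_1$ and $H(-,1)=p_2$. To each $n$-simplex $c_0\leftarrow\dots\leftarrow c_n$ in $N\mathcal C$ and each $0\leq k\leq n$, I associate the $(n+1)$-simplex
\[
p_2(c_0)\leftarrow\dots\leftarrow p_2(c_k)\xl{\nu_{c_k}}\phi p_1(c_k)\leftarrow\dots\leftarrow\phi p_1(c_n)
\]
in $N\mathcal B$; together with the coefficient $y\in Y\phi p_1(c_n)$ these simplices fit into the standard prism decomposition of $\Delta^n\times I$ and define a canonical simplicial homotopy between $(\phi p_1)_*$ and $(p_2)_*$ precomposed with $Y\nu$, manifestly natural in $Y$. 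Combining this with the Fubini equivalence of the first paragraph yields the desired natural homotopy in the lemma.

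The main technical point will be the identification of $t$ with the map built from $\nu$ and $(p_2)_*$ under the Fubini equivalence; this reduces to a direct bisimplicial verification once both sides are expressed as realizations of the appropriate bisimplicial decompositions associated to the Grothendieck construction $\mathcal C$. Everything else follows from general facts about classifying spaces and natural transformations.
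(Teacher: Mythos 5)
Your proof is correct and rests on the same underlying observation as the paper's, namely that after passing to a simplicial model the two composites are the two ``faces'' of a concatenated chain in $\mathcal B$, and the required homotopy is a simplicial interpolation between them. The execution differs, however. The paper identifies $\hocolim_{\mathcal B}\phi^h_*\phi^*Y$ with the realization of the diagonal of an explicit bisimplicial space whose $k$-simplices are chains $b_0\la\dots\la b_k\la\phi(a_0)$ in $\mathcal B$ and $a_0\la\dots\la a_k$ in $\mathcal A$ with a coefficient in $\phi^*Y(a_k)$, and then writes down a single ``join-type'' homotopy formula $[(b_0\la\dots\la b_i\la\phi(a_0)\la\dots\la\phi(a_i),\,y);(su,(1-s)u)]$ directly at the level of geometric realizations. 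You instead pass to the Grothendieck construction/comma category $\mathcal C=\phi\downarrow\mathcal B$ via a Thomason--Fubini equivalence, recognize both composites as $(\phi p_1)_*$ and $(p_2)_*\circ\nu_*$ for the natural transformation $\nu\co\phi p_1\Rightarrow p_2$, and invoke the standard prism decomposition of $\Delta^n\times\Delta^1$. Your route is more conceptual and makes the role of the natural transformation explicit, but it has two extra steps the paper avoids: it requires the Fubini comparison as an intermediate weak equivalence (so the homotopy is constructed on a model weakly equivalent to, but not equal to, $\hocolim_{\mathcal B}\phi^h_*\phi^*Y$, and must be transported along that comparison map), and it requires checking that $t$ and $\pi$ really do correspond under that comparison. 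You flag the latter yourself; it does check out (one verifies that the simplicial map from the diagonal to $N\mathcal C$ sending $\beta\co\phi(a_0)\to b_k$ to the compatible family $\beta_i=(b_k\to b_i)\circ\beta\circ\phi(a_i\to a_0)$ intertwines the relevant maps), but spelling it out is roughly as much work as the paper's direct formula. Also, your phrase ``two ways to realize a common bisimplicial space'' slightly overstates the Fubini step: the diagonal of the paper's bisimplicial space and $N\mathcal C$ with coefficients are not literally the same simplicial space, only naturally weakly equivalent, which is what Thomason's theorem gives.
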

\begin{proof}
We may view the homotopy colimit of the $\mathcal B$-diagram $\phi^h_*\phi^*Y$ as the realization of the bisimplicial space
$$
([i],[j])\mapsto \coprod_
{\left\{\substack{b_0\la\dots\la b_i\la\phi(a_0)\\a_0\la\dots \la a_j}\right\}}
\phi^*Y(a_j),
$$
and it is well-known that this is homeomorphic to the realization of the diagonal simplicial space. Restricting to this simplicial space, the two maps in the diagram are induced by the simplicial maps that map a simplex
$$
(b_0\la\dots\la b_i\xl{\gamma}\phi(a_0),\ a_0\xl{\alpha_1}\dots\xl{\alpha_i} a_i,\  y)
$$
with $y$ in $\phi^*Y(a_i)$, to
$$
(b_0\la\dots\la b_i,\ \gamma_*\phi(\alpha_1\dots\alpha_i)_*y), \quad\text{respectively}\quad (\phi(a_0)\xl{\phi(\alpha_0)}\dots\xl{\phi(\alpha_i)} \phi(a_i),\ y).
$$
The required homotopy between the topological realizations of these maps is then defined by
$$
[(b_0\la\dots\la b_i\la\phi(a_0)\la\dots\la\phi(a_i),\ y);(su,(1-s)u)],
$$
for $u\in\Delta^i$ and $s\in I$. Here $I$ denotes the unit interval and
$$
\Delta^i=\{(u_0,\dots,u_i)\in I^{i+1}\co u_0+\dots+ u_i=1\}
$$
is the standard $i$-simplex. 
\end{proof}

The following lemma is needed to ensure that the functor $R$ defined in Section 
\ref{Rliftingsection} takes values in the subcategory of level-wise $T$-good objects in 
$\I\mathcal U/BF$.    
\begin{lemma}\label{hocolimquasifibration}
Let $\mathcal A$ be a small category and let $f_a\co X_a\to BF(n)$ be an $\mathcal A$-diagram in $\mathcal U/BF(n)$. If each $f_a$ classifies a well-based quasifibration, then the induced map
$$
f\co\hocolim_{\mathcal A}X_a\to BF(n)
$$ 
also classifies a well-based quasifibration.
\end{lemma}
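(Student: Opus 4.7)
The plan is to exploit the simplicial model of the homotopy colimit. I would first identify the pullback $f^*V(n)$ itself as a homotopy colimit: writing $\hocolim_{\mathcal A}X_a$ as the realization of the simplicial space $B_\bullet\co [k]\mapsto\coprod_{a_0\leftarrow\dots\leftarrow a_k}X_{a_k}$, the pullback along $f$ is obtained by replacing each summand $X_{a_k}$ by $f_{a_k}^*V(n)$, giving a simplicial space $E_\bullet$. This identification uses that topological realization preserves pullbacks and that pullback in $\U/BF(n)$ commutes with coproducts. Under it, the projection $p\co f^*V(n)\to\hocolim_{\mathcal A}X_a$ and its section $s$ are the realizations of simplicial maps $p_\bullet\co E_\bullet\to B_\bullet$ and $s_\bullet\co B_\bullet\to E_\bullet$.

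The main step is to verify that $p_\bullet$ is a ``simplicial quasifibration'', that is, that each $p_k$ is a well-based quasifibration and that all face and degeneracy maps give pullback squares. The first property holds because $p_k=\coprod_{a_0\leftarrow\dots\leftarrow a_k}p_{a_k}$ is a coproduct of well-based quasifibrations. For the pullback property, the degeneracy maps and the face maps $d_i$ with $i<k$ act as the identity on each $X_{a_k}$-summand and so are trivially compatible with $p_\bullet$. The only substantive case is the last face map $d_k$, which uses the induced map $X(\alpha_k)\co X_{a_k}\to X_{a_{k-1}}$ associated to the outermost arrow $\alpha_k\co a_k\to a_{k-1}$ in the chain; since this is a morphism over $BF(n)$, a direct calculation shows that the resulting square is a pullback, both sides being naturally identified with the pullback of $V(n)$ along $f_{a_k}$.

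With these ingredients in hand I would invoke the classical realization theorem for simplicial quasifibrations (May, \cite{Ma2}, Theorem 12.7) to conclude that $p=|p_\bullet|$ is a quasifibration; the properness hypothesis is immediate since the degeneracies in both $E_\bullet$ and $B_\bullet$ are inclusions of coproduct summands and hence cofibrations. For the section, at each simplicial level it is $\coprod s_{a_k}$, a coproduct of cofibrations, and topological realization of maps between proper simplicial spaces preserves such levelwise cofibrations, so $s$ is itself a cofibration. The main obstacle is the appeal to the realization theorem for simplicial quasifibrations, whose hypotheses -- particularly the pullback property of face maps -- must be checked carefully; the remainder of the argument is formal.
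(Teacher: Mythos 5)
Your argument is essentially the same as the paper's: identify $f^*V(n)$ as the realization of the levelwise pullback simplicial space, observe that this gives a degreewise well-based quasifibration over the simplicial model of the homotopy colimit, and invoke the realization theorem for simplicial quasifibrations together with the analogous result for simplicial cofibrations. The only differences are cosmetic — the paper cites Segal's Prop.\ 1.6 and Lewis \cite{Le} where you cite May \cite{Ma2}, and you spell out the pullback/properness verifications a bit more explicitly than the paper does.
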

\begin{proof}
Let $W_a=f_a^*V(n)$, and notice that $f^*V(n)$ is homeomorphic to 
$\hocolim_{\mathcal A}W_a$ since topological realization preserves pullback diagrams.
It follows that the pullback of $V(n)\to BF(n)$ along $f$ is homeomorphic to the realization of the simplicial map
$$
\coprod_{a_0\leftarrow\dots\leftarrow a_k}W_{a_k}\to \coprod_{a_0\leftarrow\dots\leftarrow a_k}
X_{a_k}.
$$ 
These are good simplicial spaces in the sense of \cite{Se}, Appendix A, 
and the map is a degree-wise quasifibration by assumption. 
The result then follows from standard results on realization of simplicial quasifibrations and simplicial cofibrations, see e.g. \cite{Se}, Proposition 1.6 and \cite{Le}.  
\end{proof}


\begin{thebibliography}{99}
\bibitem{AB}
R. D. Arthan and S. R.  Bullett, \emph{The homology of $MO(1)\sp{\wedge \infty }$ and $MU(1)\sp{\wedge \infty }$}, J. Pure Appl. Algebra 26 (1982), no. 3, 229--234.
\bibitem{BE}
M. G. Barratt and P. J. Eccles, \emph{$\Gamma^+$-structures-I: A free group functor for stable homotopy theory}, Topology 13 (1974), 25--45. 
\bibitem{Ber}
C. Berger, \emph{Combinatorial models for real configuration spaces and $E_n$-operads}, Contemp. Math., vol. 202, 37--52, Amer. Math. Soc., Providence, RI, 1997. 
\bibitem{BCS}
A. Blumberg, R. Cohen, and C. Schlichtkrull, \emph{Topological Hochschild homology of Thom spectra and the free loop space}, Preprint, 2008. 
\bibitem{Bi}
J. S. Birman, \emph{Braids, links, and mapping class groups}, Annals of Mathematics Studies, No 82, Princeton University Press, Princeton, N.J., 1975. 
\bibitem{Bo}
F. Borceux, \emph{Handbook of categorical algebra 2: Categories and structures}, Encyclopedia of Mathematics and its Applications 51,  Cambridge University Press, Cambridge, 1994.  
\bibitem{BK}
A. K. Bousfield and D. M. Kan, \emph{Homotopy limits, completions and
  localizations}, Springer Lecture Notes in Math. 304, Springer Verlag, 1972.
\bibitem{Bok}
M. B\"okstedt, \emph{Topological Hochschild homology}, Preprint 1985, Bielefeld University.  
\bibitem{Bu}
S. R. Bullett, \emph{Permutations and braids in cobordism theory}, Proc. London Math. Soc. (3) 38 (1979), no. 3, 517--531. 
\bibitem{Co}
F. R. Cohen, \emph{Braid orientations and bundles with flat connection}, Inventiones math. 46, (1978), 99--110. 
\bibitem{CLM}
F. R. Cohen, T. Lada, and J. P. May, \emph{Homology of iterated loop spaces}, Springer Lecture Notes in Math. 533, Springer Verlag, 1976.
\bibitem{CV}
F. R. Cohen and V. V. Vershinin, \emph{Thom spectra which are wedges of Eilenberg-Mac Lane spectra}, Stable and unstable homotopy (Toronto, ON, 1996), 43--65, 
Fields Inst. Commun., 19, Amer. Math. Soc., Providence, RI, 1998.
\bibitem{DS}
W. G. Dwyer and J. Spalinski, \emph{Homotopy theories and model categories}, Handbook of algebraic topology, North-Holland, Amsterdam, 1995, pp. 73--126.
\bibitem{G}
T. Goodwillie, \emph{Calculus II: Analytic functors}, K-theory 5,
(1992), no. 4, 295--332.
\bibitem{Hi}
P. S. Hirschhorn, \emph{Model categories and their localizations}, Mathematical Surveys and Monographs 99, American Mathematical Society, Providence, R.I, 2003.
\bibitem{HSS}
M. Hovey, B. Shipley and J. Smith, \emph{Symmetric spectra},
J. Amer. Math. Soc. 13, (2000), no. 1, 149--208.
\bibitem{HV}
J. Hollender and R. M. Vogt, \emph{Modules of topological spaces, applications to homotopy limits and $E_{\infty}$ structures}, Arch. Math. 59 (1992), no. 2, pp. 115--129.
\bibitem{Ja}
I. M. James, \emph{Reduced product spaces}, Ann. of Math. (2) 62, (1955), 170--197.
\bibitem{Le}
L. G. Lewis, \emph{When is the natural map $X\to\Omega\Sigma X$ a
  cofibration?}, Trans. Amer. Math. Soc. 273 (1982), no. 1,  147--155.  
\bibitem{LMS}
L. G. Lewis, J. P. May and M. Steinberger, \emph{Equivariant stable
  homotopy theory}, Springer Lecture Notes in Math. 1213, Springer
Verlag, 1986.  
\bibitem{Li}
J. Lillig, \emph{A union theorem for cofibrations}, Arch. Math 24
(1973), pp. 410--415.
\bibitem{Mac}
S. Mac Lane, \emph{Categories for the working mathematician}. Second Edition, Graduate Texts in Mathematics, Springer Verlag, New York, 1998.
Springer-Verlag, 1971.
\bibitem{M}
I. Madsen, \emph{Algebraic K-theory and traces}, from: ``Current Developments in Mathematics, 1995, (Cambridge, MA), International Press, pp. 191--321 (1996) . 
\bibitem{Mah1}
M. Mahowald, \emph{A new infinite family in ${}_2\pi^s_*$}, Topology 16 (1977), 249--256. 
\bibitem{Mah}
M. Mahowald, \emph{Ring spectra which are Thom spectra}, Duke Math. J. 46 (1979), 549--559.
\bibitem{MR}
M. Mahowald and N. Ray, \emph{A note on the Thom isomorphism}, Proc. Amer. Math. Soc. 82 (1981), no. 2, 307--308.
\bibitem{MMSS}
M. A. Mandell, J. P. May, S. Schwede and B. Shipley, \emph{Model
  categories of diagram spectra}, Proc. London Math. Soc. (3), 82
(2001), no. 2, 441--512.
\bibitem{Ma}
J. P. May, \emph{The geometry of iterated loop spaces}, Springer
Lecture Notes in Math. 271, Springer Verlag, 1972.
\bibitem{Ma1}
J. P. May, \emph{$E\sb{\infty }$ spaces, group completions, and permutative categories}, New developments in topology (Proc. Sympos. Algebraic Topology, Oxford, 1972), pp. 61--93. London Math. Soc. Lecture Note Ser., No. 11, Cambridge Univ. Press, London, 1974. 
\bibitem{Ma2}
J. P. May, \emph{Classifying spaces and fibrations},
Mem. Amer. Math. Soc. 1 (1975), 1, no. 155. 
\bibitem{Ma3}
J. P. May, \emph{$E_{\infty}$ ring spaces and $E_{\infty}$ ring spectra}, Springer Lecture Notes in Math. 577, Springer Verlag, 1977. 
\bibitem{May4}
J. P. May and J. Sigurdsson, \emph{Parametrized homotopy theory}, Mathematical Surveys and Monographs, 132, American Mathematical Society, Providence, RI, 2006. 
\bibitem{SS}
S. Sagave and C. Schlichtkrull, \emph{Diagram spaces and symmetric spectra}, in preparation.
\bibitem{Sch2}
C. Schlichtkrull, \emph{The homotopy infinite symmetric product represents stable homotopy}, 
Algebr. Geom. Topol. 7 (2007), 1963--1977.
\bibitem{Schw}
S. Schwede, \emph{Symmetric spectra}, in preparation, 
\verb+http://www.math.uni-bonn.de/people/schwede/SymSpec.pdf+.
\bibitem{Se}
G. Segal, \emph{Categories and cohomology theories}, Topology 13
(1974),  293--312.
\bibitem{Sh}
B. Shipley, \emph{Symmetric spectra and topological Hochschild
  homology}, $K$-theory 19, (2000), no. 2, 155--183.
\bibitem{Sm}
J. H. Smith, \emph{Simplicial group models for $\Omega^nS^nX$}, Israel J. Math. 66 (1989), no. 1--3, 330--350.
\bibitem{St}
N. Steenrod, \emph {A convenient category of topological spaces},
  Michigan Math. J. 14 (1967), 133--152.
\bibitem{Str}
A. Str\o m, \emph{Note on cofibrations II}, Math. Scand. 22 (1968), 130--142.
\bibitem{Str2}
A. Str\o m, \emph{The homotopy category is a homotopy category},
Arch. Math. 23 (1972), 435--441.
\bibitem{W}
F. Waldhausen, \emph{Algebraic K-theory of spaces}, Lecture Notes in Math. 1126, 318--419, Springer-Verlag, New York, 1985. 

\end{thebibliography}
\end{document}